\newif{\ifarxiv}
\newcommand\eqdef{\mathrel{\buildrel \text{def}\over=}}
\newcommand\pow{\mathbb{P}}
\newcommand\Pfin{\pow_{\mathrm{fin}}}
\newcommand{\rat}{\mathbb{Q}}
\newcommand{\real}{\mathbb{R}}
\newcommand{\creal}{\overline{\mathbb{R}}_+}
\newcommand{\Rp}{\mathbb{R}_+}
\newcommand\diff{\smallsetminus}
\DeclareMathOperator{\upc}{\uparrow\!}
\DeclareMathOperator{\dc}{\downarrow\!}
\newcommand\nat{\mathbb{N}}
\newcommand\limp{\mathrel{\Rightarrow}}
\newcommand\Lform{\mathcal L}
\newcommand\co{{\mathrm{co}}}
\newcommand\Lformco{\Lform_\co}
\newcommand\uuarrow{\rlap{$\uparrow$}\raise.5ex\hbox{$\uparrow$}}
\newcommand\ddarrow{\rlap{$\downarrow$}\raise.5ex\hbox{$\downarrow$}}
\newcommand\identity[1]{\mathrm{id}_{#1}}
\newcommand\Pred{\mathbb{P}}
\newcommand\Angel{{\mathtt{A}}}
\newcommand\Demon{{\mathtt{D}}}
\newcommand\Nature{{\mathtt{P}}}
\newcommand\AN{{\Angel\Nature}}
\newcommand\DN{{\Demon\Nature}}
\newcommand\ADN{{\Angel\Demon\Nature}}
\newcommand\one{\mathbf 1}
\newcommand\Smyth{{\mathcal Q}}
\newcommand\Hoare{{\mathcal H}}
\newcommand\Vt{\mathsf{V}}
\newcommand\HV{\Hoare_\Vt}
\newcommand\HVz{\Hoare_{0\Vt}}
\newcommand\SV{\Smyth_{\Vt}}
\newcommand\SVz{\Smyth_{0\Vt}}
\newcommand\A{{\mathbf A}}
\newcommand\quasi{\mathrm{q}} 
\newcommand\QL{\Plotkin^\quasi}
\newcommand\QLV{\QL_\Vt}
\newcommand\Aval{\Plotkin^\A}
\newcommand\AvalV{\Plotkin^\A_\Vt}
\newcommand\Plotkin{\mathop{\mathcal P\ell}}
\newcommand\TEMleq{\sqsubseteq^{\text{TEM}}}
\newcommand\Plotkinn{\Plotkin_{\mathcal V}}
\newcommand\PV\Plotkinn 
\newcommand\Val{{\mathbf V}}
\newcommand\Sober{{\mathcal S}}
\newcommand\Open{{\mathcal O}}
\newcommand\egame[1]{\mathfrak{e}_{#1}}
\newcommand\supp{\mathop{\mathrm{supp}}}
\newcommand\bPi{\boldsymbol\Pi}
\newcommand\ugame[1]{\mathfrak{u}_{#1}}
\newcommand\Topcat{\mathbf{Top}}
\newcommand{\interior}[1]{\text{int} ({#1})} 
\newcommand\dsup{\sup\nolimits^{\scriptstyle\uparrow}}
\newcommand\finf{\inf\nolimits^{\scriptstyle\downarrow}}
\newcommand\dcup{\bigcup\nolimits^{\scriptstyle\uparrow}}
\newcommand\fcap{\bigcap\nolimits^{\scriptstyle\downarrow}}
\newcommand\conv{\mathop{\text{conv}}}
\newcommand\cb[1]{\mathbf{#1}} 
\newcommand{\catc}{{\cb{C}}} 
\newcommand{\catd}{{\cb{D}}}
\newcommand{\catk}{{\cb{K}}}
\newcommand{\cati}{{\cb{I}}}
\newcommand\Uuarrow{\mathop{\Uparrow}}
\newtheorem{theorem}{Theorem}[section]
\newtheorem{proposition}[theorem]{Proposition}
\newtheorem{corollary}[theorem]{Corollary}
\newtheorem{lemma}[theorem]{Lemma}
\newproof{proof}{Proof}
\newtheorem{fact}[theorem]{Fact}
\newtheorem{definition}[theorem]{Definition}
\newtheorem{example}[theorem]{Example}
\newtheorem{remark}[theorem]{Remark}
\newcommand\ForAuthors[1]
\journal{Topology and its Applications}
\begin{document}

\begin{frontmatter}



\title{On the Preservation of Projective Limits by Functors of
  Non-Deterministic, Probabilistic, and Mixed Choice}


\author{Jean Goubault-Larrecq}

\address{Universit\'e Paris-Saclay, CNRS, ENS Paris-Saclay,
  Laboratoire M\'ethodes
  Formelles, 91190, Gif-sur-Yvette, France.\\
  \texttt{jgl@lmf.cnrs.fr}
}

\begin{abstract}
  We examine conditions under which projective limits of topological
  spaces are preserved by the continuous valuation functor $\Val$ and
  its subprobability and probability variants (used to represent
  probabilistic choice), by the Smyth hyperspace functor (demonic
  non-deterministic choice), by the Hoare hyperspace functor (angelic
  non-deterministic choice), by Heckmann's $\A$-valuation functor, by
  the quasi-lens functor, by the Plotkin hyperspace functor (erratic
  non-deterministic choice), and by prevision functors and powercone
  functors that implement mixtures of probabilistic and
  non-deterministic choice.
\end{abstract}

\begin{keyword}
  continuous valuations \sep hyperspaces
  projective limit
  \MSC[2020] 54F17 \sep 54B20 \sep 46E27
  
\end{keyword}

\end{frontmatter}


\noindent
\begin{minipage}{0.25\linewidth}
  \includegraphics[scale=0.2]{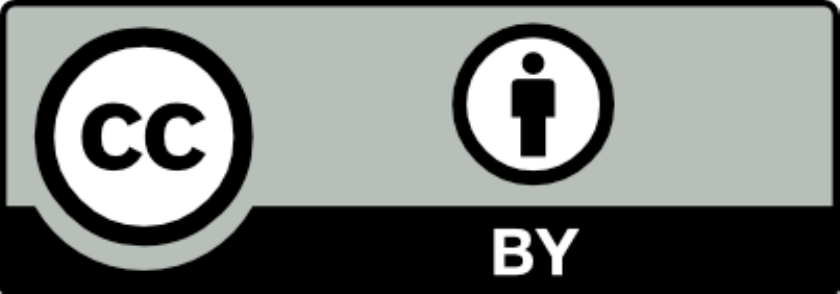}
\end{minipage}
\begin{minipage}{0.74\linewidth}
  \scriptsize
  For the purpose of Open Access, a CC-BY public copyright licence has
  been applied by the authors to the present document and will be
  applied to all subsequent versions up to the Author Accepted
  Manuscript arising from this submission.
\end{minipage}

\section{Introduction}
\label{sec:intro}

A celebrated theorem of Prokhorov \cite{Prohorov:projlim} states that
projective limits of bounded measures exist under what is known as a
uniform tightness assumption.  Bochner \cite{Bochner:harmonic} proved
a similar theorem under a sequential maximality assumption.  The paper
\cite{JGL:kolmogorov} looked at the case of continuous valuations, a
very close cousin to measures, on various kinds of projective limits
of various kinds of (non-Hausdorff) spaces.  In essence, and for now
up to some approximation, what we proved there was that the continuous
valuation functor $\Val$ commutes with various kinds of projective
limits of several kinds of (non-Hausdorff) spaces.  The purpose of
this paper is to examine the case of other standard functors that
implement various forms of non-deterministic, probabilistic, and mixed
choice.  While $\Val$ implements probabilistic choice, we will look at
the Smyth hyperspace functor $\SV$ (demonic non-determinism), the
Hoare powerspace functor $\HV$ (angelic non-determinism), a few
variants of the Plotkin powerspace functor (erratic non-determinism),
as well as prevision functors, or equivalently mixed powerdomains
\cite{JGL-mscs16,KP:mixed}.  This covers all known combinations of
functors implementing probabilistic choice, non-deterministic choice,
and their mixture.

\paragraph{Outline}

We start with some preliminary definitions in
Section~\ref{sec:preliminaries}, and we give a generic account of the
problem we will solve for general endofunctors $T$ on $\Topcat$ in
Section~\ref{sec:general-setting}.  We deal with the case of the
continuous valuations, subprobability valuations and probability
valuations functors in Section~\ref{sec:cont-valu}.  This is pretty
easy: the hard work was done in \cite{JGL:kolmogorov}.  We deal with
another easy situation in Section~\ref{sec:ep-systems}, the case of
ep-systems, an otherwise common setting in domain theory.  We proceed
with the Smyth hyperspace functor $\SV$ in
Section~\ref{sec:smyth-hyperspace}, a model of demonic
non-deterministic choice.  This one is remarkable in the sense that
$\SV$ preserves all projective limits, provided all the spaces are
sober.  We deal with the Hoare hyperspace functor $\HV$---a model of
angelic non-deterministic choice---in
Section~\ref{sec:hoare-hyperspaces}, by reduction to the continuous
valuation functor.  Erratic non-determinism can be modeled by various
related functors.  We deal with Heckmann's $\A$-valuations and with
quasi-lenses in Section~\ref{sec:cont-a-valu}.  While not as well
known as lenses, they have better properties, and their study
essentially reduces to $\SV$ and $\HV$.  We reduce the case of lenses,
namely the usual form of what is known as the Plotkin hyperspace
functor, in Section~\ref{sec:lenses}, by reduction to the case of
quasi-lenses.

All this constitutes part one of the paper.  A second part is devoted
to functors that implement mixtures of probabilistic and
non-deterministic choice.  Those can be implemented by functors of a
specific kind, which we call subcontinuation functors, which we
introduce in Section~\ref{sec:subc-funct}, and which include all the
prevision functors of \cite{Gou-csl07,JGL-mscs16}.  This will allow us
to deal with the superlinear prevision functor, which mixes
probabilistic and demonic non-deterministic choice, by a reduction to
the case of continuous valuations and the Smyth hyperspace, in
Section~\ref{sec:prev-powerc}.  The case of the sublinear prevision
functor---a mixture of probabilistic and angelic non-deterministic
choice---is considerably more complex, and will be dealt with in
Section~\ref{sec:sublinear-previsions} after an intermission
(Section~\ref{sec:interm-pres-local}), where we will prove a number of
required technical auxiliary results.  Those are results of
independent interest: the continuous valuation functors preserve local
compactness and proper maps, any limit of a projective system of
$\odot$-consonant sober spaces and proper maps is $\odot$-consonant
and sober, and any $\omega$-projective limit of locally compact sober
spaces is $\odot$-consonant.  In all those sections (except the
intermediate Section~\ref{sec:interm-pres-local}), we also examine the
related powercone functors
\cite{Mislove:nondet:prob,Tix:PhD,TKP:nondet:prob,DBLP:journals/acta/McIverM01}.
We finish with the fork functor, which implements a mixture of
probabilistic and erratic non-deterministic choice, in
Section~\ref{sec:forks}---this is a pretty easy reduction to the cases
of superlinear and sublinear prevision functors---and the related
powercone functor in Section~\ref{sec:plotk-pred_n-funct}.

We conclude in Section~\ref{sec:conclusion}.

\section{Preliminaries}
\label{sec:preliminaries}

For background on topology, we refer the reader to
\cite{JGL-topology}.  We write $\interior A$ for the interior of $A$,
$cl (A)$ (or $cl_X (A)$) for the closure of $A$ (in a space $X$), and
$\Open X$ for the lattice of open subsets of $X$.  The specialization
preordering $\leq$ of a topological space $X$ is defined on points
$x, y \in X$ by $x \leq y$ if and only if every open neighborhood of
$x$ contains $y$, if and only if $x$ lies in the closure of $\{y\}$.

We will also say that $x$ is \emph{below} $y$ and that $y$ is
\emph{above} $x$ when $x \leq y$.  A space is $T_0$ if and only if
$\leq$ is antisymmetric, $T_1$ if and only if $\leq$ is the equality
relation.

A \emph{base} for a topology (resp., of a topological space) is a
collection of open sets whose unions span all the open sets.
Equivalently, a collection $B$ of open subsets is a base if and only
for every point $x$, for every open neighborhood $U$, there is an
element $V \in B$ such that $x \in V \subseteq U$.  A \emph{subbase}
is a collection of open sets whose finite intersections form a base.
A subbase is said to \emph{generate} the topology.

A \emph{compact} subset $A$ of a space $X$ is one such that one can
extract a finite subcover from any of its open covers.  No separation
property is assumed.  A subset $A$ of $X$ \emph{saturated} if and only
if it is equal to the intersection of its open neighborhoods, or
equivalently if and only if it is upwards-closed in the specialization
preordering of $X$.

A space $X$ is \emph{locally compact} if and only if every point has a
base of compact neighborhoods, or equivalently of compact saturated
neighborhoods, since for any compact subset $K$ of $X$, the upward
closure $\upc K$ of $K$ with respect to the specialization preordering
of $X$ is compact saturated.  Please beware that, in non-Hausdorff
spaces, a compact space may fail to be locally compact.

A space is \emph{coherent} if and only if the intersection of any two
compact saturated subsets is compact (and necessarily saturated).
That, too, is a property that may fail in non-Hausdorff spaces.

A \emph{stably locally compact} space is a coherent, locally compact,
sober space; see below for the definition of sober.  A
\emph{Noetherian space} is a space whose subspaces are all compact.

An \emph{irreducible} closed subset $C$ of $X$ is a non-empty closed
subset such that, for any two closed subsets $C_1$ and $C_2$ of $X$
such that $C \subseteq C_1 \cup C_2$, $C$ is included in $C_1$ or in
$C_2$ already; equivalently, if $C$ intersects two open sets, it must
intersect their intersection.  A space $X$ is \emph{sober} if and only
if it is $T_0$ and every irreducible closed subset is of the form
$\dc x$ for some point $x \in X$.  Every Hausdorff space, for example,
is sober.  The notation $\dc x$ stands for the \emph{downward closure}
of $x$ in $X$, namely the set of points $y$ below $x$.  Symmetrically,
$\upc x$ stands for the \emph{upward closure} of $x$, namely the set
of points $y$ above $x$.  This notation extends to $\upc A$, for any
subset $A$, denoting $\bigcup_{x \in A} \upc x$.

A function $f \colon X \to Y$ between topological spaces is
\emph{continuous} if and only if $f^{-1} (V)$ is open in $X$ for every
$V \in \Open Y$.  It is equivalent to require that this property holds
for every $V$ taken from a given subbase of $Y$. Every continuous map
is monotonic (with respective to the respective specialization
preorderings).

Following \cite{GLHL:trans:words}, we will say that $f$ is \emph{full}
if and only if every open subset of $X$ can be written as $f^{-1} (V)$
for some $V \in \Open Y$---equivalently, if that is the case for just
the sets from a given subbase of $X$.  An injective, full, continuous
map is a \emph{topological embedding}; and a full map from a $T_0$
space is always injective.  (Indeed, if $f \colon X \to Y$ is full and
$X$ is $T_0$, for all $x, x' \in X$ such that $f (x)=f (x')$, for
every open set $U$ of $X$, $U=f^{-1} (V)$ for some $V \in \Open Y$, so
$x \in U$ if and only if $f (x) \in V$ if and only if $f (x') \in V$
if and only if $x' \in U$; hence $x=x'$.)  A \emph{homeomorphism},
namely a bijective, continuous map whose inverse is also continuous, is
the same as a bijective full continuous map (or just surjective, if
its domain is known to be $T_0$).

A family $D$ of elements of a preordered set $P$ is \emph{directed} if
and only if it is non-empty and every pair of elements of $D$ has an
upper bound in $D$.  In case $P$ is a poset, we write $\dsup D$, or
$\dsup_{i \in I} x_i$ when $D = {(x_i)}_{i \in I}$ for the supremum of
a directed family, if it exists; similarly, we write
$\dcup_{i \in I} U_i$ for the union of a directed family of subsets
$U_i$ of a fixed set.  Dually, $D$ is \emph{filtered} if and only if
it is directed with respect to the opposite ordering.  A related
notion is that of \emph{net}, namely a collection
${(x_i)}_{i \in I, \sqsubseteq}$ of points indexed by a set $I$ with a
preordering $\sqsubseteq$ that makes it directed.  A \emph{monotone
  net} in a poset $P$ is a net whose points are taken from $P$, and
such that $i \sqsubseteq j$ implies $x_i \leq x_j$.  The underlying
family $\{x_i \mid i \in I\}$ is then directed.  Conversely, every
directed family $D$ can be seen in a canonical way as a monotone net
by letting $I \eqdef D$, $x_i \eqdef i$, and $\sqsubseteq$ be the
restriction of the ordering $\leq$ on $P$ to $D$.

A function $f \colon P \to Q$ between posets is \emph{monotonic} if
and only if for all $x, x' \in P$, $x \leq x'$ implies
$f (x) \leq f (x')$.  It is \emph{Scott-continuous} if and only if $f$
is monotonic and for every directed family ${(x_i)}_{i \in I}$ with a
supremum $x$ in $P$, the (necessarily directed) family of elements
$f (x_i)$ has $f (x)$ as supremum.  Scott-continuity is equivalent to
continuity with the respective Scott topologies on $P$ and $Q$.  The
\emph{Scott topology} on a poset $P$ consists of those subsets
$U$---the \emph{Scott-open subsets} of $P$---that are upwards closed
($x \in U$ and $x \leq x'$ implies $x' \in U$) and such that every
directed family $D$ that has a supremum in $U$ intersects $U$.  That
is most useful in the context of \emph{dcpo}s (short for
directed-complete posets), namely posets in which every directed
family has a supremum.

A \emph{monotone convergence space} is a $T_0$ space that is a dcpo in
its specialization ordering $\leq$ and whose topology is coarser than
the Scott topology of $\leq$.  Every dco in its Scott topology, every
sober space is a monotone convergence space.

We will introduce other topological concepts along the way, as needed.

A \emph{diagram} in a category $\catc$ is a functor
$F \colon \cati \to \catc$ from a small category $\cati$ to $\catc$.
We let $|\cati|$ denote the set of objects of $\cati$.  A \emph{cone}
of $F$ is a pair $X, {(p_i)}_{i \in |\cati|}$, where $X$ is an object
of $\catc$ and the morphisms $p_i \colon X \to F (i)$, for each
$i \in |\cati|$ are such that for every morphism
$\varphi \colon j \to i$ in $\cati$, $F (\varphi) \circ p_j = p_i$.  A
\emph{limit} of $F$ is a \emph{universal cone} of $F$, namely a cone
such that for every cone $Y, {(q_i)}_{i \in |\cati|}$ of $F$, there is
a unique morphism $f \colon Y \to X$ such that $p_i \circ f = q_i$ for
every object $i$ of $\cati$.  Limits are unique up to isomorphism when
they exist.  All limits exist in $\Topcat$, and the following is the
\emph{canonical limit} of $F$: $X$ is the subspace of
$\prod_{i \in |\cati|} F (i)$ consisting of those tuples $\vec x$ such
that $F (\varphi) (x_j) = x_i$ for every morphism
$\varphi \colon j \to i$ in $\cati$, with $p_i$ mapping $\vec x$ to
$x_i$.  We routinely write $\vec x$ for tuples
${(x_i)}_{i \in |\cati|}$, and $x_i$ for their $i$th components.

The special case of a diagram over the opposite $(I, \sqsupseteq)$ of
a directed preordered set $(I, \sqsubseteq)$ is called a
\emph{projective system}.  
We call (canonical)
\emph{projective limit} any limit (the canonical limit) of a
projective system.  Explicitly, a projective system of topological
spaces, which we will write as
${(p_{ij} \colon X_j \to X_i)}_{i \sqsubseteq j \in I}$, is a
collection of spaces $X_i$ indexed by a directed preordered set
$(I, \sqsubseteq)$, with morphisms $p_{ij} \colon X_j \to X_i$ for all
indices $i \sqsubseteq j$ such that $p_{ii} = \identity {X_i}$ and
$p_{ij} \circ p_{jk} = p_{ik}$ for all $i \sqsubseteq j \sqsubseteq k$
in $I$.  We will familiarly call the maps $p_{ij}$ the \emph{bonding
  maps}.

The canonical projective limit $X, {(p_i)}_{i \in I}$ of
${(p_{ij} \colon X_j \to X_i)}_{i \sqsubseteq j \in I}$ is given by
$\{\vec x \in \prod_{i \in I} X_i \mid \forall i \sqsubseteq j \in I,
p_{ij} (x_j) = x_i\}$, with the subspace topology from the product,
and where $p_i$ is projection onto coordinate $i$.  Explicitly, a base
of that topology is given by the sets $p_i^{-1} (U_i)$, where
$i \in I$ and $U_i$ ranges over any base of the topology of $X_i$.
This can be deduced from Lemma~3.1 of \cite{JGL:kolmogorov} for
example, which states that every open subset $U$ of $X$ is the
directed union $\dcup_{i \in I} p_i^{-1} (U_i)$, where $U_i$ is the
largest open subset of $X_i$ such that $p_i^{-1} (U_i) \subseteq U$.
Directedness comes from the slightly stronger property that for all $i
\sqsubseteq j \in I$, $p_i^{-1} (U_i) \subseteq p_j^{-1} (U_j)$.

When $I$ has a countable cofinal subset, we talk about
$\omega$-projective systems and $\omega$-projective limits.  The
latter are free from certain apparent pathologies: for example, when
every space $X_i$ is non-empty and the maps $p_{ij}$ are surjective,
there are cases where the projective limit is empty
\cite{Henkin:invlimit,Waterhouse:projlim:empty}, but limits of such
$\omega$-projective systems of non-empty spaces with surjective
bonding maps are non-empty.

\section{The general setting}
\label{sec:general-setting}

\begin{definition}
  \label{defn:situation}
  For every endofunctor $T$ on $\Topcat$, we call  \emph{projective
    $T$-situation} the following data:
  \begin{itemize}
  \item a projective system
    ${(p_{ij} \colon X_j \to X_i)}_{i \sqsubseteq j \in I}$ of
    topological spaces;
  \item its canonical projective limit $X, {(p_i)}_{i \in I}$;
  \item the canonical projective limit $Z, {(q_i)}_{i \in I}$ of the
    projective system
    $(T {p_{ij}} \colon T {X_j} \to T {X_i})_{i \sqsubseteq j \in I}$;
  \item the unique continuous map $\varphi \colon T X \to Z$ such that
    $q_i \circ \varphi = T {p_i}$ for every $i \in I$, which we call
    the \emph{comparison map}.
  \end{itemize}
\end{definition}
Given a projective system and its canonical projective limit as in the
first two items above, the third item makes sense:
${(T {p_{ij}} \colon T {X_j} \to T {X_i})}_{i \sqsubseteq j \in I}$ is
a projective system, because $T$ is a functor; and $\varphi$ in the
fourth item is obtained by the universal property of $Z$.  We say that
$T$ \emph{preserves} the projective limit $X, {(p_i)}_{i \in I}$ if
and only if $\varphi$ is a homeomorphism.  In general, a functor $T$
preserves a limit $X, {(p_i)}_{i \in |\cati|}$ of a diagram
$F \colon \cati \to \catc$ if and only if
$TX, {(T{p_i})}_{i \in |\cati|}$ is a limit of $T \circ F$.
    
For various endofunctors $T$, we investigate when the comparison map
$\varphi$ is a homeomorphism.  We notice right away that $\varphi$ is
a topological embedding under some assumptions that may sound awfully
specific (Definition~\ref{defn:T:nice:S}), but which will be enough
for most of our needs.  The difficult part will then be to show that
$\varphi$ is surjective.

\begin{definition}
  \label{defn:T:nice:S}
  Let $R$ be a set.  An endofunctor $T$ on $\Topcat$ is
  \emph{$R$-nice} (or just \emph{nice}) if and only if for
  every topological space $X$, $TX$ has a subbase of open sets
  ${(B_X (r, U))}_{r \in R, U \in \Open X}$, with the following
  two properties:
  \begin{enumerate}
  \item $B_X (r, \_)$ is Scott-continuous from $\Open X$ to
    $\Open {(TX)}$ for every space $X$ and for every $r \in R$;
    \item for every continuous map $f \colon X \to Y$, for every
      $r \in R$, for every $V \in \Open Y$,
      ${(T f)}^{-1} (B_Y (r, V)) = B_X (r, f^{-1} (V))$.
  \end{enumerate}
\end{definition}

\begin{lemma}
  \label{lemma:T:projlim}
  Let $T$ be an $R$-nice endofunctor on $\Topcat$, where $R$ is a
  fixed set.  Given any projective $T$-situation as given in
  Definition~\ref{defn:situation}, the comparison map $\varphi$ is
  full.  If additionally $TX$ is $T_0$, then $\varphi$ is a
  topological embedding.
\end{lemma}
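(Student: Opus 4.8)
The plan is to establish fullness by verifying the defining condition on the subbase ${(B_X(r,U))}_{r\in R,\,U\in\Open X}$ of $TX$ provided by $R$-niceness, and then to read off the embedding claim from the elementary facts about full maps collected in Section~\ref{sec:preliminaries}.

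For fullness, I would fix $r\in R$ and an open subset $W$ of the projective limit $X$. By Lemma~3.1 of \cite{JGL:kolmogorov}, $W$ is the \emph{directed} union $\dcup_{i\in I} p_i^{-1}(W_i)$, where $W_i$ denotes the largest open subset of $X_i$ with $p_i^{-1}(W_i)\subseteq W$; directedness holds because $p_i^{-1}(W_i)\subseteq p_j^{-1}(W_j)$ whenever $i\sqsubseteq j$, and $I$ is directed. Applying property~1 of niceness, the Scott-continuity of $B_X(r,\_)\colon\Open X\to\Open{(TX)}$, yields $B_X(r,W) = \dcup_{i\in I} B_X(r,p_i^{-1}(W_i))$. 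Property~2 of niceness, applied to the limit projection $p_i\colon X\to X_i$, rewrites each term as $(T p_i)^{-1}(B_{X_i}(r,W_i))$; and since $q_i\circ\varphi = T p_i$, this equals $\varphi^{-1}(q_i^{-1}(B_{X_i}(r,W_i)))$. Therefore $B_X(r,W) = \varphi^{-1}(W')$, where $W' \eqdef \bigcup_{i\in I} q_i^{-1}(B_{X_i}(r,W_i))$ is open in $Z$. Since every open subset of $TX$ is a union of finite intersections of such subbasic opens, and $\varphi^{-1}$ commutes with unions and intersections, every open subset of $TX$ is the $\varphi$-preimage of an open subset of $Z$; that is, $\varphi$ is full. (This uses the ``equivalently'' clause of the definition of fullness in Section~\ref{sec:preliminaries}, that it suffices to check the condition on a subbase of the domain.)

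For the embedding statement, I would invoke the two observations from Section~\ref{sec:preliminaries}: a full map out of a $T_0$ space is injective, and an injective, full, continuous map is a topological embedding. The comparison map $\varphi$ is continuous by construction and full by the previous paragraph, so if $TX$ is $T_0$ then $\varphi$ is a topological embedding.

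The step that calls for a little care, rather than a genuine obstacle, is the appeal to Scott-continuity: property~1 of niceness is applicable only because the decomposition $W = \dcup_{i\in I} p_i^{-1}(W_i)$ is a \emph{directed} union, which is exactly the strengthening recorded in Lemma~3.1 of \cite{JGL:kolmogorov}; everything else is bookkeeping with the two niceness axioms together with the identity $q_i\circ\varphi = T p_i$. I note that this lemma says nothing about surjectivity of $\varphi$ --- that is the substantive work, carried out functor by functor in the rest of the paper.
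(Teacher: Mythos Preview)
Your proof is correct and follows essentially the same approach as the paper's own proof: decompose each subbasic open $B_X(r,U)$ via the directed union $U=\dcup_{i\in I}p_i^{-1}(U_i)$, apply Scott-continuity (property~1) and naturality (property~2), and use $q_i\circ\varphi=T p_i$ to recognize the result as a $\varphi$-preimage. Your closing remarks on directedness and on the embedding step match the paper's treatment as well.
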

\begin{proof}
  We need to show that for every $r \in R$, for every $U \in \Open X$,
  $B_X (r, U)$ can be written as the inverse image of some open subset
  of $Z$ by $\varphi$.  For each $i \in I$, and every open subset $U$
  of $X$, there is a largest open subset $U_i$ of $X_i$ such that
  $p_i^{-1} (U_i) \subseteq U$.  We can write $U$ as
  $\dcup_{i \in I} p_i^{-1} (U_i)$.
  By property~1, $B_X (r, U)$ is the (directed) union of the sets
  $B_X (r, p_i^{-1} (U_i))$, $i \in I$.  By property~2,
  $B_X (r, p_i^{-1} (U_i)) = {(T {p_i})}^{-1} (B_{X_i} (r, U_i))$.
  Since $q_i \circ \varphi = T {p_i}$, the latter is equal to
  $\varphi^{-1} (q_i^{-1} (B_{X_i} (r, U_i)))$.  Hence
  $B_X (r, U) = \varphi^{-1} (\dcup_{i \in I} q_i^{-1} (B_{X_i} (r,
  U_i)))$.  This shows that $\varphi$ is full.  It is continuous, and
  we recall that any full, continuous map from a $T_0$ space is a
  topological embedding.  \qed
\end{proof}


\section{Continuous valuations}
\label{sec:cont-valu}

We start our series of applications with continuous valuation
functors.  This is a low-hanging fruit: the surjectivity of the
comparison map will come from \cite{JGL:kolmogorov}---under
appropriate assumptions---and then the comparison map will be a
homeomorphism by Lemma~\ref{lemma:T:projlim}.

Let $\creal$ be the set of extended non-negative real numbers
$\Rp \cup \{\infty\}$, with its usual ordering.  When needed, we will
consider it with its Scott topology, whose open sets are the intervals
$]t, \infty]$, $t \in \Rp$, plus $\emptyset$ and $\creal$ itself.

A \emph{continuous valuation} on a space $X$ is a map
$\nu \colon \Open X \to \creal$ that is \emph{strict}
($\nu (\emptyset)=0$), 
\emph{modular} (for all $U, V \in \Open X$,
$\nu (U) + \nu (V) = \nu (U \cup V) + \nu (U \cap V)$) and
Scott-continuous.  
We say that
$\nu$ is \emph{bounded} if and only if $\nu (X) < \infty$, a
\emph{probability} valuation if and only if $\nu (X)=1$, and a
\emph{subprobability} valuation if and only if $\nu (X) \leq 1$.  We
will also consider \emph{locally finite} continuous valuations $\nu$
on $X$, namely those such that for every $x \in X$, there is an open
neighborhood $U$ of $x$ such that $\nu (U) < \infty$, and \emph{tight}
valuations $\nu$, which are those such that for every $r \in \Rp$ and
every $U \in \Open X$ such that $r < \nu (U)$, there is a compact
saturated subset $Q$ of $X$ such that $Q \subseteq U$ and
$r \leq \nu (V)$ for every open neighborhood $V$ of $Q$
\cite[Definition~6.1]{JGL:kolmogorov}.

Every tight valuation is continuous, and the converse holds if $X$ is
consonant \cite[Lemma~6.2]{JGL:kolmogorov}.  We will omit the
definition of consonance for now, and we will state it when we
actually need it, see Section~\ref{sec:proj-limits-cons}.  The notion
arises from \cite{DGL:consonant}, where it was proved that every
regular \v{C}ech-complete space is consonant; every locally compact
space is consonant, too, as well as every LCS-complete space
\cite[Proposition 12.1]{dBGLJL:LCScomplete}.  A space is
\emph{LCS-complete} if and only if it is homeomorphic to a $G_\delta$
subspace of a locally compact sober space; $G_\delta$ is short for a
countable intersection of open subsets.  The class of LCS-complete
spaces includes all locally compact sober spaces, in particular all
continuous dcpos from domain theory, all of M. de Brecht's
quasi-Polish spaces \cite{JGL:kolmogorov} and therefore all Polish
spaces.

Continuous valuations are an alternative to measures that have become
popular in domain theory \cite{jones89,Jones:proba}.  The first
results that connected continuous valuations and measures are due to
Saheb-Djahromi \cite{saheb-djahromi:meas} and Lawson
\cite{Lawson:valuation}.  The current state of the art on this matter
is the following.  In one direction, every measure on the Borel
$\sigma$-algebra of $X$ induces a continuous valuation on $X$ by
restriction to the open sets, if $X$ is hereditarily Lindel\"of
(namely, if every directed family of open sets contains a cofinal
monotone sequence).  This is an easy observation, and one half of
Adamski's theorem \cite[Theorem~3.1]{Adamski:measures}, which states
that a space is hereditary Lindel\"of if and only if every measure on
its Borel $\sigma$-algebra restricts to a continuous valuation on its
open sets.  In the other direction, every continuous valuation on a
space $X$ extends to a measure on the Borel sets provided that $X$ is
an LCS-complete space \cite[Theorem~1]{dBGLJL:LCScomplete}.

Let $\Val X$ denote the space of continuous valuations on a space $X$,
with the following \emph{weak topology}.  That is defined by subbasic
open sets $[U > r] \eqdef \{\nu \in \Val X \mid \nu (U) > r\}$, where
$U \in \Open X$ and $r \in \Rp$.  (Those will be the sets $B_X (r, U)$
needed in order to apply Lemma~\ref{lemma:T:projlim}.)  We define its
subspace $\Val_b X$ of bounded continuous valuations, $\Val_1 X$ of
probability valuations and $\Val_{\leq 1} X$ (subprobability)
similarly.  The specialization ordering of each is the
\emph{stochastic ordering} $\leq$ given by $\nu \leq \nu'$ if and only
if $\nu (U) \leq \nu' (U)$ for every $U \in \Open X$; indeed,
$\nu \leq \nu'$ if and only if for every $U \in \Open X$, for every
$r \in \Rp$, $\nu \in [U > r]$ implies $\nu' \in [U > r]$.

The weak topology is also the coarsest topology that makes the
functions $\nu \mapsto \int h \,d\nu$ continuous from $\Val X$ to
$\creal$ (with its Scott topology), for each continuous map
$h \colon X \to \creal$, see \cite[Theorem~3.3]{Jung:scs:prob} where
this was proved for spaces of probability and subprobability
valuations; the proof is similar for arbitrary continuous valuations.
(Note that, since $\creal$ has the Scott topology, continuous maps
$h \colon X \to \creal$ are what are usually called lower
semicontinuous maps in the mathematical literature.)

For every continuous map $f \colon X \to Y$, for every
$\nu \in \Val X$, there is a continuous valuation $f [\nu] \in \Val Y$
defined by $f [\nu] (V) \eqdef \nu (f^{-1} (V))$ for every
$ V \in \Open Y$.  Additionally, $f [\nu]$ is bounded, resp.\ a
probability valuation, resp.\ a subprobability valuation, if $\nu$ is.
This defines the action on morphisms of endofunctors $\Val$, $\Val_b$,
$\Val_1$, and $\Val_{\leq 1}$ respectively on $\Topcat$.

In Proposition~\ref{prop:V:projlim:basic} below, we summarize the main
results of \cite{JGL:kolmogorov}, namely Theorem~4.2, Theorem~8.1,
Theorem~9.4 and Theorem~10.1 there.  This uses the following notions.

An \emph{embedding-projection pair}, or \emph{ep-pair} for short, is a
pair of continuous maps
$\xymatrix{X \ar@<1ex>[r]^e & Y \ar@<1ex>[l]^p}$ such that
$p \circ e = \identity X$ and $e \circ p \leq \identity Y$.  The
preordering used in the latter inequality is the pointwise preordering
on functions, where points are compared by the specialization
preordering of $Y$.  In that case, $p$ is called a \emph{projection}
of $Y$ onto $X$, and $e$ is the associated \emph{embedding}.
Generally, we call projection any continuous map $p \colon Y \to X$
that has an associated embedding $e$; if $Y$ is $T_0$, then $e$ is
uniquely determined.

An \emph{ep-system} is a functor from $(I, \sqsubseteq)^{op}$ to
$\Topcat^{\text{ep}}$, where $I, \sqsubseteq$ is a directed preorder
and $\Topcat^{\text{ep}}$ is the category whose objects are
topological spaces, and whose morphisms are the ep-pairs.  Explicitly,
this is given by:
\begin{enumerate*}[label=(\roman*)]
\item a family of objects $X_i$ of $\catc$, $i \in I$;
\item ep-pairs $\xymatrix{X_i \ar@<1ex>[r]^{e_{ij}} & X_j
    \ar@<1ex>[l]^{p_{ij}}}$ for all $i \sqsubseteq j$ in $I$,
  satisfying:
\item $e_{ii} = p_{ii} = \identity {X_i}$ for every $i \in I$,
\item $p_{ij} \circ p_{jk} = p_{ik}$, and
\item $e_{jk} \circ e_{ij} = e_{ik}$ for all
  $i \sqsubseteq j \sqsubseteq k$ in $I$.
\end{enumerate*}
Every ep-system has an underlying projective system
${(p_{ij} \colon X_j \to X_i)}_{i \sqsubseteq j \in I}$, and we will
implicitly see every ep-system as a projective system this way.  This
is an abuse of language, and a projective system whose bonding maps
$p_{ij}$ are projections may be such that the matching embedding
$e_{ij}$ fail to satisfy (iii) and (v); this pathological situation
does not happen if every $X_i$ is $T_0$, since in that case every
projection $p_{ij}$ has a unique associated embedding.

A \emph{proper} map is a closed perfect map, where a \emph{closed} map
$f \colon X \to Y$ is one such that $\dc f [F]$ is closed for every
closed subset $F$ of $X$ (not $f [F]$, as one usually requires in
topology), and a \emph{perfect} map $f$ is such that $f^{-1} (Q)$ is
compact saturated for every compact saturated subset $Q$ of $Y$; this
definition of proper maps, which is well-suited to a non-Hausdorff
setting, originates from \cite[Definition~VI-6.20]{GHKLMS:contlatt}.
We will study proper maps in depth in
Section~\ref{sec:proper-maps-quasi}.
\begin{proposition}[\cite{JGL:kolmogorov}]
  \label{prop:V:projlim:basic}
  Let ${(p_{ij} \colon X_j \to X_i)}_{i \sqsubseteq j \in I}$ be a
  projective system of topological spaces, with canonical projective
  limit $X, {(p_i)}_{i \in I}$.  Let $\nu_i$ be continuous valuations
  on $X_i$ for each $i \in I$, and let us assume that
  $\nu_i = p_{ij} [\nu_j]$ for all $i \sqsubseteq j \in I$.  If:
  \begin{enumerate}
  \item the given projective system is an ep-system,
  \item or $I$ has a countable cofinal subset and every $X_i$ is
    locally compact and sober,
  \item or $I$ has a countable cofinal subset, every $\nu_i$ is
    locally finite, and every $X_i$ is LCS-complete,
  \item or every $p_{ij}$ is proper, every $X_i$ is sober, and every
    $\nu_i$ is tight,
  \end{enumerate}
  then there is a unique continuous valuation $\nu$ on $X$ such that
  for every $i \in I$, $\nu_i = p_i [\nu]$.
\end{proposition}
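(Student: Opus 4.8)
The plan is to read this proposition as a consolidation of four essentially independent results of \cite{JGL:kolmogorov} --- Theorems~4.2, 8.1, 9.4 and~10.1 there, one matching each numbered hypothesis --- so that the proof ultimately amounts to invoking each of them in turn. It is worth recording, though, both the common skeleton of these arguments and the point at which the four hypotheses actually do their work.

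Uniqueness is the easy, hypothesis-free half, and it is the same in all cases. Suppose $\nu$ and $\nu'$ are continuous valuations on $X$ with $p_i [\nu] = p_i [\nu'] = \nu_i$ for every $i \in I$. For any $U \in \Open X$, write $U = \dcup_{i \in I} p_i^{-1} (U_i)$, where $U_i$ is the largest open subset of $X_i$ with $p_i^{-1} (U_i) \subseteq U$ (Lemma~3.1 of \cite{JGL:kolmogorov}, recalled in Section~\ref{sec:preliminaries}). Scott-continuity then gives $\nu (U) = \dsup_{i \in I} \nu (p_i^{-1} (U_i)) = \dsup_{i \in I} \nu_i (U_i) = \dsup_{i \in I} \nu' (p_i^{-1} (U_i)) = \nu' (U)$, so $\nu = \nu'$. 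Hence the entire content lies in existence.

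For existence, the same computation forces the candidate: set $\nu (U) \eqdef \dsup_{i \in I} \nu_i (U_i)$. The family $(\nu_i (U_i))_{i \in I}$ is monotone, since for $i \sqsubseteq j$ one has $p_j^{-1} (p_{ij}^{-1} (U_i)) = p_i^{-1} (U_i) \subseteq U$, hence $p_{ij}^{-1} (U_i) \subseteq U_j$, hence $\nu_i (U_i) = \nu_j (p_{ij}^{-1} (U_i)) \leq \nu_j (U_j)$ using the compatibility $\nu_i = p_{ij} [\nu_j]$; so the directed supremum makes sense, and strictness is clear. The step I expect to be the main obstacle, in every one of the four cases, is to show that this $\nu$ is modular and Scott-continuous --- hence a genuine continuous valuation --- and that it restricts correctly to each $\nu_i$. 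One half of modularity is free: since $p_i^{-1} (U_i \cup V_i) \subseteq U \cup V$ and $p_i^{-1} (U_i \cap V_i) \subseteq U \cap V$, we get $U_i \cup V_i \subseteq (U \cup V)_i$ and $U_i \cap V_i \subseteq (U \cap V)_i$, and a routine manipulation of directed suprema together with modularity of each $\nu_i$ yields $\nu (U) + \nu (V) \leq \nu (U \cup V) + \nu (U \cap V)$. The reverse inequality, and Scott-continuity along directed unions of opens that are not themselves of the form $p_i^{-1} (U_i)$, are exactly the places where the pathologies of general projective limits (as in \cite{Henkin:invlimit,Waterhouse:projlim:empty}) must be excluded, and it is precisely here that the four hypotheses intervene.

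Under hypothesis~(1), every $p_i$ carries an associated embedding $e_i \colon X_i \to X$ exhibiting $X$ as a colimit as well as a limit; one takes $\nu \eqdef \dsup_{i \in I} e_i [\nu_i]$, which is a directed supremum of continuous valuations (directedness following from $e_{ij} \circ p_{ij} \leq \identity{X_j}$), hence a continuous valuation, and $p_i [\nu] = \nu_i$ because $p_i \circ e_j = p_{ij}$ for all $j \sqsupseteq i$. Under hypotheses~(2) and~(3) one argues Prokhorov-style: a countable cofinal subset reduces $I$ to an $\omega$-chain; local compactness, respectively LCS-completeness together with local finiteness of the $\nu_i$, yields consonance and hence tightness of each $\nu_i$; and a sequential uniform-tightness argument --- choosing nested compact saturated sets realizing the masses of the $\nu_i$ and intersecting them inside $X$ --- supplies the compact witnesses needed to upgrade the free inequality to an equality and to prove Scott-continuity. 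Under hypothesis~(4) one argues Bochner-style: properness of the bonding maps makes inverse images of compact saturated sets compact saturated, so the tightness witnesses at successive levels form a filtered family of non-empty compacta with non-empty intersection even without countable cofinality, and this replaces the sequential argument. In each case, once $\nu$ is known to be a continuous valuation the identity $p_i [\nu] = \nu_i$ is checked by reading values off the opens $U_i$ and extending. I would not reprove these four theorems here; the honest point is that the work is carried out in \cite{JGL:kolmogorov} and this proposition merely packages it for later use.
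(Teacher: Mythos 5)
Your proposal matches the paper exactly: the paper gives no proof of this proposition, stating it as a summary of Theorems~4.2, 8.1, 9.4 and~10.1 of \cite{JGL:kolmogorov}, which is precisely the reading you take. Your added uniqueness argument and the case sketches (including the directed-supremum construction $\dsup_{i \in I} e_i[\nu_i]$ for ep-systems, which mirrors the Tix--Heckmann argument the paper alludes to after Proposition~\ref{prop:ep}) are correct but not required, since the work is indeed delegated to the cited reference.
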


As promised, we apply Lemma~\ref{lemma:T:projlim}, with $R \eqdef \Rp$
and $B_X (r, U) \eqdef [U > r]$, and we obtain the following.
\begin{proposition}
  \label{prop:V:projlim}
  Let $T$ be $\Val$, $\Val_b$, $\Val_1$ or $\Val_{\leq 1}$.  The
  comparison map $\varphi \colon T X \to Z$ of any projective
  $T$-situation is a topological embedding.
\end{proposition}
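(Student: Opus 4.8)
The plan is to apply Lemma~\ref{lemma:T:projlim}, taking $R \eqdef \Rp$ and, for every space $X$, the subbasic open sets ${(B_X (r, U))}_{r \in \Rp,\, U \in \Open X}$ of $TX$ given by $B_X (r, U) \eqdef [U > r]$. By definition of the weak topology these sets form a subbase of $TX = \Val X$; for the subspaces $\Val_b X$, $\Val_1 X$, $\Val_{\leq 1} X$ one uses the same sets intersected with the relevant subset, which form a subbase of the subspace topology. Once we have checked that $T$ is $\Rp$-nice (the two clauses of Definition~\ref{defn:T:nice:S}) and that $TX$ is $T_0$, Lemma~\ref{lemma:T:projlim} yields at once that $\varphi$ is a topological embedding.

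\emph{Niceness.} For property~1, fix $r \in \Rp$. Recalling that continuous valuations are monotone, $U \subseteq U'$ implies $\nu (U) \leq \nu (U')$ for every $\nu \in TX$, so $B_X (r, \_)$ is monotone from $\Open X$ to $\Open (TX)$. For a directed family ${(U_k)}_{k \in K}$ in $\Open X$ with union $U$, Scott-continuity of valuations gives $\nu (U) = \dsup_{k \in K} \nu (U_k)$ for every $\nu$, hence $\nu (U) > r$ if and only if $\nu (U_k) > r$ for some $k$; that is, $[U > r] = \dcup_{k \in K} [U_k > r]$, which is exactly Scott-continuity of $B_X (r, \_)$. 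For property~2, let $f \colon X \to Y$ be continuous, $r \in \Rp$, and $V \in \Open Y$. Since $Tf$ sends $\nu$ to $f [\nu]$ and $f [\nu] (V) = \nu (f^{-1} (V))$, we get $(Tf)^{-1} ([V > r]) = \{\nu \in TX \mid \nu (f^{-1} (V)) > r\} = [f^{-1} (V) > r] = B_X (r, f^{-1} (V))$. So $T$ is $\Rp$-nice. Everything restricts verbatim to $\Val_b$, $\Val_1$, $\Val_{\leq 1}$: the subbasic sets are the restrictions of the $[U > r]$, the action on morphisms is the restriction of $\nu \mapsto f [\nu]$ (which preserves boundedness, resp.\ being a (sub)probability valuation, as recalled above), and both directed unions and inverse images commute with intersecting with the subspace.

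\emph{The $T_0$ property.} As noted in the text, the specialization preordering of $TX$ is the stochastic ordering, $\nu \leq \nu'$ iff $\nu (U) \leq \nu' (U)$ for all $U \in \Open X$; this is antisymmetric, since $\nu \leq \nu'$ and $\nu' \leq \nu$ force $\nu (U) = \nu' (U)$ for every open $U$, i.e.\ $\nu = \nu'$. Hence $TX$ is $T_0$, and likewise its three subspaces. By Lemma~\ref{lemma:T:projlim}, the comparison map $\varphi$ is full; being continuous and out of the $T_0$ space $TX$, it is therefore a topological embedding.

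I do not anticipate any real obstacle here: the argument is purely a verification of the hypotheses of the generic Lemma~\ref{lemma:T:projlim}. The substantive difficulty in the projective-limit problem for continuous valuations is the \emph{surjectivity} of $\varphi$, which is not asserted in this proposition and is instead provided, under each of the four sets of hypotheses listed there, by Proposition~\ref{prop:V:projlim:basic}.
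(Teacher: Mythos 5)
Your proposal is correct and follows essentially the same route as the paper: verify $\Rp$-niceness with $B_X(r,U) \eqdef [U > r]$ via Scott-continuity of valuations and the identity ${(\Val f)}^{-1}([V>r]) = [f^{-1}(V) > r]$, note that the stochastic ordering is antisymmetric so $TX$ is $T_0$, and conclude by Lemma~\ref{lemma:T:projlim}. Your explicit remark that the verification restricts verbatim to the subspaces $\Val_b X$, $\Val_1 X$, $\Val_{\leq 1} X$ is a point the paper leaves implicit, and is a welcome precision.
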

\begin{proof}
  We check the assumptions of Lemma~\ref{lemma:T:projlim}.  We start
  with property~1 of Definition~\ref{defn:T:nice:S}.  Let $r \in \Rp$.
  For all $U, V \in \Open X$, $U \subseteq V$ implies
  $[U > r] \subseteq [V > r]$, since $\nu (U) \leq \nu (V)$ for every
  continuous valuation $\nu$ on $X$, as part of the requirement of
  Scott-continuity.  For every directed family ${(U_i)}_{i \in I}$ of
  open subsets of $X$ with union $U$, for every continuous valuation
  $\nu$ on $X$, $\nu \in [U > r]$ if and only if $\nu (U) > r$, if and
  only if $\dsup_{i \in I} \nu (U_i) > r$ since $\nu$ is
  Scott-continuous, if and only if $\nu (U_i) > r$ for some $i \in I$,
  if and only if $\nu \in \dcup_{i \in I} [U_i > r]$.  For property~2,
  we note that for every continuous map $f \colon X \to Y$, for every
  open subset $V$ of $Y$ and every $r \in \Rp$,
  ${(\Val f)}^{-1} ([V > r]) = \{\nu \in \Val X \mid f[\nu] (V) > r\}
  = \{\nu \in \Val X \mid \nu (f^{-1} (V)) > r\} = [f^{-1} (V) > r]$.
  Finally, $T X$ is $T_0$, because its specialization preordering is
  the stochastic ordering, which is antisymmetric.  \qed
\end{proof}

\begin{theorem}
  \label{thm:V:projlim}
  Let ${(p_{ij} \colon X_j \to X_i)}_{i \sqsubseteq j \in I}$ be a
  projective system of topological spaces, with canonical projective
  limit $X, {(p_i)}_{i \in I}$.  Let $T$ be one of the functors
  $\Val$, $\Val_b$, $\Val_1$, or $\Val_{\leq 1}$.  If:
  \begin{enumerate}
  \item the projective system is an ep-system,
  \item or $I$ has a countable cofinal subset and each $X_i$ is locally
    compact sober,
  \item or $I$ has a countable cofinal subset, $T$ is $\Val_b$,
    $\Val_1$ or $\Val_{\leq 1}$, and each $X_i$ is LCS-complete,
  \item or every $X_i$ is consonant sober and every $p_{ij}$ is a
    proper map,
  \end{enumerate}
  then
  ${(T {p_{ij}} \colon T {X_j} \to T {X_i})}_{i \sqsubseteq j \in I}$
  is a projective system of topological spaces, and
  $T X, {(T {p_i})}_{i \in I}$ is its projective limit, up to
  homeomorphism.
\end{theorem}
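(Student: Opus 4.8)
The plan is to reduce the theorem entirely to the two results already available: Proposition~\ref{prop:V:projlim}, which says that the comparison map $\varphi \colon TX \to Z$ of any projective $T$-situation is a topological embedding, and Proposition~\ref{prop:V:projlim:basic}, which produces a continuous valuation on a projective limit with prescribed marginals. First I would note that $(T p_{ij} \colon T X_j \to T X_i)_{i \sqsubseteq j \in I}$ is a projective system simply because $T$ is a functor, so one may form its canonical projective limit $Z, (q_i)_{i \in I}$ and the comparison map $\varphi$, i.e.\ we are in a bona fide projective $T$-situation. By Proposition~\ref{prop:V:projlim}, $\varphi$ is a topological embedding in all four cases, so the whole theorem comes down to showing that $\varphi$ is surjective; once that is done, the embedding $\varphi$ is onto, hence a homeomorphism, and $TX, (T p_i)_{i \in I}$ is a limit of the image projective system, which is exactly what it means for $T$ to preserve the projective limit.

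For surjectivity I would unwind the definition of $Z$: an element of $Z$ is a tuple $\vec\nu = (\nu_i)_{i \in I}$ with $\nu_i \in T X_i$ for each $i$ and $T p_{ij}(\nu_j) = \nu_i$ whenever $i \sqsubseteq j$. Since the functor acts on bonding maps by $T p_{ij}(\nu_j) = p_{ij}[\nu_j]$, this is precisely the compatibility hypothesis $\nu_i = p_{ij}[\nu_j]$ of Proposition~\ref{prop:V:projlim:basic}. It then remains to check that the remaining hypotheses of that proposition hold, case by case. Cases~1 (ep-system) and~2 ($I$ has a countable cofinal subset, each $X_i$ locally compact sober) are verbatim the same. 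In case~3 I would use that each $\nu_i \in T X_i$ is a bounded continuous valuation (because $T$ is $\Val_b$, $\Val_1$ or $\Val_{\leq 1}$), hence in particular locally finite, so hypothesis~3 of Proposition~\ref{prop:V:projlim:basic} applies since each $X_i$ is LCS-complete. In case~4 I would invoke \cite[Lemma~6.2]{JGL:kolmogorov}: on a consonant space every continuous valuation is tight, so each $\nu_i$ is tight, and hypothesis~4 there applies since every $p_{ij}$ is proper and every $X_i$ is sober.

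Proposition~\ref{prop:V:projlim:basic} then delivers a (unique) continuous valuation $\nu$ on $X$ with $p_i[\nu] = \nu_i$ for all $i$. To finish I would check that $\nu$ actually lies in $TX$: picking any $i \in I$, since $p_i^{-1}(X_i) = X$ we get $\nu(X) = p_i[\nu](X_i) = \nu_i(X_i)$, so the normalization of $\nu$ (finite, equal to $1$, or at most $1$) is inherited from the common normalization of the $\nu_i$, which is precisely the condition for $\nu \in TX$ when $T$ is $\Val_b$, $\Val_1$ or $\Val_{\leq 1}$ (and nothing is required when $T = \Val$). Then $q_i(\varphi(\nu)) = T p_i(\nu) = p_i[\nu] = \nu_i = q_i(\vec\nu)$ for every $i$, so $\varphi(\nu)$ and $\vec\nu$ agree in every coordinate and are therefore equal, establishing surjectivity. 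I do not expect a genuine obstacle here: all of the analytic content is already packaged in Proposition~\ref{prop:V:projlim:basic} (whose proof is the work of \cite{JGL:kolmogorov}) and in the embedding statement of Proposition~\ref{prop:V:projlim}; the only points requiring care are the two hypothesis translations (bounded $\Rightarrow$ locally finite in case~3, consonant $\Rightarrow$ tight valuations in case~4) and the bookkeeping that the normalization passes to the limit — the latter also explaining why case~3 must exclude the unrestricted functor $\Val$.
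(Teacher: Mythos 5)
Your proposal is correct and follows essentially the same route as the paper: embedding via Proposition~\ref{prop:V:projlim}, surjectivity via Proposition~\ref{prop:V:projlim:basic} with the same two hypothesis translations (boundedness gives local finiteness in case~3, consonance gives tightness in case~4), and the final normalization check $\nu(X) = p_i[\nu](X_i) = \nu_i(X_i)$ to place $\nu$ in $TX$. Nothing is missing.
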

\begin{proof}
  We consider the map $\varphi \colon T X \to Z$ of
  Proposition~\ref{prop:V:projlim}, and we claim that it is
  surjective.  In other words, given ${(\nu_i)}_{i \in I}$ in $Z$, we
  claim that there is a $\nu \in T X$ such that
  $\varphi (\nu) = {(\nu_i)}_{i \in I}$.  We obtain such a $\nu$ in $\Val X$
  by Proposition~\ref{prop:V:projlim:basic}.  In case~3, we require
  $T$ not to be $\Val$, so as to make sure that every $\nu_i$ is
  bounded, hence locally finite.  In case~4, we use the fact that
  every continuous valuation is tight on a consonant space.

  Now that we have built $\nu$, it remains to show that it is not just
  in $\Val X$, but in $T X$.  If $T = \Val_{\leq 1}$, then we pick any
  $i \in I$.  Then $\nu_i (X_i) \leq 1$, and therefore
  $\nu (X) = \nu (p_i^{-1} (X_i)) = p_i [\nu] (X_i) = \nu_i (X_i) \leq
  1$; similarly if $T$ is $\Val_1$ or $\Val_b$.  \qed
\end{proof}


\section{Ep-systems}
\label{sec:ep-systems}

The case of ep-systems is not specific to continuous valuation
functors.  It is well-known that, in categories of continuous dcpos
\cite[Proposition~5.2.4]{AJ:domains} or even of general dcpos
\cite[Theorem~IV-5.5]{GHKLMS:contlatt}, $T$ preserves projective
limits of ep-systems provided that $T$ is locally continuous.

Local continuity does not make sense in $\Topcat$, because $\Topcat$
is not even order-enriched (it is \emph{preorder}-enriched).
Restricting to the full subcategory of monotone convergence spaces
would provide us with a dcpo-enriched category on which the notion of
local continuity would make sense, but that is not necessary.


\begin{proposition}
  \label{prop:ep}
  Let $T$ be a nice endofunctor on $\Topcat$.  Given any projective
  $T$-situation as given in Definition~\ref{defn:situation}, whose
  projective system is an ep-system, and such that $TX_i$ is $T_0$ for
  every $i \in I$ and $TX$ is a monotone convergence space, the
  comparison map $\varphi$ is a homeomorphism.
\end{proposition}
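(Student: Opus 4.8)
The plan is to note that $\varphi$ is already a topological embedding and then to obtain surjectivity by the standard bilimit construction, transporting the embedding--projection structure through $T$.

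First, a monotone convergence space is $T_0$, so Lemma~\ref{lemma:T:projlim} applies and $\varphi$ is full, continuous and injective, hence a topological embedding; it remains to prove $\varphi$ is onto. Before that I would record that every nice functor $T$ is \emph{locally monotonic}: if $f \le g \colon X \to Y$ then $Tf \le Tg$. Indeed $f \le g$ means $f^{-1}(V) \subseteq g^{-1}(V)$ for all $V \in \Open Y$, and to check $(Tf)^{-1}(W) \subseteq (Tg)^{-1}(W)$ for all $W \in \Open{(TX)}$ it suffices to treat the subbasic sets $W = B_Y(r,V)$; for those, property~2 of Definition~\ref{defn:T:nice:S} rewrites the inclusion as $B_X(r, f^{-1}(V)) \subseteq B_X(r, g^{-1}(V))$, which holds since $B_X(r,\_)$ is monotonic by property~1. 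In particular $T$ sends ep-pairs to ep-pairs: $p \circ e = \identity X$ gives $Tp \circ Te = \identity{TX}$, and $e \circ p \le \identity Y$ gives $Te \circ Tp = T(e \circ p) \le \identity{TY}$.

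Next I would equip the canonical limit $X$ with the usual bilimit structure. Writing $e_{ij} \colon X_i \to X_j$ for the embeddings of the ep-system, define $e_i \colon X_i \to X$ by $(e_i(x))_j \eqdef p_{jk}(e_{ik}(x))$ for any $k \sqsupseteq i,j$; this is independent of the choice of $k$ and lands in $X$, by routine use of the ep-system identities. One checks, exactly as in the dcpo-enriched case (\cite[Proposition~5.2.4]{AJ:domains}, \cite[Theorem~IV-5.5]{GHKLMS:contlatt}; no order-enrichment of $\Topcat$ is used), that $(e_i, p_i)$ is an ep-pair, that $e_j \circ e_{ij} = e_i$ for $i \sqsubseteq j$, that $p_j \circ e_i = p_{ji}$ whenever $j \sqsubseteq i$, and hence that $e_i \circ p_{ii'} = e_{i'} \circ (e_{ii'} \circ p_{ii'}) \le e_{i'}$ for $i \sqsubseteq i'$.

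For surjectivity, take $\vec\nu = {(\nu_i)}_{i \in I} \in Z$, so $Tp_{ij}(\nu_j) = \nu_i$ for $i \sqsubseteq j$, and set $\mu_i \eqdef Te_i(\nu_i) \in TX$. From $\nu_i = Tp_{ii'}(\nu_{i'})$, the inequality $e_i \circ p_{ii'} \le e_{i'}$, and local monotonicity of $T$, we get $\mu_i = T(e_i \circ p_{ii'})(\nu_{i'}) \le Te_{i'}(\nu_{i'}) = \mu_{i'}$ for $i \sqsubseteq i'$; thus ${(\mu_i)}_{i \in I}$ is a monotone net and its underlying family is directed. Since $TX$ is a dcpo in its specialization ordering, $\mu \eqdef \dsup_{i \in I} \mu_i$ exists, and I claim $\varphi(\mu) = \vec\nu$, i.e.\ $Tp_j(\mu) = \nu_j$ for every $j$. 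As $TX_j$ is $T_0$ with subbase ${(B_{X_j}(r,W))}_{r \in R, W \in \Open{X_j}}$, it is enough to show $Tp_j(\mu) \in B_{X_j}(r,W) \liff \nu_j \in B_{X_j}(r,W)$ for all $r, W$. By property~2 the left-hand condition is $\mu \in B_X(r, p_j^{-1}(W))$; since $TX$ is a monotone convergence space this open set is Scott-open, so, $\mu$ being a directed supremum of the $\mu_i$, it is equivalent to $\mu_i \in B_X(r, p_j^{-1}(W))$ for some $i$, and then (by monotonicity of the net) for some $i \sqsupseteq j$. For such $i$, property~2 together with $p_j \circ e_i = p_{ji}$ rewrites this as $\nu_i \in B_{X_i}(r, p_{ji}^{-1}(W))$, and property~2 once more as $Tp_{ji}(\nu_i) = \nu_j \in B_{X_j}(r,W)$. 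Every step is an equivalence and $i = j$ always qualifies, so $Tp_j(\mu) = \nu_j$; hence $\varphi(\mu) = \vec\nu$, and $\varphi$ is a homeomorphism.

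The delicate point is this last step. One would like to apply $Tp_j$ directly to the directed supremum defining $\mu$, but $TX_j$ is only known to be $T_0$, so $Tp_j$ carries no Scott-continuity one could use. The remedy is to test $Tp_j(\mu) = \nu_j$ in $TX_j$ \emph{subbasically} and push the directed-sup reasoning back into $TX$, which \emph{is} a monotone convergence space, so that the subbasic opens $B_X(r,\_)$ are Scott-open. The other ingredient not to overlook is local monotonicity of nice functors, which is exactly what makes $\mu_i \le \mu_{i'}$ hold (and, more conceptually, what makes $T$ preserve ep-pairs).
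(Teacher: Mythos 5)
Your proof is correct and takes essentially the same route as the paper's: embed via Lemma~\ref{lemma:T:projlim}, form the monotone net $T{e_i}(\nu_i)$ from the limit embeddings, take its supremum in the monotone convergence space $TX$, and verify the cone equations by transporting subbasic membership through property~2 of Definition~\ref{defn:T:nice:S} and the Scott-openness of the sets $B_X(r,\_)$. The only differences are organizational: you package the inline subbase argument as a ``local monotonicity'' lemma, you construct the embeddings $e_i$ by hand where the paper cites \cite[Lemma~4.1]{JGL:kolmogorov}, and you establish $T{p_j}(\mu)=\nu_j$ by one chain of equivalences over indices $i \sqsupseteq j$ (using $p_j \circ e_i = p_{ji}$), where the paper proves the two inequalities separately via an auxiliary index $k \sqsupseteq i,j$.
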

\begin{proof}
  We take all notations from Definition~\ref{defn:situation} and
  Definition~\ref{defn:T:nice:S}.
  Let $e_{ij}$ be embeddings associated with each of the projections
  $p_{ij}$.  By \cite[Lemma~4.1]{JGL:kolmogorov}, each $p_i$ is a
  projection, and there are associated embeddings
  $e_i \colon X_i \to X$, such that $e_j \circ e_{ij} = e_i$ for all
  $i \sqsubseteq j \in I$.  Moreover, for each open subset $U$ of $X$,
  ${((e_i \circ p_i)^{-1} (U))}_{i \in I, \sqsubseteq}$ is a monotone
  net in $\Open X$ 
  and its union is equal to $U$.

  Using Lemma~\ref{lemma:T:projlim}, it remains to show that $\varphi$
  is surjective.
  
  Let ${(t_i)}_{i \in I}$ be any element of $Z$, that is, each $t_i$
  is in $T X_i$ and $T {p_{ij}} (t_j) = t_i$ for all
  $i \sqsubseteq j \in I$.  We claim that the elements
  $T {e_i} (t_i) \in TX$ form a monotone net, namely that
  $T {e_i} (t_i) \leq T {e_j} (t_j)$ for all $i \sqsubseteq j \in I$.
  In order to see this, it suffices to show that for every $r \in R$,
  for every $U \in \Open X$, if $T {e_i} (t_i) \in B_X (r, U)$ then
  $T {e_j} (t_j) \in B_X (r, U)$.  Since $t_i = T {p_{ij}} (t_j)$, the
  assumption $T {e_i} (t_i) \in B_X (r, U)$ means that
  $T {(e_i \circ p_{ij})} (t_j) \in B_X (r, U)$, namely,
  $t_j \in (T {(e_i \circ p_{ij})})^{-1} (B_X (r, U)) = B_{X_j} (r,
  (e_i \circ p_{ij})^{-1} (U))$.  But $e_i \circ p_{ij} \leq e_j$,
  since $e_j \circ e_{ij} = e_i$,
  $e_{ij} \circ p_{ij} \leq \identity {X_j}$ and $e_j$ is monotonic
  (being continuous).  Since $U$ is upwards-closed, it follows that
  $(e_i \circ p_{ij})^{-1} (U) \subseteq e_j^{-1} (U)$.  Using the
  fact that $B_{X_j} (r, \_)$ is Scott-continuous, hence monotonic, we
  obtain that
  $B_{X_j} (r, (e_i \circ p_{ij})^{-1} (U)) \subseteq B_{X_j} (r,
  e_j^{-1} (U))$.  Therefore
  $t_j \in B_{X_j} (r, e_j^{-1} (U)) = (T {e_j})^{-1} (B_X (r, U))$,
  showing that $T {e_j} (t_j) \in B_X (r, U)$.

  Since $TX$ is a monotone convergence space, the monotone net
  ${(T {e_j} (t_j))}_{j \in I, \sqsubseteq}$ has a supremum, which we
  call $t$.  It remains to show that
  $\varphi (t) = {(t_i)}_{i \in I}$, or equivalently, that
  $T {p_i} (t) = t_i$ for every $i \in I$.  The difficult part is to
  show that $T {p_i} (t) \leq t_i$.  In order to see this, let
  $B_{X_i} (r, U_i)$ ($r \in R$, $U_i \in \Open {X_i}$) be any
  subbasic open set containing $T {p_i} (t)$.  Then
  $t \in (T {p_i})^{-1} (B_{X_i} (r, U_i)) = B_X (r, p_i^{-1} (U_i))$.
  The latter is Scott-open since $TX$ is a monotone convergence space,
  so $T {e_j} (t_j) \in B_X (r, p_i^{-1} (U_i))$ for some $j \in I$.
  Let us pick $k \in I$ such that $i, j \sqsubseteq k$.  Then
  $t_j = T {p_{jk}} (t_k)$, so
  $T (e_j \circ p_{jk}) (t_k) \in B_X (r, p_i^{-1} (U_i))$, namely
  $t_k \in B_{X_k} (r, (p_i \circ e_j \circ p_{jk})^{-1} (U_i))$.  Now
  $p_i \circ e_j = p_{ik} \circ p_k \circ e_k \circ e_{jk} = p_{ik}
  \circ e_{jk}$, and therefore
  $p_i \circ e_j \circ p_{jk} = p_{ik} \circ e_{jk} \circ p_{jk} \leq
  p_{ik}$, since $e_{jk} \circ p_{jk} \leq \identity {X_k}$ and
  $p_{ik}$ is (continuous hence) monotonic.  Using the fact that $U_i$
  is upwards-closed, it follows that
  $(p_i \circ e_j \circ p_{jk})^{-1} (U_i) \subseteq p_{ik}^{-1}
  (U_i)$.  Next, $B_{X_k} (r, \_)$ is Scott-continuous hence
  monotonic, so $t_k \in B_{X_k} (r, p_{ik}^{-1} (U_i))$.  This means
  that $T {p_{ik}} (t_k) \in B_{X_i} (r, U_i)$, namely that
  $t_i \in B_{X_i} (r, U_i)$.  As $r$ and $U_i$ are arbitrary, we
  conclude that $T {p_i} (t) \leq t_i$.

  The reverse inequality is easier: $T {e_i} (t_i) \leq t$, so
  $t_i = T (p_i \circ e_i) (t_i) = T {p_i} (T {e_i} (t_i)) \leq T
  {p_i} (t)$, using the fact that $T {p_i}$ is continuous hence
  monotonic.  Since $TX_i$ is $T_0$, we conclude that
  $T {p_i} (t) = t_i$, for every $i \in I$, hence that
  $\varphi (t) = {(t_i)}_{i \in I}$.  \qed
\end{proof}

With this, we obtain another proof of
Theorem~\ref{prop:V:projlim:basic}, item~1, when $T$ is equal to
$\Val$, $\Val_1$, or $\Val_{\leq 1}$ (not $\Val_b$).  It suffices to
observe that $TY$ is sober, hence a $T_0$ space and a monotone
convergence space, for any space $Y$.  The argument is due to R. Tix
\cite[Satz~5.4]{Tix:bewertung}, following ideas by R. Heckmann (see
\cite[Section~2.3]{heckmann96}), in the case where $T=\Val$.  When
$T=\Val_{\leq 1}$ or $T=\Val_1$, we rest on the following remark.
\begin{remark}
  \label{rem:sat:sober}
  The sober subspaces of a sober space $Z$ coincide with the subsets
  that are closed in the strong topology on $Z$
  \cite[Corollary~3.5]{KL:dcompl}.  The latter is also known as the
  Skula topology, and is the smallest one generated by the original
  topology on $Z$ and all the downwards-closed subsets.  In
  particular, any closed subspace of a sober space is sober, any
  saturated subspace of a sober space is sober.

  Hence $\Val_{\leq 1} Y$ is sober, being equal to the closed subspace
  $\Val Y \diff [Y > 1]$ of $\Val Y$, and $\Val_1 Y$ is sober, being
  upwards-closed in $\Val_{\leq 1} Y$.
\end{remark}



\section{The Smyth hyperspace}
\label{sec:smyth-hyperspace}

For every topological space $X$, let $\Smyth_0 X$ be the set of all
compact saturated subsets of $X$ (resp., $\Smyth X$ be its subset of
non-empty compact saturated subsets).  The \emph{upper Vietoris}
topology on that set has basic open subsets $\Box U$ consisting of
those compact saturated subsets of $X$ (resp., and non-empty) that are
included in $U$, where $U$ ranges over the open subsets of $X$.  We
write $\SVz X$ (resp., $\SV X$) for the resulting topological space.
Its specialization ordering is reverse inclusion $\supseteq$.  In
certain cases, and notably in Section~\ref{sec:interm-pres-local}, we
will disambiguate between $\Box U$ as a basic open subset of $\SV X$,
and as a basic open subset of $\SVz X$, and we will write $\Box_0 U$
in the latter case.

The $\SV$ and $\SVz$ constructions have been studied by a number of
people, starting with Smyth \cite{Smyth:power:pred}, and later by
Schalk \cite[Section~7]{schalk:diss} who studied not only this, but
also the variant with the Scott topology, and a localic counterpart.
See also \cite[Sections~6.2.2, 6.2.3]{AJ:domains} or
\cite[Section~IV-8]{GHKLMS:contlatt}, where the accent is rather on
the Scott topology of $\supseteq$.

There is a $\SVz$ endofunctor, and also an $\SV$ endofunctor, on the
category $\Topcat$ of topological spaces.  Its action $\SV f$ on
morphisms $f \colon X \to Y$ is the function that maps every
$Q \in \SVz X$ to $\upc f [Q] \in \SVz Y$ (and similarly with $\SV$).
Here and later, we use the notation $f [Q]$ to denote the image of $Q$
under $f$.  This endofunctor is part of a monad whose unit
$\eta^{\Smyth}_X \colon X \to \SVz X$ maps every $x \in X$ to $\upc x$
and whose multiplication
$\mu^{\Smyth}_X \colon \SVz {\SVz X} \to \SVz X$ maps $\mathcal Q$ to
$\bigcup \mathcal Q$ \cite[Proposition~7.21]{schalk:diss}, and
similarly with $\SV$.


Which projective limits are preserved by those endofunctors is made
easy by relying on \emph{Steenrod's theorem}, as stated by Fujiwara
and Kato \cite[Theorem~2.2.20]{FK:rigid:geometry}: every projective
limit, taken in $\Topcat$, of compact sober spaces is compact and
sober.  A very useful lemma that comes naturally with that result is
the following, which appears as Lemma~7.5 in \cite{JGL:kolmogorov}.
\begin{lemma}
  \label{lemma:steenrod:open}
  Let $Q, {(p_i)}_{i \in I}$ be the canonical projective limit of a
  projective system
  ${(p_{ij} \colon Q_j \to Q_i)}_{i \sqsubseteq j \in I}$ of compact
  sober spaces.  For every $i \in I$, for every open neighborhood $U$
  of $\upc p_i [Q]$ in $Q_i$, there is an index $j \in I$ such that
  $i \sqsubseteq j$ and $\upc p_{ij} [Q_j] \subseteq U$.
\end{lemma}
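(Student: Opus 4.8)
The plan is to argue by contradiction, building an irreducible closed subset (or equivalently a suitable point) in a projective limit of compact sober spaces, and then to invoke Steenrod's theorem to get a contradiction with compactness. First I would fix $i \in I$ and an open neighborhood $U$ of $\upc p_i[Q]$ in $Q_i$, and suppose for contradiction that for every $j \sqsupseteq i$ the compact set $\upc p_{ij}[Q_j]$ is \emph{not} contained in $U$, i.e.\ $\upc p_{ij}[Q_j] \cap (Q_i \diff U) \neq \emptyset$, equivalently $p_{ij}[Q_j] \cap (Q_i \diff U) \neq \emptyset$ since $Q_i \diff U$ is downward closed (it is closed, hence saturated-complement, and closed sets are downward closed). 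Write $C_j \eqdef p_{ij}^{-1}(Q_i \diff U) \subseteq Q_j$; this is a non-empty closed subset of $Q_j$, and for $j \sqsubseteq k$ we have $p_{jk}[C_k] \subseteq C_j$ because $p_{ij} \circ p_{jk} = p_{ik}$.

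Next I would cut down the original projective system to the subsystem of these closed pieces: set $Q'_j \eqdef C_j$ for $j \sqsupseteq i$ (and $Q'_j \eqdef p_{ij}^{-1}$ of the appropriate set, or just restrict attention to the cofinal subset $\{j : j \sqsupseteq i\}$, which is cofinal in $I$ by directedness, so the limit is unchanged up to homeomorphism). Each $Q'_j$ is a closed subspace of the compact sober space $Q_j$, hence is itself compact (closed subsets of compact sets are compact) and sober (closed subspaces of sober spaces are sober, as recalled in Remark~\ref{rem:sat:sober}). The restricted bonding maps $p_{jk}\restriction_{Q'_k} \colon Q'_k \to Q'_j$ — here one must check $p_{jk}[Q'_k] \subseteq Q'_j$, which holds as noted above — form a projective system of compact sober spaces over the cofinal directed set $\{j : j \sqsupseteq i\}$. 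By Steenrod's theorem, its canonical projective limit $Q'$ is compact, and in particular non-empty provided each $Q'_j$ is non-empty: this is exactly the content of Steenrod's theorem, since a projective limit of non-empty compact (sober, or even just compact $T_1$, but here compact sober) spaces is non-empty — compactness here is used precisely to avoid the Henkin--Waterhouse emptiness pathology mentioned in the preliminaries.

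So pick $\vec x \in Q'$. Its components $x_j \in Q'_j = C_j$ for all $j \sqsupseteq i$ satisfy $p_{jk}(x_k) = x_j$, and one extends $\vec x$ to a point of the original $Q$ by setting, for arbitrary $\ell \in I$, $x_\ell \eqdef p_{\ell j}(x_j)$ for any $j$ with $j \sqsupseteq i, \ell$ (well-defined by directedness and the cocycle condition); this lies in $Q = \varprojlim Q_j$. Then $p_i(\vec x) = x_i \in C_i = p_{ii}^{-1}(Q_i \diff U) = Q_i \diff U$, so $p_i(\vec x) \notin U$. But $\vec x \in Q$ gives $p_i(\vec x) \in p_i[Q] \subseteq \upc p_i[Q] \subseteq U$, a contradiction. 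The main obstacle is the bookkeeping around passing to the cofinal subsystem $\{j : j \sqsupseteq i\}$ and checking that the restricted maps genuinely form a projective system of compact sober spaces with non-empty members, so that Steenrod's theorem applies and delivers a \emph{non-empty} limit; once that is in place, the argument is a short diagram chase. (An alternative packaging avoids the explicit cofinal-subset manoeuvre by taking $Q'_j \eqdef p_{ij}^{-1}(Q_i \diff U)$ for $j \sqsupseteq i$ and $Q'_j \eqdef Q_j$ otherwise, but then the bonding maps for incomparable-to-$i$ indices need not preserve the $Q'$'s, so restricting to the cofinal subset is cleaner.)
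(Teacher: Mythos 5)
Your proof is correct. The paper does not reprove this lemma (it is quoted from Lemma~7.5 of \cite{JGL:kolmogorov}), but your argument is a sound and essentially standard way to get it: assuming $\upc p_{ij}[Q_j]\not\subseteq U$ for all $j \sqsupseteq i$, the fibers $C_j \eqdef p_{ij}^{-1}(Q_i \diff U)$ are non-empty (using that the closed set $Q_i \diff U$ is downwards closed), closed hence compact, sober as closed subspaces of sober spaces, and the restricted bonding maps do map $C_k$ into $C_j$ by the cocycle identity; restricting to the cofinal set $\upc i$ and applying Steenrod's theorem gives a point of the limit of the $C_j$'s, which extends (well-definedly, by directedness) to a point $\vec x \in Q$ with $p_i(\vec x) \notin U$, contradicting $\upc p_i[Q] \subseteq U$. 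The only point worth flagging is that you need the \emph{non-emptiness} clause of Steenrod's theorem, which is not literally in the one-sentence statement quoted in Section~6 (``compact and sober''); however, the Fujiwara--Kato version does include it, and the paper itself uses it in exactly this form in the proof of Theorem~\ref{thm:Q:projlim}, so your reliance on it is legitimate. Your bookkeeping about passing to the cofinal subsystem and extending the tuple back to an element of $Q$ is also handled correctly.
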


Let us say that a map $f \colon X \to Y$ between topological spaces is
\emph{almost surjective} if and only if $\upc f [X] = Y$.
\begin{lemma}
  \label{lemma:steenrod:almostsurj}
  Let $Q, {(p_i)}_{i \in I}$ be the canonical projective limit of a
  projective system
  ${(p_{ij} \colon Q_j \to Q_i)}_{i \sqsubseteq j \in I}$ of compact
  sober spaces.  If the bonding maps $p_{ij}$ are almost surjective,
  then the cone maps $p_i$ are also almost surjective, $i \in I$.
\end{lemma}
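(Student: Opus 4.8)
The plan is to read this off Lemma~\ref{lemma:steenrod:open} almost immediately. Fix an index $i \in I$; the goal is to prove $\upc p_i [Q] = Q_i$. The set $\upc p_i [Q]$ is upwards-closed in the specialization ordering of $Q_i$, hence saturated, hence equal to the intersection of all its open neighbourhoods in $Q_i$. So it is enough to show that the only open neighbourhood of $\upc p_i [Q]$ in $Q_i$ is $Q_i$ itself.

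To that end, let $U$ be an open subset of $Q_i$ with $\upc p_i [Q] \subseteq U$. Lemma~\ref{lemma:steenrod:open} yields an index $j$ with $i \sqsubseteq j$ and $\upc p_{ij} [Q_j] \subseteq U$. By hypothesis $p_{ij}$ is almost surjective, which says exactly that $\upc p_{ij} [Q_j] = Q_i$; hence $Q_i \subseteq U$, and so $U = Q_i$. That is the entire argument. I do not expect a genuine obstacle here: all the substance sits inside Lemma~\ref{lemma:steenrod:open} (itself a consequence of Steenrod's theorem, which is what makes the projective limit of compact sober spaces compact sober in the first place).

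The one thing worth being careful about is to reason at the level of the saturated set $\upc p_i [Q]$, rather than trying to produce, for a prescribed $y \in Q_i$, an explicit $\vec x \in Q$ with $p_i (\vec x) \le y$. Such an $\vec x$ does exist a posteriori, but building it directly would amount to a finite-intersection-property argument over the closed \emph{slices} $p_{ij}^{-1} (\dc y) \subseteq Q_j$ --- each compact (closed in a compact space) and sober (closed subspace of a sober space, cf.\ Remark~\ref{rem:sat:sober}), forming a projective subsystem whose bonding maps are again almost surjective --- followed by an appeal to non-emptiness of that projective limit; Lemma~\ref{lemma:steenrod:open} lets us sidestep all of this. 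The argument above even covers the degenerate case $Q = \emptyset$ without change: taking $U = \emptyset$ then forces $Q_i = \emptyset$, so $\upc p_i [Q] = Q_i$ holds there too.
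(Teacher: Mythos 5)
Your proof is correct and is essentially the paper's argument: both reduce everything to Lemma~\ref{lemma:steenrod:open} together with the fact that the saturated set $\upc p_i[Q]$ is the intersection of its open neighbourhoods, the only difference being that the paper phrases it as a proof by contradiction (a point of $Q_i$ outside $\upc p_i[Q]$ would yield a proper open neighbourhood $U$, contradicting $\upc p_{ij}[Q_j]=Q_i\subseteq U$) while you argue directly that every open neighbourhood of $\upc p_i[Q]$ must be all of $Q_i$. The extra remarks about slices and the empty case are fine but not needed.
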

\begin{proof}
  Let us imagine that $p_i$ is not almost surjective.  There is a
  point $x \in Q_i$ that is not in $\upc p_i [Q]$.  Since the latter
  is saturated, hence equal to the intersection of its open
  neighborhoods, there is an open neighborhood $U$ of $\upc p_i [Q]$
  that does not contain $x$.  By Lemma~\ref{lemma:steenrod:open},
  there is a $j \in I$ above $i$ such that
  $\upc p_{ij} [Q_j] \subseteq U$.  That is impossible: since $p_{ij}$
  is almost surjective, $\upc p_{ij} [Q_j]=Q_i$, but $U$ is a
  proper subset of $Q_i$.  \qed
\end{proof}

\begin{proposition}
  \label{prop:Q:projlim}
  The comparison map $\varphi \colon \SV X \to Z$ of any projective
  $\SV$-situation is a topological embedding.  Similarly with $\SVz$
  in lieu of $\SV$.
\end{proposition}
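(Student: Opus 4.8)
The plan is to invoke Lemma~\ref{lemma:T:projlim}, so it suffices to exhibit a set $R$ for which $\SV$ (and likewise $\SVz$) is $R$-nice, and to observe that $\SV X$ (resp.\ $\SVz X$) is $T_0$. I would take $R$ to be a one-element set $\{*\}$ and set $B_X (*, U) \eqdef \Box U$ for $U \in \Open X$; by definition of the upper Vietoris topology these sets form a base, hence a subbase, of $\SV X$ (resp.\ of $\SVz X$). The proposition then reduces to checking the two properties of Definition~\ref{defn:T:nice:S} together with the $T_0$ condition.

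For property~1, I would show that $U \mapsto \Box U$ is Scott-continuous from $\Open X$ to $\Open {(\SV X)}$. Monotonicity is immediate, since $U \subseteq V$ forces $\Box U \subseteq \Box V$. For a directed family ${(U_i)}_{i \in I}$ of open subsets of $X$ with union $U$, the inclusion $\dcup_{i \in I} \Box U_i \subseteq \Box U$ is clear, and for the converse I would use that every element $Q$ of $\Box U$ is a compact saturated subset (non-empty in the $\SV$ case) included in $U = \dcup_{i \in I} U_i$: compactness of $Q$ together with directedness of ${(U_i)}_{i \in I}$ yields an index $i$ with $Q \subseteq U_i$, i.e.\ $Q \in \Box U_i$. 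This is the one place where the compactness of the members of the hyperspace is genuinely used; it is the crux of the argument, although it is not really an obstacle.

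For property~2, I would verify that for every continuous map $f \colon X \to Y$ and every $V \in \Open Y$, $(\SV f)^{-1} (\Box V) = \Box (f^{-1} (V))$: since $\SV f (Q) = \upc f [Q]$ and $V$ is upwards closed, $\upc f [Q] \subseteq V$ if and only if $f [Q] \subseteq V$ if and only if $Q \subseteq f^{-1} (V)$; the same computation applies verbatim to $\SVz$. Finally, the specialization preordering of $\SV X$ (and of $\SVz X$) is reverse inclusion $\supseteq$, which is a partial order, so these spaces are $T_0$. Lemma~\ref{lemma:T:projlim} then yields that the comparison map $\varphi$ is full, and being a continuous map out of a $T_0$ space, a topological embedding. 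In short, this proposition is a routine verification: the real content — surjectivity of $\varphi$, which will rely on Steenrod's theorem and Lemmas~\ref{lemma:steenrod:open} and~\ref{lemma:steenrod:almostsurj} — is dealt with separately.
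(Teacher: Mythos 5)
Your proposal is correct and follows essentially the same route as the paper: take $R \eqdef \{*\}$, $B_X(*,U) \eqdef \Box U$, check Scott-continuity of $\Box$ via compactness of the members, check $(\SV f)^{-1}(\Box V) = \Box f^{-1}(V)$, note that $\supseteq$ is an ordering so $\SV X$ is $T_0$, and invoke Lemma~\ref{lemma:T:projlim}. The paper's proof is exactly this verification, with surjectivity deferred to Theorem~\ref{thm:Q:projlim} as you anticipate.
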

\begin{proof}
  We only deal with $\SV$.  In order to apply
  Lemma~\ref{lemma:T:projlim}, we verify that $\SV$ is $R$-nice with
  $R$ a one-element set $\{*\}$.  We let $B_X (*, U) \eqdef \Box U$.
  Property~1 of Definition~\ref{defn:T:nice:S} boils down to the fact
  that $U \subseteq V$ implies $\Box U \subseteq \Box V$, which is
  clear, plus the fact that for every directed family
  ${(U_i)}_{i \in I}$ of open subsets of $X$ with union $U$,
  $\Box U = \dcup_{i \in I} \Box {U_i}$.  In order to show that, we
  note that for every $Q \in \SV X$, $Q \in \Box U$ if and only if
  $Q \subseteq \dcup_{i \in I} U_i$, if and only if $Q \subseteq U_i$
  for some $i \in I$ (because $Q$ is compact), if and only if
  $Q \in \dcup_{i \in I} \Box {U_i}$.  As for property~2, for every
  continuous map $f \colon X \to Y$, for every $V \in \Open Y$,
  ${(\SV f)}^{-1} (\Box V) = \{Q \in \SV X \mid \upc f [Q] \subseteq
  V\} = \Box {f^{-1} (V)}$.  Finally, $\SV X$ is $T_0$, because its
  specialization preordering $\supseteq$ is an ordering.  \qed
\end{proof}

We use all this to show that $\SV$ and $\SVz$ preserve projective
limits of \emph{sober} spaces.
\begin{theorem}
  \label{thm:Q:projlim}
  Let ${(p_{ij} \colon X_j \to X_i)}_{i \sqsubseteq j \in I}$ be a
  projective system of topological spaces, with canonical projective
  limit $X, {(p_i)}_{i \in I}$.  If every $X_i$ is sober, then
  ${(\SV {p_{ij}} \colon \SV {X_j} \to \SV {X_i})}_{i \sqsubseteq j
    \in I}$ is a projective system of topological spaces, and
  $\SV X, {(\SV {p_i})}_{i \in I}$ is its projective limit, up to
  homeomorphism.  Similarly with $\SVz$ in lieu of $\SV$.
\end{theorem}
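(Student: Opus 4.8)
The plan is to apply Proposition~\ref{prop:Q:projlim}, which already gives us that the comparison map $\varphi \colon \SV X \to Z$ is a topological embedding, so the only thing left is surjectivity. Given an element ${(Q_i)}_{i \in I} \in Z$, that is, a compatible family of non-empty compact saturated sets $Q_i \in \SV {X_i}$ with $\SV {p_{ij}} (Q_j) = \upc p_{ij} [Q_j] = Q_i$ for all $i \sqsubseteq j$, I need to produce a single $Q \in \SV X$ with $\upc p_i [Q] = Q_i$ for every $i$. The natural candidate is $Q \eqdef \{\vec x \in X \mid x_i \in Q_i \text{ for all } i \in I\} = \bigcap_{i \in I} p_i^{-1} (Q_i)$, equivalently the canonical projective limit of the subsystem ${(p_{ij}\restriction_{Q_j} \colon Q_j \to Q_i)}_{i \sqsubseteq j \in I}$.

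First I would check that this restricted system is well-defined: the hypothesis $\upc p_{ij} [Q_j] = Q_i$ means exactly that the restriction of $p_{ij}$ maps $Q_j$ into $Q_i$, and moreover (since $Q_i$ is saturated and $\upc p_{ij}[Q_j]=Q_i$) that this restriction is almost surjective onto $Q_i$. Each $Q_i$, being a compact saturated subspace of the sober space $X_i$, is itself compact and sober (by Remark~\ref{rem:sat:sober}, a saturated subspace of a sober space is sober). So $Q$, as the canonical projective limit of a projective system of compact sober spaces, is compact and sober by Steenrod's theorem; in particular it is non-empty, because each $Q_i$ is non-empty and the bonding maps are almost surjective, so $Q \ne \emptyset$ by Lemma~\ref{lemma:steenrod:almostsurj} applied to this subsystem. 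As a subspace of $X$, $Q$ is compact (compactness is intrinsic) and saturated (being an intersection of sets $p_i^{-1}(Q_i)$ each of which is saturated, as $Q_i$ is saturated and $p_i$ monotone/continuous). Hence $Q \in \SV X$.

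The main obstacle is verifying that $\varphi(Q) = {(Q_i)}_{i\in I}$, i.e.\ that $\upc p_i [Q] = Q_i$ for every $i$. The inclusion $\upc p_i [Q] \subseteq Q_i$ is immediate since $p_i[Q] \subseteq Q_i$ and $Q_i$ is upwards-closed. For the reverse inclusion, fix $x \in Q_i$; I must find $\vec y \in Q$ with $x_i \leq y_i$, equivalently with $p_i(\vec y) \in \upc x$... actually it is cleaner to argue that $x \in \upc p_i[Q]$ directly. Here I would invoke the already-cited Steenrod machinery in the form of Lemma~\ref{lemma:steenrod:almostsurj}: the restricted system ${(p_{ij}\restriction \colon Q_j \to Q_i)}$ has almost-surjective bonding maps, so its cone maps $Q \to Q_i$ are almost surjective, which says precisely $\upc p_i[Q] = Q_i$. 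Thus $\varphi$ is surjective, hence a homeomorphism. The argument for $\SVz$ is identical except that the $Q_i$ may be empty; if some $Q_i = \emptyset$ then compatibility forces $Q_j = \emptyset$ for all $j \sqsupseteq i$ (since $\upc p_{ij}[Q_j] = \emptyset$ iff $Q_j = \emptyset$), and by cofinality $\emptyset \in \SVz X$ is sent to the family, so surjectivity still holds; otherwise all $Q_i$ are non-empty and the previous argument applies verbatim with $\SVz$ in place of $\SV$. \qed
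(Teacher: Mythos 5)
Your proof is correct and follows essentially the same route as the paper's: embed via Proposition~\ref{prop:Q:projlim}, restrict the projective system to the compact sober subspaces $Q_i$ (sober by Remark~\ref{rem:sat:sober}), apply Steenrod's theorem and Lemma~\ref{lemma:steenrod:almostsurj} to get $\upc p_i[Q]=Q_i$ for the limit $Q$, hence surjectivity. The only cosmetic differences are that you derive non-emptiness of $Q$ from the almost-surjectivity of the cone maps rather than quoting it directly from Steenrod's theorem, and you spell out the empty-family case for $\SVz$, which the paper leaves implicit; both are fine.
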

\begin{proof}
  We only deal with the case of $\SV$, and we reuse the notations of
  Definition~\ref{defn:situation}.  We use
  Proposition~\ref{prop:Q:projlim}, so the comparison map
  $\varphi \colon \SV X \to Z$ is a topological embedding.  It remains
  to show that $\varphi$ is surjective.  Let
  $\vec Q \eqdef {(Q_i)}_{i \in I}$ be an element of $Z$.  The family
  ${(p_{ij|Q_j} \colon Q_j \to Q_i)}_{i \sqsubseteq j \in I}$ is a
  projective system of non-empty compact spaces, where each $Q_i$ is
  given the subspace topology of $X_i$.  Since $Q_i$ is a saturated
  subset of a sober space, it is itself sober, by
  Remark~\ref{rem:sat:sober}.  By Steenrod's theorem, the canonical
  projective limit $Q$ is non-empty, compact and sober.  $Q$ is the
  collection of tuples $\vec x \eqdef {(x_i)}_{i \in I}$ where each
  $x_i$ is in $Q_i$ and $p_{ij} (x_j)=x_i$ for all $i \sqsubseteq j$
  in $I$.  In particular, $Q$ is a non-empty subset of $X$.  Being
  compact as a subspace, it is also compact as a subset.  It is also
  upwards-closed, because each $Q_i$ is upwards-closed.

  Now we observe that for all $i \sqsubseteq j \in I$,
  $Q_i = \SV {p_{ij}} (Q_j) = \upc p_{ij} [Q_j]$.  Therefore,
  $p_{ij|Q_j}$ is an almost surjective map from $Q_j$ to $Q_i$, in the
  sense of Lemma~\ref{lemma:steenrod:almostsurj}.  That lemma implies
  that the cone maps $q_i \colon Q \to Q_i$ (mapping every
  $\vec x \in Q$ to $x_i \in Q_i$) are almost surjective, too.  Since
  $q_i$ coincides with $p_i$ on $Q$, $Q_i$ is also equal to
  $\upc p_i [Q] = \SV {p_i} (Q)$.  Since this holds for every
  $i \in I$, $\varphi (Q) = {(Q_i)}_{i \in I}$, showing that $\varphi$ is
  surjective.  Now $\varphi$ is a surjective topological embedding, hence a
  homeomorphism.  \qed
\end{proof}

\begin{corollary}
  \label{corl:Q:projlim}
  Let ${(p_{ij} \colon X_j \to X_i)}_{i \sqsubseteq j \in I}$ be a
  projective system of topological spaces, with canonical projective
  limit $X, {(p_i)}_{i \in I}$, and let every $X_i$ be sober.  Given
  any family of (resp., non-empty) compact saturated subsets $Q_i$ of
  $X_i$, for each $i \in I$, such that $Q_i = \upc p_{ij} [Q_{ij}]$
  for all $i \sqsubseteq j \in I$, there is a unique (resp.,
  non-empty) compact saturated subset $Q$ of $X$ such that
  $Q_i = \upc p_i [Q]$ for every $i \in I$.  \qed
\end{corollary}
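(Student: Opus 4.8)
The plan is to obtain this statement as an immediate reformulation of Theorem~\ref{thm:Q:projlim}. First I would spell out the canonical projective limit $Z$ of the system ${(\SVz {p_{ij}} \colon \SVz {X_j} \to \SVz {X_i})}_{i \sqsubseteq j \in I}$: by definition it is the space of tuples ${(Q_i)}_{i \in I} \in \prod_{i \in I} \SVz {X_i}$ such that $\SVz {p_{ij}} (Q_j) = Q_i$ for all $i \sqsubseteq j$ in $I$. Since the action of $\SVz$ on a morphism $p_{ij}$ sends $Q_j$ to $\upc p_{ij} [Q_j]$, the points of $Z$ are precisely the families of compact saturated subsets ${(Q_i)}_{i \in I}$ satisfying $Q_i = \upc p_{ij} [Q_j]$ for all $i \sqsubseteq j \in I$ --- exactly the data given in the hypothesis of the corollary.

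Next I would invoke the $\SVz$ half of Theorem~\ref{thm:Q:projlim}: since every $X_i$ is sober, the comparison map $\varphi \colon \SVz X \to Z$ of the associated projective $\SVz$-situation is a homeomorphism, in particular a bijection. Hence, for the given family ${(Q_i)}_{i \in I} \in Z$, there is a unique $Q \in \SVz X$ with $\varphi (Q) = {(Q_i)}_{i \in I}$. Unfolding $\varphi$ through the defining relations $q_i \circ \varphi = \SVz {p_i}$ of Definition~\ref{defn:situation}, and using that $\SVz {p_i} (Q) = \upc p_i [Q]$, this says precisely that $\upc p_i [Q] = Q_i$ for every $i \in I$; and $Q \in \SVz X$ means exactly that $Q$ is a compact saturated subset of $X$. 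Uniqueness of $Q$ among compact saturated subsets with $\upc p_i [Q] = Q_i$ for all $i$ follows from injectivity of $\varphi$, since any such $Q$ satisfies $\varphi (Q) = {(\upc p_i [Q])}_{i \in I} = {(Q_i)}_{i \in I}$. This settles the first assertion.

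The parenthesized variant is handled identically, replacing $\SVz$ by $\SV$ throughout and using the $\SV$ half of Theorem~\ref{thm:Q:projlim}: the points of the corresponding limit space $Z$ are now the compatible families of non-empty compact saturated subsets, and the unique preimage $Q \in \SV X$ is a non-empty compact saturated subset of $X$ with $\upc p_i [Q] = Q_i$ for all $i \in I$. There is no genuine obstacle here; the only point to check with care is the bookkeeping that identifies the compatibility condition $Q_i = \upc p_{ij} [Q_j]$ of the corollary with the defining equation $\SVz {p_{ij}} (Q_j) = Q_i$ (resp.\ $\SV {p_{ij}} (Q_j) = Q_i$) of $Z$, which holds by the very definition of the functor on morphisms.
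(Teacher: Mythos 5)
Your proof is correct and is exactly the argument the paper intends: the corollary is stated with an immediate \qed because it is just the pointwise unfolding of the statement that the comparison map $\varphi \colon \SV X \to Z$ (resp.\ $\SVz X \to Z$) of Theorem~\ref{thm:Q:projlim} is a homeomorphism, hence a bijection onto the space of compatible families. Your bookkeeping identifying the compatibility condition with the defining equations of $Z$, and reading uniqueness off injectivity of $\varphi$, matches the paper's reasoning.
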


The assumption of sobriety is necessary, as the following
counter-example, due to A. H. Stone
\cite[Example~3]{Stone:limproj:compact}, shows.
\begin{example}
  \label{exa:Stone}
  We let $X_n$ be $\nat$ for every natural number $n$ and
  $p_{mn} \colon X_n \to X_m$ be the identity map for all $m \leq n$.
  The topology on $X_n$ is obtained by declaring a subset $C$ closed
  if and only if $C \cap \{n, n+1, \cdots\}$ is finite or equal to the
  whole of $\{n, n+1, \cdots\}$.  In other words, $X_n$ is isomorphic
  to the disjoint sum of $\{0, 1, \cdots, n\}$ with the discrete
  topology with $\{n, n+1, \cdots\}$ with the cofinite topology.  Then
  each $X_n$ is compact (even Noetherian and $T_1$), but its
  projective limit is $\nat$ with the discrete topology, which is not.
  Hence, taking $Q_m \eqdef X_m$ for every $m \in \nat$, the
  conclusion of Corollary~\ref{corl:Q:projlim} would fail: the only
  possible subset $Q$ of $X$ such that $Q_m = \upc p_m [Q]$ for every
  $m \in \nat$ is $X$ itself, and it is not compact.  In other words,
  the topological embedding of $\SV X$ into the projective limit $Z$
  obtained in Proposition~\ref{thm:Q:projlim} is not surjective in
  this example.
\end{example}

\section{The Hoare hyperspaces}
\label{sec:hoare-hyperspaces}

For every topological space $X$, let $\HVz X$ be the set of all closed
subsets of $X$ (resp., $\HV X$ the set of all non-empty closed subsets
of $X$).  We take as a subbase the sets $\Diamond U$ of those closed
subsets of $X$ that intersect $U$, for every $U \in \Open X$.  The
resulting topology is called the \emph{lower Vietoris} topology, and
its specialization ordering is inclusion $\subseteq$.  This is a very
classical space in topology, although it is more often studied in
connection with other topologies, such as the (full) Vietoris
topology.  In domain theory, one usually considers the Scott topology
of inclusion, see \cite[Sections~6.2.2, 6.2.3]{AJ:domains} or
\cite[Section~IV-8]{GHKLMS:contlatt}, yielding the \emph{Hoare
  powerdomain}.  As with Smyth hyperspaces, Schalk was one of the
first to study the Hoare hyperspace $\HV X$, in connection with the
Hoare powerdomain, and their localic counterpart
\cite[Section~6]{schalk:diss}.

There are $\HVz$ and $\HV$ endofunctors on $\Topcat$, whose action
$\HV f$ on morphisms $f \colon X \to Y$ maps every closed subset $F$
of $X$ to $cl (f [F])$.  This endofunctor is part of a monad whose
unit $\eta^{\Hoare}_X \colon X \to \SVz X$ maps every $x \in X$ to
$\dc x$ and whose multiplication
$\mu^{\Hoare}_X \colon \SVz {\SVz X} \to \SVz X$ maps $\mathcal F$ to
$cl (\bigcup \mathcal F)$.

\begin{proposition}
  \label{prop:H:projlim}
  The comparison map $\varphi \colon \HV X \to Z$ of any projective
  $\HV$-situation is a topological embedding.  Similarly with $\HVz$
  in lieu of $\HV$.
\end{proposition}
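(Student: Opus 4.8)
The plan is to follow the proof of Proposition~\ref{prop:Q:projlim} almost word for word: the Hoare hyperspace fits the hypotheses of Lemma~\ref{lemma:T:projlim} just as well as the Smyth hyperspace. Concretely, I would show that $\HV$ (and likewise $\HVz$) is $R$-nice in the sense of Definition~\ref{defn:T:nice:S}, with $R$ the one-element set $\{*\}$ and $B_X (*, U) \eqdef \Diamond U$ for each $U \in \Open X$; by definition those sets form a subbase of the lower Vietoris topology on $\HV X$.

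For property~1 of Definition~\ref{defn:T:nice:S}, I would observe that $U \subseteq V$ trivially gives $\Diamond U \subseteq \Diamond V$, and that for any directed family ${(U_i)}_{i \in I}$ of open subsets of $X$ with union $U$, a closed set $F$ meets $\dcup_{i \in I} U_i$ if and only if it meets some $U_i$; hence $\Diamond U = \dcup_{i \in I} \Diamond {U_i}$, so $\Diamond{(\_)} = B_X (*, \_)$ is Scott-continuous from $\Open X$ to $\Open {(\HV X)}$. For property~2, given a continuous map $f \colon X \to Y$ and $V \in \Open Y$, the action of $\HV f$ on a closed set $F$ is $cl (f [F])$, and an open set meets a set exactly when it meets its closure; so ${(\HV f)}^{-1} (\Diamond V) = \{F \in \HV X \mid cl (f [F]) \cap V \neq \emptyset\} = \{F \in \HV X \mid f [F] \cap V \neq \emptyset\} = \{F \in \HV X \mid F \cap f^{-1} (V) \neq \emptyset\} = \Diamond {f^{-1} (V)} = B_X (*, f^{-1} (V))$. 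The same computations go through verbatim with $\HVz$ replacing $\HV$.

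Finally I would note that $\HV X$ is $T_0$, since its specialization preordering is inclusion $\subseteq$, which is antisymmetric (a closed set is determined by itself), and similarly $\HVz X$ is $T_0$. Lemma~\ref{lemma:T:projlim} then delivers that the comparison map $\varphi$ is full and, $\HV X$ being $T_0$, a topological embedding; likewise for $\HVz$. I do not expect a real obstacle here: the only spot needing the slightest care is property~2, where one must recall that $\HV f$ carries $F$ to the \emph{closure} $cl (f[F])$ of the image rather than the raw image $f[F]$ — but this changes nothing, since we only probe it with open sets. (As in the Smyth case, the genuine work, namely surjectivity of $\varphi$, is deferred to the theorem that follows, to be obtained — per the introduction — by reduction to the continuous valuation functor.)
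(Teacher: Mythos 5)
Your proposal is correct and follows essentially the same route as the paper: verify that $\HV$ (and $\HVz$) is $R$-nice with $R = \{*\}$ and $B_X(*,U) = \Diamond U$, check the two properties of Definition~\ref{defn:T:nice:S} (the paper even notes the slightly stronger fact that $\Diamond$ commutes with arbitrary unions), observe that $\HV X$ is $T_0$ since $\subseteq$ is antisymmetric, and conclude by Lemma~\ref{lemma:T:projlim}. Your computation for property~2, including the point about closures being invisible to open sets, is exactly the paper's.
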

\begin{proof}
  We apply Lemma~\ref{lemma:T:projlim}, and to this end we verify that
  $\HV$ is $R$-nice with a one-element set $\{*\}$ for $R$.  We let
  $B_X (*, \allowbreak U) \eqdef \Diamond U$.  Property~1 of
  Definition~\ref{defn:T:nice:S} stems from the fact that the
  $\Diamond$ operator commutes with arbitrary unions.  For property~2,
  for every continuous map $f \colon X \to Y$, for every
  $V \in \Open Y$,
  ${(\HV f)}^{-1} (\Diamond V) = \{F \in \HV X \mid cl (f [F]) \cap V
  \neq \emptyset\} = \{F \in \HV X \mid f [F] \cap V \neq \emptyset\}
  = \{F \in \HV X \mid F \cap f^{-1} (V) \neq \emptyset\} = \Diamond
  {f^{-1} (V)}$.  Finally, $\HV X$ is $T_0$, because its
  specialization preordering $\subseteq$ is an ordering.  Similarly
  with $\HVz$.  \qed
\end{proof}


Dealing with $\HVz$ and $\HV$ is more difficult than dealing with
$\SVz$ and $\SV$.  In order to make the approach simpler, we take a
detour through continuous valuations.  Doing so, we run the risk of
obtaining suboptimal results, but we will argue through a list of
examples that they are still reasonably tight.
\begin{definition}
  \label{defn:inf:egame}
  For every topological space $X$, for every closed subset $F$ of $X$,
  let $\infty.\egame F$ map every $U \in \Open X$ to $\infty$ if $U$
  intersects $F$, and to $0$ otherwise.
\end{definition}
The notation comes from the example games $\egame F$ of
\cite{JGL-icalp07}, multiplied by the scalar $\infty$.
\begin{lemma}
  \label{lemma:inf:egame}
  Let $X$ be a topological space.
  \begin{enumerate}
  \item For every $F \in \HVz X$, $\infty.\egame F$ is a tight
    valuation, hence a continuous valuation.
  \item The map $F \mapsto \infty.\egame F$ is a topological embedding
    of $\HV X$ (resp., $\HVz X$) into $\Val X$.
  \item There is a natural transformation $\infty.\egame\relax$ from
    $\HV$ (resp., $\HVz$) to $\Val$, defined on each topological space
    $X$ as $F \mapsto \infty.\egame F$.
  \end{enumerate}
\end{lemma}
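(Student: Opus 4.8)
The three items are closely linked, so I would prove them roughly in the order (1), then (3), then (2), since (2) is the one needing genuine work (surjectivity onto the image and properness of the inverse).

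For item~(1), I would simply verify the tightness condition from the definition. Fix $F \in \HVz X$, let $r \in \Rp$ and $U \in \Open X$ with $r < (\infty.\egame F)(U)$. Then $(\infty.\egame F)(U) = \infty$, so $U$ intersects $F$; pick $x \in U \cap F$. Take $Q \eqdef \upc x$, which is compact saturated and contained in $U$. Any open neighbourhood $V$ of $Q$ contains $x$, hence meets $F$, so $(\infty.\egame F)(V) = \infty \geq r$. This gives tightness, and tightness implies continuity (stated in the excerpt, citing \cite[Lemma~6.2]{JGL:kolmogorov}); strictness and modularity of $\infty.\egame F$ are immediate from the definition (strictness because $\emptyset$ meets nothing; modularity because $\infty.\egame F$ takes only the values $0$ and $\infty$ and $U \cup V$ meets $F$ iff $U$ or $V$ does, while $U \cap V$ meeting $F$ forces both to meet $F$---a short case check). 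Scott-continuity: if $(U_i)_i$ is directed with union $U$ and $U$ meets $F$, then some $U_i$ meets $F$, so the sup of the values is already attained.

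For item~(3), naturality amounts to checking that for every continuous $f \colon X \to Y$ and every $F \in \HVz X$ one has $f[\infty.\egame F] = \infty.\egame{cl(f[F])}$, i.e.\ that for every $V \in \Open Y$, $(\infty.\egame F)(f^{-1}(V)) = (\infty.\egame{cl(f[F])})(V)$. Both sides are $\infty$ or $0$; the left side is $\infty$ iff $f^{-1}(V)$ meets $F$, the right side iff $V$ meets $cl(f[F])$, equivalently iff $V$ meets $f[F]$ (since $V$ is open), equivalently iff $f^{-1}(V)$ meets $F$. So the two sides agree, and $\infty.\egame\relax$ is natural from $\HVz$ to $\Val$; the restriction to non-empty closed sets lands in non-empty images, so it is also natural from $\HV$ to $\Val$ (note $\infty.\egame F$ for $F \neq \emptyset$ need not be a probability valuation, so the codomain is just $\Val$, not $\Val_1$, which matches the statement).

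For item~(2), the map $\iota \colon F \mapsto \infty.\egame F$ is continuous: $\iota^{-1}([U > r]) = \{F \mid (\infty.\egame F)(U) > r\}$, which is $\{F \mid U \cap F \neq \emptyset\} = \Diamond U$ if $r < \infty$ and $\emptyset$ if $r = \infty$ (but $r$ ranges over $\Rp$, so $r<\infty$ always), so preimages of subbasic opens of $\Val X$ are the subbasic opens $\Diamond U$ of $\HV X$ (or empty). This same computation shows $\iota$ is full onto its image, since every $\Diamond U$ arises this way. It is injective: if $\infty.\egame F = \infty.\egame{F'}$ then $F$ and $F'$ meet exactly the same open sets, hence have the same complement among open sets, hence $F = F'$ (closed sets are determined by which opens they meet---equivalently by their complement). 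A continuous, injective, full map is a topological embedding by the discussion of ``full'' maps in Section~\ref{sec:preliminaries}. The same argument works verbatim for $\HVz$, the only change being that $\Diamond \emptyset = \emptyset$ corresponds to the empty closed set, and $r = 0$ does not separate $\emptyset$ from a set meeting no open neighbourhood---but no such nonempty closed set exists in a nonempty space, and in any case the subbasic opens $\Diamond U$ still generate the topology and are still recovered as $\iota^{-1}([U>0])$.

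The main obstacle is the full/embedding part of item~(2): one must be careful that the subbasic opens $[U > r]$ of $\Val X$, as $r$ ranges over all of $\Rp$ (not just $r < \infty$ or $r = 0$), pull back exactly to the subbase $\{\Diamond U \mid U \in \Open X\}$ of $\HV X$ up to the empty set, so that fullness holds; and that injectivity genuinely uses that closed sets, not just saturated sets, are separated by the opens they meet. Everything else is routine case-checking on the two-valued valuations $\infty.\egame F$.
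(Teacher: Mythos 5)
Your proof is correct and follows essentially the same route as the paper's: the same case-check for modularity, the same tightness witness $Q = \upc x$, the same computation $\iota^{-1}([U>r]) = \Diamond U$ giving continuity and fullness, and the same naturality computation. The only cosmetic differences are that you verify injectivity and Scott-continuity directly, where the paper invokes the facts that a full map from a $T_0$ space is injective and that tight valuations are continuous.
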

\begin{proof}
  1. It is clear that $\nu \eqdef \infty.\egame F$ is strict and
  monotonic.  We claim that it is modular.  Let $U, V \in \Open X$.
  If $U$ or $V$ intersects $F$, then $U \cup V$ does, too, so both
  $\nu (U \cup V) + \nu (U \cap V)$ and $\nu (U) + \nu (V)$ are equal
  to $\infty$.  Otherwise, $F$ cannot intersect $U \cup V$, and
  certainly not $U \cap V$, so both $\nu (U \cup V) + \nu (U \cap V)$
  and $\nu (U) + \nu (V)$ are equal to $0$.  As far as tightness is
  concerned, let $r \in \Rp$ and $U \in \Open X$ such that
  $r < \infty.\egame F (U)$.  We wish to find a compact saturated
  subset $Q$ of $X$ included in $U$ such that
  $r \leq \infty.\egame F (V)$ for every open neighborhood $V$ of $Q$.
  The intersection $U \cap F$ is non-empty, since
  $0 \leq r < \infty.\egame F (U)$.  We pick $x$ from $U \cap F$, and
  we define $Q$ as $\upc x$.  Every open neighborhood $V$ of $Q$
  intersects $F$ at $x$, so $\infty.\egame F (V) = \infty \geq r$.


  2. For every $r \in \Rp$ and every open subset $U$ of $X$,
  $\infty.\egame F \in [U > r]$ if and only if $F$ is in $\Diamond U$,
  and this shows continuity.  This also shows that this map is full,
  since every subbasic open set $\Diamond U$ is the inverse image of,
  say, $[U > 0]$.  Since $\HV X$ (resp., $\HVz X$) is $T_0$, the map
  $F \mapsto \infty.\egame F$ is a topological embedding.

  3.  We only deal with $\HV$.  Naturality means that for every
  continuous map $f \colon X \to Y$, for every closed subset $F$ of
  $X$, $\Val f (\infty.\egame F) = \infty.\egame {\HV f (F)}$.  For
  every open subset $V$ of $Y$,
  $\Val f (\infty.\egame F) (V) = \infty.\egame F (f^{-1} (V))$ is
  equal to $\infty$ if $f^{-1} (V)$ intersects $F$, and to $0$
  otherwise, while $\infty.\egame {\HV f (F)} (V)$ equals $\infty$ if
  $V$ intersects $\HV f (F)$, and to $0$ otherwise.  Now
  $\HV f (F) = cl (f [F])$ intersects $V$ if and only if $f [F]$
  intersects $V$, if and only if $F$ intersects $f^{-1} (V)$.  \qed
\end{proof}

In the other direction, every continuous valuation $\nu$ on a space
$X$ has a \emph{support} $\supp \nu$, defined as the smallest closed
subset $F$ of $X$ such that $\nu (X \diff F)=0$.  Showing that this
exists is easy.  We define the family $\mathcal D$ of open subsets $U$
of $X$ such that $\nu (U)=0$, and we observe that $\mathcal D$ is
non-empty (since $\emptyset \in \mathcal D$), Scott-closed (because
$\nu$ is Scott-continuous), and directed.  For the latter, it is
enough to notice that for all $U, V \in \mathcal D$,
$\nu (U \cup V) + \nu (U \cap V) = \nu (U) + \nu (V) = 0$, so
$\nu (U \cup V) = 0$, whence $U \cup V \in \mathcal D$.  Hence the
supremum $U_\infty$ of $\mathcal D$ is in $\mathcal D$, and then
$\supp \nu \eqdef U_\infty$.  For now, we will be content to note the
following.
\begin{lemma}
  \label{lemma:egame:inf}
  Every continuous valuation $\nu$ on a topological space $X$ that
  only takes the values $0$ and $\infty$ is equal to $\infty.\egame F$
  for a unique closed subset $F$ of $X$, and $F \eqdef \supp \nu$.
\end{lemma}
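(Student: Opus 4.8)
The plan is to show two things: first, that $\nu = \infty.\egame F$ where $F \eqdef \supp\nu$; second, that this $F$ is the only closed set with $\nu = \infty.\egame{F}$. Uniqueness is immediate from part~2 of Lemma~\ref{lemma:inf:egame}: the map $F \mapsto \infty.\egame F$ is a topological embedding of $\HVz X$ into $\Val X$, in particular injective, so at most one closed set can represent $\nu$.

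For the main assertion, I would unwind the definitions. We are given that $\nu (U) \in \{0,\infty\}$ for every $U \in \Open X$. Set $F \eqdef \supp\nu$, the support as constructed just before the statement; recall $\supp\nu = U_\infty^c$ where $U_\infty$ is the largest open set $U$ with $\nu(U) = 0$. We must verify $\nu (U) = \infty.\egame F (U)$ for all $U \in \Open X$, i.e.\ that $\nu (U) = \infty$ precisely when $U$ intersects $F$, and $\nu(U) = 0$ otherwise. If $U$ does not intersect $F = U_\infty^c$, then $U \subseteq U_\infty$, so $\nu(U) \le \nu(U_\infty) = 0$ by monotonicity, giving $\nu(U) = 0 = \infty.\egame F (U)$. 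Conversely, suppose $U \cap F \neq \emptyset$. Then $U \not\subseteq U_\infty$, so $U \notin \mathcal D$, i.e.\ $\nu(U) \neq 0$; since $\nu$ only takes the values $0$ and $\infty$, we conclude $\nu(U) = \infty = \infty.\egame F (U)$. This covers all cases, so $\nu = \infty.\egame F$, and by part~1 of Lemma~\ref{lemma:inf:egame} this is indeed a (tight, hence continuous) valuation, consistent with the hypothesis.

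There is essentially no obstacle here: the argument is a direct manipulation of the definition of support, using only monotonicity of $\nu$ and the two-valuedness hypothesis. The one point deserving a word of care is why the largest zero-open-set $U_\infty$ exists and why $F = U_\infty^c$ is well-defined as ``the smallest closed set with $\nu(X \diff F) = 0$''; but this is exactly what was established in the paragraph preceding the lemma (the family $\mathcal D$ is Scott-closed and directed, so has a top element in $\mathcal D$), so I would simply cite it. Thus the proof is a short three-line verification plus an appeal to the embedding property of Lemma~\ref{lemma:inf:egame}(2) for uniqueness.
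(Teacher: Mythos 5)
Your proof is correct and follows essentially the same route as the paper: identify $F$ with $\supp\nu$ and check $\nu=\infty.\egame{\supp\nu}$ by unwinding the construction of the support (the paper compresses this verification into a ``by definition''). The only cosmetic difference is uniqueness, which you obtain by citing the injectivity of the embedding $F\mapsto\infty.\egame F$ from Lemma~\ref{lemma:inf:egame}, whereas the paper reproves it directly by evaluating both sides on $X\diff F$ and $X\diff F'$; both are fine.
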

\begin{proof}
  For every $U \in \Open X$, by definition $\nu (U)=0$ if and only if
  $U$ does not intersect $\supp \nu$.  Since $\nu (U) \neq 0$ is
  equivalent to $\nu (U) = \infty$, $\nu = \egame {\supp \nu}$.  As
  for the uniqueness of $F$, let $F$ and $F'$ be two closed subsets of
  $X$ such that $\infty.\egame F = \infty.\egame {F'}$.  Applying both
  sides to $X \diff F$, we obtain that
  $F' \cap (X \diff F)=\emptyset$, hence $F' \subseteq F$, and using
  $X \diff F'$ instead gives us $F \subseteq F'$. \qed
\end{proof}

\begin{theorem}
  \label{thm:H:projlim}
  Let ${(p_{ij} \colon X_j \to X_i)}_{i \sqsubseteq j \in I}$ be a
  projective system of topological spaces, with canonical projective
  limit $X, {(p_i)}_{i \in I}$.  If:
  \begin{enumerate}
  \item the projective system is an ep-system,
  \item or every $X_i$ is sober and every $p_{ij}$ is a proper map,
  \item or $I$ has a countable cofinal subset and each $X_i$ is
    locally compact sober,
  \end{enumerate}
  then
  ${(\HV {p_{ij}} \colon \HV {X_j} \to \HV {X_i})}_{i \sqsubseteq j
    \in I}$ is a projective system of topological spaces, and
  $\HV X, {(\HV {p_i})}_{i \in I}$ is its projective limit, up to
  homeomorphism.  Similarly with $\HVz$ in lieu of $\HV$.
\end{theorem}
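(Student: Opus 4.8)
The plan is to reduce everything to the continuous valuation case by transporting closed sets along the embedding $F \mapsto \infty.\egame F$ of Lemma~\ref{lemma:inf:egame}, handling $\HV$ and $\HVz$ in parallel. By Proposition~\ref{prop:H:projlim} the comparison map $\varphi \colon \HV X \to Z$ (resp.\ $\HVz X \to Z$) is already a topological embedding, so the entire task is to prove that $\varphi$ is surjective. Accordingly, I would fix an element $\vec F \eqdef {(F_i)}_{i \in I}$ of $Z$: each $F_i$ is a (resp.\ non-empty) closed subset of $X_i$, and $F_i = \HV {p_{ij}} (F_j) = cl (p_{ij} [F_j])$ for all $i \sqsubseteq j$ in $I$. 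The goal is then to produce a (resp.\ non-empty) closed subset $F$ of $X$ with $\varphi (F) = \vec F$, that is, with $cl (p_i [F]) = F_i$ for every $i \in I$.

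The first step is to push $\vec F$ through $\infty.\egame\relax$. By Lemma~\ref{lemma:inf:egame}(1), each $\nu_i \eqdef \infty.\egame {F_i}$ is a tight continuous valuation on $X_i$, and by the naturality of $\infty.\egame\relax$ (Lemma~\ref{lemma:inf:egame}(3)) one has $p_{ij} [\nu_j] = \Val {p_{ij}} (\infty.\egame {F_j}) = \infty.\egame {\HV {p_{ij}} (F_j)} = \infty.\egame {F_i} = \nu_i$ for all $i \sqsubseteq j$. Thus ${(\nu_i)}_{i \in I}$ is a compatible family of continuous valuations in the sense of Proposition~\ref{prop:V:projlim:basic}, and each hypothesis of the present theorem lands inside an applicable case of that proposition: case~1 here is its case~1, case~3 here is its case~2, and case~2 here is its case~4 --- this last being exactly where the tightness of the $\nu_i$ is needed. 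Hence Proposition~\ref{prop:V:projlim:basic} supplies a continuous valuation $\nu$ on $X$ with $p_i [\nu] = \nu_i$ for every $i \in I$.

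The second step is to recognise $\nu$ as a valuation of the form $\infty.\egame F$. For any $U \in \Open X$ I would write $U = \dcup_{i \in I} p_i^{-1} (U_i)$, with $U_i$ the largest open subset of $X_i$ such that $p_i^{-1} (U_i) \subseteq U$, as recalled in Section~\ref{sec:preliminaries}; then Scott-continuity of $\nu$ gives $\nu (U) = \dsup_{i \in I} \nu (p_i^{-1} (U_i)) = \dsup_{i \in I} p_i [\nu] (U_i) = \dsup_{i \in I} \infty.\egame {F_i} (U_i)$, a directed supremum all of whose terms are $0$ or $\infty$, hence itself $0$ or $\infty$. So $\nu$ takes only the values $0$ and $\infty$, and Lemma~\ref{lemma:egame:inf} yields $\nu = \infty.\egame F$ with $F \eqdef \supp \nu$ a closed subset of $X$. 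In the $\HV$ case, picking any $i$ and using $F_i \neq \emptyset$, we get $\nu (X) = p_i [\nu] (X_i) = \infty.\egame {F_i} (X_i) = \infty \neq 0$, so $F \neq \emptyset$, i.e.\ $F \in \HV X$.

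The last step is routine: by naturality once more, $\infty.\egame {\HV {p_i} (F)} = \Val {p_i} (\infty.\egame F) = p_i [\nu] = \nu_i = \infty.\egame {F_i}$, so the uniqueness clause of Lemma~\ref{lemma:egame:inf} (equivalently, injectivity of the embedding of Lemma~\ref{lemma:inf:egame}(2)) forces $\HV {p_i} (F) = F_i$ for every $i$; hence $\varphi (F) = \vec F$, $\varphi$ is surjective, and --- being a topological embedding --- a homeomorphism. The same argument, simply omitting the non-emptiness bookkeeping, settles $\HVz$. I do not expect a genuine obstacle; the only points of care are checking that each case of the statement really does fall under an applicable case of Proposition~\ref{prop:V:projlim:basic}, and noting that its case~3 is \emph{not} available here, since $\infty.\egame {F_i}$ fails to be locally finite as soon as $F_i \neq \emptyset$ --- which is precisely why the present theorem carries no ``LCS-complete'' clause.
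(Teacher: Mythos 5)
Your proposal is correct and follows essentially the same route as the paper's proof: reduction to continuous valuations via the embedding $F \mapsto \infty.\egame F$ and its naturality, application of Proposition~\ref{prop:V:projlim:basic} with exactly the same matching of cases (including using tightness for the proper-map case), the directed-supremum argument showing $\nu$ takes only the values $0$ and $\infty$, and recovery of $F$ via Lemma~\ref{lemma:egame:inf}. The only cosmetic difference is your non-emptiness argument via $\nu(X)=\infty$, which is an equivalent variant of the paper's remark.
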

\begin{proof}
  We only deal with $\HV$.  Let $\varphi \colon \HV X \to Z$ be the
  comparison map; that is a topological embedding by
  Proposition~\ref{prop:H:projlim}, and it remains to show that it is
  surjective.  Explicitly, $\varphi$ maps every $F \in \Hoare X$ to
  ${(\HV {p_i} (F)))}_{i \in I}$.  Let
  $\vec F \eqdef {(F_i)}_{i \in I}$ be any element of $Z$.  We show
  that there is a unique (non-empty) closed subset $F$ of $X$ such
  that $F_i = \HV {p_i} (F)$ for every $i \in I$.  By
  Lemma~\ref{lemma:inf:egame}, item~1,
  $\nu_i \eqdef \infty.\egame {F_i}$ is a continuous (even tight)
  valuation on $X_i$ for each $i \in I$.  For all $i \sqsubseteq j$ in
  $I$, we use the natural transformation $\infty.\egame\relax$ of
  Lemma~\ref{lemma:inf:egame}, item~3, in order to obtain that
  $\Val {p_{ij}} (\infty.\egame {F_j}) = \infty.\egame {\HV {p_{ij}}
    (F_j)}$, equivalently, that $p_{ij} [\nu_j] = \nu_i$.

  There is a unique continuous valuation $\nu$ on $X$ such that
  $\nu_i = p_i [\nu]$ for every $i \in I$, by
  Proposition~\ref{prop:V:projlim:basic}.  In case~1, we use case~1 of
  that proposition; in case~2, we use case~4 of the proposition,
  recalling that each $\nu_i$ is tight; in case~3, we use case~2 of
  the proposition.

  For every open subset $U$ of $X$, we can write $U$ as
  $\dcup_{i \in I} p_i^{-1} (U_i)$ where $U_i$ is the largest open
  subset of $X_i$ such that $p_i^{-1} (U_i) \subseteq U$.  Then
  $\nu (U) = \dsup_{i \in I} \nu_i (U_i)$, which implies that
  $\nu (U)$ is equal to $0$ or to $\infty$.  By
  Lemma~\ref{lemma:egame:inf}, $\nu$ is equal to $\infty.\egame F$ for
  a unique closed subset $F$ of $X$.

  For every $i \in I$, using the naturality of the transformation
  $\infty.\egame\relax$ (Lemma~\ref{lemma:egame:inf}, item~3),
  $\Val {p_i} (\infty.\egame F) = \infty.\egame {\HV {p_i} (F)}$,
  namely $p_i [\nu] = \infty.\egame {\HV {p_i} (F)}$.  By the
  uniqueness part of Lemma~\ref{lemma:egame:inf},
  $F_i = \HV {p_i} (F)$.  We also note that $F$ cannot be empty,
  otherwise every $F_i$ would be empty as well (in the case of $\HV$,
  not $\HVz$).  This finishes to show that $\vec F = \varphi (F)$.
  \qed
\end{proof}

\begin{remark}
  \label{rem:H:projlim:ep}
  The case of ep-systems can also be obtained by using
  Proposition~\ref{prop:ep}, and relying on the fact that both $\HV Y$
  and $\HVz Y$ are sober, hence $T_0$ spaces and monotone convergence
  spaces, for every space $Y$ \cite[Proposition~1.7]{Schalk:PhD}.
\end{remark}

\begin{remark}
  \label{rem:ep:proper}
  Every projection is a proper map, as we will argue shortly.  It
  follows that, when every $X_i$ is sober, item~2 of
  Theorem~\ref{thm:H:projlim} subsumes item~1.  In order to see that
  every projection $p$ is proper, let $e$ be its associated embedding;
  the image $\dc p [F]$ of any closed set $F$ is equal to
  $e^{-1} (F)$, hence is closed,
  and the inverse image $p^{-1} (Q)$ of a compact saturated set $Q$ is
  equal to $\upc e [Q]$, which is compact saturated.
\end{remark}


The following examples show that one cannot dispense flatly either
with $I$ having a countable cofinal subset, or with the spaces $X_i$
being sober, or with the spaces $X_i$ being locally compact in item~3
of Theorem~\ref{thm:H:projlim}.

\begin{example}
  \label{ex:Waterhouse:H}
  Consider any example of a projective system
  ${(p_{ij} \colon X_j \to X_i)}_{i \sqsubseteq j \in I}$ of non-empty
  sets with an empty projective limit $X$ and with surjective bonding
  maps $p_{ij}$ \cite{Henkin:invlimit,Waterhouse:projlim:empty}.  We
  give each $X_i$ the discrete topology.  Let $\vec F$ be
  ${(X_i)}_{i \in I}$.  Since each $p_{ij}$ is surjective, we have
  $X_i = p_{ij} [X_j] = cl (p_{ij} [X_j])$ for all
  $i \sqsubseteq j \in I$.  However $\vec F$ does not arise as
  $\varphi (F) = {(cl (p_i [F]))}_{i \in I}$ for any closed subset of
  the (empty) projective limit $X$, since no $X_i$ is empty.
\end{example}

\begin{example}
  \label{exa:Stone:H}
  We reuse Stone's example (Example~\ref{exa:Stone}).  For each $n \in
  \nat$, $F_n \eqdef \{n, n+1, \cdots\}$ is the closure of any of its
  infinite subsets in $X_n$, hence $F_m = cl (p_{mn} [F_n])$ for all
  $m \leq n \in \nat$.  But there is no closed subset $F$ of its
  projective limit ($\nat$, with the discrete topology, every $p_n$
  being the identity map) such that $F_n = cl (p_n [F])$ for every $n
  \in \nat$: if such an $F$ existed, it would be included in every
  $F_n$, hence would be empty, and $F_n \neq cl (p_n [F])$ for any $n
  \in \nat$.  The spaces $X_n$ are locally compact, in fact even
  Noetherian (every subset is compact), but not sober.
\end{example}

While Stone's example is ultimately based on encoding a supremum of
topologies on the same set through projective limits
\cite[Remark~3.2]{JGL:proj:class}, the next example is based on the fact
that filtered intersections of subspaces are projective limits, too.
We make this precise as follows.
\begin{remark}
  \label{rem:inter:projlim}
  Let $I, \sqsubseteq$ be a directed preordered set, let $Y$ be a
  topological space and let $X_i$ be a subspace of $Y$, one for each
  $i \in I$, such that that $i \sqsubseteq j$ implies
  $X_i \supseteq X_j$.  Then
  ${(p_{ij} \colon X_j \to X_i)}_{i \sqsubseteq j \in I}$ forms a
  projective system, where each $p_{ij}$ is the inclusion map.  Let
  $X \eqdef \bigcap_{i \in I} X_i$, with the subspace topology.  There
  are inclusion maps $p_i \colon X \to X_i$, and they turn
  $X, {(p_i)}_{i \in I}$ into a projective limit of
  ${(p_{ij} \colon X_j \to X_i)}_{i \sqsubseteq j \in I}$.
\end{remark}

A space is \emph{Baire} if and only if the intersection of countably
many dense open subsets is dense.  It is \emph{completely Baire} if
and only if every closed subspace is Baire.  
The following implications hold:
\begin{flushright}
  $\left.
    \mbox{
      \begin{tabular}{r}
        locally compact sober\\
        Polish $\limp$ quasi-Polish $\limp$ domain-complete
      \end{tabular}
      }
  \right\}
  \limp$ 
  LCS-complete $\limp$ completely Baire
  $\limp$ Baire,
\end{flushright}
see \cite[Figure~1]{dBGLJL:LCScomplete}.

\begin{lemma}
  \label{lemma:not:cBaire}
  In any non-completely Baire space $Y$, we can find an antitonic
  sequence
  $X_0 \supseteq X_1 \supseteq \cdots \supseteq X_n \supseteq \cdots$
  of subspaces such that each $X_n$ is dense in $X_0$, but whose
  intersection $X$ is not dense in $X_0$.  Letting
  $p_{mn} \colon X_n \to X_m$ be the inclusion map,
  ${(p_{mn} \colon X_n \to X_m)}_{m \leq n \in \nat}$ is a projective
  system with the property that the conclusion of
  Theorem~\ref{thm:H:projlim} fails, namely the comparison map
  $\varphi \colon \HV X \to Z$ (resp., from $\HVz X$ to $Z$) is not
  surjective.
\end{lemma}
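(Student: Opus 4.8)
The plan is to read off the required sequence from a closed subspace of $Y$ that fails to be Baire, and then to observe that the constant tuple ${(X_n)}_{n \in \nat}$ lies in $Z$ but has no preimage under $\varphi$. Since $Y$ is not completely Baire, there is a closed subspace $X_0$ of $Y$ that is not Baire, i.e.\ there are dense open subsets $U_0, U_1, \ldots, U_n, \ldots$ of $X_0$ whose intersection $X \eqdef \bigcap_{n \in \nat} U_n$ is not dense in $X_0$. Note that $X_0$ is non-empty, since the empty space is (vacuously) Baire. For each $n \in \nat$, let $X_n \eqdef \bigcap_{k < n} U_k$, with the subspace topology from $X_0$. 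Then $X_0 \supseteq X_1 \supseteq \cdots \supseteq X_n \supseteq \cdots$; for $n \geq 1$, $X_n$ is a finite intersection of dense open subsets of $X_0$, hence dense (and open) in $X_0$, and $X_0$ itself is trivially dense in $X_0$; and $\bigcap_{n \in \nat} X_n = \bigcap_{n \in \nat} U_n = X$ is not dense in $X_0$. This gives the antitonic sequence claimed in the statement.

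Next, let $p_{mn} \colon X_n \to X_m$ be the inclusion for $m \leq n$ and $p_n \colon X \to X_n$ the inclusion; by Remark~\ref{rem:inter:projlim}, ${(p_{mn} \colon X_n \to X_m)}_{m \leq n \in \nat}$ is a projective system and $X, {(p_n)}_{n \in \nat}$ is its canonical projective limit. By Proposition~\ref{prop:H:projlim}, the comparison map $\varphi \colon \HV X \to Z$ (resp.\ $\HVz X \to Z$) is a topological embedding, so it suffices to show it is not surjective. Since each $p_n$ is an inclusion and $F \subseteq X \subseteq X_n$ for every $F \in \HV X$, $\varphi$ sends $F$ to ${(\HV {p_n} (F))}_{n \in \nat} = {(cl_{X_n} (F))}_{n \in \nat}$.

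Now consider $\vec F \eqdef {(X_n)}_{n \in \nat}$, where each $X_n$ is viewed as a closed subset of the space $X_n$. Each $X_n$ is non-empty (being dense in the non-empty space $X_0$), so $X_n \in \HV {X_n}$; and for $m \leq n$, $\HV {p_{mn}} (X_n) = cl_{X_m} (X_n) = cl_{X_0} (X_n) \cap X_m = X_0 \cap X_m = X_m$, where we used that $X_n$ is dense in $X_0$ and $X_n \subseteq X_m \subseteq X_0$. Hence $\vec F \in Z$. Suppose $\varphi (F) = \vec F$ for some $F \in \HV X$; then in particular $cl_{X_0} (F) = X_0$. But $F \subseteq X$, and $X$ is not dense in $X_0$, so $cl_{X_0} (F) \subseteq cl_{X_0} (X) \subsetneq X_0$, a contradiction. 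The same works for $\HVz$: the only extra case is $F = \emptyset$, but then $cl_{X_0} (F) = \emptyset \neq X_0$ since $X_0$ is non-empty, again contradicting $cl_{X_0} (F) = X_0$. Thus $\vec F$ has no preimage, $\varphi$ is not surjective, and the conclusion of Theorem~\ref{thm:H:projlim} fails.

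The argument is essentially bookkeeping, so there is no serious obstacle; the only step deserving care is checking that $\vec F$ really lies in $Z$, i.e.\ that $\HV {p_{mn}} (X_n) = X_m$. This reduces to $X_n$ being dense in $X_m$, which follows from $X_n$ being dense in $X_0$ together with the fact that closure in a subspace is the intersection of the subspace with the ambient closure.
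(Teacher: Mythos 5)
Your proposal is correct and follows essentially the same route as the paper: extract a non-Baire closed subspace $X_0$ with dense open sets $U_n$ whose intersection is not dense, set $X_n \eqdef \bigcap_{k<n} U_k$, invoke Remark~\ref{rem:inter:projlim}, and show that the tuple ${(X_n)}_{n\in\nat}$ lies in $Z$ (via density of $X_n$ in $X_0$) but cannot be $\varphi(F)$ for any closed $F \subseteq X$, since $cl_{X_0}(F) \subseteq cl_{X_0}(X) \subsetneq X_0$. The only cosmetic differences are that you phrase the dense sets intrinsically in $X_0$ rather than as traces of opens of $Y$, and that you add the (harmless, slightly more careful) remarks that $X_0$ is non-empty and that the empty closed set is handled in the $\HVz$ case.
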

\begin{proof}
  There is a closed subset $F$ of $Y$, and there are open subsets
  $U_n$, $n \in \nat$, of $Y$ such that $U_n \cap F$ is dense in $F$
  for every $n \in \nat$, but whose intersection is not dense in $F$.
  Let $X_n \eqdef U_0 \cap \cdots \cap U_{n-1} \cap F$ for each
  $n \in \nat$.  When $n=0$, this means that $X_0 \eqdef F$.

  For each $n \in \nat$, $X_n$ is open in $F$, and is also dense,
  because the intersection of any two dense open sets in $F$ is dense
  and open.  (Quick proof: let $U$ and $V$ be dense and open in $F$,
  we show that $U \cap V$ is dense in $F$ as follows.  An equivalent
  definition of a dense subset $A$ of $F$ is that any non-empty open
  subset $W$ of $F$ should intersect $A$.  Now, for every non-empty
  open subset $W$ of $F$, $V \cap W$ is open, and non-empty since $V$
  is dense, and then $U \cap (V \cap W)$ is non-empty because $U$ is
  dense.  Hence $W$ intersects $U \cap V$.)  It is clear that
  $X_0 \supseteq X_1 \supseteq \cdots \supseteq X_n \supseteq \cdots$,
  and $X \eqdef \bigcap_{n \in \nat} X_n$ is equal to
  $\bigcap_{n \in \nat} (U_n \cap F)$, hence is not dense in $F=X_0$.

  By Remark~\ref{rem:inter:projlim},
  ${(p_{mn} \colon X_n \to X_m)}_{m \leq n \in \nat}$ is a projective
  system, where $p_{mn}$ is the inclusion map, and with a projective
  limit $X, {(p_n)}_{n \in \nat}$ obtained by letting
  $X \eqdef \bigcap_{n \in \nat} X_n$ and $p_n$ be inclusion maps.  It
  makes no difference whether we reason with that projective limit or
  with the canonical projective limit.

  For all $m \leq n \in \nat$, the closure of $p_{mn} [X_n] = X_n$ in
  $X_m$ is the whole of $X_m$.  Indeed, $X_n$ is dense in $F$, so
  every non-empty subset $V$ of $F$ intersects $X_n$, and that implies
  that every non-empty subset $U$ of $X_m$, which we can write as
  $V \cap X_m$ for some (necessarily non-empty) open subset $V$ of
  $F$, will intersect $X_n$.  In particular, if the closure of $X_n$
  were not equal to $X_m$, then its complement in $X_m$ would be such
  an open set $U$, but $U$ certainly does not intersect $X_n$.

  Therefore ${(X_n)}_{n \in \nat}$ is an element of the canonical
  projective limit $Z$ of
  ${(\HV {p_{mn}} \colon \HV {X_n} \to \HV {X_m})}_{m \leq n \in
    \nat}$.  However, it is not in the image of the comparison map
  $\varphi \colon \HV X \to Z$.  If it were, then there would be a
  closed subset $C$ of $X$ such that, in particular, the closure of
  $p_0 [C] = C$ in $X_0 = F$ would be equal to $F$.  But the closure
  of $C$ in $F$ is included in the closure of $X$ in $F$, which is
  strictly contained in $F$, since $X$ is not dense in $F$.  Similarly
  with $\HVz$ in place of $\HV$.  \qed
\end{proof}

\begin{example}
  \label{exa:rat:H}
  The space $\rat$ of rational numbers with its usual metric topology
  is not Baire, hence not completely Baire.  It is Hausdorff, hence
  Lemma~\ref{lemma:not:cBaire} provides us with a case where the
  conclusion of Theorem~\ref{thm:H:projlim} fails, although the index
  set is countable and every space $X_i$ is sober.  For an explicit
  construction, we enumerate the elements of $\rat$ as
  ${(q_n)}_{n \in \nat}$, and we define $X_n$ as
  $\rat \diff \{q_0, \cdots, q_{n-1}\}$.  In that case, the projective
  limit $X = \bigcap_{n \in \nat} X_n$ is empty, and the fact that the
  comparison map $\varphi \colon \HV X \to Z$ (resp., from $\HVz X$ to
  $Z$) is not surjective is particularly clear: the element
  ${(X_n)}_{n \in \nat}$ of $Z$ is not in the image of $\varphi$.
\end{example}

\begin{example}
  \label{exa:ratbot:H}
  As a sequel to Example~\ref{exa:rat:H}, compactness (without
  Hausdorffness) does not help.  In other words, we can find countable
  projective systems of compact sober spaces whose limits are not
  preserved by the $\HV$ and $\HVz$ functors.  (Necessarily, those
  spaces are not locally compact.)  We proceed as follows.  Every
  space $X$ has a \emph{lift} $X_\bot$, obtained by adding a fresh
  point $\bot$ to $X$, and whose open subsets are those of $X$ plus
  $X_\bot$.  Then $\bot$ is least in the specialization preordering of
  $X_\bot$, and $X_\bot$ is compact: every open cover
  ${(U_i)}_{i \in I}$ must be such that $\bot \in U_i$ for some
  $i \in I$, and then $U_i$ covers $X_\bot$ by itself.  The closed
  subsets of $X_\bot$ are the empty set and all the sets
  $C \cup \{\bot\}$ with $C$ closed in $X$.  It is easy to see that
  those that are irreducible are the sets $C \cup \{\bot\}$ such that
  $C$ is empty or irreducible in $X$.  It follows that $X$ is sober if
  and only if $X_\bot$ is.  Hence $\rat_\bot$ is a compact sober
  space.  We enumerate the elements of $\rat$ as
  ${(q_n)}_{n \in \nat}$, and we define $X_n$ as
  $\rat_\bot \diff \{q_0, \cdots, q_{n-1}\}$.  Those are simply the
  lifts of the spaces $X_n$ of Example~\ref{exa:rat:H}, and they are
  therefore all compact and sober.  The projective limit
  $X = \bigcap_{n \in \nat} X_n$ is reduced to $\{\bot\}$.  The
  comparison map $\varphi \colon \HV X \to Z$ (resp., from $\HVz X$ to
  $Z$) is not surjective because ${(X_n)}_{n \in \nat}$ is an element
  of the projective limit $Z$ of
  ${(p_{mn} \colon X_n \to X_m)}_{m \leq n \in \nat}$ ($p_{mn}$ being
  the inclusion map), but is not in the image of the comparison map
  $\varphi$.  Indeed, the only elements of $Z$ are $\emptyset$ and
  $\{\bot\}$, and their images by $\varphi$ are the constant
  $\emptyset$ tuple and the constant $\{\bot\}$ tuple.
\end{example}

As for the tightness of Theorem~\ref{thm:H:projlim}, therefore, there
is still a gap in item~3: $I$ needs to have a countable cofinal subset
and each $X_i$ needs to be sober, as well as something else, and that
something else probably lies between locally compact sober and
completely Baire.  
It is open whether requiring each $X_i$ to be LCS-complete, as in
Theorem~\ref{thm:V:projlim}, item~3, for example, would be enough.

\section{$\A$-valuations and quasi-lenses}
\label{sec:cont-a-valu}

Another standard powerdomain considered in domain theory is the
\emph{Plotkin powerdomain} \cite[Definition~IV-8.11]{GHKLMS:contlatt}.
On continuous coherent dcpos, as well as on $\omega$-continuous dcpo,
this can be realized as the space $\Plotkin X$ of all lenses, with the
Scott topology of an ordering called the topological Egli-Milner
ordering (see Theorem~IV.8.18 in \cite{GHKLMS:contlatt}, or
\cite[Theorem~6.2.19, Theorem~6.2.22]{AJ:domains}).  A \emph{lens} is
a non-empty set of the form $Q \cap C$ where $Q$ is compact saturated
and $C$ is closed in $X$.  The \emph{Vietoris topology} has subbasic
open subsets of the form $\Box U$ (the set of lenses included in $U$)
and $\Diamond U$ (the set of lenses that intersect $U$), for each open
subset $U$ of $X$.  We let $\Plotkinn X$ denote $\Plotkin X$ with the
Vietoris topology.  The specialization ordering of $\Plotkinn X$ is
the \emph{topological Egli-Milner ordering}: $L \TEMleq L'$ if and
only if $\upc L \supseteq \upc L'$ and $cl (L) \subseteq cl (L')$
\cite[Discussion before Fact~4.1]{GL:duality}.  This is an ordering,
not just a preordering, hence $\Plotkinn X$ is $T_0$.

Every lens is compact, and if $X$ is Hausdorff, every non-empty
compact subset of $X$ is a lens.  Hence, when $X$ is Hausdorff,
$\Plotkinn X$ is the familiar space of all non-empty compact subsets
of $X$ with the usual Vietoris topology.

It is profitable not to study $\Plotkinn X$ directly, and to examine
better-behaved variants.  Heckmann observed that one can define a
related notion, with better overall properties, and which look like
continuous valuations: $\A$-valuations \cite{Heckmann:absval}.  Let
$\A \eqdef \{0, \mathbf M, 1\}$, ordered by $0 < \mathbf M < 1$.  An
\emph{$\A$-valuation} on a space $X$ is a Scott-continuous map
$\alpha \colon \Open X \to \A$ such that $\alpha (\emptyset)=0$,
$\alpha (X)=1$, and, for all open subsets $U$ and $V$ of $X$,
\begin{enumerate}
\item if $\alpha (U)=0$ then $\alpha (U \cup V) = \alpha (V)$;
\item if $\alpha (V)=1$ then $\alpha (U \cap V) = \alpha (U)$.
\end{enumerate}
We write $\Aval X$ for the set of all $\A$-valuations on $X$.  The
\emph{Vietoris topology} on $\Aval X$ is generated by the subbasic
open sets $\Box^\A U \eqdef \{\alpha \in \Aval X \mid \alpha (U)=1\}$
and
$\Diamond^\A U \eqdef \{\alpha \in \Aval X \mid \alpha (U) \neq 0\}$,
where $U \in \Open X$.  We write $\AvalV X$ for $\Aval X$ with the
Vietoris topology.  Its specialization ordering is the pointwise
ordering: $\alpha \leq \beta$ if and only if
$\alpha (U) \leq \beta (U)$ for every $U \in \Open X$.

There is an $\A$-valuation functor $\AvalV$ on $\Topcat$.  For every
continuous map $f \colon X \to Y$,
$\AvalV f \colon \AvalV X \to \AvalV Y$ maps every $\A$-valuation
$\alpha$ on $X$ to $f [\alpha]$, defined so that for every
$V \in \Open Y$, $f [\alpha] (V) = \alpha (f^{-1} (V))$, exactly as
with continuous valuations.  This functor is part of a monad, just
like the other functors we will mention below, but we will ignore this
here.  We note that for every open subset $V$ of $Y$,
${(\AvalV f)}^{-1} (\Box^\A V) = \Box^\A {f^{-1} (V)}$ and
${(\AvalV f)}^{-1} (\Diamond^\A V) = \Diamond^\A {f^{-1} (V)}$.

\begin{proposition}
  \label{prop:Aval:projlim}
  The comparison map $\varphi \colon \AvalV X \to Z$ of any projective
  $\AvalV$-situation is a topological embedding.
\end{proposition}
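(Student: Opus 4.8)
The plan is to apply Lemma~\ref{lemma:T:projlim}, exactly as was done for the continuous valuation functors in Proposition~\ref{prop:V:projlim} and for the Smyth and Hoare hyperspace functors. Concretely, I would show that $\AvalV$ is $R$-nice in the sense of Definition~\ref{defn:T:nice:S}, for a two-element set $R \eqdef \{\Box, \Diamond\}$, with $B_X (\Box, U) \eqdef \Box^\A U$ and $B_X (\Diamond, U) \eqdef \Diamond^\A U$. These sets do form a subbase of the Vietoris topology on $\AvalV X$ by definition, so the only things to check are the two properties in Definition~\ref{defn:T:nice:S}, plus the $T_0$ hypothesis needed for the second conclusion of the lemma.

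For property~2, the naturality condition, I would simply invoke the two identities already recorded in the paragraph preceding the proposition: for every continuous $f \colon X \to Y$ and every $V \in \Open Y$, ${(\AvalV f)}^{-1} (\Box^\A V) = \Box^\A {f^{-1} (V)}$ and ${(\AvalV f)}^{-1} (\Diamond^\A V) = \Diamond^\A {f^{-1} (V)}$; these are immediate from the formula $f[\alpha](V) = \alpha(f^{-1}(V))$. For property~1, Scott-continuity of $\Box^\A (\_)$ and $\Diamond^\A(\_)$ from $\Open X$ to $\Open{\AvalV X}$: monotonicity is clear since $U \subseteq V$ gives $\alpha(U) \leq \alpha(V)$ for every $\A$-valuation $\alpha$ (part of Scott-continuity of $\alpha$), so $\Box^\A U \subseteq \Box^\A V$ and $\Diamond^\A U \subseteq \Diamond^\A V$. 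For directed unions, given a directed family ${(U_i)}_{i \in I}$ of opens with union $U$, Scott-continuity of each $\alpha$ gives $\alpha(U) = \dsup_{i} \alpha(U_i)$, and since $\A$ is a finite poset this directed supremum is attained; hence $\alpha(U) = 1$ iff $\alpha(U_i) = 1$ for some $i$, giving $\Box^\A U = \dcup_i \Box^\A {U_i}$, and likewise $\alpha(U) \neq 0$ iff $\alpha(U_i) \neq 0$ for some $i$, giving $\Diamond^\A U = \dcup_i \Diamond^\A {U_i}$. Finally, $\AvalV X$ is $T_0$ because its specialization ordering is the pointwise ordering, which is antisymmetric.

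The proof is therefore entirely routine, and I do not anticipate a genuine obstacle: it is the ``easy half'' of the analysis, exactly parallel to Propositions~\ref{prop:V:projlim}, \ref{prop:Q:projlim}, and \ref{prop:H:projlim}. The mild point to be careful about is that $R$ here has two elements rather than one, so one must verify properties~1 and~2 for \emph{both} families $\Box^\A(\_)$ and $\Diamond^\A(\_)$; but the verifications are independent and each is a one-line observation. The real work — showing $\varphi$ is \emph{surjective}, and hence a homeomorphism under suitable hypotheses on the $X_i$ and $p_{ij}$ — is deferred to a subsequent theorem, presumably by reduction to the Smyth and Hoare cases as announced in the introduction.
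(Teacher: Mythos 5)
Your proposal is correct and follows the paper's own proof essentially verbatim: the paper also verifies that $\AvalV$ is $R$-nice for a two-element $R$ (labelled $\{\mathsf M, 1\}$ there) with $B_X(1,U) = \Box^\A U$ and $B_X(\mathsf M,U) = \Diamond^\A U$, deriving property~1 from the Scott-continuity of $\A$-valuations, property~2 from the inverse-image identities stated just before the proposition, and the $T_0$ property from antisymmetry of the pointwise ordering, before invoking Lemma~\ref{lemma:T:projlim}.
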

\begin{proof}
  We use Lemma~\ref{lemma:T:projlim} and to this end we verify that
  $\AvalV$ is $R$-nice with $R \eqdef \{\mathsf M, 1\}$,
  $B_X (1, U) \eqdef \Box^\A U$,
  $B_X (\mathsf M, U) \eqdef \Diamond^\A U$.  As in
  Proposition~\ref{prop:V:projlim}, property~1 of
  Definition~\ref{defn:T:nice:S} stems from the Scott-continuity of
  $\A$-valuations, while property~2 is clear from the description we
  gave of ${(\AvalV f)}^{-1}$ right before this proposition.  Finally,
  $\AvalV X$ is $T_0$, by definition of its specialization
  preordering.  \qed
\end{proof}

An intermediate notion is that of \emph{quasi-lens}, which originates
from \cite[Theorem~9.6]{heckmann:2ndorder}.  A \emph{quasi-lens} on a
topological space $X$ is a pair $(Q, C)$ of a compact saturated subset
$Q$ and a closed subset $C$ of $X$ such that:
\begin{enumerate}
\item $Q$ intersects $C$;
\item $Q \subseteq \upc (Q \cap C)$;
\item for every open neighborhood $U$ of $Q$, $C \subseteq cl (U
  \cap C)$.
\end{enumerate}
We write $\QL X$ for the space of quasi-lenses on $X$.  The
\emph{Vietoris topology} on $\QL X$ is generated by the subbasic open
sets $\Box^\quasi U \eqdef \{(Q, C) \in \QL X \mid Q \subseteq U\}$
and
$\Diamond^\quasi U \eqdef \{(Q, C) \in \QL X \mid C \cap U \neq
\emptyset\}$.  We write $\QLV X$ for $\QL X$ with the Vietoris
topology.
\begin{lemma}
  \label{lemma:fQL:cont}
  For every topological space $X$,
  \begin{enumerate}
  \item the inclusion of $\QLV X$ into $\SV X \times \HV X$ is a
    topological embedding;
  \item the specialization preordering on $\QLV X$ is $\supseteq
    \times \subseteq$, which is antisymmetric, so $\QLV X$ is $T_0$.
  \end{enumerate}
\end{lemma}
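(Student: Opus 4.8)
The plan is to read both items off the subbases involved, with no detour. First I would check that the inclusion is even well defined: in a quasi-lens $(Q,C)$ the compact saturated set $Q$ meets the closed set $C$ by the first clause of the definition, so both $Q$ and $C$ are non-empty and $(Q,C)$ genuinely lies in $\SV X \times \HV X$. Write $\iota$ for this inclusion of sets; it is plainly injective. To obtain item~1, I would invoke the criterion recalled in Section~\ref{sec:preliminaries} that an injective, full, continuous map is a topological embedding, checking continuity and fullness on subbases. Since $\{\Box U \mid U \in \Open X\}$ is a base of $\SV X$ and $\{\Diamond U \mid U \in \Open X\}$ a subbase of $\HV X$, the sets $\Box U \times \HV X$ and $\SV X \times \Diamond U$, with $U$ ranging over $\Open X$, form a subbase of the product. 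The two identities $\iota^{-1} (\Box U \times \HV X) = \{(Q,C) \in \QL X \mid Q \subseteq U\} = \Box^\quasi U$ and $\iota^{-1} (\SV X \times \Diamond U) = \{(Q,C) \in \QL X \mid C \cap U \neq \emptyset\} = \Diamond^\quasi U$ --- each immediate from unfolding definitions --- then do double duty: they exhibit the inverse image under $\iota$ of each subbasic open of the product as a subbasic open of $\QLV X$ (continuity), and they exhibit each subbasic open of $\QLV X$ as an inverse image under $\iota$ of an open of the product (fullness).

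For item~2 I would compute the specialization preordering directly from the same subbases: $(Q,C) \leq (Q',C')$ in $\QLV X$ if and only if every subbasic open containing $(Q,C)$ also contains $(Q',C')$. Testing this against the sets $\Box^\quasi U$ says that $Q \subseteq U$ implies $Q' \subseteq U$ for every $U \in \Open X$; since $Q$, being saturated, is the intersection of its open neighbourhoods, this is equivalent to $Q' \subseteq Q$. Testing it against the sets $\Diamond^\quasi U$ says that $C$ meets $U$ implies $C'$ meets $U$ for every $U \in \Open X$; taking $U = X \diff C'$ (open, as $C'$ is closed) shows this is in turn equivalent to $C \subseteq C'$. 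Hence $(Q,C) \leq (Q',C')$ if and only if $Q \supseteq Q'$ and $C \subseteq C'$, i.e.\ the specialization preordering of $\QLV X$ is $\supseteq \times \subseteq$ restricted to $\QL X$. Reverse inclusion on compact saturated sets and inclusion on closed sets are both antisymmetric, hence so is their product, and therefore $\QLV X$ is $T_0$.

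I do not expect any real obstacle: the statement is bookkeeping, powered by the liberty to test continuity, fullness, and the specialization preordering on subbases. The only two steps that are not purely formal are the appeal to the saturation of $Q$ (to pass from ``$Q'$ lies in every open neighbourhood of $Q$'' to $Q' \subseteq Q$) and to the closedness of $C'$ (to pass from the $\Diamond^\quasi$ condition to $C \subseteq C'$), each a one-line observation. Alternatively, item~2 could be deduced from item~1, since a topological embedding preserves and reflects the specialization preordering and the specialization preordering of a product is the product of those of the factors.
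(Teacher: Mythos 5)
Your proposal is correct, and for item~1 it is exactly the paper's argument: compute $\iota^{-1}(\Box U \times \HV X) = \Box^\quasi U$ and $\iota^{-1}(\SV X \times \Diamond U) = \Diamond^\quasi U$ for $U \in \Open X$, which gives continuity and fullness on subbases, and conclude from injectivity. For item~2 the paper simply invokes that the specialization preordering of a subspace is the restriction of that of the ambient space $\SV X \times \HV X$ (the alternative you mention at the end), while your primary argument recomputes it directly from the subbases via saturation of $Q$ and closedness of $C'$; both are sound and the difference is negligible.
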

\begin{proof}
  1. Let $i$ be the inclusion map.  For every open subset $U$ of $X$,
  $i^{-1} (\Box U \times \HV X) = \Box^\quasi U$ and
  $i^{-1} (\SV X \times \Diamond U) = \Diamond^\quasi U$, so $i$ is
  full and continuous.  It is clearly injective, hence a topological
  embedding.

  2. The specialization preordering of a subspace $Y$ of a space $Z$
  is the restriction of the specialization preordering of $Z$ to $Y$.
  \qed
\end{proof}
\begin{lemma}
  \label{lemma:QLV:functor}
  There is a $\QLV$ functor on $\Topcat$, and its action on continuous
  maps $f \colon X \to Y$ is defined by
  $\QLV (f) (Q, C) \eqdef (\SV (f) (Q), \HV (f) (C)) = (\upc f [Q], cl
  (f [C]))$.  For every open subset $V$ of $Y$,
  ${(\QLV f)}^{-1} (\Box^\quasi V) = \Box^\quasi {f^{-1} (V)}$ and
  ${(\QLV f)}^{-1} (\Diamond^\quasi V) = \Diamond^\quasi {f^{-1}
    (V)}$.
\end{lemma}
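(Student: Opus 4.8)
The plan is to check, in this order, that $\QLV f$ is well-defined (i.e.\ sends quasi-lenses to quasi-lenses), that each $\QLV f$ is continuous, that the functor laws hold, and finally to compute the inverse images of the subbasic open sets. Write $Q' \eqdef \upc f[Q] = \SV f(Q)$ and $C' \eqdef cl(f[C]) = \HV f(C)$, which are respectively a compact saturated subset and a closed subset of $Y$ because $\SV$ and $\HV$ act on morphisms that way. The first quasi-lens axiom for $(Q',C')$ follows by picking $x \in Q \cap C$: then $f(x) \in f[Q] \cap f[C] \subseteq Q' \cap C'$. For the second axiom, since $f$ is monotonic it sends the inclusion $Q \subseteq \upc(Q \cap C)$ to $f[Q] \subseteq \upc f[Q \cap C] \subseteq \upc(Q' \cap C')$, and since the right-hand side is upwards closed, $Q' = \upc f[Q] \subseteq \upc(Q' \cap C')$. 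The third axiom is the one genuinely delicate point, and I expect it to be the main obstacle: given an open neighbourhood $V$ of $Q'$, the set $f^{-1}(V)$ is an open neighbourhood of $Q$, so the third axiom for $(Q,C)$ gives $C \subseteq cl(f^{-1}(V) \cap C)$; a pointwise check shows $f^{-1}(V) \cap C \subseteq f^{-1}(cl(V \cap C'))$ (if $x \in C$ and $f(x) \in V$ then $f(x) \in V \cap C' \subseteq cl(V \cap C')$), and since $f^{-1}(cl(V \cap C'))$ is closed, taking closures yields $C \subseteq f^{-1}(cl(V \cap C'))$, i.e.\ $f[C] \subseteq cl(V \cap C')$; one last closure gives $C' = cl(f[C]) \subseteq cl(V \cap C')$, as required.

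Next I would establish continuity and the functor laws. By construction $\QLV f$ is the restriction to $\QLV X$ of the map $\SV f \times \HV f \colon \SV X \times \HV X \to \SV Y \times \HV Y$, which takes values in $\QLV Y$ by the first step; this product map is continuous because $\SV$ and $\HV$ are functors on $\Topcat$, and by Lemma~\ref{lemma:fQL:cont}(1) the Vietoris topologies of $\QLV X$ and of $\QLV Y$ are exactly the subspace topologies inherited from the corresponding products, so $\QLV f$ is continuous. The functor laws are then immediate: $\QLV(\identity X)(Q,C) = (\upc Q, cl(C)) = (Q,C)$ because $Q$ is saturated and $C$ is closed, and for composable $f,g$, functoriality of $\SV$ and $\HV$ gives $\QLV(g \circ f)(Q,C) = (\SV(g \circ f)(Q), \HV(g \circ f)(C)) = \QLV g(\SV f(Q), \HV f(C)) = \QLV g(\QLV f(Q,C))$, using that $\QLV f(Q,C)$ is a quasi-lens.

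Finally I would compute the inverse images. For open $V \subseteq Y$ we have $(\QLV f)^{-1}(\Box^\quasi V) = \{(Q,C) \in \QLV X \mid \upc f[Q] \subseteq V\}$, and since $V$ is upwards closed this condition is equivalent to $Q \subseteq f^{-1}(V)$, so $(\QLV f)^{-1}(\Box^\quasi V) = \Box^\quasi{f^{-1}(V)}$; likewise $(\QLV f)^{-1}(\Diamond^\quasi V) = \{(Q,C) \in \QLV X \mid cl(f[C]) \cap V \neq \emptyset\}$, and since $V$ is open it meets $cl(f[C])$ if and only if it meets $f[C]$, if and only if $C \cap f^{-1}(V) \neq \emptyset$, so $(\QLV f)^{-1}(\Diamond^\quasi V) = \Diamond^\quasi{f^{-1}(V)}$. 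These identities in fact re-prove continuity, since the sets $\Box^\quasi V$ and $\Diamond^\quasi V$ form a subbase of $\QLV Y$. Apart from the preservation of the third quasi-lens axiom, everything in this proof is routine bookkeeping.
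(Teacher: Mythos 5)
Your proposal is correct and follows essentially the same route as the paper: verify the three quasi-lens axioms for $(\upc f [Q], cl (f [C]))$, deduce continuity from the embedding of Lemma~\ref{lemma:fQL:cont}, and compute the subbasic inverse images (which the paper leaves as ``easily proved''). Your treatment of the third axiom, passing through the closed set $f^{-1} (cl (V \cap C'))$, is a mildly streamlined variant of the paper's argument via open sets $W$ meeting $cl (f [C])$, and your explicit check of the functor laws fills in a step the paper treats as immediate.
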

\begin{proof}
  We need to show that $(\upc f [Q], cl (f [C]))$ is a quasi-lens on
  $Y$, for every quasi-lens $(Q, C)$ on $X$.  Since $Q \cap C$ is
  non-empty, we can pick a point $x$ from it, and then $f (x)$ is in
  both $\upc f [Q]$ and $cl (f [C])$.  Since
  $Q \subseteq \upc (Q \cap C)$, every point $y \eqdef f (x)$ in
  $f [Q]$ (with $x \in Q$), is such that $x' \leq x$ for some
  $x' \in Q \cap C$.  Then $f (x') \leq y$, since $f$ is continuous,
  hence monotonic, and
  $f (x') \in f [Q] \cap f [C] \subseteq \upc f [Q] \cap cl (f [C])$.
  Hence $f [Q] \subseteq \upc f [Q] \cap cl (f [C])$, from which we
  obtain $\upc f [Q] \subseteq \upc (\upc f [Q] \cap cl (f [C]))$.
  Finally, let $V$ be any open neighborhood of $\upc f [Q]$.  Then
  $Q \subseteq f^{-1} (V)$, so $C \subseteq cl (f^{-1} (V) \cap C)$.
  We need to show that $cl (f [C]) \subseteq cl (V \cap cl (f [C]))$,
  and for that it is enough to show that every open set $W$ that
  intersects the left hand-side intersects the right-hand side.  If
  $W$ intersects $cl (f [C])$, it intersects $f [C]$, so $f^{-1} (W)$
  intersects $C$.  Since $C \subseteq cl (f^{-1} (V) \cap C)$,
  $f^{-1} (W)$ also intersects $cl (f^{-1} (V) \cap C)$, hence
  $f^{-1} (V) \cap C$.  It follows that $f^{-1} (W \cap V)$ intersects
  $C$, or alternatively that $W \cap V$ intersects $f [C]$, hence also
  $cl (f [C])$.  Therefore $W$ intersects $V \cap cl (f [C])$, hence
  also $cl (V \cap cl (f [C]))$.

  The fact that $\QLV (f)$ is continuous follows from the fact that
  $\SV (f)$ and $\HV (f)$ are continuous, and from
  Lemma~\ref{lemma:fQL:cont}, item~1.  That can also be deduced from
  the final claims,
  ${(\QLV f)}^{-1} (\Box^\quasi V) = \Box^\quasi {f^{-1} (V)}$ and
  ${(\QLV f)}^{-1} (\Diamond^\quasi V) = \Diamond^\quasi {f^{-1} (V)}$
  for every $V \in \Open Y$, which are easily proved.  \qed
\end{proof}

Just like with $\AvalV$, we have the following.
\begin{proposition}
  \label{prop:QL:projlim}
  The comparison map $\varphi \colon \QLV X \to Z$ of any projective
  $\QLV$-situation is a topological embedding.
\end{proposition}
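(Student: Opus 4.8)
The plan is to apply Lemma~\ref{lemma:T:projlim}, exactly along the lines of the proofs of Proposition~\ref{prop:V:projlim} and Proposition~\ref{prop:Aval:projlim}. So I would check that $\QLV$ is $R$-nice with $R \eqdef \{\mathsf M, 1\}$, taking $B_X (1, U) \eqdef \Box^\quasi U$ and $B_X (\mathsf M, U) \eqdef \Diamond^\quasi U$ for $U \in \Open X$; these form a subbase of $\QLV X$ by definition of the Vietoris topology. It then remains to verify the two properties of Definition~\ref{defn:T:nice:S} and that $\QLV X$ is $T_0$, after which Lemma~\ref{lemma:T:projlim} delivers that $\varphi$ is a topological embedding.

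Two of these points require no new work. Property~2 of Definition~\ref{defn:T:nice:S} is precisely the content of the last sentence of Lemma~\ref{lemma:QLV:functor}: for every continuous $f \colon X \to Y$ and every $V \in \Open Y$, ${(\QLV f)}^{-1} (\Box^\quasi V) = \Box^\quasi {f^{-1} (V)}$ and ${(\QLV f)}^{-1} (\Diamond^\quasi V) = \Diamond^\quasi {f^{-1} (V)}$. And $\QLV X$ is $T_0$ by Lemma~\ref{lemma:fQL:cont}, item~2, since its specialization preordering is the antisymmetric relation $\supseteq \times \subseteq$.

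For property~1, I would argue that $\Box^\quasi (\_)$ and $\Diamond^\quasi (\_)$ are Scott-continuous from $\Open X$ to $\Open {(\QLV X)}$. Monotonicity of both is immediate. Since membership $(Q, C) \in \Diamond^\quasi U$ depends only on whether the fixed set $C$ meets $U$, the operator $\Diamond^\quasi$ commutes with arbitrary unions, in particular directed ones. For $\Box^\quasi$, the one genuine observation is the same one used for $\Box$ in Proposition~\ref{prop:Q:projlim}: if ${(U_i)}_{i \in I}$ is a directed family of open subsets of $X$ with union $U$, then $(Q, C) \in \Box^\quasi U$ iff $Q \subseteq U$, and because $Q$ is compact this holds iff $Q \subseteq U_i$ for some $i$, i.e.\ iff $(Q, C) \in \dcup_{i \in I} \Box^\quasi {U_i}$.

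There is no real obstacle here. Everything that could have caused difficulty---that $\QLV$ is a well-defined endofunctor of $\Topcat$ with the stated action on morphisms, and that the inverse images of the subbasic opens are as claimed---has already been discharged in Lemmas~\ref{lemma:fQL:cont} and~\ref{lemma:QLV:functor}; beyond those, the only fact invoked is the compactness of the first component of a quasi-lens.
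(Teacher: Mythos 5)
Your proof is correct and follows essentially the same route as the paper: apply Lemma~\ref{lemma:T:projlim} with $R = \{\mathsf M, 1\}$, verify Scott-continuity of $\Box^\quasi$ (via compactness of $Q$) and $\Diamond^\quasi$, and get property~2 from Lemma~\ref{lemma:QLV:functor}. Your explicit appeal to Lemma~\ref{lemma:fQL:cont}, item~2 for the $T_0$ hypothesis is a small point the paper leaves implicit, but the argument is the same.
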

\begin{proof}
  We use Lemma~\ref{lemma:T:projlim}, showing that $\QLV$ is $R$-nice
  with $R \eqdef \{\mathsf M, 1\}$, $B_X (1, U) \eqdef \Box^\quasi U$,
  $B_X (\mathsf M, U) \eqdef \Diamond^\quasi U$.  Property~1 of
  Definition~\ref{defn:T:nice:S} stems from the fact that
  $\Box^\quasi$ and $\Diamond^\quasi$ are Scott-continuous.  This is
  clear for $\Diamond^\quasi$, which commutes with arbitrary unions.
  For $\Box^\quasi$, let ${(U_i)}_{i \in I}$ be any directed family of
  open subsets of a space $X$: for every lens $(Q, C)$,
  $(Q, C) \in \Box^\quasi {\dcup_{i \in I} U_i}$ if and only if
  $Q \subseteq \dcup_{i \in I} U_i$, if and only if $Q \subseteq U_i$
  for some $i \in I$ (because $Q$ is compact), if and only if
  $(Q, C) \in \dcup_{i \in I} \Box^\quasi {U_i}$.  Property~2 stems
  from the characterization of ${(\QLV f)}^{-1}$ given in the second
  part of Lemma~\ref{lemma:QLV:functor}.  \qed
\end{proof}

We will see the precise relationship between $\QLV X$ and
$\Plotkinn X$ in Section~\ref{sec:lenses}; for now, they simply carry
a resemblance.  The relationship between $\AvalV X$ and $\QLV X$ is
that they are homeomorphic when $X$ is sober
\cite[Fact~5.2]{GL:duality}.  Explicitly, for any space $X$, there is
a function $\quasi_X$ that maps every quasi-lens $(Q, C) \in \QLV X$
to the $\A$-valuation $\alpha$ defined by $\alpha (U) \eqdef 1$ if
$Q \subseteq U$, $0$ if $U \cap C = \emptyset$, $\mathbf M$ otherwise.
It is easy to see that for every open subset $U$ of $X$,
$\quasi_X^{-1} (\Box^\A U) = \Box^\quasi U$ and
$\quasi_X^{-1} (\Diamond^\A U) = \Diamond^\quasi U$, so that
$\quasi_X$ is full and continuous, and since $\QLV X$ is $T_0$,
$\quasi_X$ is a topological embedding.  When $X$ is sober, $\quasi_X$
has an inverse, which maps every $\A$-valuation $\alpha$ to $(Q, C)$
defined by letting $Q$ be the intersection of the open subsets $U$ of
$X$ such that $\alpha (U)=1$ and $F$ be the complement of the largest
open subset $U$ of $X$ such that $\alpha (U)=0$.
\begin{lemma}
  \label{lemma:quasi:nat}
  The collection of maps $\quasi_X$ is natural in $X$.
\end{lemma}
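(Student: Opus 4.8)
The plan is to check the naturality square directly. We must show that for every continuous map $f \colon X \to Y$ the equation $\AvalV f \circ \quasi_X = \quasi_Y \circ \QLV f$ holds for maps $\QLV X \to \AvalV Y$. Since an $\A$-valuation is by definition a function $\Open Y \to \A$, two of them coincide as soon as they agree on every open subset $V$ of $Y$; so it is enough to fix a quasi-lens $(Q, C) \in \QLV X$ and an open subset $V$ of $Y$, and to verify that the two composites send $(Q, C)$ to $\A$-valuations taking the same value at $V$.

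First I would unfold both sides. On the left, $\AvalV f (\quasi_X (Q, C))$ is the $\A$-valuation $W \mapsto \quasi_X (Q, C) (f^{-1} (W))$, so its value at $V$ is, by the definition of $\quasi_X$, equal to $1$ if $Q \subseteq f^{-1} (V)$, to $0$ if $f^{-1} (V) \cap C = \emptyset$, and to $\mathbf M$ otherwise. On the right, $\QLV f (Q, C) = (\upc f [Q], cl (f [C]))$ by Lemma~\ref{lemma:QLV:functor}, so $\quasi_Y (\QLV f (Q, C))$ has value at $V$ equal to $1$ if $\upc f [Q] \subseteq V$, to $0$ if $V \cap cl (f [C]) = \emptyset$, and to $\mathbf M$ otherwise.

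It then remains to match the three clauses. The ``value $1$'' clauses agree because $Q \subseteq f^{-1} (V)$ iff $f [Q] \subseteq V$ iff $\upc f [Q] \subseteq V$, the last equivalence using that $V$, being open, is upwards closed. The ``value $0$'' clauses agree because $f^{-1} (V) \cap C = \emptyset$ iff $V \cap f [C] = \emptyset$ iff $V \cap cl (f [C]) = \emptyset$, the last equivalence using that $V$ is open, so that it meets a set iff it meets its closure. No conflict between the first two clauses can arise on either side, since the first quasi-lens axiom gives $Q \cap C \neq \emptyset$, resp.\ $\upc f [Q] \cap cl (f [C]) \neq \emptyset$ (as $\QLV f (Q, C)$ is again a quasi-lens); and in both definitions the value $\mathbf M$ is returned precisely when neither of the other two clauses applies. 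Hence the two $\A$-valuations coincide, which is what we wanted. I do not expect any genuine obstacle here: the only points requiring a moment of attention are the two equivalences just mentioned, both of which are immediate from $V$ being open.
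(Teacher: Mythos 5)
Your proof is correct and follows essentially the same route as the paper's: evaluate both composites on an arbitrary open $V$ of $Y$ and check that the ``value $1$'' and ``value $0$'' conditions match via $Q \subseteq f^{-1}(V) \liff \upc f[Q] \subseteq V$ and $C \cap f^{-1}(V) = \emptyset \liff cl(f[C]) \cap V = \emptyset$. The extra remark about the clauses not conflicting is a harmless addition (it is already built into the well-definedness of $\quasi_X$); nothing is missing.
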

\begin{proof}
  Let $f \colon X \to Y$ be a continuous map.  We need to show that
  for every quasi-lens $(Q, C)$ on $X$, letting
  $\alpha \eqdef \quasi_X (Q, C)$, we have
  $f [\alpha] = \quasi_Y (\upc f [Q], cl (f [C]))$.  For every open
  subset $V$ of $Y$, $\quasi_Y (\upc f [Q], cl (f [C])) (V)$ is equal
  to $1$ (resp., $0$) if and only if $\upc f [Q] \subseteq V$ (resp.,
  $cl (f [C]) \cap V = \emptyset$) if and only if
  $Q \subseteq f^{-1} (V)$ (resp., $f [C] \cap V = \emptyset$, namely
  $C \cap f^{-1} (V) = \emptyset$) if and only if
  $\alpha (f^{-1} (V))=1$ (resp., $\alpha (f^{-1} (V))=0$) if and only
  if $f [\alpha] (V)=1$ (resp, $0$).  \qed
\end{proof}

In order to proceed with projective limits, we recall the notion of
uniform tightness from \cite[Lemma~6.4, Remark~6.6]{JGL:kolmogorov}.
Given a projective system
${(p_{ij} \colon X_j \to X_i)}_{i \sqsubseteq j \in I}$, with
projective limit $X, {(p_i)}_{i \in I}$, a family
${(\nu_i)}_{i \in I}$ of maps $\nu_i \colon \Open {X_i} \to \creal$ is
\emph{uniformly tight} if and only if for every $i \in I$, for every
$U \in \Open {X_i}$, for every $r \in \Rp$ such that $r \ll \nu_i (U)$
(i.e., $r=0$ or $r < \nu_i (U)$), there is a compact saturated subset
$Q$ of $X$ such that $\upc p_i [Q] \subseteq U$ and for every
$j \in I$ with $i \sqsubseteq j$,
$r \leq \nu_j^\bullet (\upc p_j [Q])$.  The notation $\nu_j^\bullet$
stands for the function that maps every compact saturated subset $Q_j$
of $X_j$ to $\inf_V \nu_j (V)$, where $V$ ranges over the open
neighborhoods $V$ of $Q_j$.

By equating $\A$ with the subset $\{0, 1/2, 1\}$ of $\creal$, and
noting that this identification preserves order, suprema, and infima,
this yields a notion of uniform tightness for $\A$-valuations.
Explicitly, and making some simplifications along the way, a family
${(\alpha_i)}_{i \in I}$ of maps from $\Open {X_i}$ to $\A$ is
uniformly tight if and only if for every $i \in I$, for every open
subset $U$ of $X_i$,
\begin{enumerate}[label=(\alph*)]
\item if $\alpha_i (U)=1$ then there is a compact saturated
  subset $Q$ of $X$ such that $\upc p_i [Q] \subseteq U$ and for every
  $j \in I$ with $i \sqsubseteq j$, for every open neighborhood $V$ of
  $\upc p_j [Q]$, $\alpha_j (V)=1$, and
\item if $\alpha_i (U) \neq 0$ then there is a compact saturated
  subset $Q$ of $X$ such that $\upc p_i [Q] \subseteq U$ and for every
  $j \in I$, for every open neighborhood $V$ of $\upc p_j [Q]$,
  $\alpha_j (V) \neq 0$.
\end{enumerate}

\begin{lemma}
  \label{lemma:QL:utight}
  Let ${(p_{ij} \colon X_j \to X_i)}_{i \sqsubseteq j \in I}$ be a
  projective system of topological spaces, with canonical projective
  limit $X, {(p_i)}_{i \in I}$.  Let $(Q_i, C_i)$ be quasi-lenses on
  $X_i$ for each $i \in I$, such that
  $(Q_i, C_i) = \QLV {p_{ij}} (Q_j, C_j)$ for all $i \sqsubseteq j$ in
  $I$.  Let $\alpha_i \eqdef \quasi_{X_i} (Q_i, C_i)$.  If
  $\SV X, {(\SV {p_i})}_{i \in I}$ is a projective limit of
  ${(\SV {p_{ij}} \colon \SV {X_j} \to \SV {X_i})}_{i \sqsubseteq j
    \in I}$ and if $\HV X, {(\HV {p_i})}_{i \in I}$ is a projective
  limit of
  ${(\HV {p_{ij}} \colon \HV {X_j} \to \HV {X_i})}_{i \sqsubseteq j
    \in I}$, then ${(\alpha_i)}_{i \in I}$ is uniformly tight.
\end{lemma}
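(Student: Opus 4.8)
The plan is to unwind the definitions and then route each of the two projective-limit hypotheses into one of the two clauses~(a), (b) of uniform tightness for $\A$-valuations. First I would record what $\alpha_i \eqdef \quasi_{X_i} (Q_i, C_i)$ computes: by the definition of $\quasi_{X_i}$, for every $U \in \Open {X_i}$ we have $\alpha_i (U) = 1$ if and only if $Q_i \subseteq U$, and $\alpha_i (U) \neq 0$ if and only if $U$ meets $C_i$. Next, unfolding $\QLV {p_{ij}}$ through Lemma~\ref{lemma:QLV:functor}, the compatibility assumption $(Q_i, C_i) = \QLV {p_{ij}} (Q_j, C_j)$ says precisely that $Q_i = \upc p_{ij} [Q_j] = \SV {p_{ij}} (Q_j)$ and $C_i = cl (p_{ij} [C_j]) = \HV {p_{ij}} (C_j)$. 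Hence ${(Q_i)}_{i \in I}$ is a point of the canonical projective limit of the Smyth system ${(\SV {p_{ij}})}_{i \sqsubseteq j \in I}$, and ${(C_i)}_{i \in I}$ is a point of the canonical projective limit of the Hoare system ${(\HV {p_{ij}})}_{i \sqsubseteq j \in I}$. Using the hypothesis that $\SV X, {(\SV {p_i})}_{i \in I}$ is a projective limit of the Smyth system --- equivalently, that its comparison map, which is a topological embedding by Proposition~\ref{prop:Q:projlim}, is also onto, hence a homeomorphism --- there is a compact saturated subset $Q$ of $X$ with $\upc p_i [Q] = Q_i$ for every $i \in I$. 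Symmetrically, from the hypothesis on the Hoare system together with Proposition~\ref{prop:H:projlim}, there is a closed subset $C$ of $X$ with $cl (p_i [C]) = C_i$ for every $i \in I$.

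For clause~(a): assume $\alpha_i (U) = 1$, i.e.\ $Q_i \subseteq U$. I would take the compact saturated set $Q$ just produced. Then $\upc p_i [Q] = Q_i \subseteq U$, and for every $j \sqsupseteq i$ and every open neighbourhood $V$ of $\upc p_j [Q] = Q_j$ we have $Q_j \subseteq V$, so $\alpha_j (V) = 1$, as required. For clause~(b): assume $\alpha_i (U) \neq 0$, i.e.\ $U$ meets $C_i = cl (p_i [C])$, so $U$ already meets $p_i [C]$; pick $c \in C$ with $p_i (c) \in U$. This time I would take $Q \eqdef \upc c$, which is compact saturated, being the upward closure of the one-point compact set $\{c\}$. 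Since $U$ is open, hence upwards-closed, $\upc p_i [Q] = \upc p_i (c) \subseteq U$. And for every $j \in I$ we have $p_j (c) \in p_j [C] \subseteq cl (p_j [C]) = C_j$, while $p_j (c) \in \upc p_j (c) = \upc p_j [Q]$; hence every open neighbourhood $V$ of $\upc p_j [Q]$ contains the point $p_j (c) \in C_j$, so $V$ meets $C_j$ and $\alpha_j (V) \neq 0$, as required.

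There is essentially no analytic difficulty here: the content is bookkeeping --- matching the Smyth hypothesis to clause~(a) and the Hoare hypothesis to clause~(b), and rewriting the neighbourhood conditions through $\quasi_{X_j}$. The one point worth flagging, and the closest thing to a subtlety, is that clause~(b) does not use the Smyth hypothesis at all: a \emph{principal} compact saturated set $\upc c$, with $c$ drawn from $C$, already does the job. This is precisely why the lemma can get away with assuming only that the Smyth and Hoare limits are preserved, rather than the quasi-lens limit that we are ultimately trying to establish.
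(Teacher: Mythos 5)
Your proof is correct and follows essentially the same route as the paper's: use the Smyth-limit hypothesis to produce $Q$ with $\upc p_i[Q]=Q_i$ for clause~(a), and the Hoare-limit hypothesis to produce $C$, pick a point of $C$ projecting into $U$, and use the principal compact saturated set $\upc c$ for clause~(b). Your closing observation that clause~(b) never needs the Smyth hypothesis is accurate and matches what the paper's own argument implicitly does.
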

\begin{proof}
  In order to prove $(a)$, we note that since for all
  $i \sqsubseteq j$ in $I$, $(Q_i, C_i) = \QLV {p_{ij}} (Q_j, C_j)$,
  we have $Q_i = \SV {p_{ij}} (Q_j)$, by
  Lemma~\ref{lemma:QLV:functor}.  Hence ${(Q_i)}_{i \in I}$ is an
  element of the canonical projective limit of
  ${(\SV {p_{ij}} \colon \SV {X_j} \to \SV {X_i})}_{i \sqsubseteq j
    \in I}$.  Since $\SV X, {(\SV {p_i})}_{i \in I}$ is another
  projective limit, by the universal property of projective limits,
  there must be a (unique) element $Q$ of $\SV X$ such that
  $Q_i = \SV {p_i} (Q)$ for every $i \in I$.  Explicitly,
  $Q_i = \upc p_i [Q]$ for every $i \in I$.  Now, let $i \in I$, let
  $U$ be open in $X_i$, and let us assume that $\alpha_i (U)=1$.  By
  definition of $\alpha_i$ as $\quasi_{X_i} (Q_i, C_i)$,
  $Q_i \subseteq U$, so $\upc p_i [Q] \subseteq U$.  For every
  $j \in I$ with $i \sqsubseteq j$ and for every open neighborhood $V$
  of $\upc p_j [Q]$, we have
  $Q_j = \SV {p_j} (Q) = \upc p_j [Q] \subseteq V$, so
  $\alpha_j (V)=1$.

  We turn to $(b)$.  Since $\HV X, {(\HV {p_i})}_{i \in I}$ is a
  projective limit of
  $(\HV {p_{ij}} \colon \HV {X_j} \to \HV {X_i})_{i \sqsubseteq j \in
    I}$, we reason as above and we obtain that there is a (unique)
  element $C$ of $\HV X$ such that $C_i = \HV {p_i} (C)$ for every
  $i \in I$, namely $C_i = cl (p_i [C])$.  Let $i \in I$ and $U$ be an
  open subset of $X_i$ such that $\alpha_i (U) \neq 0$.  Since
  $\alpha_i = \quasi_{X_i} (Q_i, C_i)$, this means that $C_i$
  intersects $U$, equivalently that $cl (p_i [C])$ intersects $U$.
  Hence $p_i [C]$ intersects $U$, showing that there is an element
  $\vec x \in C$ such that $x_i = p_i (\vec x) \in U$.  We let
  $Q \eqdef \upc \vec x$.  Then $\upc p_i [Q] = \upc x_i$ is included
  in $U$.  For every $j \in I$ such that $i \sqsubseteq j$, for every
  open neighborhood $V$ of $\upc p_j [Q]$, $V$ contains
  $x_j = p_j (\vec x)$.  But $x_j$ is in $p_j [C]$, hence in $C_j$, so
  $C_j$ intersects $V$, and therefore $\alpha_j (V) \neq 0$.  \qed
\end{proof}

The point of uniform tightness stems from Lemma~6.5 of
\cite{JGL:kolmogorov}.  For every $\mu \colon \Smyth_0 X \to \creal$,
there is map $\mu^\circ \colon \Open X \to \creal$ defined by
$\mu^\circ (U) \eqdef \dsup_Q \mu (Q)$, where $Q$ ranges over the
compact saturated subsets of $X$ included in $U$.  (Beware that
$\Smyth_0 X$ was written as $\Smyth X$ in \cite{JGL:kolmogorov}.)
Given any projective system
${(p_{ij} \colon X_j \to X_i)}_{i \sqsubseteq j \in I}$ of topological
spaces, given Scott-continuous maps $\nu_i \colon \Open X \to \creal$
for each $i \in I$, such that $\nu_i = p_{ij} [\nu_j]$ for all
$i \sqsubseteq j \in I$, one can define a map
$\mu \colon \Smyth_0 X \to \creal$ by
$\mu (Q) \eqdef \finf_{i \in I} \nu_i^\bullet (\upc p_i [Q])$; the
arrow superscript denotes the fact that the infimum is filtered, in
fact $i \sqsubseteq j \in I$ implies
$\nu_i^\bullet (\upc p_i [Q]) \geq \nu_j^\bullet (\upc p_j [Q])$.
Then Lemma~6.5 of \cite{JGL:kolmogorov} states the equivalence between
three conditions, among which the following two: (1)
${(\nu_i)}_{i \in I}$ is uniformly tight, (3) for every $i \in I$,
$\nu_i = p_i [\mu^\circ]$.  This applies verbatim to $\A$-valuations
$\alpha_i$ for $\nu_i$, modulo our identification of $\A$ with
$\{0, 1/2, 1\} \subseteq \creal$.
\begin{proposition}
  \label{prop:prohorov:Aval}
  Let ${(p_{ij} \colon X_j \to X_i)}_{i \sqsubseteq j \in I}$ be a
  projective system of topological spaces, with canonical projective
  limit $X, {(p_i)}_{i \in I}$.  Let $\alpha_i$ be $\A$-valuations on
  each $X_i$ such that $\alpha_i = p_{ij} [\alpha_j]$ 
  for all $i \sqsubseteq j \in I$.  If ${(\alpha_i)}_{i \in I}$ is
  uniformly tight, then there is a 
  Scott-continuous map $\alpha \colon \Open X \to \creal$ such that
  for every $i \in I$, $\alpha_i = p_i [\alpha]$, and $\alpha$ is an
  $\A$-valuation.
\end{proposition}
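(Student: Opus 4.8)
The plan is to take for $\alpha$ the map $\mu^\circ$ produced by Lemma~6.5 of \cite{JGL:kolmogorov}, and then to check directly that it satisfies the $\A$-valuation axioms. Identifying $\A$ with $\{0,1/2,1\}\subseteq\creal$ as in the discussion preceding the statement, I would set $\mu(Q)\eqdef\finf_{i\in I}\alpha_i^\bullet(\upc p_i[Q])$ for $Q\in\Smyth_0 X$ and $\alpha\eqdef\mu^\circ$, so $\alpha(U)=\dsup\{\mu(Q)\mid Q\in\Smyth_0 X,\ Q\subseteq U\}$. Since ${(\alpha_i)}_{i\in I}$ is uniformly tight, Lemma~6.5 of \cite{JGL:kolmogorov} gives $\alpha_i=p_i[\alpha]$ for every $i\in I$. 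The map $\alpha$ is Scott-continuous: it is clearly monotone, and for any directed family of open subsets of $X$ with union $U$, every compact saturated $Q\subseteq U$ already lies in a single member of the family (by compactness of $Q$), so $\mu^\circ$ preserves that supremum. Moreover $\mu$ takes values in $\{0,1/2,1\}$, being a filtered infimum of the values $\alpha_i^\bullet(K)$, each of which is an infimum of a nonempty family of elements of $\{0,1/2,1\}$, hence again in $\{0,1/2,1\}$; so $\alpha=\mu^\circ$, being a directed supremum over a nonempty index set of such values, also lands in $\{0,1/2,1\}$, i.e.\ $\alpha$ corestricts to a Scott-continuous map $\Open X\to\A$.

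It remains to verify the $\A$-valuation axioms. Strictness and normalization follow from $\alpha_i=p_i[\alpha]$ at any $i\in I$ (the index set of a directed preorder is nonempty): $\alpha(\emptyset)=\alpha_i(\emptyset)=0$ and $\alpha(X)=\alpha_i(X_i)=1$, using $p_i^{-1}(\emptyset)=\emptyset$ and $p_i^{-1}(X_i)=X$. For the two remaining conditions I would use the description of the opens of a projective limit: writing $U_i$ (resp.\ $V_i$) for the largest open subset of $X_i$ with $p_i^{-1}(U_i)\subseteq U$ (resp.\ $p_i^{-1}(V_i)\subseteq V$), the families ${(p_i^{-1}(U_i))}_i$ and ${(p_i^{-1}(V_i))}_i$ are increasing with unions $U$ and $V$ (the stronger property recalled after Lemma~3.1 of \cite{JGL:kolmogorov}), hence so are ${(p_i^{-1}(U_i\cup V_i))}_i$ and ${(p_i^{-1}(U_i\cap V_i))}_i$, with unions $U\cup V$ and $U\cap V$ respectively. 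By Scott-continuity of $\alpha$ and $\alpha_i=p_i[\alpha]$ this gives $\alpha(U)=\dsup_i\alpha_i(U_i)$, $\alpha(U\cup V)=\dsup_i\alpha_i(U_i\cup V_i)$ and $\alpha(U\cap V)=\dsup_i\alpha_i(U_i\cap V_i)$, and likewise for $V$.

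From here the two axioms reduce to the corresponding ones for the $\alpha_i$. If $\alpha(U)=0$, then $\alpha_i(U_i)=0$ for every $i$ (since $p_i^{-1}(U_i)\subseteq U$ and $\alpha$ is monotone), so the first $\A$-valuation axiom for $\alpha_i$ gives $\alpha_i(U_i\cup V_i)=\alpha_i(V_i)$, and taking directed suprema over $i$ yields $\alpha(U\cup V)=\alpha(V)$. If instead $\alpha(V)=1$, then $\dsup_i\alpha_i(V_i)=1$ is a directed supremum in $\creal$ of elements of $\{0,1/2,1\}$ along a monotone net, so there is $i_0\in I$ with $\alpha_i(V_i)=1$ for all $i\sqsupseteq i_0$; the second $\A$-valuation axiom for $\alpha_i$ then gives $\alpha_i(U_i\cap V_i)=\alpha_i(U_i)$ for all such $i$, and since $\{i\mid i\sqsupseteq i_0\}$ is cofinal in $I$ and the nets are monotone, taking suprema over this cofinal subset yields $\alpha(U\cap V)=\alpha(U)$. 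This finishes the verification.

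The genuinely substantial work is packaged inside Lemma~6.5 of \cite{JGL:kolmogorov}, where uniform tightness is used to obtain $\alpha_i=p_i[\mu^\circ]$; the present argument only adds the $\A$-valuation bookkeeping. The one point that needs care is that, for $U\cup V$ and $U\cap V$, one must work with the directed covers ${(p_i^{-1}(U_i\cup V_i))}_i$ and ${(p_i^{-1}(U_i\cap V_i))}_i$ rather than with the canonical covers by preimages of the largest opens of $X_i$ whose preimages lie in $U\cup V$ or $U\cap V$ — those larger opens need not equal $U_i\cup V_i$ or $U_i\cap V_i$, and it is Scott-continuity of $\alpha$ that licenses computing with the smaller, more tractable covers.
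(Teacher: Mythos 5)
Your proposal is correct and follows essentially the same route as the paper: define $\mu (Q) \eqdef \finf_{i \in I} \alpha_i^\bullet (\upc p_i [Q])$, take $\alpha \eqdef \mu^\circ$, invoke Lemma~6.5 of \cite{JGL:kolmogorov} via uniform tightness to get $\alpha_i = p_i[\alpha]$, prove Scott-continuity by compactness, and verify the two $\A$-valuation axioms through the directed covers $U \cup V = \dcup_{i} p_i^{-1}(U_i \cup V_i)$ and $U \cap V = \dcup_{i} p_i^{-1}(U_i \cap V_i)$, extracting an index $i_0$ with $\alpha_{i_0}(V_{i_0})=1$ and using cofinality. The only (harmless) differences are cosmetic: you derive strictness and $\alpha(X)=1$ from $\alpha_i = p_i[\alpha]$ rather than directly, and you make explicit the observation that $\alpha$ lands in $\{0,1/2,1\}$.
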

\begin{proof}
%
  We define $\mu \colon \Smyth_0 X \to \A$ by
  $\mu (Q) \eqdef \finf_{i \in I} \alpha_i^\bullet (\upc p_i [Q])$ for
  every $Q \in \Smyth_0 X$.  Since ${(\alpha_i)}_{i \in I}$ is
  uniformly tight by assumption, Lemma~6.5 of \cite{JGL:kolmogorov},
  as discussed above, entails that $\alpha_i = p_i [\mu^\circ]$ for
  every $i \in I$.

  We claim that $\mu^\circ$ is Scott-continuous.  We recall that
  $\mu^\circ (U) \eqdef \dsup_Q \mu (Q)$, where $Q$ ranges over the
  compact saturated subsets of $X$ included in $U$, for every
  $U \in \Open X$.  It is easy to see that $\mu^\circ$ is monotonic.
  Let ${(U_j)}_{j \in J}$ be a directed family of open subsets of $X$,
  with union equal to $U$.  Since $\mu^\circ$ is monotonic,
  $\dsup_{j \in J} \mu^\circ (U_j) \leq \mu^\circ (U)$.  In order to
  show the reverse inequality, it suffices to show that for every
  $r < \mu^\circ (U)$, there is an index $j \in J$ such that
  $r \leq \mu^\circ (U_j)$.  Since $r < \mu^\circ (U)$, by definition
  of $\mu^\circ$, there is a compact saturated subset $Q$ of $X$
  included in $U$ such that $r \leq \mu (Q)$.  Since $Q$ is compact
  and ${(U_j)}_{j \in J}$ is directed, $Q \subseteq U_j$ for some
  $j \in J$, and therefore $r \leq \mu^\circ (U_j)$.

  The map $\mu^\circ$ is strict: the only compact saturated set
  included in $\emptyset$ is the empty set, and
  $\mu (\emptyset) = \finf_{i \in I} \nu_i^\bullet (\upc p_i
  [\emptyset]) = \finf_{i \in I} \nu_i^\bullet (\emptyset)$; this is
  equal to $0$, because for every $i \in I$,
  $\nu_i^\bullet (\emptyset)$ is the infimum of the values
  $\nu_i (V)$, where $V$ ranges over the open neighborhoods of
  $\emptyset$, namely $\nu_i (\emptyset) = 0$.

  We show that $\mu^\circ$ satisfies the remaining defining conditions
  for an $\A$-valuation.  For short, we will write $U_i$ for the
  largest open subset of $X_i$ such that $p_i^{-1} (U_i) \subseteq U$,
  for every open subset $U$ of $X$; and similarly $V_i$ for $V$, for
  example.  Since $U = \dcup_{i \in I} p_i^{-1} (U_i)$, and since
  $\mu^\circ$ is Scott-continuous, we have
  $\mu^\circ (U) = \dsup_{i \in I} \mu^\circ (p_i^{-1} (U_i)) =
  \dsup_{i \in I} \alpha_i (U_i)$.

  When $U=X$, the sets $U_i$ are equal to the given spaces $X_i$, so
  $\mu^\circ (X) = \dsup_{i \in I} \alpha_i (X_i) = 1$.

  Let $U$ and $V$ be arbitrary open subsets of $X$.  If
  $\mu^\circ (U)=0$, then $\alpha_i (U_i)=0$ for every $i \in I$.  We
  have $U \cup V = \dcup_{i \in I} p_i^{-1} (U_i \cup V_i)$, so
  $\mu^\circ (U \cup V) = \dsup_{i \in I} \mu^\circ (p_i^{-1} (U_i
  \cup V_i))$ (since $\mu^\circ$ is Scott-continuous)
  $= \dsup_{i \in I} \alpha_i (U_i \cup V_i)$.  That is equal to
  $\dsup_{i \in I} \alpha_i (V_i)$ by Condition~1 of the definition of
  $\A$-valuations; so $\mu^\circ (U \cup V) = \mu^\circ (V)$, provided
  that $\mu^\circ (U)=0$.

  If $\mu^\circ (V)=1$, then $\dsup_{i \in I} \alpha_i (V_i)=1$, and
  since $\alpha_i (V_i)$ can only take the values $0$, $\mathbf M$
  ($=1/2$) and $1$, we must have $\alpha_{i_0} (V_{i_0})=1$ for some
  $i_0 \in I$.  Then $\alpha_i (V_i)=1$ for every $i \sqsupseteq i_0$,
  since then $p_{i_0}^{-1} (V_{i_0}) \subseteq p_i^{-1} (V_i)$, and
  then
  $1 = \alpha_{i_0} (V_{i_0}) = \mu^\circ (p_{i_0}^{-1} (V_{i_0}))
  \leq \mu^\circ (p_i^{-1} (V_i)) = \alpha_i (V_i)$.  We observe that
  $U \cap V = \dcup_{i \in I} p_i^{-1} (U_i \cap V_i)$: every point of
  $U \cap V$ is in $p_i^{-1} (U_i)$ for some $i \in I$, in
  $p_j^{-1} (V_j)$ for some $j \in I$, hence in
  $p_k^{-1} (U_k) \cap p_k^{-1} (V_k) = p_k^{-1} (U_k \cap V_k)$ for
  some $k \in I$ such that $i, j \sqsubseteq k$; the reverse inclusion
  is obvious.  Since $\mu^\circ$ is Scott-continuous, it follows that
  $\mu^\circ (U \cap V) = \dsup_{i \in I} \alpha_i (U_i \cap V_i)$.
  For every $i \sqsupseteq i_0$,
  $\alpha_i (U_i \cap V_i) = \alpha_i (U_i)$ by Condition~2 of the
  definition of $\A$-valuations; so
  $\mu^\circ (U \cap V) \geq \dsup_{i \sqsupseteq i_0} \alpha_i (U_i)
  = \dsup_{i \in I} \alpha_i (U_i)$ (since the family of indices
  $i \sqsupseteq i_0$ is cofinal in $I$) $= \mu^\circ (U)$.  The
  reverse inequality follows by monotonicity of $\mu^\circ$.  \qed
\end{proof}

Putting everything together, we obtain the following.
\begin{theorem}
  \label{thm:QL:projlim}
  Let ${(p_{ij} \colon X_j \to X_i)}_{i \sqsubseteq j \in I}$ be a
  projective system of topological spaces, with canonical projective
  limit $X, {(p_i)}_{i \in I}$.  If every $X_i$ is sober,
  and if $\HV X$ is a projective limit of
  ${(\HV {p_{ij}} \colon \HV {X_j} \to \HV {X_i})}_{i \sqsubseteq j
    \in I}$, then $\QLV X$ is a projective limit of
  ${(\QLV {p_{ij}} \colon \QLV {X_j} \to \QLV {X_i})}_{i \sqsubseteq j
    \in I}$ and $\AvalV X$ is a projective limit of
  ${(\AvalV {p_{ij}} \colon \AvalV {X_j} \to \AvalV {X_i})}_{i
    \sqsubseteq j \in I}$.
\end{theorem}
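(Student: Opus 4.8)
The plan is to reduce the whole statement to Proposition~\ref{prop:prohorov:Aval} through the correspondence $\quasi$, establishing the claim for $\AvalV$ first and then transporting it to $\QLV$. I would begin with two auxiliary observations. First, $X$ is a limit in $\Topcat$ of the sober spaces $X_i$, hence is itself sober (sober spaces are closed under limits in $\Topcat$); consequently $\quasi_X$ and every $\quasi_{X_i}$ are not merely topological embeddings but homeomorphisms, so that $\quasi_X^{-1}$ and the $\quasi_{X_i}^{-1}$ are available. Second, since every $X_i$ is sober, Theorem~\ref{thm:Q:projlim} gives that $\SV X, {(\SV {p_i})}_{i \in I}$ is a projective limit of ${(\SV {p_{ij}})}_{i \sqsubseteq j \in I}$; together with the hypothesis that $\HV X, {(\HV {p_i})}_{i \in I}$ is a projective limit of ${(\HV {p_{ij}})}_{i \sqsubseteq j \in I}$, both hypotheses of Lemma~\ref{lemma:QL:utight} are thereby met.

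I would then treat the $\AvalV$ case. Writing $Z$ and $\varphi \colon \AvalV X \to Z$ for the data of the $\AvalV$-situation as in Definition~\ref{defn:situation}, Proposition~\ref{prop:Aval:projlim} already tells us that $\varphi$ is a topological embedding, so only surjectivity remains. Let ${(\alpha_i)}_{i \in I}$ be a point of $Z$, i.e.\ $\A$-valuations $\alpha_i$ on $X_i$ with $\alpha_i = p_{ij} [\alpha_j]$ for all $i \sqsubseteq j$, and set $(Q_i, C_i) \eqdef \quasi_{X_i}^{-1} (\alpha_i)$. By naturality of $\quasi$ (Lemma~\ref{lemma:quasi:nat}), $(Q_i, C_i) = \QLV {p_{ij}} (Q_j, C_j)$ for all $i \sqsubseteq j$, so the $(Q_i, C_i)$ form a compatible family of quasi-lenses; Lemma~\ref{lemma:QL:utight} then yields that ${(\alpha_i)}_{i \in I}$ is uniformly tight, and Proposition~\ref{prop:prohorov:Aval} produces an $\A$-valuation $\alpha$ on $X$ with $p_i [\alpha] = \alpha_i$ for every $i$. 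Since $\AvalV {p_i}$ maps $\alpha$ to $p_i [\alpha]$, this says exactly $\varphi (\alpha) = {(\alpha_i)}_{i \in I}$, so $\varphi$ is a surjective topological embedding, hence a homeomorphism; therefore $\AvalV X, {(\AvalV {p_i})}_{i \in I}$ is the projective limit.

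For $\QLV$ I would transport this along $\quasi$. The maps $\quasi_X$ form a natural transformation $\QLV \Rightarrow \AvalV$ that restricts to a natural isomorphism on the full subcategory of sober spaces; since $X$ and every $X_i$ is sober, the limit cone $\AvalV X, {(\AvalV {p_i})}_{i \in I}$ just obtained corresponds componentwise, via the inverse homeomorphisms $\quasi_{X_i}^{-1}$ and $\quasi_X^{-1}$, to the cone $\QLV X, {(\QLV {p_i})}_{i \in I}$ over ${(\QLV {p_{ij}})}_{i \sqsubseteq j \in I}$, and a natural isomorphism of diagrams carries limit cones to limit cones, so the latter is a projective limit. (One could equally well rerun the previous paragraph with Proposition~\ref{prop:QL:projlim} in place of Proposition~\ref{prop:Aval:projlim} and $(Q, C) \eqdef \quasi_X^{-1} (\alpha)$ as the preimage.)

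The genuine work has already been done in Lemma~\ref{lemma:QL:utight} and Proposition~\ref{prop:prohorov:Aval}, so the present theorem is essentially an assembly. The only point needing care is that one must know $X$ is sober before $\quasi_X^{-1}$ can be used --- without sobriety $\quasi_X$ is merely an embedding, and one could not conclude that the $\A$-valuation $\alpha$ arises from an actual quasi-lens --- together with the bookkeeping, via naturality of $\quasi$, that the quasi-lens data $(Q_i, C_i)$ extracted from the $\alpha_i$ genuinely is a compatible family to which Lemma~\ref{lemma:QL:utight} applies.
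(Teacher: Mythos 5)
Your proposal is correct and follows essentially the same route as the paper: both rest on Lemma~\ref{lemma:QL:utight}, Proposition~\ref{prop:prohorov:Aval}, the naturality of $\quasi$, sobriety of the limit $X$, and Theorem~\ref{thm:Q:projlim} for the $\SV$ hypothesis. The only difference is cosmetic ordering—the paper establishes the $\QLV$ statement first and deduces $\AvalV$ via the homeomorphisms $\quasi$, whereas you do $\AvalV$ first and transport back along $\quasi^{-1}$—which changes nothing of substance.
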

\begin{proof}
  Let $Z^\sharp$ be the canonical projective limit of
  $(\SV {p_{ij}} \colon \SV {X_j} \to \SV {X_i})_{i \sqsubseteq j \in
    I}$ and $\varphi^\sharp \colon \SV X \to Z^\sharp$ be the
  comparison map; $\varphi^\sharp$ is a homeomorphism by
  Theorem~\ref{thm:Q:projlim}.  Let $Z^\flat$ be the canonical
  projective limit of
  $(\HV {p_{ij}} \colon \HV {X_j} \to \HV {X_i})_{i \sqsubseteq j \in
    I}$ and $\varphi^\flat \colon \HV X \to Z^\flat$ be the comparison
  map; $\varphi^\flat$ is a homeomorphism by assumption.  Finally, let
  $Z^\natural$ be the canonical projective limit of
  $(\QLV {p_{ij}} \colon \QLV {X_j} \to \QLV {X_i})_{i \sqsubseteq j
    \in I}$ and $\varphi^\natural \colon \QLV X \to Z^\natural$ be the
  comparison map. 
  Considering Proposition~\ref{prop:QL:projlim}, 
  $\varphi^\natural$ is a topological embedding, and we need to show
  that it is surjective.

  Let $(Q_i, C_i) \in \QLV {X_i}$ be given for each $i \in I$ so that
  for all $i \sqsubseteq j \in I$,
  $(Q_i, C_i) = \QLV {p_{ij}} (Q_j, C_j)$.  Let
  $\alpha_i \eqdef \quasi_{X_i} (Q_i, C_i)$.  Both $\varphi^\sharp$
  and $\varphi^\flat$ are homeomorphisms, so we can apply
  Lemma~\ref{lemma:QL:utight} and conclude that
  ${(\alpha_i)}_{i \in I}$ is uniformly tight.  By naturality of
  $\quasi_\_$ (Lemma~\ref{lemma:quasi:nat}), we have
  $\alpha_i = p_{ij} [\alpha_j]$ for all $i \sqsubseteq j \in I$, so
  Proposition~\ref{prop:prohorov:Aval} applies, giving us an
  $\A$-valuation $\alpha$ on $X$ such that $\alpha_i = p_i [\alpha]$
  for every $i \in I$.  Any limit of sober spaces, taken in $\Topcat$,
  is sober \cite[Theorem 8.4.13]{JGL-topology}.  Therefore $X$ is
  sober, and because of that, $\quasi_X$ is a homeomorphism.  In
  particular $\alpha = \quasi_X (Q, C)$ for some unique quasi-lens
  $(Q, C)$ on $X$.  For every $i \in I$, the fact that
  $\alpha_i = p_i [\alpha]$ entails that
  $\quasi_{X_i} (Q_i, C_i) = \AvalV {p_i} (\quasi_X (Q, C))$, which is
  equal to $\quasi_{X_i} (\QLV {p_i} (Q, C))$, by naturality of
  $\quasi_\_$ (Lemma~\ref{lemma:quasi:nat}).  Since each $X_i$ is
  sober, $\quasi_{X_i}$ is a homeomorphism, so
  $(Q_i, C_i) = \QLV {p_i} (Q, C)$.

  The case of $\AvalV$ is an immediate consequence, since $\quasi_X$
  and the maps $\quasi_{X_i}$ are homeomorphisms, and using the
  naturality of $\quasi_\_$ once again.  \qed
\end{proof}

\begin{example}
  \label{exa:QL:projlim:sober:nec}
  Sobriety is required in Theorem~\ref{thm:QL:projlim}.  Let us look
  back at Stone's counterexample~\ref{exa:Stone}.  Each $X_n$ is
  compact, so $(X_n, X_n)$ is a quasi-lens.  For all
  $m \leq n \in \nat$, $p_{mn} \colon X_n \to X_m$ is the identity
  map, so $(X_m, X_m) = \QLV {p_{mn}} (X_n, X_n)$.  It follows that
  ${(X_n, X_n)}_{n \in \nat}$ is an element of the projective limit
  $Z$ of
  ${(\QLV {p_{mn}} \colon \QLV {X_n} \to \QLV {X_m})}_{m \leq n \in
    \nat}$.  But it is not in the image of the comparison map
  $\varphi \colon \QLV X \to Z$.  Indeed, $X$ is $\nat$ with the
  discrete topology, so the only quasi-lenses on $X$ are the pairs
  $(A, A)$ where $A$ is a non-empty finite subset of $\nat$,
  and their images by $\varphi$ are the constant $\nat$-indexed tuples
  whose entries are all equal to $(A, A)$.
\end{example}

\begin{remark}
  \label{rem:QL:projlim:nec}
  The requirement that $\HV X$ be a projective limit of
  $(\HV {p_{ij}} \colon \allowbreak \HV {X_j} \to \HV {X_i})_{i
    \sqsubseteq j \in I}$ in Theorem~\ref{thm:QL:projlim} cannot be
  dispensed with if every $X_i$ is not only sober but also pointed and
  if the bonding maps $p_{ij}$ are strict.  A \emph{pointed} space is
  a space with a least element $\bot$ in its specialization ordering,
  or equivalently with an element whose sole open neighborhood is the
  whole space; every pointed space is compact.  A \emph{strict} map is
  a function that maps least elements to least elements.  In that
  case, the projective limit is also pointed.  In a compact space, for
  every non-empty closed subset $C$, $(\upc C, C)$ is a quasi-lens; we
  even have $C \subseteq cl (\upc C \cap C)$, from which it is
  immediate to see that every open neighborhood $U$ of $\upc C$
  satisfies $C \subseteq cl (U \cap C)$.  In a pointed space $Y$, $C$
  contains $\bot$, so $\upc C$ is simply the whole space $Y$.  Now let
  us use the notations of the proof of Theorem~\ref{thm:QL:projlim},
  and let us assume that each $X_i$ is sober and pointed, and that
  each $p_{ij}$ is strict.  Let ${(C_i)}_{i \in I}$ be any element of
  $Z^\flat$.  Then each pair $(X_i, C_i)$ is a quasi-lens, as we have
  just seen.  For all $i \sqsubseteq j \in I$,
  $\upc X_i = \upc {p_{ij}} [\upc X_j]$ because every element of
  $\upc X_i=X_i$ is larger than or equal to its bottom element, which
  is obtained as the image of the bottom element of $X_j$ by the
  strict function $p_{ij}$.  It follows that ${(X_i, C_i)}_{i \in I}$
  is in $Z^\natural$.  If $\varphi^\natural$ is surjective, then there
  is a quasi-lens $(Q, C)$ on $X$ such that
  $(X_i, C_i) = \QLV {p_i} (Q, C)$ for every $i \in I$, in particular
  such that $C_i = \HV {p_i} (C)$ for every $i \in I$.  Therefore
  $\varphi^\flat$ must be surjective, too, hence a homeomorphism, by
  Proposition~\ref{prop:H:projlim}.
\end{remark}

We combine Theorem~\ref{thm:QL:projlim} with
Theorem~\ref{thm:H:projlim}, and we obtain the following.  We remember
from Remark~\ref{rem:ep:proper} that every projection is proper.
\begin{corollary}
  \label{corl:QL:projlim}
  Let ${(p_{ij} \colon X_j \to X_i)}_{i \sqsubseteq j \in I}$ be a
  projective system of topological spaces, with canonical projective
  limit $X, {(p_i)}_{i \in I}$.  If every $X_i$ is sober, and:
  \begin{enumerate}
  \item every $p_{ij}$ is a proper map (e.g., a projection),
  \item or $I$ has a countable cofinal subset and each $X_i$ is
    locally compact,
  \end{enumerate}
  then $\QLV X$ is a projective limit of
  ${(\QLV {p_{ij}} \colon \QLV {X_j} \to \QLV {X_i})}_{i \sqsubseteq j
    \in I}$ and $\AvalV X$ is a projective limit of
  ${(\AvalV {p_{ij}} \colon \AvalV {X_j} \to \AvalV {X_i})}_{i
    \sqsubseteq j \in I}$.
\end{corollary}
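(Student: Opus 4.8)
The plan is to recognize this as a genuine corollary, in the literal sense: feed the hypotheses into Theorem~\ref{thm:H:projlim} to learn that $\HV X$ is a projective limit of the $\HV {p_{ij}}$, and then invoke Theorem~\ref{thm:QL:projlim}, whose only assumption beyond the sobriety of the $X_i$ (which we are given) is exactly that $\HV X, {(\HV {p_i})}_{i \in I}$ be a projective limit of ${(\HV {p_{ij}} \colon \HV {X_j} \to \HV {X_i})}_{i \sqsubseteq j \in I}$.

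First I would dispatch case~1 of the corollary: when every $X_i$ is sober and every $p_{ij}$ is a proper map, this is precisely case~2 of Theorem~\ref{thm:H:projlim}, so that theorem delivers the required fact that $\HV X$ is a projective limit of the $\HV {p_{ij}}$. Here I would recall, following Remark~\ref{rem:ep:proper}, that every projection is a proper map, which legitimizes the parenthetical ``(e.g., a projection)'' and, incidentally, shows that ep-systems are also covered, their bonding maps being projections. Next I would dispatch case~2 of the corollary: when $I$ has a countable cofinal subset and every $X_i$ is locally compact (and, by hypothesis, sober), this is case~3 of Theorem~\ref{thm:H:projlim}, which again yields that $\HV X$ is the projective limit we need.

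In either case we are then in position to apply Theorem~\ref{thm:QL:projlim}, with the $X_i$ sober and $\HV X$ a projective limit of the $\HV {p_{ij}}$, and we conclude at once that $\QLV X$ is a projective limit of ${(\QLV {p_{ij}} \colon \QLV {X_j} \to \QLV {X_i})}_{i \sqsubseteq j \in I}$ and that $\AvalV X$ is a projective limit of ${(\AvalV {p_{ij}} \colon \AvalV {X_j} \to \AvalV {X_i})}_{i \sqsubseteq j \in I}$. There is no real obstacle: the only points requiring attention are the bookkeeping of matching each branch of the corollary's hypothesis to the right case of Theorem~\ref{thm:H:projlim}, and the elementary observation, already recorded, that projections are proper maps.
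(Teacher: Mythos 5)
Your proposal is correct and follows exactly the paper's own route: the corollary is obtained by matching each hypothesis to case~2 (proper maps) or case~3 (countable cofinal, locally compact sober) of Theorem~\ref{thm:H:projlim}, and then feeding the resulting fact that $\HV X$ is the projective limit of the $\HV{p_{ij}}$ into Theorem~\ref{thm:QL:projlim}, with Remark~\ref{rem:ep:proper} justifying ``(e.g., a projection)''. Nothing further is needed.
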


The assumptions of Corollary~\ref{corl:QL:projlim} are not quite
tight, as the following corner case demonstrates.
\begin{remark}
  \label{ex:QL:projlim:ep}
  The fact that $X_i$ be sober is not needed when
  ${(p_{ij} \colon X_j \to X_i)}_{i \sqsubseteq j \in I}$ is the
  projective system underlying an ep-system, and in the case of the
  $\AvalV$ functor.  Explicitly, let
  $(\xymatrix{X_i \ar@<1ex>[r]^{e_{ij}} & X_j
    \ar@<1ex>[l]^{p_{ij}}})_{i \sqsubseteq j \in I}$ be an ep-system,
  with canonical projective limit $X, {(p_i)}_{i \in I}$.  Let $Z$ be
  the canonical projective limit of
  ${(\AvalV {p_{ij}} \colon \AvalV {X_j} \to \AvalV {X_i})}_{i
    \sqsubseteq j \in I}$.  Then the comparison map
  $\varphi \colon \AvalV X \to Z$ is a homeomorphism.  This is a
  consequence of Proposition~\ref{prop:ep} and the fact that
  $\AvalV Y$ is sober for every space $Y$
  \cite[Theorem~3.2]{Heckmann:absval}, hence a monotone convergence
  space, and certainly a $T_0$ space.
\end{remark}


\begin{example}
  \label{exa:ratbot:QL}
  Example~\ref{exa:ratbot:H} (where each $X_n$ is defined as
  $\rat_\bot \diff \{q_0, \cdots, q_{n-1}\}$) gives an example of a
  projective system of compact sober spaces, with a countable index
  set, whose projective limit is not preserved by $\QLV$, by
  Remark~\ref{rem:QL:projlim:nec}.  Hence sobriety is not enough in
  case~2 of Corollary~\ref{corl:QL:projlim}.  We recall that sobriety
  itself is needed, see Example~\ref{exa:QL:projlim:sober:nec}.
\end{example}


\section{Lenses}
\label{sec:lenses}

The study of lenses will require us to talk about weakly Hausdorff
spaces, and about quasi-Polish spaces.

A topological space $X$ is \emph{weakly Hausdorff} in the sense of
Lawson and Keimel \cite[Lemma~6.6]{KL:measureext} if and only if for
all $x, y \in X$, every open neighborhood $W$ of $\upc x \cap \upc y$
contains an intersection $U \cap V$ of an open neighborhood $U$ of $x$
and of an open neighborhood $V$ of $y$, equivalently if for all
compact saturated subsets $Q$, $Q'$ of $X$, every open neighborhood
$W$ of $Q \cap Q'$ contains an intersection $U \cap V$ of an open
neighborhood $U$ of $Q$ and of an open neighborhood $V$ of $Q'$.  All
Hausdorff spaces, all stably locally compact spaces are weakly
Hausdorff; see \cite{JGL:wHaus} for further information.

Quasi-Polish spaces were invented by M. de Brecht
\cite{deBrecht:qPolish}, and can be characterized in many ways.  The
original definition is as the topological spaces obtained from
second-countable Smyth-complete quasi-metric spaces in their open ball
topology, just keeping the topology and throwing away the
quasi-metric.  A space is \emph{second-countable} if and only if it
has a countable base.  We will not need to refer to quasi-metric
spaces or Smyth-completeness in the sequel, so we omit the
definitions.  Every Polish space is quasi-Polish, and also every
$\omega$-continuous dcpo from domain theory.

The latter are defined as follows.  In a dcpo $P$, let $x \ll y$
(``$x$ is \emph{way below} $y$'') if and only if every directed family
$D$ such that $y \leq \dsup D$ contains an element $d \in D$ such that
$x \leq d$.  A \emph{basis} for $P$ is a a subset $B$ such that, for
every $x \in P$, $\ddarrow_B x \eqdef \{b \in B \mid b \ll x\}$ is
directed and has $x$ as its supremum.  A dcpo is \emph{continuous} if
and only if it has a basis, and \emph{$\omega$-continuous} if and only
if it has a countable basis.  For example, for any set $I$, the dcpo
$\pow (I)$, ordered by inclusion, is continuous with basis
$\Pfin (I)$.  It is $\omega$-continuous if $I$ is countable.  (That is
in fact an example of an \emph{algebraic} domain, where the set of
\emph{finite elements} $\{x \in P \mid x \ll x\}$ serves as a basis.
Every algebraic domain is continuous.)  In a continuous dcpo $P$ with
basis $B$, the sets $\uuarrow b \eqdef \{x \in P \mid b \ll x\}$,
$b \in B$, form a base of the Scott topology.

We recall that a $G_\delta$ subset of a topological space $X$ is a
countable intersection of open subsets.  A \emph{$\bPi^0_2$ subset} of
$X$ is a countable intersection of UCO subsets, where a \emph{UCO
  subset} of $X$ is a set of the form $U \limp V$, denoting
$\{x \in X \mid \text{if }x\in U\text{ then }x \in V\}$, with
$U, V \in \Open X$.  Every open subset is UCO, so every $G_\delta$
subset is $\bPi^0_2$.  The $\bPi^0_2$ subsets are crucial in
understanding the structure of quasi-Polish spaces as, notably, the
subspaces of a quasi-Polish space that are quasi-Polish are exactly
its $\bPi^0_2$ subsets \cite[Corollary~23]{deBrecht:qPolish}.
\begin{proposition}
  \label{prop:qPolish:model}
  The following are equivalent for a topological space $X$:
  \begin{enumerate}
  \item $X$ is quasi-Polish;
  \item $X$ is homeomorphic to a $\bPi^0_2$ subspace of an
    $\omega$-continuous dcpo;
  \item $X$ is homeomorphic to a $G_\delta$ subspace of an
    $\omega$-continuous dcpo.
  \end{enumerate}
\end{proposition}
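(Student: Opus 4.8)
The plan is to prove the cycle of implications $(3)\Rightarrow(2)\Rightarrow(1)\Rightarrow(3)$. The first implication, $(3)\Rightarrow(2)$, is immediate: as recalled just before the statement, every open set is UCO, hence every $G_\delta$ subset is a $\bPi^0_2$ subset, so a $G_\delta$ subspace of an $\omega$-continuous dcpo is in particular a $\bPi^0_2$ subspace of one.

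For $(2)\Rightarrow(1)$ I would combine two facts from de Brecht's theory. First, every $\omega$-continuous dcpo, with its Scott topology, is quasi-Polish \cite{deBrecht:qPolish}. Second, the quasi-Polish subspaces of a quasi-Polish space are exactly its $\bPi^0_2$ subspaces (\cite[Corollary~23]{deBrecht:qPolish}, quoted in the preliminaries above). Applying the second fact with the $\omega$-continuous dcpo of $(2)$ in the role of the ambient quasi-Polish space, any $\bPi^0_2$ subspace of it is quasi-Polish, which is $(1)$.

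The substantive implication is $(1)\Rightarrow(3)$, and this is where I expect the main difficulty. One cannot simply reuse the ambient space of some $\bPi^0_2$-representation of $X$: if $Y$ were an $\omega$-continuous dcpo containing $X$ as a topological subspace, then every open subset of $Y$ would meet $X$ in an open subset of $X$, whereas the sets one would want to intersect down to $X$ are only UCO in $Y$, not open. So one must build a genuinely new $\omega$-continuous dcpo together with a fresh embedding of $X$, and the natural vehicle is the formal-ball model. Since $X$ is quasi-Polish, there is a Smyth-complete quasi-metric $d$ on $X$ whose open-ball topology is the topology of $X$ and which has a countable base. Let $\mathbf B(X,d)$ be the poset of formal balls $(x,r)$ with $x\in X$ and $r\in\Rp$, ordered by $(x,r)\sqsubseteq(y,s)$ iff $d(x,y)\le r-s$. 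By the theory of formal balls over Smyth-complete quasi-metric spaces \cite{deBrecht:qPolish}, $\mathbf B(X,d)$ is a continuous dcpo, it is $\omega$-continuous because $(X,d)$ is second-countable, and the map $x\mapsto(x,0)$ is a topological embedding of $X$ onto the set of formal balls of radius $0$. (This last point is the delicate one: it is exactly where Smyth-completeness is used, and where I would take care to invoke the precise statement.) The radius map $\pi\colon(x,r)\mapsto r$ reverses the order and sends directed suprema to infima, so each $O_n\eqdef\{(x,r)\in\mathbf B(X,d)\mid r<1/n\}=\pi^{-1}([0,1/n))$ is Scott-open; equivalently its complement $\{(x,r)\mid r\ge 1/n\}$ is downward closed (from $(y,s)\sqsubseteq(x,r)$ one gets $s\ge r+d(y,x)\ge r$) and closed under directed suprema. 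Since $\{(x,0)\mid x\in X\}=\bigcap_{n\ge 1}O_n$, the space $X$ is homeomorphic to a $G_\delta$ subspace of the $\omega$-continuous dcpo $\mathbf B(X,d)$, which is $(3)$.

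An alternative, less self-contained route to $(1)\Rightarrow(3)$ would start from de Brecht's representation of a quasi-Polish space as a $\bPi^0_2$ subspace of $\pow(\omega)$ and then appeal to the general principle — itself a consequence of the equivalence being proved — that a $\bPi^0_2$ subspace of an $\omega$-continuous dcpo can always be re-presented as a $G_\delta$ subspace of another one; but the formal-ball argument above is cleaner given the cited results.
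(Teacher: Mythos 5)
Your cycle $3\Rightarrow2\Rightarrow1\Rightarrow3$ is exactly the paper's, and your treatment of $3\Rightarrow2$ and $2\Rightarrow1$ coincides with it (every $\omega$-continuous dcpo is quasi-Polish, plus de Brecht's characterization of quasi-Polish subspaces as the $\bPi^0_2$ subsets). Where you genuinely diverge is $1\Rightarrow3$: the paper invokes, as a black box, the existence of an $\omega$-quasi-ideal model of any quasi-Polish space (Theorem~8.18 of the Goubault-Larrecq--Ng paper on formal balls); the non-finite elements of such an algebraic domain $P$ are the complement of the countably many closed sets $\dc p_n$ ($p_n$ ranging over the finite elements), hence a $G_\delta$, and that is the whole proof. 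You instead work directly with the formal ball space $\mathbf{B}(X,d)$ of a countably based Smyth-complete quasi-metric $d$, and exhibit $X$ as the radius-zero slice $\bigcap_n\{(x,r)\mid r<1/n\}$. This is a legitimate and arguably more self-contained route (in fact the quasi-ideal-model theorem is itself proved via formal balls, so you are re-deriving a slice of that machinery), but it shifts the burden onto three facts you state rather lightly: (i) $\mathbf{B}(X,d)$ is a continuous dcpo with $(x,r)\ll(y,s)$ iff $d(x,y)<r-s$ (Romaguera--Valero, fine to cite); (ii) the map $x\mapsto(x,0)$ is an embedding with respect to the \emph{open ball} topology, i.e.\ the $d$-Scott topology coincides with the open ball topology for Smyth-complete quasi-metrics --- you correctly flag this as the delicate point, and it is precisely the statement to cite from the formal-ball literature; (iii) $\omega$-continuity of $\mathbf{B}(X,d)$. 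Point (iii) is not literally ``because $(X,d)$ is second-countable'': a countable basis of $\mathbf{B}(X,d)$ amounts to a countable subset $D\subseteq X$ dense for the symmetrized metric $d^{sym}$ (then $D\times\rat_+$ works), and one must argue that second countability of the open ball topology yields such a $D$ --- it does, by a short argument (an $\epsilon$-separated set for $d^{sym}$ injects into any countable base, since two points sharing a basic open set contained in each of their $\epsilon/2$-balls are $d^{sym}$-close), but this step should be made explicit or supported by a precise reference rather than attributed to ``the theory of formal balls''. Your Scott-openness computation for $\{(x,r)\mid r<1/n\}$ is correct, modulo the standard fact that directed suprema of formal balls have radius equal to the infimum of the radii (Kostanek--Waszkiewicz), which you implicitly use. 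Finally, you are right to discard your alternative route through $\pow(\omega)$, which as you note would be circular here. In short: correct in substance, different from (and less economical than) the paper's one-line reduction to quasi-ideal models, and needing two or three precise citations or short lemmas to be complete.
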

The $\omega$-continuous dcpo is given its Scott topology, and the
$G_\delta$ or the $\bPi^0_2$ subspace has the subspace topology; mind
that the latter is not a Scott topology in general.

\begin{proof}
  $3 \limp 2 \limp 1$.  Every $G_\delta$ subspace $X$ of an
  $\omega$-continuous dcpo $P$ is in particular a $\bPi^0_2$ subspace
  of $P$.  Every $\omega$-continuous dcpo is quasi-Polish
  \cite[Corollary~45]{deBrecht:qPolish}, hence so is any of its
  $\bPi^0_2$ subspace.
  

  $1 \limp 3$.  If $X$ is quasi-Polish, then $X$ has an
  $\omega$-quasi-ideal model \cite[Theorem~8.18]{GLN-lmcs17}.  An
  $\omega$-quasi-ideal domain is an algebraic domain with countably
  many finite elements, and in which every element smaller than or
  equal to a finite element is itself finite
  \cite[Definition~8.1]{GLN-lmcs17}.  Clearly every
  $\omega$-quasi-ideal domain is $\omega$-continuous.  An
  $\omega$-quasi-ideal \emph{model} of a space $X$ is an
  $\omega$-quasi-ideal domain $P$ such that $X$ is homeomorphic to the
  subspace of non-finite elements of $P$.  Listing the finite elements
  of $P$ as $p_0$, $p_1$, \ldots, $p_n$, \ldots, the sets $\dc p_n$
  are closed, so their complements $U_n$ are open, and the set of
  non-finite elements of $P$ is $\bigcap_{n \in \nat} U_n$, hence a
  $G_\delta$ subset of $P$.  \qed
\end{proof}

The point in introducing those kinds of spaces is that the space of
lenses $\Plotkinn X$ is naturally isomorphic to the space of
quasi-lenses $\QLV X$, provided that $X$ is weakly Hausdorff or a
quasi-Polish spaces, as we are going to argue.

Lemma~6.1, Proposition~6.2 and Theorem~6.3 and of \cite{JGL:wHaus}
state the following, among other things.
\begin{lemma}[\cite{JGL:wHaus}]
  \label{lemma:thm6.3}
  For every topological space $X$, there is a topological embedding
  $\iota_X \colon \Plotkinn X \to \QLV X$, defined by
  $\iota_X (L) \eqdef (\upc L, cl (L))$, and we have
  ${(\iota_X)}^{-1} (\Box^\quasi U) = \Box U$ and
  ${(\iota_X)}^{-1} (\Diamond^\quasi U) = \Diamond U$ for every
  $U \in \Open X$.  There is a map
  $\varrho_X \colon \QLV X \to \Plotkinn X$ defined by
  $\varrho_X (Q, C) \eqdef Q \cap C$, and
  $\varrho_X \circ \iota_X = \identity X$.  If $X$ satisfies the
  following property:
  \begin{quotation}
    $(*)$ for every compact saturated subset $Q$ of $X$, for every
    closed subset $C$ of $X$, if $C \subseteq cl (U \cap C)$ for every
    open neighborhood $U$ of $Q$, then $C \subseteq cl (Q \cap C)$,
  \end{quotation}
  then $\iota_X$ is a homeomorphism, with inverse $\varrho_X$.
  
  Property $(*)$ is satisfied, in particular, if $X$ is weakly
  Hausdorff.
\end{lemma}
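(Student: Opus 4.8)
The plan is to prove the four assertions in turn, the last of which — that weak Hausdorffness implies $(*)$ — is the only one requiring a genuine idea.

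First I would check that $\iota_X$ is well defined and a topological embedding. For a lens $L = Q \cap F$ with $Q$ compact saturated and $F$ closed, $L$ is a closed subset of the compact set $Q$, hence compact, so $\upc L$ is compact saturated; quasi-lens condition~1 holds because $L \neq \emptyset$ and $L \subseteq \upc L \cap cl (L)$; condition~2 follows from $L \subseteq \upc L \cap cl (L)$ by monotonicity of $\upc$; and condition~3 holds because any open neighbourhood $U$ of $\upc L$ contains $L$, and any open set meeting $cl (L)$ already meets $L$, hence meets $U \cap cl (L)$. A one-line computation then gives $\iota_X^{-1} (\Box^\quasi U) = \{L \mid \upc L \subseteq U\} = \Box U$ (open sets being upward closed) and $\iota_X^{-1} (\Diamond^\quasi U) = \{L \mid cl (L) \cap U \neq \emptyset\} = \Diamond U$ ($U$ being open), for every $U \in \Open X$; so $\iota_X$ is continuous and full, and since $\Plotkinn X$ is $T_0$, a full map out of it is injective, hence $\iota_X$ is a topological embedding.

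Next, for a quasi-lens $(Q, C)$ the set $Q \cap C$ is non-empty by condition~1 and is (compact saturated) $\cap$ (closed), so it is a lens; thus $\varrho_X$ is well defined. The equality $\varrho_X \circ \iota_X = \identity {\Plotkinn X}$ is exactly the normal form $L = \upc L \cap cl (L)$ of a lens: writing $L = Q \cap F$ as above, $\upc L \subseteq \upc Q = Q$ and $cl (L) \subseteq F$, so $\upc L \cap cl (L) \subseteq L$, while the reverse inclusion is trivial. Assuming $(*)$, it then remains only to see that $\iota_X$ is onto, and I claim $\iota_X (Q \cap C) = (Q, C)$ for every quasi-lens $(Q, C)$: indeed $\upc (Q \cap C) \subseteq \upc Q = Q$ while $Q \subseteq \upc (Q \cap C)$ is quasi-lens condition~2, so $\upc (Q \cap C) = Q$; and $cl (Q \cap C) \subseteq cl (C) = C$ while quasi-lens condition~3 gives $C \subseteq cl (U \cap C)$ for every open neighbourhood $U$ of $Q$, so $(*)$ yields $C \subseteq cl (Q \cap C)$, hence $cl (Q \cap C) = C$. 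Thus under $(*)$ the map $\iota_X$ is a continuous full bijection, therefore a homeomorphism, with inverse $\varrho_X$.

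Finally, the crux: weak Hausdorffness implies $(*)$. The decisive (and trivial) observation is that $\upc x$ is always compact saturated, since every open set containing $x$ contains $\upc x$. So let $Q$ be compact saturated and $C$ closed with $C \subseteq cl (U \cap C)$ for every open neighbourhood $U$ of $Q$, fix $x \in C$, and let $W$ be an arbitrary open neighbourhood of $x$; I must find a point of $W \cap Q \cap C$. Put $Q' \eqdef \upc x$: then $Q'$ is compact saturated, $Q' \subseteq W$, and $x \in Q' \cap C$. For every open $U \supseteq Q$ and every open $V \supseteq Q'$, $V$ meets $C$ (it contains $x$) and $U \cap C$ is dense in $C$, so $U \cap V \cap C \neq \emptyset$; hence, were $Q \cap Q' \cap C$ empty, applying weak Hausdorffness (in its compact-saturated formulation) to the open neighbourhood $X \diff C$ of $Q \cap Q'$ would produce open $U \supseteq Q$ and $V \supseteq Q'$ with $U \cap V \subseteq X \diff C$, a contradiction. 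So $Q \cap Q' \cap C \neq \emptyset$, and since $Q' \subseteq W$ this gives $W \cap Q \cap C \neq \emptyset$; thus $x \in cl (Q \cap C)$, proving $(*)$. The main obstacle is precisely this step: the hypothesis of $(*)$ controls only the traces $U \cap C$ over neighbourhoods $U$ of $Q$, and turning that into control of $Q \cap C$ itself is exactly what the weak Hausdorff separation property buys, the trick being to feed it the auxiliary compact saturated set $\upc x$ together with the open set $X \diff C$.
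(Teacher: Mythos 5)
Your proof is correct, and it fills in an argument that the paper itself does not give: Lemma~\ref{lemma:thm6.3} is simply imported from the cited reference \cite{JGL:wHaus} without proof. Your route—checking the three quasi-lens axioms for $(\upc L, cl(L))$, recovering $Q=\upc(Q\cap C)$ from axiom~2 and $C=cl(Q\cap C)$ from axiom~3 together with $(*)$, and deriving $(*)$ from weak Hausdorffness by applying the compact-saturated formulation to $Q$, $\upc x$ and the open set $X\diff C$—is exactly the natural argument behind that citation, and every step (in particular the fullness-plus-$T_0$ reasoning giving the embedding and the homeomorphism) checks out against the paper's conventions.
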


We turn to quasi-Polish spaces, and for that we examine
$\omega$-continuous dcpos first.  For a finite set $E$, we write
$\uuarrow E$ for $\bigcup_{x \in E} \uuarrow x$.  The notation $\fcap$
refers to the intersection of a filtered family.  The following lemma
is folklore.
\begin{lemma}
  \label{lemma:Q:omegacont}
  Let $X$ be an $\omega$-continuous dcpo, with a countable basis $B$.
  For every compact saturated subset $Q$ of $X$, there is a sequence
  of finite subsets $E_n$ of $B$ such that
  $E_{n+1} \subseteq \uuarrow E_n$ for every $n \in \nat$, and such
  that
  $Q = \fcap_{n \in \nat} \uuarrow E_n = \fcap_{n \in \nat} \upc E_n$.
\end{lemma}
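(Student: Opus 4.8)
The plan is to reduce everything to a single finite-approximation fact about $Q$ and then diagonalise. I would first establish the auxiliary \emph{Claim}: for every Scott-open set $U$ with $Q \subseteq U$, there is a finite subset $E$ of $B$ with $Q \subseteq \uuarrow E \subseteq \upc E \subseteq U$. To prove it, note that for each $x \in Q \subseteq U$ the directed family $\ddarrow_B x$ has supremum $x \in U$, so it meets $U$ at some $b_x \in B$; then $x \in \uuarrow{b_x}$ and, since $U$ is upwards-closed, $\upc{b_x} \subseteq U$. The sets $\uuarrow{b_x}$ ($x \in Q$) are Scott-open (they belong to the standard base recalled above) and cover $Q$, so by compactness of $Q$ finitely many of them cover $Q$; collecting the corresponding $b_x$ into a finite set $E \subseteq B$ gives $Q \subseteq \uuarrow E = \bigcup_{b \in E}\uuarrow b \subseteq \bigcup_{b \in E}\upc b = \upc E \subseteq U$, where the middle inclusion uses $b \ll y \Rightarrow b \leq y$. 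This step is where ($\omega$-)continuity of $X$, i.e. essentially its local compactness, is used.

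Next I would exploit countability. Since $B$ is countable, there are only countably many finite subsets of $B$, so the family $\mathcal{U}$ of all sets $\uuarrow E$ with $E \subseteq B$ finite and $Q \subseteq \uuarrow E$ is countable; enumerate it as $\uuarrow{F_1}, \uuarrow{F_2}, \dots$ (repeating entries if it is finite; it is non-empty by the Claim applied to $U = X$). Using the Claim together with the fact that a compact saturated $Q$ equals the intersection of its open neighbourhoods, one checks $\bigcap \mathcal{U} = Q$: $Q$ lies in every member, and if $y \notin Q$ then some open $U$ separates them and the Claim yields $\uuarrow E \in \mathcal{U}$ with $y \notin \uuarrow E \subseteq U$.

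Then I would construct the $E_n$ by induction, keeping the invariant $Q \subseteq \uuarrow{E_n}$. Take $E_0$ from the Claim with $U = X$. Given $E_n$, apply the Claim to the Scott-open neighbourhood $\uuarrow{E_n} \cap \uuarrow{F_{n+1}}$ of $Q$ (it contains $Q$ since $Q \subseteq \uuarrow{F_{n+1}}$ by definition of $\mathcal{U}$) to obtain a finite $E_{n+1} \subseteq B$ with $Q \subseteq \uuarrow{E_{n+1}} \subseteq \upc{E_{n+1}} \subseteq \uuarrow{E_n} \cap \uuarrow{F_{n+1}}$. From $E_{n+1} \subseteq \upc{E_{n+1}} \subseteq \uuarrow{E_n}$ we get the required condition $E_{n+1} \subseteq \uuarrow{E_n}$; this also forces $\uuarrow{E_{n+1}} \subseteq \uuarrow{E_n}$ and $\upc{E_{n+1}} \subseteq \upc{E_n}$ (using $b \ll c \leq x \Rightarrow b \ll x$), so both sequences are filtered and the filtered intersections $\fcap_{n \in \nat}\uuarrow{E_n}$ and $\fcap_{n \in \nat}\upc{E_n}$ make sense. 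Finally $Q \subseteq \fcap_{n \in \nat}\uuarrow{E_n} \subseteq \fcap_{n \in \nat}\upc{E_n}$ is immediate from the invariant, while $\fcap_{n \in \nat}\upc{E_n} \subseteq \bigcap_{m \geq 1}\uuarrow{F_m} = \bigcap\mathcal{U} = Q$ since $\upc{E_m} \subseteq \uuarrow{F_m}$ for every $m \geq 1$; hence $Q = \fcap_{n \in \nat}\uuarrow{E_n} = \fcap_{n \in \nat}\upc{E_n}$.

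The main obstacle, and the point requiring care, is reconciling the two demands: the sets $\uuarrow{E_n}$ must shrink all the way down to $Q$ (a ``cofinality'' demand, handled by the diagonal intersection with $\uuarrow{F_{n+1}}$ at stage $n+1$), while simultaneously the \emph{witnessed} decrease $E_{n+1} \subseteq \uuarrow{E_n}$ must hold. The latter is delicate because a basis element $b$ need not be finite, so $b \notin \uuarrow b$ in general; this is exactly why one inserts $\upc{E_{n+1}}$ between $\uuarrow{E_{n+1}}$ and $\uuarrow{E_n}$ and compares through it rather than comparing the finite sets $E_n$ directly. The degenerate case $Q = \emptyset$ should be disposed of first, by taking $E_n \eqdef \emptyset$ and noting $\uuarrow\emptyset = \upc\emptyset = \emptyset$.
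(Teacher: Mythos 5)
Your proposal is correct and follows essentially the same route as the paper's proof: the same preliminary claim that every open neighbourhood of $Q$ contains some $\uuarrow E \supseteq Q$ with $E \subseteq B$ finite (via the cover by $\uuarrow b_x$ and compactness), the same countable enumeration of such sets, and the same inductive diagonal construction interleaving $\upc E_{n+1}$ between $\uuarrow E_{n+1}$ and $\uuarrow E_n \cap \uuarrow F_{n+1}$. The only differences are cosmetic (choice of $E_0$, indexing, and the optional separate treatment of $Q = \emptyset$).
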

\begin{proof}
  Let us first observe that for every open neighborhood $U$ of $Q$,
  there is a finite subset $E$ of $B$ such that
  $Q \subseteq \uuarrow E \subseteq \upc E \subseteq U$.  Indeed, for
  each $x \in Q$, there is a point $b_x \in B$ such that $b_x \in U$
  and $b_x \ll x$.  Then ${(\uuarrow b_x)}_{x \in Q}$ is an open cover
  of $Q$.  We extract a finite subcover ${(\uuarrow b_x)}_{x \in E}$,
  and then $Q \subseteq \uuarrow E \subseteq \upc E \subseteq U$, as
  desired.
  
  Since $Q$ is saturated, $Q$ is also equal to the intersection of its
  open neighborhoods, hence also to the intersection
  $\bigcap \uuarrow E$, where $E$ ranges over the finite subsets of
  $B$ such that $Q \subseteq \uuarrow E$, by the observation we have
  just made.  Since $B$ is countable, there are countably many such
  sets $E$.  Let us enumerate them as $E^0_n$, $n \in \nat$.  We
  define a finite subset $E_n$ of $B$ by induction on $n$ in such a
  way that $Q \subseteq \uuarrow E_n$ and
  $E_{n+1} \subseteq \uuarrow E_n$ for every $n \in \nat$ as follows.
  First, $E_0 \eqdef E^0_0$.  Then, for every $n \in \nat$, we let
  $E_{n+1}$ be any finite subset of $B$ such that
  $Q \subseteq \uuarrow E_{n+1} \subseteq \upc E_{n+1} \subseteq
  \uuarrow E^0_{n+1} \cap \uuarrow E_n$, using our preliminary
  observation.  We have
  $Q \subseteq \fcap_{n \in \nat} \uuarrow E_n \subseteq \fcap_{n \in
    \nat} \upc E_n \subseteq \bigcap_{n \in \nat} \uuarrow E^0_n = Q$,
  whence the claim.  \qed
\end{proof}

\begin{lemma}
  \label{lemma:C:lens:omegacont}
  Every $\omega$-continuous dcpo $P$ (with its Scott topology)
  satisfies Property $(*)$.
\end{lemma}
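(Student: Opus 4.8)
The plan is to read Property $(*)$ as a Baire-category statement. Write $P$ for the given $\omega$-continuous dcpo (with its Scott topology), and assume $Q$ is compact saturated, $C$ is Scott-closed, and $C \subseteq cl (U \cap C)$ for every open neighborhood $U$ of $Q$; the goal is to show $C \subseteq cl (Q \cap C)$, equivalently that $Q \cap C$ is dense in the subspace $C$.

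First I would apply Lemma~\ref{lemma:Q:omegacont} to obtain a sequence of finite subsets $E_n$ of a countable basis $B$ of $P$ with $Q = \fcap_{n \in \nat} \uuarrow E_n$. Each $\uuarrow E_n = \bigcup_{b \in E_n} \uuarrow b$ is Scott-open (the sets $\uuarrow b$, $b \in B$, form a base of the Scott topology) and contains $Q$, so it is an open neighborhood of $Q$; applying the hypothesis to $U = \uuarrow E_n$ and using $cl_C (A) = cl (A) \cap C$, this says exactly that $\uuarrow E_n \cap C$ is a dense open subset of the subspace $C$, for every $n \in \nat$.

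Then I would use that every $\omega$-continuous dcpo is quasi-Polish (see Proposition~\ref{prop:qPolish:model}), hence completely Baire by the displayed chain of implications; consequently the closed subspace $C$ of $P$ is a Baire space. Applying the Baire property to the countably many dense open subsets $\uuarrow E_n \cap C$ of $C$ yields that $\bigcap_{n \in \nat} (\uuarrow E_n \cap C) = (\fcap_{n \in \nat} \uuarrow E_n) \cap C = Q \cap C$ is dense in $C$, which is precisely $C \subseteq cl (Q \cap C)$.

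I do not expect a genuine obstacle once this Baire-category reformulation is spotted; the points to be careful about are (i) that Lemma~\ref{lemma:Q:omegacont} really presents $Q$ as a countable (filtered) intersection of the honestly Scott-open sets $\uuarrow E_n$, and (ii) that $\omega$-continuous dcpos — hence, since they are completely Baire, their closed subspaces — are Baire spaces, which is what powers the final density step. The degenerate cases $C = \emptyset$ or $Q = \emptyset$ need no separate treatment, as the argument goes through verbatim.
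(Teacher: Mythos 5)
Your proof is correct, and it takes a genuinely different route from the paper's. The paper argues by hand inside the dcpo: starting from a basic open set $\uuarrow x$ ($x \in B$) meeting $C$, it builds an increasing ladder $x_0 \leq x_1 \leq \cdots$ with $x_n \in B \cap \uuarrow E_n$ and $\uuarrow x_n$ meeting $C$, and then the directed supremum $x_\infty$ lands in $Q \cap C$ above $x$; this is elementary and self-contained, using only Lemma~\ref{lemma:Q:omegacont} and basic properties of the way-below relation. You instead observe that the hypothesis of Property~$(*)$ applied to $U \eqdef \uuarrow E_n$ says exactly that each $\uuarrow E_n \cap C$ is dense open in the subspace $C$, and then invoke Baire category: $\omega$-continuous dcpos are quasi-Polish, hence LCS-complete, hence completely Baire, so the closed subspace $C$ is Baire and $Q \cap C = \bigcap_{n \in \nat} (\uuarrow E_n \cap C)$ is dense in $C$. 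All the facts you use are available in the paper (Lemma~\ref{lemma:Q:omegacont}, de Brecht's theorem that $\omega$-continuous dcpos are quasi-Polish, and the displayed chain LCS-complete $\limp$ completely Baire), and none of them depends on the present lemma, so there is no circularity; you could even shortcut the quasi-Polish step by noting that a continuous dcpo is locally compact sober, hence LCS-complete directly. The trade-off: your argument leans on the nontrivial external results behind ``completely Baire'', where the paper's chain construction is essentially a bare-hands instance of the same Baire-type reasoning; in exchange, your argument is more general --- it works verbatim in any second-countable completely Baire space (compact saturated sets there are countable intersections of opens), so it would yield Proposition~\ref{prop:dB:Kawai:lens} for quasi-Polish spaces directly, without the $G_\delta$-embedding detour the paper takes.
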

\begin{proof}
  We fix a compact saturated subset $Q$ of $P$ and a closed subset $C$
  of $P$, such that for every open neighborhood $U$ of $Q$, $C
  \subseteq cl (U \cap C)$.

  Let $B$ be a countable basis of $P$, and $E_n$ be as given in
  Lemma~\ref{lemma:Q:omegacont}.  Let $x$ be any point of $B$ such
  that $\uuarrow x$ intersects $C$.  We build a monotone sequence of
  points ${(x_n)}_{n \in \nat}$ such that $x_n \in B \cap \uuarrow E_n$
  and $\uuarrow x_n$ intersects $C$ for every $n \in \nat$.  This is
  by induction on $n$.  Since $Q \subseteq \uuarrow E_0$, by
  assumption $C \subseteq cl (\uuarrow E_0 \cap C)$.  Since
  $\uuarrow x$ intersects $C$, it also intersects
  $\uuarrow E_0 \cap C$.  Let $y$ be any point in the intersection.
  Since $y$ is in the Scott-open set $\uuarrow x \cap \uuarrow E_0$,
  there is a point $x_0$ of $B$ in $\uuarrow x \cap \uuarrow E_0$ such
  that $x_0 \ll y$.  In particular, $\uuarrow x_0$ intersects $C$ (at
  $y$), and $x_0$ is in $B \cap \uuarrow E_0$.  This starts the
  induction.  Given that $\uuarrow x_n$ intersects $C$, we proceed in
  the same way to obtain $x_{n+1}$.  Since $\uuarrow x_n$ intersects
  $C$ and $Q \subseteq \uuarrow E_{n+1}$, $\uuarrow x_n$ also
  intersects $\uuarrow E_{n+1} \cap C$; we pick $y$ in the
  intersection, and $x_{n+1} \in B$ such that $x_{n+1} \ll y$ and
  $x_{n+1} \in \uuarrow x_n \cap \uuarrow E_{n+1}$.

  Let $x_\infty \eqdef \dsup_{n \in \nat} x_n$.  Since $\uuarrow x_n$
  intersects $C$ and $C$ is downwards-closed, $x_n$ is itself in $C$
  for every $n \in \nat$, so $x_\infty$ is in $C$.  Since
  $x_n \in \uuarrow E_n$ for every $n$, hence also
  $x_\infty \in \uuarrow E_n$, $x_\infty$ is in $Q$, using
  Lemma~\ref{lemma:Q:omegacont}.  Hence $x_\infty$ is in $Q \cap C$.
  Additionally, $x \ll x_0 \leq x_\infty$.  Therefore we have shown
  that for every $x \in B$, if $\uuarrow x$ intersects $C$ then it
  also intersects $Q \cap C$.  Since every Scott-open set is a union
  of sets of the form $\uuarrow x$ with $x \in B$, every open set that
  intersects $C$ also intersects $Q \cap C$.  We conclude that
  $C \subseteq cl (Q \cap C)$.  \qed
\end{proof}

\begin{proposition}
  \label{prop:dB:Kawai:lens}
  Every quasi-Polish space satisfies Property~$(*)$.
\end{proposition}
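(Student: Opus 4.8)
The plan is to reduce to the case of an $\omega$-continuous dcpo, where Property~$(*)$ was established in Lemma~\ref{lemma:C:lens:omegacont}, using the characterization of quasi-Polish spaces as $G_\delta$ subspaces of $\omega$-continuous dcpos (Proposition~\ref{prop:qPolish:model}, item~3). Property~$(*)$ is phrased entirely in terms of open sets, closed sets, compact saturated sets and closures, hence is invariant under homeomorphism, so I would assume outright that $X$ is a $G_\delta$ subspace $\fcap_{n \in \nat} W_n$ of an $\omega$-continuous dcpo $P$ with its Scott topology, each $W_n$ being Scott-open in $P$; fix a countable basis $B$ of $P$. Given a compact saturated $Q \subseteq X$ and a closed $C \subseteq X$ satisfying the hypothesis of $(*)$ in $X$, let $Q' \eqdef \upc_P Q$, which is compact saturated in $P$ since $Q$ is compact as a subspace hence as a subset, and $C' \eqdef cl_P (C)$, which is Scott-closed in $P$. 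Since the specialization ordering of $X$ is the restriction of that of $P$, one checks $Q = Q' \cap X$ and $C = C' \cap X$; moreover, for any Scott-open $U'$ of $P$ with $Q' \subseteq U'$, the set $U' \cap X$ is an open neighbourhood of $Q$ in $X$, whence $C \subseteq cl_X (U' \cap C)$.

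Next I would apply Lemma~\ref{lemma:Q:omegacont} to $Q'$ to obtain finite subsets $E_n$ of $B$ with $E_{n+1} \subseteq \uuarrow E_n$ and $Q' = \fcap_{n} \uuarrow E_n = \fcap_n \upc E_n$. Since $\{\uuarrow b \cap X \mid b \in B\}$ is a base of $X$, and since $C$ and $Q \cap C$ are contained in $X$, it suffices to show that whenever $b \in B$ is such that $\uuarrow b$ meets $C$, then $\uuarrow b$ meets $Q \cap C$. The heart of the argument is a construction in the spirit of Lemma~\ref{lemma:C:lens:omegacont}: I would build a chain $b \ll x_0 \ll x_1 \ll \cdots$ in $B$ such that (i) $\uuarrow x_k$ meets $C$ for every $k$, (ii) for every $n$ some $x_k$ lies in $\uuarrow E_n$, and (iii) for every $m$ some $x_k$ lies in $W_m$, the two requirements (ii) and (iii) being interleaved so that each $\uuarrow E_n$ and each $W_m$ is eventually targeted. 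From any current $x_k$ with $y \in \uuarrow x_k \cap C$, a step targeting $\uuarrow E_n$ uses $C \subseteq cl_X (\uuarrow E_n \cap C)$ (valid by the previous paragraph, since $\uuarrow E_n \cap X$ is an open neighbourhood of $Q$ in $X$) to find $z \in \uuarrow x_k \cap \uuarrow E_n \cap C$, and then picks $x_{k+1} \in B$ with $x_{k+1} \ll z$ inside the Scott-open set $\uuarrow x_k \cap \uuarrow E_n$; a step targeting $W_m$ simply observes $y \in C \subseteq X \subseteq W_m$ and picks $x_{k+1} \in B$ with $x_{k+1} \ll y$ inside $\uuarrow x_k \cap W_m$. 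Both kinds of step preserve (i), and once some $x_k$ enters $\uuarrow E_n$ (resp.\ $W_m$) all later $x_k$ and the eventual supremum stay in it, these sets being upwards closed.

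Finally, let $x_\infty \eqdef \dsup_k x_k$ (the chain is directed and $P$ is a dcpo). By (ii), $x_\infty \in \fcap_n \uuarrow E_n = Q'$; by (iii), $x_\infty \in \fcap_m W_m = X$; hence $x_\infty \in Q' \cap X = Q$. From (i), each $x_k$ lies below a point of $C$, hence in the downwards-closed set $C'$, and $C'$ being Scott-closed contains $x_\infty$; so $x_\infty \in C' \cap X = C$. As $b \ll x_0 \leq x_\infty$, we get $x_\infty \in \uuarrow b \cap Q \cap C$, which is what we needed. The main obstacle, compared with Lemma~\ref{lemma:C:lens:omegacont}, is exactly the need to steer the supremum into the $G_\delta$ set $X$ while keeping invariant~(i) alive so that the hypothesis of Property~$(*)$ remains usable at every stage; this is what the steps targeting the $W_m$ achieve, and their availability rests on the elementary point that every point of $C$ already lies in every $W_m$.
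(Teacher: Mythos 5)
Your proof is correct, and it follows the same reduction as the paper (via Proposition~\ref{prop:qPolish:model}, item~3, to a $G_\delta$ subspace of an $\omega$-continuous dcpo), but it executes the reduction differently. The paper does not re-open the proof of Lemma~\ref{lemma:C:lens:omegacont}: it applies that lemma as a black box in the ambient dcpo $Y$, to $Q$ itself and to $C' \eqdef cl_Y (C)$, after transferring the hypothesis of $(*)$ (for every Scott-open neighborhood $V$ of $Q$ in $Y$, $C' \subseteq cl_Y (V \cap C')$, obtained by intersecting $V$ with $X$), and then concludes by intersecting $cl_Y (Q \cap C')= cl_Y(Q\cap C)$ back with $X$. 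The only place the $G_\delta$ hypothesis enters there is the observation that $X$, being an intersection of open subsets of $Y$, is upwards-closed in $Y$, so that $Q$ is already compact \emph{saturated} in $Y$. You instead redo the chain construction of Lemma~\ref{lemma:C:lens:omegacont} inside the ambient dcpo, interleaving steps that steer the supremum $x_\infty$ into each $W_m$; this is sound (each step is available because $C \subseteq X \subseteq W_m$, and Scott-openness of $\uuarrow x_k \cap W_m$, resp.\ $\uuarrow x_k \cap \uuarrow E_n$, lets you interpolate a basis element below a witness in $C$), and the bookkeeping at the end ($x_\infty \in \fcap_n \uuarrow E_n = \upc_P Q$, $x_\infty \in \fcap_m W_m = X$, $x_\infty \in cl_P (C)$, hence $x_\infty \in Q \cap C \cap \uuarrow b$) is right. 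Note, though, that the steering steps are actually redundant: since the $W_m$ are open, $X$ is upwards-closed in $P$, so $\upc_P Q = Q \subseteq X$ and the plain construction of Lemma~\ref{lemma:C:lens:omegacont} applied to $\upc_P Q$ and $cl_P (C)$ already produces a point of $X$; this is precisely what makes the paper's shorter, modular argument work. What your version buys is a self-contained argument that exhibits directly how to keep the limit point inside a countable intersection of opens, without appealing to saturation of the subspace; what the paper's buys is brevity and reuse of the lemma as stated.
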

\begin{proof}
  Let us equate $X$ with a $G_\delta$ subspace of an
  $\omega$-continuous dcpo $Y$, thanks to
  Proposition~\ref{prop:qPolish:model}.  Let us write $\upc_X$,
  $\upc_Y$ for upward closures in $X$, resp.\ $Y$, and let us note
  that $\upc_Y A = \upc_X A$ for every subset $A$ of $X$, because $X$
  is upwards-closed in $Y$.  Let us also write $cl_X$, $cl_Y$ for
  closure in $X$, resp.\ $Y$, and let us note that
  $cl_X (A) = cl_Y (A) \cap X$ for every subset $A$ of $X$.

  Let $Q$ be a compact saturated subset of $X$, let $C$ be a closed
  subset of $X$, and let us assume that for every open neighborhood
  $U$ of $Q$, $C \subseteq cl_X (U \cap C)$.  We let
  $C' \eqdef cl_Y (C)$, and we claim that for every open neighborhood
  $V$ of $Q$ in $Y$, $C' \subseteq cl_Y (V \cap C')$.  Let
  $U \eqdef V \cap X$, an open neighborhood of $Q$ in $X$.  By
  assumption, $C \subseteq cl_X (U \cap C)$, in particular
  $C \subseteq cl_Y (U \cap C)$, and since
  $U \cap C = V \cap X \cap C = V \cap C \subseteq V \cap C'$,
  $C \subseteq cl_Y (V \cap C')$.  Taking closures in $Y$,
  $C' \subseteq cl_Y (V \cap C')$.

  We note that $Q$ is compact saturated in $X$, hence in $Y$, and that
  $C'$ is closed in $Y$.  By Lemma~\ref{lemma:C:lens:omegacont}, $Y$
  satisfies Property~$(*)$, so $C' \subseteq cl_Y (Q \cap C')$.  But
  $Q \cap C' = Q \cap X \cap C' = Q \cap C$, so
  $C = C' \cap X \subseteq cl_Y (Q \cap C) \cap X = cl_X (Q \cap C)$.
  \qed
\end{proof}

\begin{theorem}
  \label{thm:dB:Kawai:P}
  For every quasi-Polish space $X$, the spaces $\QLV X$ and
  $\Plotkinn X$ are quasi-Polish, and homeomorphic through $\iota_X$
  and $\varrho_X$.
\end{theorem}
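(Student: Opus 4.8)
The homeomorphism claim is a formality given what precedes. By Proposition~\ref{prop:dB:Kawai:lens}, every quasi-Polish space $X$ satisfies Property~$(*)$, so by Lemma~\ref{lemma:thm6.3} the maps $\iota_X\colon\Plotkinn X\to\QLV X$ and $\varrho_X\colon\QLV X\to\Plotkinn X$ are mutually inverse homeomorphisms. It therefore remains only to show that one of the two homeomorphic spaces is quasi-Polish, and I will aim at $\QLV X$, since that is the one for which Lemma~\ref{lemma:fQL:cont} already hands us a concrete topological embedding into a product of hyperspaces. (The quickest route here is to invoke the theorem of de Brecht and Kawai that $\Plotkinn X$ is quasi-Polish whenever $X$ is, and transport quasi-Polishness along $\iota_X$; the plan below is a more self-contained alternative that also exposes where the difficulty sits.)

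First I would check that $\SV X$ and $\HV X$ are themselves quasi-Polish. Using Proposition~\ref{prop:qPolish:model}, realize $X$ as a $G_\delta$ subspace $\bigcap_{n\in\nat}U_n$ of an $\omega$-continuous dcpo $P$, the $U_n$ being Scott-open in $P$. The Smyth and Hoare powerdomains $\SV P$ and $\HV P$ are $\omega$-continuous dcpos (classical; for $\SV P$, a countable basis is given by the sets $\upc E$ with $E$ a finite subset of a basis of $P$, exactly as in Lemma~\ref{lemma:Q:omegacont}), and their Scott topologies coincide with the upper, resp.\ lower Vietoris topologies because $P$ is locally compact sober; hence both are quasi-Polish. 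A compact saturated subset of $X$ is compact saturated in $P$ (saturatedness because $X$ is upwards-closed in $P$), and the resulting inclusion $\SV X\hookrightarrow\SV P$ is a topological embedding onto $\{R\in\SV P\mid R\subseteq X\}=\bigcap_{n\in\nat}\Box U_n$, a $G_\delta$; so $\SV X$ is quasi-Polish. Likewise $C\mapsto cl_P(C)$ is a topological embedding $\HV X\hookrightarrow\HV P$ onto $\{D\in\HV P\mid D=cl_P(D\cap X)\}$; since $P$ is completely Baire, every $D\in\HV P$ is a Baire space, so this image equals $\{D\in\HV P\mid D\cap U_n\text{ is dense in }D\text{ for every }n\}$, and for each $n$ and each member $W$ of a countable base of $P$ the condition ``$D$ meets $W$ $\Rightarrow$ $D$ meets $W\cap U_n$'' is a UCO condition on $D$; quantifying over countably many $W$ and $n$ shows the image is $\bPi^0_2$, hence quasi-Polish by de Brecht's characterization. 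Then $\SV X\times\HV X$ is quasi-Polish, finite products of quasi-Polish spaces being quasi-Polish.

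The remaining, and genuinely hard, step is to show that the set of quasi-lenses is a $\bPi^0_2$ subset of $\SV X\times\HV X$; once this is done, Proposition~\ref{prop:qPolish:model}, via the fact that $\bPi^0_2$ subspaces of quasi-Polish spaces are quasi-Polish, gives that $\QLV X$ — hence also $\Plotkinn X$ — is quasi-Polish. One must re-express the three defining conditions of a quasi-lens $(Q,C)$ — that $Q$ meets $C$, that $Q\subseteq\upc(Q\cap C)$, and that $C\subseteq cl(U\cap C)$ for every open neighbourhood $U$ of $Q$ — as countable combinations of UCO conditions in the variables $(Q,C)$. The available tools are exactly those of Lemmas~\ref{lemma:Q:omegacont} and~\ref{lemma:C:lens:omegacont}: second countability lets one replace quantifiers over open sets by countable conjunctions over a base, and soberness (well-filteredness) of $X$ lets one control filtered intersections of the compact saturated sets $\upc E_n$ that approximate $Q$ from above. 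I expect this $\bPi^0_2$ bookkeeping — or, equivalently, the appeal to de Brecht and Kawai's theorem, which settles precisely this point — to be the crux; everything else is assembled from results already in hand.
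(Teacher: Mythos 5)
Read as it is actually completed, your proof is the paper's: the homeomorphism is obtained exactly as the paper does it, from Proposition~\ref{prop:dB:Kawai:lens} (quasi-Polish spaces satisfy Property~$(*)$) plus Lemma~\ref{lemma:thm6.3}, and quasi-Polishness then comes from your parenthetical ``quickest route'', namely the de Brecht--Kawai theorem that $\Plotkinn X$ is quasi-Polish whenever $X$ is (Theorem~5.1 of that paper), transported along $\iota_X$ --- which is precisely the paper's two-line argument. The self-contained alternative occupying most of your text should not, however, be presented as a proof. The embeddings of $\SV X$ and $\HV X$ into the powerspaces of an $\omega$-continuous model, and their identification as $G_\delta$ resp.\ $\bPi^0_2$ subsets, are sound (they reproduce de Brecht and Kawai's own arguments, and your use of ``$X$ is upwards-closed in $P$'' is legitimate via the quasi-ideal model, as in the proof of Proposition~\ref{prop:dB:Kawai:lens}); but the step you defer --- that the quasi-lens conditions cut out a $\bPi^0_2$ subset of $\SV X\times\HV X$ --- is where the real content lies, and it is not done. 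Conditions~1 and~3 of quasi-lenses can indeed be rewritten as countably many UCO conditions, by restricting the neighborhoods $U$ of $Q$ to finite unions of basic open sets (compactness of $Q$), and expressing density via a countable base. Condition~2, $Q\subseteq\upc(Q\cap C)$, is another matter: it quantifies over open sets containing $Q\cap C$, and ``$Q\cap C\subseteq W$'' is neither an open nor a closed condition on the pair $(Q,C)$ in the product of the upper and lower Vietoris topologies, so the $\bPi^0_2$ bookkeeping does not go through by the same routine; in effect, settling it without circularity is what the citation of de Brecht and Kawai buys you. So keep the short proof, and if you retain the sketch, flag it as a heuristic whose crux remains open rather than as an alternative argument.
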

\begin{proof}
  Theorem~5.1 of \cite{dBK:comm} states that, when $X$ is
  quasi-Polish, so is $\Plotkinn X$.  By
  Proposition~\ref{prop:dB:Kawai:lens}, $X$ satisfies Property~$(*)$,
  so we may apply Lemma~\ref{lemma:thm6.3} and conclude.  \qed
\end{proof}

\begin{figure}
  \centering
  \includegraphics[scale=0.4]{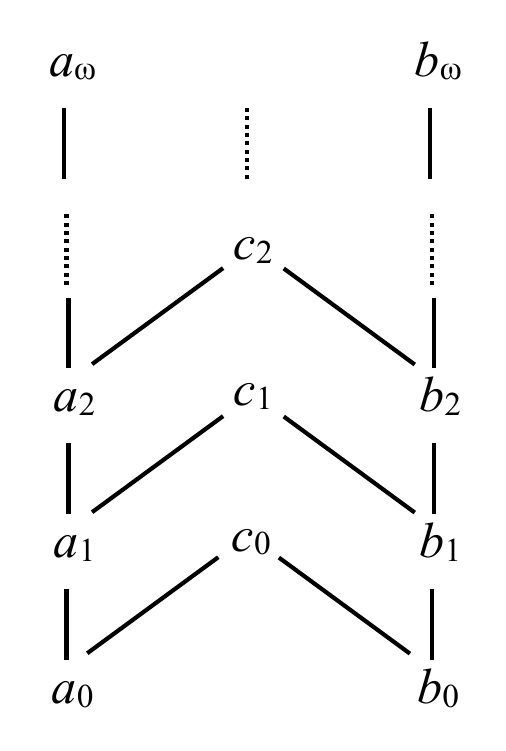}
  \caption{Knijnenburg's dcpo}
  \label{fig:knijnenburg}
\end{figure}

\begin{remark}
  \label{rem:knijnenburg}
  One may wonder whether every quasi-Polish space would simply just be
  weakly Hausdorff, in which case Theorem~\ref{thm:dB:Kawai:P} would
  be implied by Lemma~\ref{lemma:thm6.3}.  That is not true.  Consider
  the dcpo of Figure~\ref{fig:knijnenburg}, due to Peter Knijnenburg
  \cite[Example~6.1]{Knijnenburg:powerdomain}.  (We have only removed
  the bottom element from the original example.)  Its elements are
  $a_n$ and $b_n$ for every $n \in \nat \cup \{\omega\}$, and $c_n$
  for every $n \in \nat$ (not $\omega$), all pairwise distinct.  The
  ordering is given by: $a_m \leq a_n$, $b_m \leq b_n$,
  $a_m \leq c_n$, $b_m \leq c_n$ if and only if $m \leq n$; all other
  pairs of elements are incomparable.  This is an $\omega$-continuous
  dcpo, even an $\omega$-algebraic dcpo, whose finite elements are all
  elements except $a_\omega$ and $b_\omega$.  Every open neighborhood
  of $a_\omega$ intersects every open neighborhood of $b_\omega$, so
  it is not weakly Hausdorff.  One may also note that, in a weakly
  Hausdorff space, for every lens $L$, $\dc L = cl (L)$
  \cite[Theorem~6.4]{JGL:wHaus}, and that is not the case here, as
  Knijnenburg notices:
  $L \eqdef \{c_n \mid n \in \nat\} \cup \{b_\omega\}$ is a non-empty
  compact saturated subset, hence a lens, but $cl (L)$ is the whole
  space, while $\dc L$ is the whole space minus $a_\omega$.
\end{remark}

\begin{remark}
  \label{rem:anti:knijnen}
  Conversely, not every weakly Hausdorff space is quasi-Polish.  For
  example, $\rat$ with its metric topology is Hausdorff, but not
  Baire, hence not quasi-Polish.
\end{remark}

There is a $\Plotkinn$ endofunctor on $\Topcat$, and we will enquire
which projective limits it preserves.
\begin{lemma}
  \label{lemma:iota:nat}
  $\Plotkinn$ is an endofunctor on $\Topcat$, whose action on
  morphisms $f \colon X \to Y$ is given by
  $\Plotkinn f (L) \eqdef \upc f [L] \cap cl (f [L])$ for every lens
  $L$ on $X$; for every open subset $V$ of $Y$,
  ${(\Plotkinn f)}^{-1} (\Box V) = \Box {f^{-1} (V)}$ and
  ${(\Plotkinn f)}^{-1} (\Diamond V) = \Diamond {f^{-1} (V)}$.  The
  collection of maps $\iota_X$, when $X$ ranges over all topological
  spaces, is a natural transformation from $\Plotkinn$ to $\QLV$.
\end{lemma}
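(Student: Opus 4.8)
The plan is to build $\Plotkinn$ as a functor by transporting the functor structure of $\QLV$ along the topological embeddings $\iota_X$ of Lemma~\ref{lemma:thm6.3}. First I would check that the proposed value $\Plotkinn f (L) \eqdef \upc f [L] \cap cl (f [L])$ is a lens on $Y$ whenever $L$ is a lens on $X$ and $f \colon X \to Y$ is continuous: since every lens is compact and non-empty, $f [L]$ is compact and non-empty, so $\upc f [L]$ is compact saturated, $cl (f [L])$ is closed, and $f [L] \subseteq \upc f [L] \cap cl (f [L])$ witnesses non-emptiness of the intersection. Hence $\Plotkinn f (L)$ is indeed a lens.

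The heart of the argument is to show that the square $\iota_Y \circ \Plotkinn f = \QLV f \circ \iota_X$ commutes, which simultaneously yields naturality of $(\iota_X)_X$. Evaluating the right-hand side on a lens $L$, Lemma~\ref{lemma:QLV:functor} gives $\QLV f (\iota_X L) = \QLV f (\upc L, cl (L)) = (\upc f [\upc L], cl (f [cl (L)]))$. Monotonicity of $f$ yields $f [\upc L] \subseteq \upc f [L]$, while $f [L] \subseteq f [\upc L]$ is trivial, so $\upc f [\upc L] = \upc f [L]$; continuity of $f$ yields $f [cl (L)] \subseteq cl (f [L])$, while $f [L] \subseteq f [cl (L)]$, so $cl (f [cl (L)]) = cl (f [L])$. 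Thus the right-hand side equals $(\upc f [L], cl (f [L]))$. For the left-hand side, $\iota_Y (\Plotkinn f (L)) = (\upc (\upc f [L] \cap cl (f [L])), cl (\upc f [L] \cap cl (f [L])))$; since $f [L]$ lies in both $\upc f [L]$ (which, being saturated, is its own upward closure) and $cl (f [L])$ (which is closed), one gets $\upc f [L] \subseteq \upc (\upc f [L] \cap cl (f [L])) \subseteq \upc f [L]$ and $cl (f [L]) \subseteq cl (\upc f [L] \cap cl (f [L])) \subseteq cl (f [L])$, so the left-hand side also equals $(\upc f [L], cl (f [L]))$. This is the one place where the lens structure of $L$ is genuinely used — it is what makes $\QLV f (\iota_X L)$ land in the image of $\iota_Y$ — and I expect it to be the only delicate point; everything else is formal.

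It then remains to derive functoriality, continuity, and the inverse-image formulas. Because $\iota_Y$ is a topological embedding, the identity $\iota_Y \circ \Plotkinn f = \QLV f \circ \iota_X$ exhibits $\iota_Y \circ \Plotkinn f$ as continuous, hence $\Plotkinn f$ is continuous; and injectivity of each $\iota_X$ together with the commuting squares reduces the functor laws for $\Plotkinn$ to those of $\QLV$: $\iota_X \circ \Plotkinn \identity X = \QLV \identity X \circ \iota_X = \iota_X$ forces $\Plotkinn \identity X = \identity {\Plotkinn X}$, and for $g \colon Y \to Z$, $\iota_Z \circ \Plotkinn (g \circ f) = \QLV (g \circ f) \circ \iota_X = \QLV g \circ \QLV f \circ \iota_X = \QLV g \circ \iota_Y \circ \Plotkinn f = \iota_Z \circ \Plotkinn g \circ \Plotkinn f$ forces $\Plotkinn (g \circ f) = \Plotkinn g \circ \Plotkinn f$. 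Finally, for $V \in \Open Y$, using $\iota_Y^{-1} (\Box^\quasi V) = \Box V$ and $\iota_X^{-1} (\Box^\quasi {f^{-1} (V)}) = \Box {f^{-1} (V)}$ from Lemma~\ref{lemma:thm6.3}, and $(\QLV f)^{-1} (\Box^\quasi V) = \Box^\quasi {f^{-1} (V)}$ from Lemma~\ref{lemma:QLV:functor}, I compute $(\Plotkinn f)^{-1} (\Box V) = (\iota_Y \circ \Plotkinn f)^{-1} (\Box^\quasi V) = (\QLV f \circ \iota_X)^{-1} (\Box^\quasi V) = \iota_X^{-1} (\Box^\quasi {f^{-1} (V)}) = \Box {f^{-1} (V)}$, and the same computation with $\Diamond$ in place of $\Box$ gives the other formula.
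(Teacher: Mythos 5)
Your proof is correct, and it is organized differently from the paper's. You verify the naturality square $\iota_Y \circ \Plotkinn f = \QLV f \circ \iota_X$ by a direct pointwise computation (showing both sides send a lens $L$ to $(\upc f[L], cl (f[L]))$, using monotonicity, continuity, and $f[L] \subseteq \upc f[L] \cap cl (f[L])$), and then transport everything else—continuity of $\Plotkinn f$, the two inverse-image formulas, and the functor laws—along the embeddings $\iota$, using their injectivity and fullness (Lemma~\ref{lemma:thm6.3}) together with functoriality of $\QLV$ (Lemma~\ref{lemma:QLV:functor}). The paper proceeds in the opposite order: it first establishes ${(\Plotkinn f)}^{-1} (\Box V) = \Box {f^{-1} (V)}$ and ${(\Plotkinn f)}^{-1} (\Diamond V) = \Diamond {f^{-1} (V)}$ by direct element chasing (which gives continuity), and then proves both the functor laws and the naturality square by the separation trick that two continuous maps into a $T_0$ space agreeing on inverse images of subbasic opens are equal. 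Your route concentrates the concrete set-theoretic work in one pointwise-verified square, after which everything is formal; the paper's route avoids computing saturations and closures of images of $\upc L$ and $cl(L)$ and instead reuses a $T_0$ argument it employs elsewhere, obtaining the inverse-image formulas (the ingredient needed later for Proposition~\ref{prop:Plotkinn:projlim}) directly rather than as a consequence. Both rest on the same two cited lemmas, and both are complete.
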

\begin{proof}
  Let $f \colon X \to Y$ be any continuous map.  For every lens $L$ on
  $X$, $f [L]$ is compact, so $\upc f [L]$ is compact saturated, and
  $cl (f [L])$ is clearly closed.  Given any point $x \in L$, $f (x)$
  is in $\upc f [L]$ and in $cl (f [L])$, so $\upc f [L]$ intersects
  $cl (f [L])$, showing that $\upc f [L] \cap cl (f [L])$ is a lens.

  For every open subset $V$ of $Y$, ${(\Plotkinn f)}^{-1} (\Box V)$ is
  the collection of lenses $L$ on $X$ such that
  $\upc f [L] \cap cl (f [L]) \subseteq V$.  If so, then
  $f [L] \subseteq \upc f [L] \cap cl (f [L]) \subseteq V$, so
  $L \subseteq f^{-1} (V)$, namely $V \in \Box {f^{-1} (V)}$.
  Conversely, if $L \subseteq f^{-1} (V)$, namely if
  $f [L] \subseteq V$, then $\upc f [L] \subseteq V$ since $V$ is
  upwards-closed, so $\upc f [L] \cap cl (f [L]) \subseteq V$.
  Therefore ${(\Plotkinn f)}^{-1} (\Box V) = \Box {f^{-1} (V)}$.  The
  set ${(\Plotkinn f)}^{-1} (\Diamond V)$ is the collection of lenses
  $L$ on $X$ such that $\upc f [L] \cap cl (f [L])$ intersects $V$.
  If so, then $cl (f [L])$ intersects $V$, so $f [L]$ intersects $V$,
  namely $L \cap f^{-1} (V) \neq \emptyset$, or equivalently
  $L \in \Diamond {f^{-1} (L)}$.  Conversely, if $f [L]$ intersects
  $V$, then the larger set $\upc f [L] \cap cl (f [L])$ also intersects
  $V$.  Therefore
  ${(\Plotkinn f)}^{-1} (\Diamond V) = \Diamond {f^{-1} (V)}$. All
  this shows that $\Plotkinn f$ is continuous.

  In order to show that
  $\Plotkinn {\identity X} = \identity {\Plotkinn X}$, we need to show
  that $L = \upc L \cap cl (L)$ for every lens $L$ on $X$.  This is
  the fact that $\varrho_X \circ \iota_X = \identity X$, see
  Lemma~\ref{lemma:thm6.3}.  We also need to show that
  $\Plotkinn {(g \circ f)} = \Plotkinn g \circ \Plotkinn f$ for all
  continuous maps $f \colon X \to Y$ and $g \colon Y \to Z$.  We
  realize that the inverse image of any subbasic open set $\Box W$
  (resp.\ $\Diamond W$), $W \in \Open Z$, by any side of the equality
  is equal to $\Box {(f^{-1} (g^{-1} (W)))}$ (resp.,
  $\Diamond {(f^{-1} (g^{-1} (W)))}$) Hence the inverse images of any
  open subset of $\Plotkinn Z$ by the two sides of the equality are
  the same.  But any two continuous maps from a space to a $T_0$ space
  with that property are equal.

  Finally, we need to verify that
  $\QLV f \circ \iota_X = \iota_Y \circ \Plotkinn f$, for every
  continuous map $f \colon X \to Y$.  We use the same trick.  Using
  the first part of Lemma~\ref{lemma:thm6.3} notably, the inverse
  image of every subbasic open subset $\Box^\quasi V$ (resp.,
  $\Diamond^\quasi V$) of $\QLV Y$ by each side of the equality is
  $\Box {f^{-1} (V)}$, resp.\ $\Diamond {f^{-1} (V)}$.  We conclude
  that the equality holds, since $\QLV Y$ is $T_0$.  \qed
\end{proof}

\begin{proposition}
  \label{prop:Plotkinn:projlim}
  The comparison map $\varphi \colon \Plotkinn X \to Z$ of any
  projective $\Plotkinn$-situation is a topological embedding.
\end{proposition}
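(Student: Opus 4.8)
The plan is to invoke Lemma~\ref{lemma:T:projlim}, exactly as in the proofs of Proposition~\ref{prop:QL:projlim} and Proposition~\ref{prop:Aval:projlim}. To that end I will show that $\Plotkinn$ is $R$-nice in the sense of Definition~\ref{defn:T:nice:S} for $R \eqdef \{\mathsf M, 1\}$, taking $B_X (1, U) \eqdef \Box U$ and $B_X (\mathsf M, U) \eqdef \Diamond U$; by definition these sets form a subbase of the Vietoris topology on $\Plotkinn X$.

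For property~1, I need $B_X (r, \_)$ to be Scott-continuous from $\Open X$ to $\Open {\Plotkinn X}$ for $r \in \{\mathsf M, 1\}$. For $\Diamond$ this is immediate, since $\Diamond$ commutes with arbitrary unions: a lens meets $\bigcup_i U_i$ if and only if it meets some $U_i$. For $\Box$, monotonicity is clear, and given a directed family ${(U_i)}_{i \in I}$ of open subsets of $X$ with union $U$, and any lens $L$, we have $L \in \Box U$ iff $L \subseteq \bigcup_{i \in I} U_i$ iff $L \subseteq U_i$ for some $i \in I$ (using that $L$ is compact and the family is directed) iff $L \in \dcup_{i \in I} \Box {U_i}$. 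Property~2 is precisely the content of the computations ${(\Plotkinn f)}^{-1} (\Box V) = \Box {f^{-1} (V)}$ and ${(\Plotkinn f)}^{-1} (\Diamond V) = \Diamond {f^{-1} (V)}$ already recorded in Lemma~\ref{lemma:iota:nat}, for every continuous map $f \colon X \to Y$ and every $V \in \Open Y$.

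Finally, $\Plotkinn X$ is $T_0$, because its specialization ordering is the topological Egli-Milner ordering $\TEMleq$, which is antisymmetric. Hence Lemma~\ref{lemma:T:projlim} applies and the comparison map $\varphi$ is full and, being continuous out of a $T_0$ space, a topological embedding.

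I do not expect any genuine obstacle here: this proposition is a routine verification, a carbon copy of the argument for $\QLV$. The substantive difficulty — surjectivity of $\varphi$, hence the full preservation statement — is deferred to a later theorem, where I anticipate reducing to the quasi-lens case via the natural homeomorphisms $\iota_X$, $\varrho_X$ of Lemma~\ref{lemma:thm6.3} and Theorem~\ref{thm:dB:Kawai:P} (on weakly Hausdorff or quasi-Polish spaces) together with Theorem~\ref{thm:QL:projlim}.
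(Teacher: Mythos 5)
Your proposal is correct and follows essentially the same route as the paper: verifying that $\Plotkinn$ is $R$-nice with $R = \{\mathsf M, 1\}$, using compactness of lenses for the Scott-continuity of $\Box$, invoking the inverse-image formulas from Lemma~\ref{lemma:iota:nat} for property~2, and concluding via Lemma~\ref{lemma:T:projlim} together with the $T_0$ property of $\Plotkinn X$ (which the paper records just before the proof, via antisymmetry of $\TEMleq$). Your anticipated strategy for surjectivity also matches the paper's subsequent Theorem~\ref{thm:Plotkinn:projlim}.
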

\begin{proof}
  We use Lemma~\ref{lemma:T:projlim}, first checking that $\Plotkinn$
  is $R$-nice with $R \eqdef \{\mathsf M, 1\}$,
  $B_X (1, U) \eqdef \Box U$, $B_X (\mathsf M, U) \eqdef \Diamond U$.
  Property~1 of Definition~\ref{defn:T:nice:S} stems from the fact
  that $\Box$ and $\Diamond$ are Scott-continuous; $\Diamond$ even
  commutes with arbitrary unions, and the argument for $\Box$ is as in
  Proposition~\ref{prop:QL:projlim}, considering that every lens $L$
  is compact: for every directed family ${(U_i)}_{i \in I}$ of open
  subsets of $X$, $L \in \Box {\dcup_{i \in I} U_i}$ if and only if
  $L \subseteq \dcup_{i \in I} U_i$, if and only if $L \subseteq U_i$
  for some $i \in I$ (by compactness), if and only if
  $L \in \dcup_{i \in I} \Box {U_i}$.  Property~2 follows from the
  characterization of ${(\Plotkinn f)}^{-1}$ given in the first part
  of Lemma~\ref{lemma:iota:nat}.  \qed
\end{proof}

\begin{theorem}
  \label{thm:Plotkinn:projlim}
  Let ${(p_{ij} \colon X_j \to X_i)}_{i \sqsubseteq j \in I}$ be a
  projective system of topological spaces, with canonical projective
  limit $X, {(p_i)}_{i \in I}$.  If every $X_i$ is sober, if $\iota_X$
  is surjective, and if $\HV X$ is a projective limit of
  ${(\HV {p_{ij}} \colon \HV {X_j} \to \HV {X_i})}_{i \sqsubseteq j
    \in I}$, then $\Plotkinn X$ is a projective limit of
  ${(\Plotkinn {p_{ij}} \colon \Plotkinn {X_j} \to \Plotkinn
    {X_i})}_{i \sqsubseteq j \in I}$.
\end{theorem}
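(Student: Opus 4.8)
The plan is to invoke Proposition~\ref{prop:Plotkinn:projlim}, which already gives that the comparison map $\varphi \colon \Plotkinn X \to Z$ is a topological embedding; everything then reduces to showing that $\varphi$ is surjective. First I would unpack the hypotheses. Since $\iota_X \colon \Plotkinn X \to \QLV X$ is always a topological embedding by Lemma~\ref{lemma:thm6.3} and is surjective by assumption, it is a homeomorphism (with inverse $\varrho_X$). And since every $X_i$ is sober and $\HV X$ is a projective limit of ${(\HV {p_{ij}})}_{i \sqsubseteq j \in I}$, Theorem~\ref{thm:QL:projlim} applies: writing $Z^\natural$ for the canonical projective limit of ${(\QLV {p_{ij}} \colon \QLV {X_j} \to \QLV {X_i})}_{i \sqsubseteq j \in I}$ and $\varphi^\natural \colon \QLV X \to Z^\natural$ for its comparison map, $\varphi^\natural$ is a homeomorphism.

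Next I would use the natural transformation $\iota$ from $\Plotkinn$ to $\QLV$ (Lemma~\ref{lemma:iota:nat}) to produce a map $\widehat\iota \colon Z \to Z^\natural$ between the two canonical limits, namely the one obtained by applying $\iota_{X_i}$ in each coordinate, which sends ${(L_i)}_{i \in I} \in Z$ to ${((\upc L_i, cl (L_i)))}_{i \in I} \in Z^\natural$ (it lands in $Z^\natural$ precisely because $\iota$ is natural, and it is continuous, being the map induced by the cone ${(\iota_{X_i} \circ q_i)}_{i \in I}$, where the $q_i$ are the cone projections of $Z$). The key observation is that $\widehat\iota$ is injective, since each $\iota_{X_i}$ is. A short diagram chase through the defining properties of the comparison maps and the naturality of $\iota$ then yields the commuting square $\widehat\iota \circ \varphi = \varphi^\natural \circ \iota_X$: indeed, for each $i \in I$, post-composing both sides with the cone projection $q^\natural_i$ of $Z^\natural$ gives $\iota_{X_i} \circ q_i \circ \varphi = \iota_{X_i} \circ \Plotkinn {p_i}$ on the left and $\QLV {p_i} \circ \iota_X$ on the right, and these agree by naturality of $\iota$.

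Given this square, surjectivity of $\varphi$ is purely formal. The right-hand side $\varphi^\natural \circ \iota_X$ is a composite of two homeomorphisms, hence a bijection, so $\widehat\iota \circ \varphi$ maps $\Plotkinn X$ onto $Z^\natural$; consequently, for any $z \in Z$ there is $t \in \Plotkinn X$ with $\widehat\iota (\varphi (t)) = \widehat\iota (z)$, and injectivity of $\widehat\iota$ forces $\varphi (t) = z$. Thus $\varphi$ is a surjective topological embedding, hence a homeomorphism, and $\Plotkinn X, {(\Plotkinn {p_i})}_{i \in I}$ is a projective limit of ${(\Plotkinn {p_{ij}})}_{i \sqsubseteq j \in I}$.

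The main subtlety to watch for is that the component maps $\iota_{X_i}$ are only embeddings, not homeomorphisms: the spaces $X_i$ are not assumed to satisfy Property~$(*)$, only their limit $X$ is (through the surjectivity of $\iota_X$). So we cannot transport the homeomorphism $\varphi^\natural$ back to $\varphi$ along componentwise isomorphisms between $Z$ and $Z^\natural$; the argument has to go through the weaker fact that $\widehat\iota$ is injective, which is exactly enough to pull surjectivity of $\widehat\iota \circ \varphi$ back to surjectivity of $\varphi$. The remaining verifications --- well-definedness and continuity of $\widehat\iota$, and the commuting square --- are routine once the definitions are spelled out.
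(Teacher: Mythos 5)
Your proposal is correct and follows essentially the same route as the paper's proof: reduce to surjectivity via Proposition~\ref{prop:Plotkinn:projlim}, transport the problem to quasi-lenses using Theorem~\ref{thm:QL:projlim}, the surjectivity of $\iota_X$, the naturality of $\iota$, and the injectivity of each $\iota_{X_i}$. The only difference is presentational---you package the paper's elementwise chase as a commuting square with the coordinatewise injection $\widehat\iota \colon Z \to Z^\natural$---which is a harmless reformulation of the same argument.
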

\begin{proof}
  By Proposition~\ref{prop:Plotkinn:projlim}, it suffices to show that
  the comparison map $\varphi \colon \Plotkinn X \to Z$ is surjective,
  where $Z$ is the canonical projective limit of
  ${(\Plotkinn {p_{ij}} \colon \Plotkinn {X_j} \to \Plotkinn
    {X_i})}_{i \sqsubseteq j \in I}$.  Let ${(L_i)}_{i \in I}$ be an
  element of the latter.  We form the quasi-lenses
  $(Q_i, C_i) \eqdef \iota_{X_i} (L_i)$ for each $i \in I$.  For all
  $i \sqsubseteq j \in I$, $L_i = \Plotkinn {p_{ij}} (L_j)$, so
  $(Q_i, C_i) = \QLV {p_{ij}} (Q_j, C_j)$ by naturality of $\iota$
  (Lemma~\ref{lemma:iota:nat}).  Using Theorem~\ref{thm:QL:projlim},
  there is a (unique) quasi-lens $(Q, C)$ on $X$ such that
  $(Q_i, C_i) = \QLV {p_i} (Q, C)$ for every $i \in I$.  Since we are
  assuming that $\iota_X$ is surjective, there is a lens $L$ on $X$
  such that $(Q, C) = \iota_X (L)$.  Then, for every $i \in I$,
  $\iota_{X_i} (L_i) = (Q_i, C_i) = \QLV {p_i} (\iota_X (L)) =
  \iota_{X_i} (\Plotkinn {p_i} (L))$, by naturality of $\iota$.  Since
  $\iota_{X_i}$ is injective (being a topological embedding, see
  Lemma~\ref{lemma:thm6.3}), $L_i = \Plotkinn {p_i} (L)$.  \qed
\end{proof}

One case where $\iota_X$ is surjective, or equivalently, a
homeomorphism, where $X$ is as in Theorem~\ref{thm:Plotkinn:projlim},
is when $X$ is weakly Hausdorff, by Lemma~\ref{lemma:thm6.3}.  This
happens notably when every $X_i$ is locally strongly sober, as we now
argue.  The original definition of a locally strongly sober space is a
space in which the collection of limits of every convergent
ultrafilter is the closure of a unique point
\cite[Definition~VI-6.12]{GHKLMS:contlatt}.  A space is locally
strongly sober if and only if it is sober, coherent, and weakly
Hausdorff \cite[Theorem~3.5]{JGL:wHaus}, and every projective limit of
locally strongly sober spaces is locally strongly sober
\cite[Theorem~5.1]{JGL:proj:class}.  We note that every Hausdorff
space, every stably locally compact space is weakly Hausdorff
\cite[Proposition~2.2]{JGL:wHaus}, and since they are all sober and
coherent, they are all locally strongly sober.

Another case where $\iota_X$ is surjective is when $X$ is
quasi-Polish, using Proposition~\ref{prop:dB:Kawai:lens} and
Lemma~\ref{lemma:thm6.3}.  Now any projective limit of quasi-Polish
spaces is quasi-Polish \cite[Proposition~9.5]{JGL:kolmogorov}, so we
are in this situation if every $X_i$ is quasi-Polish.  We recall that
every Polish space is quasi-Polish.

Hence, combining Theorem~\ref{thm:Plotkinn:projlim} with
Theorem~\ref{thm:H:projlim} (or Corollary~\ref{corl:QL:projlim}), we
obtain the following.
\begin{corollary}
  \label{corl:Plotkinn:projlim}
  Let ${(p_{ij} \colon X_j \to X_i)}_{i \sqsubseteq j \in I}$ be a
  projective system of topological spaces, with canonical projective
  limit $X, {(p_i)}_{i \in I}$.  If every $X_i$ is locally strongly
  sober, or if every $X_i$ is quasi-Polish, and:
  \begin{enumerate}
  \item every $p_{ij}$ is a proper map (e.g., a projection),
  \item or $I$ has a countable cofinal subset and each $X_i$ is
    locally compact,
  \end{enumerate}
  then $\Plotkinn X$ is a projective limit of
  ${(\Plotkinn {p_{ij}} \colon \Plotkinn {X_j} \to \Plotkinn
    {X_i})}_{i \sqsubseteq j \in I}$.
\end{corollary}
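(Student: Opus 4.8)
The plan is to reduce everything to Theorem~\ref{thm:Plotkinn:projlim}, whose hypotheses are that every $X_i$ be sober, that $\iota_X$ be surjective, and that $\HV X, {(\HV {p_i})}_{i \in I}$ be a projective limit of ${(\HV {p_{ij}} \colon \HV {X_j} \to \HV {X_i})}_{i \sqsubseteq j \in I}$; once all three are in place, the conclusion is immediate. Sobriety of each $X_i$ holds in both cases, since locally strongly sober spaces are sober by the characterization recalled above (sober, coherent, weakly Hausdorff), and every quasi-Polish space is sober; this observation will also be reused below.

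To obtain the surjectivity of $\iota_X$, I would argue on the limit space $X$ itself rather than on the $X_i$ individually. If every $X_i$ is locally strongly sober, then so is $X$, because the class of locally strongly sober spaces is stable under projective limits \cite[Theorem~5.1]{JGL:proj:class}; in particular $X$ is weakly Hausdorff, so by the last assertion of Lemma~\ref{lemma:thm6.3} it satisfies Property~$(*)$, whence $\iota_X$ is a homeomorphism, in particular surjective. If instead every $X_i$ is quasi-Polish, then $X$ is quasi-Polish by \cite[Proposition~9.5]{JGL:kolmogorov}, hence satisfies Property~$(*)$ by Proposition~\ref{prop:dB:Kawai:lens}, and Lemma~\ref{lemma:thm6.3} again makes $\iota_X$ a homeomorphism.

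It remains to check that $\HV X$ is the projective limit of the Hoare system, and this is exactly what Theorem~\ref{thm:H:projlim} supplies under either numbered hypothesis. In case~1, each $X_i$ is sober and each $p_{ij}$ is a proper map---recalling from Remark~\ref{rem:ep:proper} that every projection is indeed proper---so item~2 of Theorem~\ref{thm:H:projlim} applies. In case~2, $I$ has a countable cofinal subset and each $X_i$ is locally compact and sober, so item~3 applies. (One could equally route through Corollary~\ref{corl:QL:projlim}, since its hypotheses hold, but invoking Theorem~\ref{thm:H:projlim} directly is cleaner.) With the three hypotheses of Theorem~\ref{thm:Plotkinn:projlim} established, we conclude that $\Plotkinn X, {(\Plotkinn {p_i})}_{i \in I}$ is a projective limit of ${(\Plotkinn {p_{ij}} \colon \Plotkinn {X_j} \to \Plotkinn {X_i})}_{i \sqsubseteq j \in I}$.

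I do not expect a genuinely hard step here: the proof is the assembly of the preservation results for sobriety, local strong sobriety, and quasi-Polishness with the Hoare-hyperspace theorem. The one point demanding care is that surjectivity of $\iota_X$ is a property of the limit space $X$ and cannot be checked coordinatewise on the $X_i$; this is precisely why the two separate hypotheses ``locally strongly sober'' and ``quasi-Polish'' appear, each being a class closed under projective limits and forcing Property~$(*)$, so that Lemma~\ref{lemma:thm6.3} can be applied to $X$ itself.
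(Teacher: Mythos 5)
Your proof is correct and follows essentially the same route as the paper: establish sobriety of the $X_i$, get surjectivity of $\iota_X$ on the limit $X$ via preservation of local strong sobriety (hence weak Hausdorffness) or of quasi-Polishness under projective limits together with Lemma~\ref{lemma:thm6.3} and Proposition~\ref{prop:dB:Kawai:lens}, verify the Hoare-hyperspace hypothesis via Theorem~\ref{thm:H:projlim}, and conclude by Theorem~\ref{thm:Plotkinn:projlim}. Your remark that surjectivity of $\iota_X$ must be checked on the limit space itself, which is why the hypotheses are taken from projectively closed classes, is exactly the point the paper makes.
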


In case~2, we note that the combination of the requirements of $X_i$
being locally compact and locally strongly sober is equivalent to
requiring that $X_i$ be stably locally compact
\cite[Proposition~VI-6.15, Corollary~VI-6.16]{GHKLMS:contlatt}.
Requiring instead that $X_i$ be locally compact and quasi-Polish is
equivalent to requiring that $X_i$ be locally compact, sober and
second-countable.  Indeed, every quasi-Polish space is
second-countable by definition, and conversely every locally compact,
sober, second-countable space is quasi-Polish
\cite[Theorem~44]{deBrecht:qPolish}.

\section{Subcontinuation functors}
\label{sec:subc-funct}

All the functors we will consider from now on are subcontinuation
functors, in a sense we define below.  (All the functors we have
examined until now are naturally isomorphic to subcontinuation
functors, too, but it was easier to deal with them as we did.)  We
will see that, whenever $T$ is a subcontinuation functor, the
comparison maps $\varphi \colon TX \to Z$ are topological embeddings,
and even homeomorphisms when $X$ is obtained as a limit of an
ep-system.

For every space $X$, let $\Lform X$ be the set of lower semicontinuous
maps from $X$ to $\creal$, namely the set of continuous maps from $X$
to $\creal$ where the latter is given the Scott topology of its usual
ordering.  $\Lform X$ is ordered pointwise, and we give it the Scott
topology of that ordering.

\begin{definition}
  \label{defn:subcont}
  A \emph{subcontinuation functor} $T$ is an endofunctor on $\Topcat$
  such that:
  \begin{itemize}
  \item for every space $X$, $TX$ is a subspace of the space $KX$ of
    lower semicontinuous maps from $\Lform X$ to $\creal$, with the
    topology generated by subbasic open sets
    $[h > r] \eqdef \{F \in TX \mid F (h) > r\}$, $h \in \Lform X$
    (the subspace topology induced by the inclusion in the product
    $\creal^{\Lform X}$);
  \item for every morphism $f \colon X \to Y$, $Tf$ maps every
    $F \in TX$ to the function $h \in \Lform Y \mapsto F (h \circ f)$.
  \end{itemize}
\end{definition}
$K$ itself, which maps every space $X$ to the space $KX$ of lower
semicontinuous maps from $\Lform X$ to $\creal$, is the largest
subcontinuation functor, which one may call the \emph{continuation}
functor.  The name is by analogy with the continuation monad used in
the denotational semantics of programming languages, with answer type
$\creal$.

We need the following.
\begin{lemma}
  \label{lemma:hi}
  Let ${(p_{ij} \colon X_j \to X_i)}_{i \sqsubseteq j \in I}$ be a
  projective system in $\Topcat$, with canonical projective limit
  $X, {(p_i)}_{i \in I}$.  For every $h \in \Lform X$,
  \begin{enumerate}
  \item there is a largest function $h_i \in \Lform {X_i}$ such that
    $h_i \circ p_i \leq h$, for every $i \in I$;
  \item for all $i \sqsubseteq j \in I$, $h_i \circ p_{ij} \leq h_j$;
  \item for all $i \sqsubseteq j \in I$, $h_i \circ p_i \leq h_j \circ
    p_j$;
  \item for every $r \in \Rp$, $h^{-1} (]r, \infty]) = \dcup_{i \in I}
    p_i^{-1} (h_i^{-1} (]r, \infty]))$;
  \item $\dsup_{i \in I} (h_i \circ p_i) = h$.
  \end{enumerate}
\end{lemma}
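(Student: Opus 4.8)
The plan is to build the functions $h_i$ by an explicit formula and then verify the five items in order, noting that items 2 and 3 are essentially immediate from the maximality in item 1, while items 4 and 5 are the real content. First I would define, for each $i \in I$ and each $t \in X_i$,
\[
h_i(t) \eqdef \sup \{ s \in \Rp \mid t \in U \text{ for some } U \in \Open{X_i} \text{ with } p_i^{-1}(U) \subseteq h^{-1}(]s,\infty]) \},
\]
or equivalently describe $h_i^{-1}(]s,\infty])$ directly as the largest open subset $V$ of $X_i$ with $p_i^{-1}(V) \subseteq h^{-1}(]s,\infty])$ — this largest open set exists because unions of opens with that property still have that property. One checks $h_i$ is lower semicontinuous (its superlevel sets are open by construction) and that $h_i \circ p_i \leq h$ (if $h_i(p_i(x)) > s$ then $p_i(x)$ lies in the corresponding $V$, so $x \in p_i^{-1}(V) \subseteq h^{-1}(]s,\infty])$, i.e. $h(x) > s$). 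Maximality: if $g \in \Lform{X_i}$ satisfies $g \circ p_i \leq h$, then for each $s$, $p_i^{-1}(g^{-1}(]s,\infty])) \subseteq h^{-1}(]s,\infty])$, so $g^{-1}(]s,\infty])$ is one of the open sets competing in the definition of $h_i^{-1}(]s,\infty])$, hence $g \leq h_i$.

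For item 2: from $h_i \circ p_i \leq h$ and $p_i = p_{ij} \circ p_j$ (wait — the bonding maps go the other way; $p_i = p_{ij}\circ p_j$ fails, rather $p_i = p_{ij}\circ p_j$ is wrong). Correctly, $p_i = p_{ij}\circ p_j$ does hold in the sense that the cone satisfies $p_{ij}\circ p_j = p_i$; so $(h_i\circ p_{ij})\circ p_j = h_i\circ p_i \leq h$, which by the maximality property of $h_j$ gives $h_i\circ p_{ij}\leq h_j$. Item 3 follows by composing item 2 with $p_j$ on the right: $h_i\circ p_i = h_i\circ p_{ij}\circ p_j \leq h_j\circ p_j$. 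The family $(h_i\circ p_i)_{i\in I}$ is thus monotone, which is what makes the directed sup in item 5 meaningful and the union in item 4 directed.

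For item 4: the inclusion $\supseteq$ is immediate from $h_i\circ p_i \leq h$. For $\subseteq$, fix $x \in X$ with $h(x) > r$; I want an index $i$ and an open $U\ni p_i(x)$ in $X_i$ with $p_i^{-1}(U)\subseteq h^{-1}(]r,\infty])$, for then $h_i(p_i(x)) \geq r$... but I need strict inequality $h_i(p_i(x)) > r$, so I should first pick $r'$ with $r < r' < h(x)$, note $h^{-1}(]r',\infty])$ is an open neighbourhood of $x$ in $X$, and invoke the description of the topology on the canonical projective limit (stated in the Preliminaries, via Lemma~3.1 of \cite{JGL:kolmogorov}): $h^{-1}(]r',\infty])$ is the directed union $\dcup_{i\in I} p_i^{-1}(U_i)$ where $U_i$ is the largest open subset of $X_i$ with $p_i^{-1}(U_i)\subseteq h^{-1}(]r',\infty])$. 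Hence $x\in p_i^{-1}(U_i)$ for some $i$, so $p_i(x)\in U_i \subseteq h_i^{-1}(]r',\infty])$ (since $U_i$ is exactly a competitor in the definition of $h_i^{-1}(]r',\infty])$), giving $h_i(p_i(x)) \geq r' > r$, hence $x \in p_i^{-1}(h_i^{-1}(]r,\infty]))$. This is the step I expect to be the main obstacle — not because it is deep, but because it is where the specific structure of the projective-limit topology is used, and one must be careful to thread the extra parameter $r'$ so as to land in the strict superlevel set. Finally item 5 is a reformulation of item 4: $\dsup_{i\in I}(h_i\circ p_i)$ is a well-defined lower semicontinuous function (monotone directed sup of l.s.c. functions), and for every $r\in\Rp$ its superlevel set $]r,\infty]$-preimage is $\dcup_{i\in I} p_i^{-1}(h_i^{-1}(]r,\infty]))$, which by item 4 equals $h^{-1}(]r,\infty])$; since two functions into $\creal$ with the same superlevel sets at every $r\in\Rp$ are equal (the sets $]r,\infty]$ together with $\emptyset$ and $\creal$ form a basis of the Scott topology separating points), $\dsup_{i\in I}(h_i\circ p_i) = h$.
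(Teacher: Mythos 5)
Your proof is correct and follows essentially the same route as the paper: the paper simply defines $h_i$ as the pointwise supremum of all $k \in \Lform {X_i}$ with $k \circ p_i \leq h$, derives items 2 and 3 by the same maximality argument, and its proof of item 4 is exactly your $r'$-threading, using the largest open $U_i$ with $p_i^{-1} (U_i) \subseteq h^{-1} (]r', \infty])$ and concluding $h_i (p_i (x)) \geq r' > r$, with item 5 read off from item 4 as you do. The only quibble is that your ``equivalent'' description of $h_i$, prescribing each superlevel set $h_i^{-1} (]s, \infty])$ to be the largest open $V$ with $p_i^{-1} (V) \subseteq h^{-1} (]s, \infty])$, is not literally consistent in general (from your sup formula one only gets $h_i \geq s$ on that $V$, not $h_i > s$), but since every step of your argument only uses the weak inequality, nothing breaks.
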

\begin{proof}
  1. Any pointwise supremum of lower semicontinuous maps is lower
  semicontinuous, including the empty supremum, which is the constant
  zero map.  Let $\mathcal F \eqdef \{k \in \Lform {X_i} \mid k \circ p_i
  \leq h\}$, and $h_i$ be its pointwise supremum.  Then $h_i \in
  \Lform {X_i}$, and it is clear that $h_i \circ p_i \leq h$, so $h_i$
  is the largest element of $\mathcal F$.

  2. We have $h_i \circ p_i = (h_i \circ p_{ij}) \circ p_j \leq h$.
  By the maximality of $h_j$, $h_i \circ p_{ij} \leq h_j$.

  3. By item~2, post-composing with $p_j$.

  4. Let $V \eqdef h_i^{-1} (]r, \infty])$.  Letting $\chi_V$ be the
  characteristic function of $V$, $r \chi_V$ is a lower semicontinuous
  map such that $r \chi_V \circ p_i \leq h$: indeed, for every
  $x \in X$, if $p_i (x) \in V$ then $h (x) \geq h_i (p_i (x)) > r$.
  By the maximality of $h_i$, $r \chi_V \leq h_i$, which means that
  every point $x$ of $p_i^{-1} (h_i^{-1} (]r, \infty]))$ is such that
  $r \chi_V (p_i (x)) = r \leq h_i (x)$.


  4. Since $h_i \circ p_i \leq h$ for every $i \in I$,
  $h^{-1} (]r, \infty]) \supseteq \dcup_{i \in I} p_i^{-1} (h_i^{-1}
  (]r, \infty]))$.  For the reverse inclusion, let $x$ be any point in
  $h^{-1} (]r, \infty])$.  We pick $t \in \Rp$ such that
  $r < t < h (x)$, and we let $U \eqdef h^{-1} (]t, \infty])$.  We
  recall that there is a largest open subset $U_i$ of $X_i$ such that
  $p_i^{-1} (U_i) \subseteq U$, for every $i \in I$, and that
  $\dcup_{i \in I} p_i^{-1} (U_i) = U$.  Hence $x \in p_i^{-1} (U_i)$
  for some $i \in I$.  We note that $t \chi_{U_i} \circ p_i \leq h$,
  since every point mapped by $p_i$ into $U_i$ is in
  $p_i^{-1} (U_i) \subseteq U = h^{-1} (]t, \infty])$.  By maximality
  of $h_i$, $t \chi_{U_i} \leq h_i$.  Then
  $h_i (p_i (x)) \geq t \chi_{U_i} (p_i (x)) = t$, since
  $x \in p_i^{-1} (U_i)$.  Since $t > r$, it follows that
  $x \in p_i^{-1} (h_i^{-1} (]r, \infty]))$.

  5. It suffices to observe that, for every $x \in X$, for every $i
  \in I$, for every $r \in \Rp$, $r < \dsup_{i \in I} (h_i \circ p_i)
  (x)$ if and only if $x \in \dcup_{i \in I} p_i^{-1} (h_i^{-1} (]r,
  \infty]))$, $r < h (x)$ if and only if $x \in h^{-1} (]r, \infty])$,
  and to apply item~4.  \qed
\end{proof}

\begin{lemma}
  \label{lemma:T:projlim:subcont}
  Let $T$ be a subcontinuation functor.  Given any projective
  $T$-situation as given in Definition~\ref{defn:situation}, the
  comparison map $\varphi$ is a topological embedding.
\end{lemma}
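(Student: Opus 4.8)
The plan is to reduce to the standard fact, recalled in Section~\ref{sec:preliminaries}, that a full continuous map from a $T_0$ space is a topological embedding. Since $TX$ is, by Definition~\ref{defn:subcont}, a subspace of the product $\creal^{\Lform X}$, and $\creal$ with its Scott topology is $T_0$ (products and subspaces of $T_0$ spaces are $T_0$), the space $TX$ is $T_0$; and $\varphi$ is continuous by the universal property that defines it. So it remains to prove that $\varphi$ is \emph{full}, and since the sets $[h > r]$ ($h \in \Lform X$, $r \in \Rp$) form a subbase of $TX$, it suffices to exhibit each $[h > r]$ as $\varphi^{-1}$ of an open subset of $Z$.

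First I would make $\varphi$ explicit: from $q_i \circ \varphi = T p_i$ and the action of a subcontinuation functor on morphisms, $\varphi$ sends $F \in TX$ to ${(T p_i (F))}_{i \in I}$, where $T p_i (F)$ is the map $k \in \Lform {X_i} \mapsto F (k \circ p_i)$. Next, fixing $h \in \Lform X$, I would invoke Lemma~\ref{lemma:hi}: for each $i \in I$ there is a largest $h_i \in \Lform {X_i}$ with $h_i \circ p_i \leq h$; the family ${(h_i \circ p_i)}_{i \in I}$ is directed (item~3, using directedness of $I$); and $\dsup_{i \in I} (h_i \circ p_i) = h$ (item~5).

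The crucial point is that each $F \in TX$ is, by definition, a continuous map from $\Lform X$ with its Scott topology to $\creal$ with its Scott topology, hence Scott-continuous. Therefore $F (h) = F (\dsup_{i \in I} (h_i \circ p_i)) = \dsup_{i \in I} F (h_i \circ p_i)$, so for $r \in \Rp$ we get $F \in [h > r]$ iff $F (h_i \circ p_i) > r$ for some $i \in I$, iff $T p_i (F) \in [h_i > r]$ for some $i$, iff $\varphi (F) \in q_i^{-1} ([h_i > r])$ for some $i$. Hence $[h > r] = \varphi^{-1} \bigl( \dcup_{i \in I} q_i^{-1} ([h_i > r]) \bigr)$, which is the inverse image of an open subset of $Z$; thus $\varphi$ is full, and being full and continuous out of a $T_0$ space it is a topological embedding.

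I do not anticipate a real obstacle: this is essentially Lemma~\ref{lemma:T:projlim} again, with $R \eqdef \Lform X$ and $B_X (h, r) \eqdef [h > r]$, except that $R$ is now allowed to depend on $X$ and the two ``niceness'' conditions hold automatically (property~1 is Scott-continuity of $F$, property~2 is the definition of $Tf$). The only points needing a word of care are the directedness of ${(h_i \circ p_i)}_{i \in I}$, needed to apply Scott-continuity of $F$, and the verification that $TX$ is $T_0$.
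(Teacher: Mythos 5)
Your proposal is correct and follows essentially the same route as the paper: make $\varphi$ explicit, take the largest $h_i$ from Lemma~\ref{lemma:hi}, use Scott-continuity of $F$ together with $\dsup_{i \in I}(h_i \circ p_i)=h$ to write $[h>r]=\varphi^{-1}\bigl(\dcup_{i \in I} q_i^{-1}([h_i>r])\bigr)$, and conclude via fullness plus $T_0$-ness of $TX$ (which the paper checks through antisymmetry of the pointwise specialization preordering rather than via the product embedding, an immaterial difference).
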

\begin{proof}
  Let $\varphi \colon TX \to Z$, where $Z, {(q_i)}_{i \in I}$ is the
  canonical projective limit of the projective system
  $(T {p_{ij}} \colon T {X_j} \to T {X_i})_{i \sqsubseteq j \in I}$,
  and $X, {(p_i)}_{i \in I}$ is that of
  ${(p_{ij} \colon X_j \to X_i)}_{i \sqsubseteq j \in I}$.

  For every $h \in \Lform X$, for every $i \in I$, let
  $h_i \in \Lform {X_i}$ be the largest such that
  $h_i \circ p_i \leq h$, as given in Lemma~\ref{lemma:hi}.
  For every subbasic open subset $[h > r]$ of $TX$, with
  $h \in \Lform X$ and $r \in \Rp$, we wish to show that $[h > r]$ is
  the inverse image under $\varphi$ of some open subset of $Z$.  We
  note that for every $F \in TX$, $F \in [h > r]$ if and only if
  $F (\dsup_{i \in I} (h_i \circ p_i)) > r$ (by Lemma~\ref{lemma:hi},
  item~4), if and only if $\dsup_{i \in I} F (h_i \circ p_i) > r$
  (since $F$ is Scott-continuous) if and only if
  $F (h_i \circ p_i) > r$ for some $i \in I$; so
  $[h > r] = \dcup_{i \in I} [h_i \circ p_i > r]$.  Now we note that
  $[h_i \circ p_i > r] = \varphi^{-1} (q_i^{-1} ([h_i > r]))$.
  Indeed, $q_i \circ \varphi = T {p_i}$, so the elements of
  $\varphi^{-1} (q_i^{-1} ([h_i > r]))$ are exactly the elements
  $F \in TX$ such that $(q_i \circ \varphi) (F) \in [h_i > r]$, namely
  such that $T {p_i} (F) (h_i) > r$, equivalently such that
  $F (h_i \circ p_i) > r$; those are exactly the elements of
  $[h_i \circ p_i > r]$.  Hence
  $[h > r] = \dcup_{i \in I} \varphi^{-1} (q_i^{-1} ([h_i > r])) =
  \varphi^{-1} (\dcup_{i \in I} q_i^{-1} ([h_i > r]))$, showing that
  $\varphi$ is full.

  Finally, the preorder of specialization of $TX$ is given by
  $F \leq F'$ if and only if for all $h \in \Lform X$ and $r \in \Rp$,
  $F \in [h > r]$ implies $F' \in [h > r]$, if and only if
  $F (h) \leq F' (h)$ for every $h \in \Lform X$.  This is
  antisymmetric, so $TX$ is $T_0$. It follows that $\varphi$ is a
  topological embedding.  \qed
\end{proof}
Directed suprema, in fact arbitrary suprema, of elements of $KX$ are
again in $KX$, where $K$ is the continuation functor, and $X$ is an
arbitrary space.  This is because arbitrary suprema of lower
semicontinuous maps are lower semicontinuous.  A \emph{subdcpo} of a
dcpo $P$ is a subset $A$ of $P$ such that the supremum of any directed
family $D \subseteq A$, taken in $P$, belongs to $A$.  This entails
that $A$ is itself a dcpo, but the property is strictly stronger.  By
some abuse of language, we will extend this to subdcpos of $KX$,
implicitly seing the latter as a dcpo.
\begin{proposition}
  \label{prop:ep:subcont}
  Let $T$ be a subcontinuation functor.  Given any projective
  $T$-situation as given in Definition~\ref{defn:situation}, whose
  projective system is an ep-system, and if $TX$ is a subdcpo of $KX$,
  then the comparison map $\varphi$ is a homeomorphism.
\end{proposition}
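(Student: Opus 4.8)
The plan is to follow the template of Proposition~\ref{prop:ep} almost verbatim, replacing the monotone convergence space hypothesis by the assumption that $TX$ is a subdcpo of $KX$, and replacing open sets by lower semicontinuous maps $h \in \Lform X$ throughout. By Lemma~\ref{lemma:T:projlim:subcont} the comparison map $\varphi \colon TX \to Z$ is already a topological embedding, so it suffices to prove that $\varphi$ is surjective; then it is a surjective topological embedding, hence a homeomorphism.

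First I would invoke \cite[Lemma~4.1]{JGL:kolmogorov}, exactly as in the proof of Proposition~\ref{prop:ep}: each cone map $p_i$ is a projection, with an associated embedding $e_i \colon X_i \to X$, and these may be chosen so that $e_j \circ e_{ij} = e_i$ for all $i \sqsubseteq j \in I$. Given an arbitrary element ${(t_i)}_{i \in I}$ of $Z$ (so $t_i \in TX_i$ and $Tp_{ij}(t_j) = t_i$ for all $i \sqsubseteq j$), set $s_i \eqdef Te_i(t_i) \in TX$. I would then check that ${(s_i)}_{i \in I, \sqsubseteq}$ is a monotone net: for $i \sqsubseteq j$ and every $h \in \Lform X$ we have $s_i(h) = t_i(h \circ e_i) = Tp_{ij}(t_j)(h \circ e_i) = t_j(h \circ e_i \circ p_{ij})$, and since $e_i \circ p_{ij} = e_j \circ e_{ij} \circ p_{ij} \leq e_j$ (using $e_{ij} \circ p_{ij} \leq \identity{X_j}$ and monotonicity of $e_j$) while $h$ and $t_j$ are monotonic (every element of $KX_j$ is Scott-continuous on $\Lform{X_j}$), this is $\leq t_j(h \circ e_j) = s_j(h)$. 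The underlying family $\{s_i \mid i \in I\}$ is directed, so since $TX$ is a subdcpo of $KX$ the pointwise supremum $t \eqdef \dsup_{i \in I} s_i$ exists and lies in $TX$; in particular $t(h) = \dsup_{i \in I} s_i(h)$ for every $h \in \Lform X$.

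It then remains to show $\varphi(t) = {(t_i)}_{i \in I}$, i.e.\ $Tp_i(t) = t_i$ for every $i$. The easy inequality is $t_i = T(p_i \circ e_i)(t_i) = Tp_i(s_i) \leq Tp_i(t)$, using $s_i \leq t$ and the monotonicity of the continuous map $Tp_i$. For the reverse inequality I would fix $h_i \in \Lform{X_i}$ and compute $Tp_i(t)(h_i) = t(h_i \circ p_i) = \dsup_{j \in I} t_j(h_i \circ p_i \circ e_j)$; for each $j$, choosing $k \in I$ above both $i$ and $j$ and using $t_j = Tp_{jk}(t_k)$ gives $t_j(h_i \circ p_i \circ e_j) = t_k(h_i \circ p_i \circ e_j \circ p_{jk})$, and the identities $p_k \circ e_j = p_k \circ e_k \circ e_{jk} = e_{jk}$ and $p_i = p_{ik} \circ p_k$ yield $p_i \circ e_j \circ p_{jk} = p_{ik} \circ e_{jk} \circ p_{jk} \leq p_{ik}$; monotonicity of $h_i$ and of $t_k$ then bounds the quantity by $t_k(h_i \circ p_{ik}) = Tp_{ik}(t_k)(h_i) = t_i(h_i)$. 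Taking the supremum over $j$ gives $Tp_i(t)(h_i) \leq t_i(h_i)$ for all $h_i$, hence $Tp_i(t) \leq t_i$. Since $TX_i$ is $T_0$ (its specialization order is the antisymmetric pointwise order), we conclude $Tp_i(t) = t_i$ for every $i$, so $\varphi(t) = {(t_i)}_{i \in I}$ and $\varphi$ is surjective. The main obstacle is precisely this last inequality $Tp_i(t) \leq t_i$: it requires the cofinality step of passing to an index $k$ above both $i$ and $j$, together with the order relation $p_i \circ e_j \circ p_{jk} \leq p_{ik}$; everything else is routine bookkeeping with the ep-system identities and the pointwise description of $KX$.
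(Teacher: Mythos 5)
Your proposal is correct and follows essentially the same route as the paper's proof: embedding via Lemma~\ref{lemma:T:projlim:subcont}, the embeddings $e_i$ from \cite[Lemma~4.1]{JGL:kolmogorov}, the monotone net $Te_i(t_i)$ whose supremum exists in $TX$ by the subdcpo hypothesis, and the two inequalities with the cofinal index $k$ and the relation $p_i \circ e_j \circ p_{jk} \leq p_{ik}$. No gaps.
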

\begin{proof}
  We take the same notations as in Definition~\ref{defn:situation}
  ($T$-situations).  Let $e_{ij}$ be embeddings associated with each
  of the projections $p_{ij}$.  By \cite[Lemma~4.1]{JGL:kolmogorov},
  each $p_i$ is a projection, and there are associated embeddings
  $e_i \colon X_i \to X$, such that $e_j \circ e_{ij} = e_i$ for all
  $i \sqsubseteq j \in I$.  Moreover,
  ${(e_i (p_i (\vec x)))}_{i \in I, \sqsubseteq}$ is a monotone net
  with supremum equal to $\vec x$ for every $\vec x \in X$.

  Using Lemma~\ref{lemma:T:projlim:subcont}, it remains to show that
  $\varphi$ is surjective.

  Let ${(F_i)}_{i \in I}$ be any element of $Z$, that is, each $F_i$
  is in $T X_i$ and $T {p_{ij}} (F_j) = F_i$ for all
  $i \sqsubseteq j \in I$.  The elements $T {e_i} (F_i) \in TX$ form a
  monotone net, namely $T {e_i} (F_i) \leq T {e_j} (F_j)$ for all
  $i \sqsubseteq j \in I$.  Indeed, this follows from the fact that
  $e_i \circ p_{ij} \leq e_j$ (because $e_j \circ e_{ij} = e_i$
  $e_{ij} \circ p_{ij} \leq \identity {X_j}$, and $e_i$ is continuous,
  hence monotonic).  Then, for every $h \in \Lform X$,
  $T {e_j} (F_j) (h) = F_j (h \circ e_j) \geq F_j (h \circ e_i \circ
  p_{ij})$ (since composition with the continuous map $h$ is
  monotonic, and $F_j$ is continuous hence monotonic as well)
  $= T {p_{ij}} (F_j) (h \circ e_i) = F_i (h \circ e_i) = T {e_i}
  (F_i) (h)$.

  Since $TX$ is a subdcpo of $KX$, the monotone net
  ${(T {e_j} (F_j))}_{j \in I, \sqsubseteq}$ has a pointwise supremum,
  which is in $TX$.  Let us call it $F$.  We show that
  $\varphi (F) = {(F_i)}_{i \in I}$, or equivalently, that
  $T {p_i} (F) = F_i$ for every $i \in I$.  We consider any
  $h \in \Lform X$, and we aim to prove that
  $T {p_i} (F) (h) = F_i (h)$, namely that
  $F (h \circ p_i) = F_i (h)$, or equivalently, that
  $\dsup_{j \in J} F_j (h \circ p_i \circ e_j) = F_i (h)$.  By taking
  $j \eqdef i$ and recalling that $p_i \circ e_i = \identity {X_i}$,
  we see that the left-hand side is larger than or equal to the
  right-hand side.  For the other inequality, we consider any
  $j \in J$ and we show that
  $F_j (h \circ p_i \circ e_j) \leq F_i (h)$.  Let us pick $k \in I$
  such that $i, j \sqsubseteq k$.  Then
  $p_i \circ e_j \circ p_{jk} \leq p_{ik}$: indeed,
  $p_i \circ e_j = p_{ik} \circ p_k \circ e_k \circ e_{jk} = p_{ik}
  \circ e_{jk}$, so
  $p_i \circ e_j \circ p_{jk} \leq p_{ik} \circ e_{jk} \circ p_{jk}
  \leq p_{ik}$, using implicitly that continuous maps are monotonic.
  Therefore $F_j (h \circ p_i \circ e_j)$, which is equal to
  $T {p_{jk}} (F_k) (h \circ p_i \circ e_j) = F_k (h \circ p_i \circ
  e_j \circ p_{jk})$ is less than or equal to $F_k (h \circ p_{ik})$
  (since $F_j$ and $h$ are themselves continuous hence monotonic), and
  the latter is equal to $T {p_{ik}} (F_k) (h) = F_i (h)$.  \qed
\end{proof}

\section{Superlinear previsions and retracts}
\label{sec:prev-powerc}

Previsions form models of mixed non-deterministic and probabilistic
choice \cite{Gou-csl07}, and are an elaboration on Walley's notion of
prevision in economics \cite{Walley:prev}.  We will borrow most of
what we need from \cite{JGL-mscs16}, see also the errata
\cite{JGL:mscs16:errata}.  A \emph{prevision} on a space $X$ is a
Scott-continuous map $F \colon \Lform X \to \creal$ that is
\emph{positively homogeneous} in the sense that $F (ah)=aF(h)$ for all
$a \in \Rp$ and $h \in \Lform X$.  There is a space $\Pred X$ of
previsions on $X$, whose topology is generated by sets
$[h > r] \eqdef \{F \mid F (h) > r\}$, $h \in \Lform X$, $r \in \Rp$.

For example, any continuous valuation $\nu$ on $X$ gives rise to a
prevision $G \colon h \mapsto \int h \,d\nu$.  Such a prevision is
\emph{linear}, in the sense that $G (h+h') = G (h) + G (h')$ for all
$h, h' \in \Lform X$.  Let $\Pred_\Nature X$ be the subspace of
$\Pred X$ of linear previsions.  Conversely, every linear prevision
$G \in \Pred_\Nature X$ gives rise to a continuous valuation
$U \mapsto G (\chi_U)$, where $\chi_U$ is the characteristic map of
the open set $U$, and the two constructions are inverse of each other.
Additionally, those two constructions define continuous maps between
$\Val X$ and $\Pred_\Nature X$ \cite[Satz~4.16]{Tix:bewertung}.  We
will therefore equate continuous valuations with linear previsions.

A prevision is \emph{sublinear} (resp., \emph{superlinear}) if and
only if $G (h+h') \leq G (h) + G (h')$ (resp., $\geq$) for all
$h, h' \in \Lform X$.  As in \cite{JGL-mscs16}, we write
$\Pred_{\AN} X$ for the subspace of $\Pred X$ consisting of sublinear
previsions, and $\Pred_{\DN} X$ for the subspace of $\Pred X$
consisting of superlinear previsions.

Among the continuous valuations, there are the probability valuations
and the subprobability valuations.  Similarly, we say that a prevision
$F$ is \emph{subnormalized} (resp., \emph{normalized}) iff
$F (\one+h) \leq \one+F (h)$ (resp., $=$) for every $h \in \Lform X$,
where $\one$ is the constant function with value $1$.  The
homeomorphism between $\Val X$ and $\Pred_\Nature X$ restricts to
homeomorphisms between $\Val_{\leq 1} X$ (resp., $\Val_1 X$) and the
subspace $\Pred_\Nature^{\leq 1} X$ (resp., $\Pred_\Nature^1 X$) of
subnormalized (resp., normalized) linear previsions on $X$.  We write
$\Pred_{\AN}^{\leq 1} X$, $\Pred_{\DN}^{\leq 1} X$,
$\Pred_{\DN}^{\leq 1} X$, $\Pred_{\DN}^1 X$ for the corresponding
spaces of (sub)normalized, sublinear/superlinear previsions.  In
general, we write $\Pred_{\AN}^\bullet X$ or $\Pred_{\DN}^\bullet X$,
where $\bullet$ can be nothing, ``$\leq 1$'', or ``$1$''.

All those constructions define endofunctors on $\Topcat$, whose action
$\Pred f$ on morphisms $f \colon X \to Y$ is given by
$\Pred f (F) (h) \eqdef F (h \circ f)$.  We write $\Pred f$ without
any $\bullet$ superscript or any subscript $\Nature$, $\AN$ or $\DN$
because the action is defined in the same way for all functors.  It is
easy to check that $\Pred f$ is a morphism from
$\Pred_\Nature^\bullet X$ to $\Pred_\Nature^\bullet Y$ for every
continuous map $f \colon X \to Y$ and similarly with $\AN$ or $\DN$ in
place of $\Nature$.  Hence all prevision functors are subcontinuation
functors in the sense of Definition~\ref{defn:subcont}.

Additionally, this construction is compatible with the homeomorphisms
$\Val_\bullet X \cong \Pred_\Nature^\bullet X$, meaning that those
homeomorphisms are natural.  Explicitly, for every
$F \in \Pred_\Nature^\bullet X$, letting $\nu$ be the associated
continuous valuation defined by $\nu (U) \eqdef F (\chi_U)$ for every
$U \in \Open X$, for every continuous map $f \colon X \to Y$, the
continuous valuation $\nu'$ associated with $\Pred f (F)$ is equal to
$f [\nu]$: for every $V \in \Open Y$,
$\nu' (V) = \Pred f (F) (\chi_V) = F (\chi_V \circ f) = F
(\chi_{f^{-1} (V)}) = \nu (f^{-1} (V)) = f [\nu] (V)$.

Our plan for establishing projective limit preservation theorems for
prevision functors---apart from the case of ep-systems, which will
follow from Proposition~\ref{prop:ep:subcont}---is to rely on the fact
that spaces of previsions on $X$ are retracts of
$\SV (\Val_\bullet X)$, resp.\ $\HV (\Val_\bullet X)$ under some
conditions \cite[Proposition~3.11, Proposition
3.22]{JGL-mscs16,JGL:mscs16:errata} and to reuse our limit preservation
theorems for $\SV$, $\HV$, and $\Val_\bullet$.

A \emph{retraction} on a category $\catc$ (of $Y$ \emph{onto} $X$) is
a pair $\xymatrix{X \ar@<1ex>[r]^s & Y \ar@<1ex>[l]^r}$ such that
$r \circ s = \identity X$.  (Hence an ep-pair is a special case of a
retraction.)  We also say that $r$, by itself, is the retraction, with
associated \emph{section} $s$, and that $X$ is a \emph{retract} of
$Y$.

We call a \emph{natural retraction}
$\xymatrix{S \ar@<1ex>[r]^{s} & T \ar@<1ex>[l]^{r}}$ of a functor
$T \colon \catc \to \catd$ onto a functor $S \colon \catc \to \catd$
any retraction in the category $\catd^\catc$ of functors from $\catc$
to $\catd$.  Explicitly, this is a collection of retractions
$\xymatrix{SX \ar@<1ex>[r]^{s_X} & TX \ar@<1ex>[l]^{r_X}}$, one for
each object $X$ of $\catc$, which are natural in $X$.

This will be fine for $\Pred_\DN^\bullet$, but we will need the
following refinement in the case of $\Pred_\AN^\bullet$.

Given a diagram $F \colon \cati \to \catc$ with a limit
$X, {(p_i)}_{i \in |\cati|}$, there is a small category $\cati_*$
obtained by adjoining a fresh object $*$ to $\cati$, with unique
morphisms from $*$ to all objects of $\cati_*$, and there is a functor
$F_* \colon \cati_* \to \catc$ that extends $F$ and such that
$F_* (*) = X$, $F_* (* \to X_i) = p_i$ for every $i \in |\cati|$.
Below, and as is customary in category theory, we write $SF$ for
$S \circ F$, and similarly with $TF$.  We write $S_{|\catk}$ for the
restriction of $S$ to $\catk$, too.
\begin{definition}
  \label{defn:Frel:retr}
  Let $S$, $T$ be two functors from a category $\catc$ to a category
  $\catd$.  Given a diagram $F \colon \cati \to \catc$, with a limit
  $X, {(p_i)}_{i \in |\cati|}$, an \emph{$F$-relative natural
    retraction} of $T$ onto $S$ is a natural retraction of
  $T_{|\catk}$ onto $S_{|\catk}$, for some subcategory $\catk$ of
  $\catc$ that contains the image of $F_*$.
\end{definition}
In other words, instead of requiring the natural retraction to exist
on the whole category $\catd$, we only require it to exist on a
sufficiently large subcategory $\catk$.  In all cases we will
encounter, $\catk$ will be a full subcategory of $\catd$.  A subtle
point of this definition is that $\catk$ should contain not just the
objects $F (i)$, $i \in |\cati|$, but also the limit $X$ (and also,
all the required morphisms between them, which will hardly be a
problem if $\catk$ is a full subcategory of $\catd$).  For example,
consider a natural retraction $r, s$ on the category of locally
compact sober spaces, and assume that each $F (i)$ is locally compact
sober: this is not enough to make it an $F$-natural retraction, since
the limit itself may fail to be locally compact
\cite[Proposition~3.4]{JGL:proj:class}.


\begin{lemma}
  \label{lemma:retract:limits}
  Let $F \colon \cati \to \catc$ be a diagram with a limit
  $X, {(p_i)}_{i \in |\cati|}$, and let $S$ and $T$ be two functors
  from $\catc$ to a category $\catd$.  If there is an $F_*$-relative
  natural retraction
  $\xymatrix{S \ar@<1ex>[r]^{s} & T \ar@<1ex>[l]^{r}}$, and if
  $TX, {(T p_i)}_{i \in |\cati|}$ is a limit of $TF$, then
  $SX, {(S p_i)}_{i \in |\cati|}$ is a limit of $SF$.
\end{lemma}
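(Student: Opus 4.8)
The statement is the familiar categorical fact that a natural retract of a functor which preserves a given limit preserves it as well; ``split retracts are absolute''. The only subtlety is the relative version: since the retraction $\xymatrix{S \ar@<1ex>[r]^{s} & T \ar@<1ex>[l]^{r}}$ is only assumed to exist on a subcategory $\catk$ of $\catc$, we must check that all the morphisms at which we need the naturality squares of $r$ and $s$ actually lie in $\catk$. By Definition~\ref{defn:Frel:retr} (applied to $F_*$), $\catk$ contains the image of $F_*$, hence it contains $X$, every $F (i)$, every $F (\varphi)$, and every cone map $p_i = F_* (* \to F (i))$; this is exactly what will be used. The plan is a two-step diagram chase.

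First I would record that $SX, {(S p_i)}_{i \in |\cati|}$ is a cone of $SF$: for every $\varphi \colon j \to i$ in $\cati$, $SF (\varphi) \circ S p_j = S (F (\varphi) \circ p_j) = S p_i$, since $X, {(p_i)}_i$ is a cone of $F$. For universality, let $W, {(q_i)}_{i \in |\cati|}$ be an arbitrary cone of $SF$ in $\catd$. I would transport it to a cone of $TF$ via the sections: set $q_i' \eqdef s_{F (i)} \circ q_i \colon W \to TF (i)$. Naturality of $s$ at the morphisms $F (\varphi)$ (which lie in $\catk$) gives $s_{F (i)} \circ SF (\varphi) = TF (\varphi) \circ s_{F (j)}$, so ${(q_i')}_i$ is indeed a cone of $TF$. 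Since $TX, {(T p_i)}_i$ is a limit of $TF$ by hypothesis, there is a unique $u' \colon W \to TX$ with $T p_i \circ u' = q_i'$ for all $i$. I then put $u \eqdef r_X \circ u'$. Using naturality of $r$ at each $p_i$ (again a morphism in $\catk$), $S p_i \circ r_X = r_{F (i)} \circ T p_i$, whence $S p_i \circ u = r_{F (i)} \circ T p_i \circ u' = r_{F (i)} \circ q_i' = r_{F (i)} \circ s_{F (i)} \circ q_i = q_i$, because $r_{F (i)} \circ s_{F (i)} = \identity {SF (i)}$. Thus $u$ mediates the cone.

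It remains to prove uniqueness, which is a similarly short chase. If $v \colon W \to SX$ also satisfies $S p_i \circ v = q_i$ for every $i$, then naturality of $s$ at the $p_i$ yields $T p_i \circ (s_X \circ v) = s_{F (i)} \circ S p_i \circ v = s_{F (i)} \circ q_i = q_i'$, so $s_X \circ v$ satisfies the defining equations of $u'$; by the uniqueness part of the limit property of $TX$, $s_X \circ v = u'$. Composing on the left with $r_X$ and using $r_X \circ s_X = \identity {SX}$ gives $v = r_X \circ u' = u$. No step here is a genuine obstacle; the one place requiring care — and the reason Definition~\ref{defn:Frel:retr} insists that $\catk$ contain the image of $F_*$ rather than merely of $F$ — is precisely that $X$ and all the $p_i$ must belong to $\catk$ so that the naturality squares of $r$ and $s$ are available at exactly those morphisms.
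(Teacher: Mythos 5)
Your proof is correct and follows essentially the same route as the paper's: transport the cone of $SF$ along the sections $s_{F(i)}$, use the limit property of $TX$ to get the mediating morphism, compose with $r_X$, and prove uniqueness by composing with $s_X$ and invoking the uniqueness part of the $TF$-limit. Your explicit remark about why $\catk$ must contain the image of $F_*$ (so that naturality is available at $X$ and the $p_i$) matches the paper's discussion surrounding Definition~\ref{defn:Frel:retr}.
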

\begin{proof}
  It is clear that $SX, {(S {p_i})}_{i \in |\cati|}$ is a cone of
  $SF$.  In order to show that it is universal, let
  $Y, {(q_i)}_{i \in |\cati|}$ be another cone of $SF$.  Then
  $Y, {(s_{F (i)} \circ q_i)}_{i \in |\cati|}$ is a cone of $TF$: for
  every morphism $\varphi \colon j \to i$ in $\cati$,
  $TF (\varphi) \circ s_{F (j)} \circ q_j = s_{F (i)} \circ SF
  (\varphi) \circ q_j = s_{F (i)} \circ q_i$ by $F_*$-relative
  naturality of $s$ and the definition of a cone of $SF$.  By
  assumption, $TX, {(T {p_i})}_{i \in I}$ is a limit of $TF$, so there
  is a unique morphism $f \colon Y \to TX$ such that
  $ T {p_i} \circ f = s_{F (i)} \circ q_i$ for every $i \in |\cati|$.
  Then $r_{F (i)} \circ T {p_i} \circ f = q_i$ for every
  $i \in |\cati|$, since $r_{F (i)}$ and $s_{F (i)}$ form a
  retraction.  By $F_*$-relative naturality of $r$,
  $r_{F (i)} \circ T {p_i} \circ f = S {p_i} \circ r_X \circ f$, so we
  have found a morphism $g$ such that $S {p_i} \circ g = q_i$ for
  every $i \in |\cati|$, namely $r_X \circ f$.  This is the only one:
  given any morphism $g \colon Y \to SX$ such that
  $S {p_i} \circ g = q_i$ for every $i \in |\cati|$, we must have
  $s_{F (i)} \circ S {p_i} \circ g = s_{F (i)} \circ q_i$ for every
  $i \in |\cati|$, namely
  $T {p_i} \circ s_X \circ g = s_{F (i)} \circ q_i$ for every
  $i \in |\cati|$, by $F_*$-relative naturality of $s$.  By the
  uniqueness of $f$, $f = s_X \circ g$, so $r_X \circ f = g$ since
  $r_X \circ s_X = \identity X$.  Hence $g$ is unique.  \qed
\end{proof}

By Proposition~3.22 of \cite{JGL-mscs16}, for every topological space
$X$, and letting $\bullet$ be nothing, ``$\leq 1$'', or ``$1$'', there
is a retraction
$r_{\DN} \colon \SV (\Pred_\Nature^\bullet X) \to \Pred_{\DN}^\bullet
X$, defined by $r_{\DN} (Q) (h) \eqdef \min_{G \in Q} G (h)$, with
associated section $s_{\DN}$ defined by
$s_{\DN}^\bullet (F) \eqdef \{G \in \Pred_\Nature^\bullet X \mid G
\geq F\}$.  (The ordering $\leq$ between previsions is the
specialization ordering, which is pointwise, and $\geq$ is the
opposite ordering.)  We write them $r_{\DN\,X}$ and
$s_{\DN\,X}^\bullet$ in order to make the dependency on $X$ explicit,
reserving the notations $r_{\DN}$ and $s_{\DN}^\bullet$ for the
families of maps $r_{\DN\,X}$, resp.\ $s_{\DN\,X}^\bullet$, where $X$
ranges over topological spaces.

This retraction even cuts down to a homeomorphism between
$\SV^{cvx} (\Pred_\Nature^\bullet X)$ and $\Pred_\DN^\bullet X$
\cite[Theorem~4.15]{JGL-mscs16}, where the former denotes the subspace
of $\SV (\Pred_\Nature^\bullet X)$ consisting of convex compact
saturated subsets of $\Pred_\Nature^\bullet X$.  (A subset $A$ of the
latter is \emph{convex} if and only if for all $G_1, G_2 \in A$, for
every $r \in [0, 1]$, $r G_1 + (1-r) G_2 \in A$.)
\begin{lemma}
  \label{lemma:rDP:nat}
  The transformations $r_\DN$ and $s_\DN^\bullet$ are natural.
\end{lemma}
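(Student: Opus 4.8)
The plan is to verify the two naturality squares one after the other: the square for $r_\DN$ by a direct computation, and then the square for $s_\DN^\bullet$ by a short bootstrapping argument that exploits the convexity refinement of the retraction rather than any extension theorem. For $r_\DN$, viewed as a transformation from $\SV \circ \Pred_\Nature^\bullet$ to $\Pred_\DN^\bullet$, fix a continuous map $f \colon X \to Y$, a set $Q \in \SV (\Pred_\Nature^\bullet X)$, and an $h \in \Lform Y$. On one side, $\Pred f (r_{\DN\,X} (Q)) (h) = r_{\DN\,X} (Q) (h \circ f) = \min_{G \in Q} G (h \circ f)$; on the other, using $\SV (\Pred_\Nature^\bullet f) (Q) = \upc (\Pred_\Nature^\bullet f) [Q]$, we get $r_{\DN\,Y} (\SV (\Pred_\Nature^\bullet f) (Q)) (h) = \min \{G' (h) \mid G' \in \upc (\Pred_\Nature^\bullet f) [Q]\}$. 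Since the specialization order on $\Pred_\Nature^\bullet Y$ is pointwise, $G' \mapsto G' (h)$ is monotone, so passing to the upward closure does not change this minimum, which therefore equals $\min_{G \in Q} \Pred_\Nature^\bullet f (G) (h) = \min_{G \in Q} G (h \circ f)$ (the minimum is attained, as $r_{\DN\,Y}$ is a well-defined map). As elements of $\Pred_\DN^\bullet Y$ are functions on $\Lform Y$, this yields $r_{\DN\,Y} \circ \SV (\Pred_\Nature^\bullet f) = \Pred f \circ r_{\DN\,X}$; this step is routine.

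For $s_\DN^\bullet$ the square to establish is $s_{\DN\,Y}^\bullet (\Pred f (F)) = \SV (\Pred_\Nature^\bullet f) (s_{\DN\,X}^\bullet (F))$ for every $F \in \Pred_\DN^\bullet X$. Attacked head-on this would require showing that every linear prevision on $Y$ dominating $\Pred f (F)$ lies above $\Pred_\Nature^\bullet f (G)$ for some linear prevision $G \geq F$ on $X$, which is a Strassen-type extension statement; this is the step I expect to be the main obstacle, so I would sidestep it by moving into the convex hyperspace. Two easy observations are needed. First, $s_{\DN\,X}^\bullet (F) = \{G \mid G \geq F\}$ is convex, since a pointwise convex combination of previsions $\geq F$ is again $\geq F$; hence $s_{\DN\,X}^\bullet (F) \in \SV^{cvx} (\Pred_\Nature^\bullet X)$, and likewise $s_{\DN\,Y}^\bullet (\Pred f (F)) \in \SV^{cvx} (\Pred_\Nature^\bullet Y)$. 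Second, $\SV (\Pred_\Nature^\bullet f)$ carries convex compact saturated sets to convex ones: $\Pred_\Nature^\bullet f$ is affine (both $\Pred_\Nature^\bullet f (t G_1 + (1-t) G_2)$ and $t \Pred_\Nature^\bullet f (G_1) + (1-t) \Pred_\Nature^\bullet f (G_2)$ send $h$ to $t G_1 (h \circ f) + (1-t) G_2 (h \circ f)$), so $(\Pred_\Nature^\bullet f)[Q]$ is convex when $Q$ is, and the upward closure of a convex set of previsions is convex because addition and scaling are monotone; therefore $\SV (\Pred_\Nature^\bullet f) (s_{\DN\,X}^\bullet (F)) \in \SV^{cvx} (\Pred_\Nature^\bullet Y)$ as well.

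Finally I would apply $r_{\DN\,Y}$ to both sides of the desired square. Using the naturality of $r_\DN$ just proved, together with $r_{\DN\,X} \circ s_{\DN\,X}^\bullet = \identity {\Pred_\DN^\bullet X}$, we obtain $r_{\DN\,Y} (\SV (\Pred_\Nature^\bullet f) (s_{\DN\,X}^\bullet (F))) = \Pred f (r_{\DN\,X} (s_{\DN\,X}^\bullet (F))) = \Pred f (F) = r_{\DN\,Y} (s_{\DN\,Y}^\bullet (\Pred f (F)))$. Since $r_{\DN\,Y}$ restricts to a homeomorphism, in particular a bijection, from $\SV^{cvx} (\Pred_\Nature^\bullet Y)$ onto $\Pred_\DN^\bullet Y$ by \cite[Theorem~4.15]{JGL-mscs16}, and both $\SV (\Pred_\Nature^\bullet f) (s_{\DN\,X}^\bullet (F))$ and $s_{\DN\,Y}^\bullet (\Pred f (F))$ lie in $\SV^{cvx} (\Pred_\Nature^\bullet Y)$, these two compact saturated sets must coincide, which is precisely the naturality square for $s_\DN^\bullet$. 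Thus $r_\DN$ and $s_\DN^\bullet$ form a natural retraction, and in particular both are natural.
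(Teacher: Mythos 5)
Your proposal is correct and follows essentially the same route as the paper: a direct computation with minima for the naturality of $r_\DN$, and for $s_\DN^\bullet$ the observation that both sides of the square are convex compact saturated sets, so that equality follows by applying the injective (on the convex hyperspace) map $r_{\DN\,Y}$ together with the naturality of $r_\DN$ and the retraction identity. The only cosmetic difference is how you establish convexity of $\SV (\Pred f) (s_{\DN\,X}^\bullet (F))$ (affinity of $\Pred f$ plus the fact that upward closures of convex sets are convex, versus the paper's direct element-wise domination argument), which amounts to the same computation.
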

\begin{proof}
  Let $f \colon X \to Y$ be any continuous map.  For $r_\DN$, we need
  to show that for every $Q \in \SV (\Pred_\Nature^\bullet X)$, for
  every $h \in \Lform Y$,
  $r_{\DN\,Y} (\SV (\Pred f) (Q)) (h) = \Pred f (r_{\DN\,X} (Q)) (h)$.
  The left-hand side is equal to
  $\min_{G' \in \SV (\Pred f) (Q)} G' (h) = \min_{G' \in \upc \{\Pred
    f (G) \mid G \in Q\}} G' (h) = \min_{G' \in \{\Pred f (G) \mid G
    \in Q\}} G' (h) = \min_{G \in Q} \Pred f (G) (h)$, while the
  right-hand side is equal to
  $r_{\DN\,X} (Q) (h \circ f) = \min_{G \in Q} G (h \circ f)$, and
  those are equal.

  For $s_\DN^\bullet$, we must show that for every
  $F \in \Pred_\DN^\bullet X$,
  $s_{\DN\,Y}^\bullet (\Pred f (F)) = \SV (\Pred f) \allowbreak
  (s_{\DN\,X}^\bullet (F))$.  The left-hand side is in
  $\SV^{cvx} (\Pred_\Nature^\bullet Y)$, and we claim that so is the
  right-hand side; it suffices to show that it is convex.  We consider
  any two elements $G'_1$, $G'_2$ of
  $\SV (\Pred f)) (s_{\DN\,X}^\bullet (F))$, and $r \in [0, 1]$.  By
  definition, there are elements $G_1$, $G_2$ of
  $s_{\DN\,X}^\bullet (F)$ such that $\Pred f (G_1) \leq G'_1$ and
  $\Pred f (G_2) \leq G'_2$.  Since $s_{\DN\,X}^\bullet (F)$ is
  convex, $rG_1+ (1-r)G_2 \in s_{\DN\,X}^\bullet (F)$.  It is easy to
  see that
  $r G'_1 + (1-r) G'_2 \geq r \Pred f (G_1) + (1-r) \Pred g (G_2) =
  \Pred f (rG_1 + (1-r) G_2)$, so
  $r G'_1 + (1-r) G'_2 \in \SV (\Pred f) (s_{\DN\,X}^\bullet (F))$.
  Since $r_{\DN\,X}$ and $r_{\DN\,Y}$ are homeomorphisms (with domains
  $\SV^{cvx} (\Pred_\Nature^\bullet X)$, resp.\
  $\SV^{cvx} (\Pred_\Nature^\bullet Y)$), in order to show that
  $s_{\DN\,Y}^\bullet (\Pred f (F)) = \SV (\Pred f)
  (s_{\DN\,X}^\bullet (F))$, it suffices to show that
  $r_{\DN\,Y} (s_{\DN\,Y}^\bullet (\Pred f (F))) = r_{\DN\,Y} (\SV
  (\Pred f) (s_{\DN\,X}^\bullet (F)))$.  The left-hand side is equal
  to $\Pred f (F)$, and the right-hand side is equal to
  $\Pred f (r_{\DN\,X} (s_{\DN\,X}^\bullet (F))$ (by naturality of
  $r_\DN$), hence to $\Pred f (F)$.  \qed
\end{proof}

\begin{theorem}
  \label{thm:DN:projlim}
  Let $\bullet$ be nothing, ``$\leq 1$'' or ``$1$''.  The projective
  limit of a projective system
  ${(p_{ij} \colon X_j \to X_i)}_{i \sqsubseteq j \in I}$ of
  topological spaces is preserved by $\Pred_{\DN}^\bullet$ if and only
  if it is preserved by $\Val_\bullet$.  In particular, it is
  preserved under any of the three sets of conditions of
  Theorem~\ref{thm:V:projlim}.
\end{theorem}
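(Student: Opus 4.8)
The plan is to deduce both implications from results already in hand: the fact that prevision functors are subcontinuation functors (so Lemma~\ref{lemma:T:projlim:subcont} makes the comparison maps topological embeddings), the natural retraction $r_\DN, s_\DN^\bullet$ of $\SV \circ \Pred_\Nature^\bullet$ onto $\Pred_\DN^\bullet$ (Lemma~\ref{lemma:rDP:nat}), the preservation theorem for $\SV$ on sober spaces (Theorem~\ref{thm:Q:projlim}), and the abstract retraction lemma (Lemma~\ref{lemma:retract:limits}). First I would note that $\Pred_\DN^\bullet$ and $\Pred_\Nature^\bullet$ are subcontinuation functors, so their comparison maps $\varphi$ are topological embeddings; hence, in each case, the functor preserves the projective limit if and only if $\varphi$ is surjective. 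Since the homeomorphisms $\Val_\bullet X \cong \Pred_\Nature^\bullet X$ are natural, $\Val_\bullet$ preserves the limit if and only if $\Pred_\Nature^\bullet$ does, and it remains to prove the equivalence between preservation by $\Pred_\DN^\bullet$ and by $\Pred_\Nature^\bullet$. I write $F$ for the projective system viewed as a diagram over $(I,\sqsubseteq)^{op}$, with limit $X$.

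For the direction from $\Pred_\Nature^\bullet$ to $\Pred_\DN^\bullet$: assuming $\Pred_\Nature^\bullet X, {(\Pred_\Nature^\bullet p_i)}_{i \in I}$ is a limit of $\Pred_\Nature^\bullet \circ F$, I would observe that each $\Pred_\Nature^\bullet X_i \cong \Val_\bullet X_i$ is sober, so Theorem~\ref{thm:Q:projlim} applies and $\SV (\Pred_\Nature^\bullet X), {(\SV {(\Pred_\Nature^\bullet p_i)})}_{i \in I}$ is a limit of $\SV \circ \Pred_\Nature^\bullet \circ F$. Then I would invoke Lemma~\ref{lemma:retract:limits} with $T \eqdef \SV \circ \Pred_\Nature^\bullet$, $S \eqdef \Pred_\DN^\bullet$, and the retraction $r_\DN, s_\DN^\bullet$, which is a natural retraction on all of $\Topcat$ by Lemma~\ref{lemma:rDP:nat} and therefore $F_*$-relative natural. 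The conclusion is that $\Pred_\DN^\bullet X, {(\Pred_\DN^\bullet p_i)}_{i \in I}$ is a limit of $\Pred_\DN^\bullet \circ F$, i.e.\ $\Pred_\DN^\bullet$ preserves the limit.

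For the converse, I would show surjectivity of the (already embedding) comparison map $\varphi \colon \Pred_\Nature^\bullet X \to Z$ for $\Pred_\Nature^\bullet$ directly. Given a compatible family ${(G_i)}_{i \in I}$ of linear previsions $G_i$ on $X_i$ (subnormalized, resp.\ normalized, when $\bullet$ is ``$\leq 1$'', resp.\ ``$1$''), it is in particular a compatible family of superlinear previsions, hence an element of the canonical projective limit of $\Pred_\DN^\bullet \circ F$; since $\Pred_\DN^\bullet$ preserves the limit, there is $G \in \Pred_\DN^\bullet X$ with $\Pred {p_i} (G) = G_i$, that is, $G (h \circ p_i) = G_i (h)$ for every $i \in I$ and $h \in \Lform {X_i}$. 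The remaining point is that $G$ is linear. I would use Lemma~\ref{lemma:hi}: for $h, h' \in \Lform X$ with associated functions $h_i, h'_i$, items~3 and~5 of that lemma show ${((h_i + h'_i) \circ p_i)}_{i \in I}$ is a monotone net with supremum $h + h'$, so by Scott-continuity of $G$, linearity of each $G_i$, and additivity of directed suprema over the common directed index set,
\[
  G (h + h') = \dsup_{i \in I} G ((h_i + h'_i) \circ p_i) = \dsup_{i \in I} \bigl(G_i (h_i) + G_i (h'_i)\bigr) = \dsup_{i \in I} G_i (h_i) + \dsup_{i \in I} G_i (h'_i) = G (h) + G (h').
\]
Being already a (sub)normalized prevision, $G$ then lies in $\Pred_\Nature^\bullet X$ and $\varphi (G) = {(G_i)}_{i \in I}$. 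The final ``in particular'' clause is immediate, since $\Val_\bullet$ preserves the limit under the conditions of Theorem~\ref{thm:V:projlim}.

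The work here is organizational rather than deep. The main thing to get right in the forward direction is to use the retraction of $\Pred_\DN^\bullet$ off the composite $\SV \circ \Pred_\Nature^\bullet$ (rather than the homeomorphism onto $\SV^{cvx} (\Pred_\Nature^\bullet X)$, which would require a preservation theorem for $\SV^{cvx}$ that is not available), and to remember that it is sobriety of each $\Pred_\Nature^\bullet X_i$ that licenses Theorem~\ref{thm:Q:projlim}. In the converse direction, the only slightly delicate step is the interchange of the directed supremum with addition, which is valid because the nets ${(G_i (h_i))}_{i}$ and ${(G_i (h'_i))}_{i}$ are monotone over the same directed set $I$.
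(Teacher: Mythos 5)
Your proposal is correct and follows essentially the same route as the paper: the forward direction via sobriety of $\Pred_\Nature^\bullet X_i$, Theorem~\ref{thm:Q:projlim}, and Lemma~\ref{lemma:retract:limits} applied to the natural retraction $(r_\DN, s_\DN^\bullet)$ of $\SV \Pred_\Nature^\bullet$ onto $\Pred_\DN^\bullet$; the converse by viewing the compatible family of linear previsions as superlinear ones, pulling back a superlinear $G$, and proving additivity through Lemma~\ref{lemma:hi} and Scott-continuity exactly as the paper does. The only cosmetic difference is that you justify the embedding property of the comparison map via the subcontinuation-functor lemma rather than Proposition~\ref{prop:V:projlim}, which changes nothing of substance.
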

\begin{proof}
  We start with the if direction.  Let $X, {(p_i)}_{i \in I}$ be the
  canonical projective limit of
  ${(p_{ij} \colon X_j \to X_i)}_{i \sqsubseteq j \in I}$.  If
  $\Val_\bullet X, {(\Val {p_i})}_{i \in I}$ is a projective limit of
  ${(\Val {p_{ij}} \colon \Val_\bullet {X_j} \to \Val_\bullet
    {X_i})}_{i \sqsubseteq j \in I}$, then
  $\Pred_{\Nature}^\bullet X, {(\Pred {p_i})}_{i \in I}$ is a
  projective limit of
  ${(\Pred {p_{ij}} \colon \Pred_{\Nature}^\bullet {X_j} \to
    \Pred_{\Nature}^\bullet {X_i})}_{i \sqsubseteq j \in I}$.  Indeed,
  we recall that there is a natural homeomorphism between
  $\Val_\bullet$ and $\Pred_\Nature^\bullet$.  The spaces
  $\Val_\bullet {X_i}$ are all sober (see Remark~\ref{rem:sat:sober}),
  hence we can use Theorem~\ref{thm:Q:projlim} and conclude that
  $\SV {(\Pred_{\Nature}^\bullet X)}, {(\SV {(\Pred {p_i})})}_{i \in
    I}$ is a projective limit of
  $(\SV {(\Pred {p_{ij}})} \colon \SV {(\Pred_{\Nature}^\bullet
    {X_j})} \to \SV {(\Pred_{\Nature}^\bullet {X_i})})_{i \sqsubseteq
    j \in I}$.  We now use Lemma~\ref{lemma:retract:limits} with
  $S \eqdef \Pred_{\DN}^\bullet$ and
  $T \eqdef \SV \Pred_\Nature^\bullet$, and the natural retraction
  $(r_\DN, s_\DN^\bullet)$---it is natural by
  Lemma~\ref{lemma:rDP:nat}.

  In the only if direction, if
  $\Pred_\DN^\bullet X, {(\Pred_\DN^\bullet {p_i})}_{i \in I}$ is a
  projective limit of the projective system
  $(\Pred {p_{ij}} \colon \Pred_{\DN}^\bullet {X_j} \to
  \Pred_{\DN}^\bullet {X_i})_{i \sqsubseteq j \in I}$, then we claim
  that the comparison map $\varphi \colon \Val_\bullet X \to Z$ is
  surjective, where $Z$ is the canonical projective limit of
  ${(\Val {p_{ij}} \colon \Val_\bullet {X_j} \to \Val_\bullet
    {X_i})}_{i \sqsubseteq j \in I}$.  This will be enough to show
  that $\varphi$ is a homeomorphism, using
  Proposition~\ref{prop:V:projlim}.  Because of the natural
  homeomorphism $\Val_\bullet \cong \Pred_\Nature^\bullet$, we reason
  with linear previsions instead of continuous valuations.  Let
  ${(G_i)}_{i \in I}$ be an element of $Z$.  By assumption, there is a
  (unique) superlinear prevision $F$ on $X$ such that
  $\Pred {p_i} (F) = G_i$ for every $i \in I$, namely such that
  $F (h_i \circ p_i) = G_i (h_i)$ for every $h_i \in \Lform {X_i}$,
  for every $i \in I$.  For every $h \in \Lform X$, we build $h_i$ as
  in Lemma~\ref{lemma:hi}; since $F$ is Scott-continuous, we obtain
  that
  $F (h) = \dsup_{i \in I} F (h_i \circ p_i) = \dsup_{i \in I} G_i
  (h_i)$.  Given any two maps $h, h' \in \Lform X$,
  $h+h' = \dsup_{i \in I} (h_i \circ p_i) + \dsup_{i \in I} (h'_i
  \circ p_i) = \dsup_{i \in I} (h_i + h'_i) \circ p_i$, so a similar
  argument shows that
  $F (h+h') = \dsup_{i \in I} F ((h_i+h'_i) \circ p_i) = \dsup_{i \in
    I} G_i (h_i+h'_i)$.  Since $G_i$ is linear, the latter is equal to
  $\dsup_{i \in I} (G_i (h_i) + G_i (h'_i)) = \dsup_{i \in I} G_i
  (h_i) + \dsup_{i \in I} G_i (h'_i) = F (h) + F (h')$.  Hence $F$ is
  sublinear.  Being in $\Pred_{\DN}^\bullet X$, it is superlinear,
  hence linear.  Hence $F$ is in $\Pred_\Nature^\bullet X$, and it was
  built so that $\Pred {p_i} (F) = G_i$ for every $i \in I$, so
  $\varphi (F) = {(G_i)}_{i \in I}$.  \qed
\end{proof}

Superlinear previsions form a model of mixed demonic non-deterministic
and probabilistic choice.  Another, earlier model, due to
\cite{Mislove:nondet:prob,Tix:PhD,TKP:nondet:prob,DBLP:journals/acta/McIverM01},
is the composition $\SV^{cvx} \Val_\bullet$.  We have already
mentioned the fact that $\SV^{cvx} \Val_\bullet X$ is homeomorphic to
$\Pred_\DN^\bullet X$ for every space $X$
\cite[Theorem~4.15]{JGL-mscs16}; naturality was overlooked there, and
is dealt with by Lemma~\ref{lemma:rDP:nat}.  Together with the natural
homeomorphism $\Val_\bullet \cong \Pred_\Nature^\bullet$, this allows
us to obtain the following.
\begin{corollary}
  \label{corl:QV:projlim}
  Let $\bullet$ be nothing, ``$\leq 1$'' or ``$1$''.  The projective
  limit of a projective system
  ${(p_{ij} \colon X_j \to X_i)}_{i \sqsubseteq j \in I}$ of
  topological spaces is preserved by $\SV^{cvx} \Val_\bullet$ if and
  only if it is preserved by $\Val_\bullet$.
\end{corollary}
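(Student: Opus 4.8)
The plan is to deduce Corollary~\ref{corl:QV:projlim} from Theorem~\ref{thm:DN:projlim} by transporting statements along natural homeomorphisms. First I would collect the two natural homeomorphisms already in play: the natural homeomorphism $\Val_\bullet \cong \Pred_\Nature^\bullet$, and, for every space $X$, the homeomorphism between $\SV^{cvx}(\Pred_\Nature^\bullet X)$ and $\Pred_\DN^\bullet X$ obtained by restricting the retraction $r_{\DN\,X}$ (with section $s_{\DN\,X}^\bullet$) as in Theorem~4.15 of \cite{JGL-mscs16}, whose naturality is exactly Lemma~\ref{lemma:rDP:nat}. Since the homeomorphism $\Val_\bullet X \cong \Pred_\Nature^\bullet X$ is affine, it restricts to a natural homeomorphism $\SV^{cvx}\Val_\bullet \cong \SV^{cvx}\Pred_\Nature^\bullet$; composing the three would give a natural homeomorphism of endofunctors on $\Topcat$, $\SV^{cvx}\Val_\bullet \cong \Pred_\DN^\bullet$. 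Along the way I would note that $\SV^{cvx}\Pred_\Nature^\bullet$ is a genuine subfunctor of $\SV\Pred_\Nature^\bullet$: the image $\upc (\Pred f)[Q] = \SV(\Pred f)(Q)$ of a convex compact saturated $Q$ under a linear map $\Pred f$ is again convex, since images under affine maps and upward closures of convex sets are convex in a cone (this is essentially the computation already carried out in the proof of Lemma~\ref{lemma:rDP:nat}).

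Next I would record the elementary categorical fact that two naturally isomorphic endofunctors $T,T'$ on $\Topcat$ preserve exactly the same limits. Given a natural isomorphism $\alpha \colon T \Rightarrow T'$ and a diagram $F \colon \cati \to \Topcat$ with limit $X, {(p_i)}_{i \in |\cati|}$, the assignment sending a cone $Y, {(q_i)}_{i \in |\cati|}$ of $TF$ to $Y, {(\alpha_{F(i)} \circ q_i)}_{i \in |\cati|}$ is, by naturality of $\alpha$, a bijection between cones of $TF$ and cones of $T'F$ (with inverse built from $\alpha^{-1}$); it carries $TX, {(Tp_i)}_{i}$ to $T'X, {(T'p_i)}_{i}$; and, because each $\alpha_{F(i)}$ is an isomorphism, it carries the unique mediating morphism witnessing universality of one cone to that of its image. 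Hence $TX, {(Tp_i)}_{i}$ is a limit of $TF$ if and only if $T'X, {(T'p_i)}_{i}$ is a limit of $T'F$. Applying this with $T \eqdef \SV^{cvx}\Val_\bullet$ and $T' \eqdef \Pred_\DN^\bullet$, preservation of the given projective limit by $\SV^{cvx}\Val_\bullet$ is equivalent to its preservation by $\Pred_\DN^\bullet$, which by Theorem~\ref{thm:DN:projlim} is equivalent to its preservation by $\Val_\bullet$.

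I do not expect any serious obstacle in this argument; it is a bookkeeping exercise in chaining natural homeomorphisms. The one point that genuinely requires care is that the homeomorphism of \cite[Theorem~4.15]{JGL-mscs16} must be available, and natural, not merely at the spaces $X_i$ of the projective system but also at the limit space $X$ itself (so that $\alpha$ is a natural isomorphism on all of $\Topcat$, not just on a subcategory, and the argument of the previous paragraph applies verbatim). This is unproblematic here because Theorem~4.15 together with Lemma~\ref{lemma:rDP:nat} produces the homeomorphism and its naturality for \emph{every} topological space — so, unlike in the situation that motivates Definition~\ref{defn:Frel:retr}, there is no issue about $X$ failing to lie in a suitable subcategory.
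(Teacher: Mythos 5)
Your proposal is correct and is essentially the paper's own argument: the corollary is obtained by combining the natural homeomorphism $\SV^{cvx}\Val_\bullet \cong \Pred_\DN^\bullet$ (Theorem~4.15 of the previsions paper, made natural by Lemma~\ref{lemma:rDP:nat}, together with $\Val_\bullet \cong \Pred_\Nature^\bullet$) with Theorem~\ref{thm:DN:projlim}, using the standard fact that naturally isomorphic endofunctors preserve exactly the same limits. Your added bookkeeping (convexity of images under $\SV(\Pred f)$, and the remark that naturality here holds on all of $\Topcat$, unlike in the $\AN$ case) just fills in details the paper leaves implicit.
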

We refer to Theorem~\ref{thm:V:projlim} to what conditions ensure that
such limit preservation results hold.

\section{Intermission: $\Val_\bullet$ preserves local compactness and
  proper maps, and projective limits that yield $\odot$-consonant spaces}
\label{sec:interm-pres-local}

Before we go on with the $\Pred_{\AN}$ sublinear prevision functor, we
need to prove a few theorems about the $\Val_\bullet$ functors: that
it preserves local compactness, and that it preserves proper maps.  We
also need to show that certain spaces known as $\odot$-consonant
(sober) spaces are preserved by projective limits with proper bonding
maps, and that $\omega$-projective limits of locally compact sober
spaces are $\odot$-consonant.

\subsection{On the preservation of local compactness by
  $\Val_\bullet$}
\label{sec:pres-local-comp}

We start with local compactness.  It is known that $\Val_{\leq 1}$
preserves various properties: stable compactness
\cite[Theorem~39]{AMJK:scs:prob}, being a continuous dcpo
\cite[Theorem~5.2]{Jones:proba}, being a quasi-continuous dcpo
\cite[Theorem~5.1]{JGL:qcontval}, for example.  Some of these
preservation theorems extend over to $\Val_1$ or to $\Val$, but a
conspicuously absent property in the list is local compactness.  We
address this now.

The proof relies on capacities, as studied in \cite{GL:duality}.  But
that paper considers integrals of lower semicontinuous maps from $X$
to $\Rp$ (not $\creal$), hence does not cover $\Lform X$.  Instead, we
will refer to \cite[Section~4]{JGL:qcontval}, where we can find some
of the following information; we will prove the rest.  For every
monotone map $\nu \colon \Open X \to \creal$, for every
$h \in \Lform X$, there is a \emph{Choquet integral}
$\int_{x \in X} h (x) \,d\nu$, defined as the indefinite Riemann
integral $\int_0^\infty \nu (h^{-1} (]t, \infty]))\,dt$.
\begin{lemma}
  \label{lemma:choq}
  The following properties hold.
  \begin{enumerate}
  \item The Choquet integral $\int_{x \in X} h (x) \,d\nu$ is linear
    in $\nu$, monotonic and even Scott-continuous in $\nu$.
  \item\label{choq:cont:h} If $\nu$ is Scott-continuous, then the
    Choquet integral is Scott-continuous in $h$.
  \item\label{choq:lin:h} If $\nu$ is a continuous valuation, then the
    Choquet integral is also linear in $h$.
  \item\label{choq:chi} For every $U \in \Open X$,
    $\int_{x \in X} \chi_U (x) \,d\nu = \nu (U)$.
  \item\label{choq:int} Given any continuous valuation $\nu^*$ on
    $\SV X$, there is a Scott-continuous map
    $\nu \colon \Open X \to \creal$ defined by
    $\nu (U) \eqdef \nu^* (\Box U)$ for every $U \in \Open X$.  Then,
    for every $h \in \Lform X$, the map
    $h^* \colon Q \mapsto \min_{x \in Q} h (x)$ is in $\Lform {\SV X}$
    and
    $\int_{x \in X} h (x) \,d\nu = \int_{Q \in \SV X} h^*
    (Q)\,d\nu^*$.
  \item\label{choq:ugame} For every compact saturated subset $Q$ of
    $X$, the \emph{unanimity game}
    $\ugame Q \colon \Open X \to \creal$, which maps every
    $U \in \Open X$ to $1$ if $Q \subseteq U$ and to $0$ otherwise, is
    a Scott-continuous map from $\Open X$ to $\creal$.
  \item\label{choq:F} Letting
    $\nu \eqdef \sum_{j=1}^m a_j \ugame {Q_j}$,
    where each $Q_j$ is compact saturated and $a_j \in \Rp$, the map
    $F \colon \Lform X \to \creal$ defined by:
    \begin{align*}
      F (h) &\eqdef \int_{x \in X} h (x) \,d\nu = \sum_{j=1}^m a_j
              \min_{x \in Q_j} h (x)
    \end{align*}
    for every $h \in \Lform X$ is a superlinear prevision.
\end{enumerate}
\end{lemma}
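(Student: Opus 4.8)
The plan is to establish the seven items in roughly the stated order, viewing the Choquet integral $\int_{x\in X} h(x)\,d\nu = \int_0^\infty \nu(h^{-1}(]t,\infty]))\,dt$ as an ordinary indefinite Riemann integral of the monotone nonincreasing $\creal$-valued function $t \mapsto \nu(h^{-1}(]t,\infty]))$, and relying throughout on the fact that such Riemann integrals commute with directed suprema of their integrands. Items~1 and~\ref{choq:chi} are then immediate: linearity, monotonicity and Scott-continuity in $\nu$ come from the corresponding pointwise properties of $t \mapsto \nu(h^{-1}(]t,\infty]))$, and $\int_{x\in X} \chi_U(x)\,d\nu = \nu(U)$ follows because $\chi_U^{-1}(]t,\infty])$ equals $U$ for $t < 1$ and $\emptyset$ for $t \geq 1$. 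Item~\ref{choq:cont:h} (Scott-continuity in $h$) reduces to the observation that $h^{-1}(]t,\infty]) = \dcup_i h_i^{-1}(]t,\infty])$ for a directed family $(h_i)_{i}$ with supremum $h$, combined with Scott-continuity of $\nu$ and the monotone convergence property just mentioned; and item~\ref{choq:lin:h} is the classical linearity of the Choquet integral with respect to a modular set function. These four facts are standard and I would take them from, or adapt them from, \cite[Section~4]{JGL:qcontval}, filling in the one-line arguments where needed.

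The genuinely new content is in items~\ref{choq:ugame}, \ref{choq:int} and~\ref{choq:F}. For item~\ref{choq:ugame}, $\ugame Q$ is clearly monotone, and its Scott-continuity is exactly the statement that whenever $Q$ is contained in a directed union $\dcup_i U_i$ it is already contained in some $U_i$, which holds by compactness of $Q$; this is the sole use of compactness here. For item~\ref{choq:int}, I would first note that $\nu \colon U \mapsto \nu^*(\Box U)$ is Scott-continuous, since $\Box$ is Scott-continuous as a map from $\Open X$ to $\Open {\SV X}$ (recorded, again via compactness, in the proof of Proposition~\ref{prop:Q:projlim}) and $\nu^*$ is Scott-continuous. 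The heart of the matter is the identity
\[
  (h^*)^{-1}(]t,\infty]) = \{Q \in \SV X \mid \forall x \in Q,\ h(x) > t\} = \Box\bigl(h^{-1}(]t,\infty])\bigr) ,
\]
valid for every $t \in \Rp$; here one uses that $h$, being lower semicontinuous, attains its minimum on the compact set $Q$ (the sets $\{x \in Q \mid h(x) \leq t\}$ for $t$ above the infimum form a filtered family of nonempty closed subsets of a compact space), so that $h^* \colon Q \mapsto \min_{x\in Q} h(x)$ is well-defined and, by the identity, lower semicontinuous. Unfolding the definition of the Choquet integral on both sides then gives
\[
  \int_{x\in X} h(x)\,d\nu = \int_0^\infty \nu^*\bigl(\Box(h^{-1}(]t,\infty]))\bigr)\,dt = \int_0^\infty \nu^*\bigl((h^*)^{-1}(]t,\infty])\bigr)\,dt = \int_{Q\in \SV X} h^*(Q)\,d\nu^* .
\]

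Finally, for item~\ref{choq:F}: a finite sum $\nu = \sum_{j=1}^m a_j \ugame{Q_j}$ of nonnegative scalar multiples of unanimity games is Scott-continuous by item~\ref{choq:ugame} together with Scott-continuity of addition and of multiplication by a scalar on $\creal$, so $F(h) = \int_{x\in X} h(x)\,d\nu$ is Scott-continuous in $h$ by item~\ref{choq:cont:h}; linearity in $\nu$ (item~1) and the computation $\int_{x\in X} h(x)\,d\ugame{Q_j} = \min_{x\in Q_j} h(x)$ (the integrand $\ugame{Q_j}(h^{-1}(]t,\infty]))$ equals $1$ exactly when $t < \min_{x\in Q_j} h(x)$) give the closed form $F(h) = \sum_{j=1}^m a_j \min_{x\in Q_j} h(x)$; positive homogeneity follows from $\min_{x\in Q_j}(ah)(x) = a \min_{x\in Q_j} h(x)$, and superlinearity from $\min_{x\in Q_j}(h(x)+h'(x)) \geq \min_{x\in Q_j} h(x) + \min_{x\in Q_j} h'(x)$. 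The main point requiring care, rather than a deep obstacle, is the handling of infinite values of $h$ and the attainment of minima over compact sets in the non-Hausdorff setting, which is exactly what the filtered-intersection property of compact spaces supplies.
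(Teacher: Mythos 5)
Your proposal is correct and follows essentially the same route as the paper's proof: the same reduction of each item to the antitone Riemann integrand $t \mapsto \nu(h^{-1}(]t,\infty]))$ and its commutation with directed suprema, the same key identity $(h^*)^{-1}(]t,\infty]) = \Box(h^{-1}(]t,\infty]))$ for item~5, the same compactness argument for item~6, and the same assembly of item~7 from items~1, 2 and~6 together with the min laws. The only differences are cosmetic—you spell out the attainment of the minimum on a compact set via filtered intersections of closed sets (which the paper merely asserts) and cite slightly different sources for the standard facts in items~2 and~3.
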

\begin{proof}
  1. The fact that the Choquet integral is linear in $\nu$, namely
  that it commutes with scalar products by non-negative real numbers
  and with addition of continuous valuations, follows from the
  linearity of indefinite Riemann integration.  It is also monotonic
  in $\nu$.  In order to show Scott-continuity, we consider a directed
  family ${(\nu_i)}_{i \in I}$, with (pointwise) supremum $\nu$, and
  we observe that
  $\int_{x \in X} h (x) \,d\nu = \int_0^\infty \dsup_{i \in I} \nu_i
  (h^{-1} (]t, \infty])) \,dt$.  The key is that the integrand
  $t \mapsto \nu_i (h^{-1} (]t, \infty]))$ is antitone (all antitone
  maps are Riemann-integrable), and that indefinite Riemann
  integration of antitone maps $f$ is Scott-continuous in $f$, see
  \cite[Lemma~4.2]{Tix:bewertung}.  Therefore
  $\int_{x \in X} h (x) \,d\nu = \dsup_{i \in I} \int_0^\infty \nu_i
  (h^{-1} (]t, \infty])) \,dt = \dsup_{i \in I} \int_{x \in X} h (x)
  \,d\nu_i$.

  2.  The proof works as Tix's original proof of the same result in
  the special case where $\nu$ is a continuous valuation
  \cite[Satz~4.4]{Tix:bewertung}, and also relies on
  \cite[Lemma~4.2]{Tix:bewertung}.  Explicitly, let
  ${(h_i)}_{i \in I}$ be a directed family in $\Lform X$, with
  (pointwise) supremum $h$.  For every $t \in \Rp$,
  $h^{-1} (]t, \infty]) = \{x \in X \mid \dsup_{i \in I} h_i (x) > t\}
  = \dcup_{i \in I} h_i^{-1} (]t, \infty])$.  Therefore
  $\int_{x \in X} h (x) \,d\nu = \int_0^\infty \nu (\dcup_{i \in I}
  h_i^{-1} (]t, \infty])) \,dt = \int_0^\infty \dsup_{i \in I} \nu
  (h_i^{-1} (]t, \infty])) \,dt = \dsup_{i \in I} \int_0^\infty \nu
  (h_i^{-1} (]t, \infty])) \,dt = \dsup_{i \in I} \int_{x \in X} h_i
  (x) \,d\nu$, using the Scott-continuity of $\nu$ and the
  Scott-continuity of indefinite Riemann integration of antitone maps.

  3. This is a result of Tix \cite[Satz~4.4]{Tix:bewertung}.

  4. $\int_{x \in X} \chi_U (x) \,d\nu = \int_0^\infty \nu
  (\chi_U^{-1} (]t, \infty])) \,dt = \int_0^1 \nu (U) \,dt +
  \int_1^\infty 0 \,dt = \nu (U)$.

  5.  This is as with \cite[Lemma~7.5]{GL:duality}.  The fact that
  $\nu^*$ is Scott-continuous follows from the fact that $\nu$ is, and
  that the $\Box$ operator is, too.  For the latter, observe that for
  every directed family ${(U_i)}_{i \in I}$ of open subsets of $X$,
  for every $Q \in \SV X$, $Q \in \Box {\dcup_{i \in I} U_i}$ if and
  only if $Q \subseteq \dcup_{i \in I} U_i$, which is equivalent to
  $Q \subseteq U_i$ (namely, $Q \in \Box {U_i}$) for some $i \in I$,
  because $Q$ is compact.

  For every $Q \in \SV X$, the minimum of $h (x)$ when $x$ ranges over
  $Q$ is reached, since $Q$ is compact and non-empty.  For every
  $t \in \Rp$, $Q \in {h^*}^{-1} (]t, \infty])$ if and only if
  $h^* (Q) > t$.  The latter certainly implies that $h (x) > t$ for
  every $x \in Q$, hence that $Q \in \Box {h^{-1} (]t, \infty])}$.
  Conversely, if $Q \in \Box {h^{-1} (]t, \infty])}$, then let us pick
  $x \in Q$ such that $h (x)$ is the least value reached by $h$ on
  $Q$; then $h^* (Q) = h (x) > t$, so
  $Q \in {h^*}^{-1} (]t, \infty])$.  Hence we have shown that
  ${h^*}^{-1} (]t, \infty]) = \Box {h^{-1} (]t, \infty])}$.  This
  implies that $h^*$ is in $\Lform {\SV X}$, in particular.

  Now
  $\int_{Q \in \SV X} h^* (Q) \,d\nu^* = \int_0^\infty \nu^*
  ({h^*}^{-1} (]t, \infty])) \,dt = \int_0^\infty \nu^* (\Box h^{-1}
  (]t, \infty]) \,dt = \int_0^\infty \nu (h^{-1} (]t, \infty]) \,dt =
  \int_{x \in X} h (x) \,d\nu$.

  6. Monotonicity is clear.  For every directed family
  ${(U_i)}_{i \in I}$ of open subsets of $X$,
  $\ugame Q (\dcup_{i \in I} U_i)=1$ if and only if
  $Q \in \Box {\dcup_{i \in I} U_i}$, which is equivalent to the
  existence of an $i \in I$ such that $Q \in \Box {U_i}$
  (equivalently, $\ugame Q (U_i)=1$), as we have seen at the beginning
  of the proof of item~5.


  7. That would be a consequence of \cite[Propositions~7.2,
  7.6]{GL:duality}, except for the fact that our functions $h$ take
  their values in $\creal$.  We verify that
  $\int_{x \in X} h (x) \,d\nu = \sum_{j=1}^m a_j \int_{x \in X} h (x)
  \,d\ugame {Q_i} = \sum_{j=1}^m a_j \min_{x \in Q_j} h (x)$: the
  first equality is by item~1, and the second one is because
  $\int_{x \in X} h (x) \,d\ugame {Q_i}$ is equal to
  $\int_0^\infty \ugame {Q_i} (h^{-1} (]t, \infty])) \,dt =
  \int_0^{\min_{x \in Q_i} h (x)} 1 \,dt + \int_{\min_{x \in Q_i} h
    (x)}^\infty 0 \,dt = {\min_{x \in Q_i} h (x)}$.

  It is easy to see that $F (h)$ is superlinear, because of the laws
  $\min_{x \in Q_i} ah(x) = a \min_{x \in Q_i} h (x)$ (for every
  $a \in \Rp$) and
  $\min_{x \in Q_i} (h (x) + h' (x)) \geq \min_{x \in Q_i} h (x) +
  \min_{x \in Q_i} h' (x)$.  Scott-continuity comes from the fact that
  $F (h) = \int_{x \in X} h (x)\,d\nu$, where
  $\nu \eqdef \sum_{j=1}^m a_j \ugame {Q_j}$, that $\nu$ is
  Scott-continuous (by item~\ref{choq:ugame}), and by using
  item~\ref{choq:cont:h}.  \qed
\end{proof}

As we will see, $\Val_\bullet$ does not just preserve local
compactness, it maps core-compact spaces to locally compact sober
spaces.  A space $X$ is \emph{core-compact} if and only if $\Open X$
is a continuous dcpo; every locally compact space is core-compact
\cite[Theorem 5.2.9]{JGL-topology}.  The connection between the two
notions can be made more precise as follows.  Every topological space
$X$ has a \emph{sobrification} $\Sober X$ (or $X^s$), which is the
free sober space over $X$ \cite[Theorem 8.2.44]{JGL-topology}; then
$X$ is core-compact if and only if $\Sober X$ is locally compact
\cite[Proposition~8.3.11]{JGL-topology}.  $\Sober X$ can be built as
the collection of irreducible closed subsets, with the topology whose
open sets (all of them, not just a base) are
$\diamond U \eqdef \{F \in \Sober X \mid F \cap U \neq \emptyset\}$,
$U \in \Open X$.  In particular,
$\diamond \colon U \mapsto \diamond U$ is an order-isomorphism between
$\Open X$ and $\Open {\Sober X}$.  This induces a homeomorphism
between $\Val_\bullet X$ and $\Val_\bullet {\Sober X}$.

\begin{theorem}
  \label{thm:V:loccomp}
  For every core-compact space $X$, $\Val X$ and $\Val_{\leq 1} X$ are
  locally compact and sober.  If $X$ is also compact, then $\Val_1 X$
  is locally compact sober.
\end{theorem}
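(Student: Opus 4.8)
The plan is to dispatch the sobriety assertions at once, reduce the core‑compact case to the locally compact sober case by sobrification, and then prove local compactness of $\Val X$ by exhibiting explicit compact saturated neighbourhoods; the subprobability and probability statements will follow by small adjustments.

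\textbf{Sobriety and reduction.} For every space $Y$, $\Val Y$ is sober (classical; see the discussion in Section~\ref{sec:ep-systems}). Hence $\Val_{\leq 1} Y = \Val Y \diff [Y > 1]$ is sober as a closed subspace and $\Val_1 Y$ is sober as an upwards‑closed subspace of $\Val_{\leq 1} Y$, both by Remark~\ref{rem:sat:sober}. For local compactness we may assume $X$ is locally compact sober: indeed $X$ core‑compact implies $\Sober X$ locally compact \cite[Proposition~8.3.11]{JGL-topology}, $\Sober X$ is sober, $X$ is compact if and only if $\Sober X$ is, and the order‑isomorphism $\diamond\colon \Open X \to \Open {\Sober X}$ induces a homeomorphism $\Val_\bullet X \cong \Val_\bullet {\Sober X}$, as noted just before the theorem.

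\textbf{Compact saturated neighbourhoods in $\Val X$, for $X$ locally compact sober.} Fix $\nu \in \Val X$ and a basic open neighbourhood $W = \bigcap_{k=1}^n [U_k > r_k]$ of $\nu$, with $r_k \in \Rp$ and $\nu(U_k) > r_k$; choose reals $s_k$ with $r_k < s_k < \nu(U_k)$. By local compactness, each $U_k$ is the directed union $\dcup\{\interior Q \mid Q \subseteq U_k \text{ compact saturated}\}$ (directedness: a finite union of compact saturated sets is compact saturated), so Scott‑continuity of $\nu$ gives a compact saturated $Q_k \subseteq U_k$ with $\nu(\interior {Q_k}) > s_k$. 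Writing $\mu^\bullet(Q) \eqdef \inf\{\mu(V)\mid V \in \Open X,\ Q \subseteq V\}$, set
\[
\mathcal K \eqdef \{\mu \in \Val X \mid \mu^\bullet(Q_k) \geq s_k \text{ for } k = 1,\dots,n\}.
\]
It is immediate that $\mathcal K$ is saturated; that $\mathcal K \subseteq W$ (for $\mu \in \mathcal K$, $U_k$ is an open neighbourhood of $Q_k$, so $\mu(U_k) \geq \mu^\bullet(Q_k) \geq s_k > r_k$); that $\nu \in \mathcal K$ (as $\nu^\bullet(Q_k) \geq \nu(\interior{Q_k}) > s_k$); and that $\nu \in \interior {\mathcal K}$, since the open set $\bigcap_{k=1}^n [\interior{Q_k} > s_k]$ contains $\nu$ and is contained in $\mathcal K$ (if $\mu(\interior{Q_k}) > s_k$ then $\mu(V) \geq \mu(\interior{Q_k}) > s_k$ for every open $V \supseteq Q_k$, whence $\mu^\bullet(Q_k) \geq s_k$). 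Thus everything reduces to showing that $\mathcal K$ is compact.

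\textbf{Compactness of $\mathcal K$ --- the main obstacle.} This is exactly where the capacity‑theoretic machinery of \cite{GL:duality}, extended to integrands valued in $\creal$, is needed --- the reason Lemma~\ref{lemma:choq} was proved. Observe that $\mathcal K = \{\mu \in \Val X \mid \mu \geq g\}$, where $g \eqdef \bigvee_{k=1}^n s_k\,\ugame{Q_k}$ is a continuous game on $X$ (a finite pointwise supremum of the Scott‑continuous unanimity games of Lemma~\ref{lemma:choq}(\ref{choq:ugame})), and $g$ is dominated by the continuous valuation $\nu$. That sets of continuous valuations dominating a continuous game are compact is a Prokhorov‑type statement; the route I would take is to pass to $\SV X$, which for $X$ locally compact sober is a continuous dcpo (its upper Vietoris topology being the Scott topology; see \cite[Sections~6.2.2, 6.2.3]{AJ:domains}), so that $\Val{(\SV X)}$ is a continuous dcpo \cite[Theorem~5.2]{Jones:proba}, hence locally compact sober, and then use the Choquet‑integral correspondence of Lemma~\ref{lemma:choq}(\ref{choq:int}) between continuous valuations on $\SV X$ and continuous games on $X$ to read off compactness of $\mathcal K$ from a compact saturated subset of $\Val{(\SV X)}$. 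Pinning down which compact saturated set of $\Val{(\SV X)}$ does the job, and transporting compactness back correctly, is by far the hardest part of the argument.

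\textbf{The subprobability and probability cases.} $\Val_{\leq 1} X = \Val X \diff [X > 1]$ is a closed subspace of the (now established) locally compact space $\Val X$, and closed subspaces of locally compact spaces are locally compact; this settles $\Val_{\leq 1}$. For $\Val_1$, assume in addition $X$ compact, so that $X$ is itself a (nonempty, when $\Val_1 X \neq \emptyset$) compact saturated subset of $X$, i.e.\ $X \in \SV X$. Given $\mu_0 \in \Val_1 X$ and a prescribed basic open neighbourhood, run the construction of the second paragraph, additionally putting the pair $(X,1)$ into the list of constraints: the set $\mathcal K' \eqdef \{\mu \in \Val X \mid \mu^\bullet(Q_k) \geq s_k\ (k \leq n),\ \mu^\bullet(X) \geq 1\}$ is compact by the same argument as above, and $\mathcal K' \cap \Val_{\leq 1} X$ --- a closed subspace of the compact space $\mathcal K'$, hence compact --- equals $\{\mu \in \Val_1 X \mid \mu^\bullet(Q_k) \geq s_k\ (k \leq n)\}$, a compact saturated neighbourhood of $\mu_0$ inside the prescribed open set. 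Hence $\Val_1 X$ is locally compact (and sober, by the first paragraph).
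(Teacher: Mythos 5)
There is a genuine gap, and you name it yourself: the compactness of $\mathcal K$, which is the entire content of the theorem once the (correct) reduction to the locally compact sober case and the (correct) choice of the sets $Q_k$, $s_k$ are made, is never proved. What you offer instead is a sketch of ``the route I would take'', and that sketch does not close the gap: passing to $\Val_\bullet {\SV X}$ and invoking Lemma~\ref{lemma:choq}, item~\ref{choq:int}, gives you a way to compute integrals against games, but it does not by itself identify a compact saturated subset of $\Val_\bullet {\SV X}$ whose image or preimage is $\mathcal K$, nor a continuous map along which compactness would be ``transported back''. Note also that your specific set $\mathcal K=\{\mu\mid \mu\geq \bigvee_{k} s_k\,\ugame{Q_k}\}$ is not obviously the set of valuations dominating a \emph{superlinear} prevision: the functional $h\mapsto\max_k s_k\min_{x\in Q_k}h(x)$ need not be superlinear, and writing $\mathcal K$ as a finite intersection of dominator sets $\{\mu\mid\mu^\bullet(Q_k)\geq s_k\}$ does not help either, since finite intersections of compact saturated sets need not be compact without coherence, which is not available here. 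So even the known compactness result for sections of superlinear previsions cannot be quoted verbatim for your $\mathcal K$.

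For comparison, the paper's proof avoids this by not choosing the $Q_k$ coordinatewise. It pushes $\nu$ forward to $\mu\eqdef\Val{\eta^\Smyth_X}(\nu)\in\Val_\bullet{\SV X}$, uses the fact that $\SV X$ is a continuous dcpo (so that, by Jones' theorem and its variants, $\Val_\bullet{\SV X}$ is a continuous dcpo with a basis of simple valuations), and picks a simple valuation $\xi^*=\sum_{j=1}^m a_j\delta_{Q_j}$ way below $\mu$ inside $\bigcap_i[\Box U_i>r_i]$. The compact saturated neighbourhood is then $s_\DN^\bullet(F)$ for the superlinear prevision $F(h)\eqdef\sum_{j=1}^m a_j\min_{x\in Q_j}h(x)$ (a sum, not a sup), whose compactness is exactly the already-established retraction result of \cite[Proposition~3.22]{JGL-mscs16}; and the way-below relation supplies the open set $(\Val{\eta^\Smyth_X})^{-1}(\uuarrow\xi^*)$ containing $\nu$ and contained in $s_\DN^\bullet(F)$, with Lemma~\ref{lemma:choq}, item~\ref{choq:int}, doing the bookkeeping. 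Your opening reductions (sobriety via Remark~\ref{rem:sat:sober}, sobrification, and the derivation of the $\Val_{\leq 1}$ and $\Val_1$ cases from the unbounded case) are fine, but they all hinge on the missing compactness argument.
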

\begin{proof}
  By Remark~\ref{rem:sat:sober}, all the spaces $\Val_\bullet X$ are
  sober.

  Replacing $X$ by $\Sober X$ if necessary, we may assume that $X$ is
  locally compact and sober.  Then the upper Vietoris topology on
  $\SV X$ coincides with the Scott topology on $\Smyth X$ (with the
  reverse inclusion ordering $\supseteq$), by Lemma~8.3.26 of
  \cite{JGL-topology}, and $\Smyth X$ itself is a continuous dcpo
  \cite[Proposition~8.3.25]{JGL-topology}.  A fundamental theorem due
  to Jones \cite[Theorem~5.2]{Jones:proba} states that for every
  continuous dcpo $P$, $\Val_{\leq 1} P$ is a continuous dcpo under
  the stochastic ordering, and that a basis is given by the simple
  valuations, namely those of the form
  $\sum_{i=1}^n a_i \delta_{x_i}$, where each $a_i$ is in $\Rp$ and
  $x_i \in P$.  A similar result holds for $\Val P$ \cite[Theorem
  IV-9.16]{GHKLMS:contlatt}, and for $\Val_1 P$ provided that $P$ is
  also pointed \cite[Corollary~3.3]{Edalat:int}.  We will apply those
  results to $P \eqdef \Smyth X$, and we notice that if $X$ is
  compact, then $P$ is pointed, as $X$ itself will be the least
  element of $P$ in that case.

  Let $\nu \in \Val_\bullet X$, and let $\mathcal U$ be any open
  neighborhood of $\nu$.  Then $\nu$ is in some finite intersection of
  subbasic open sets $\bigcap_{i=1}^n [U_i > r_i]$ that is included in
  $\mathcal U$, where each $U_i$ is open in $X$ and $r_i \in \Rp$.  We
  consider
  $\mu \eqdef \Val {\eta^\Smyth_X} (\nu) \in \Val_\bullet {\SV X}$.
  We recall that $\eta^\Smyth_X$ is the unit of the $\SV$ monad, and
  maps every point $x \in X$ to $\upc x \in \SV X$.  For every open
  subset $U$ of $X$,
  $\mu (\Box U) = \eta^\Smyth_X [\nu] (\Box U) = \nu
  ((\eta^\Smyth_X)^{-1} (\Box U)) = \nu (U)$.  It follows that $\mu$
  is in $\bigcap_{i=1}^n [\Box {U_i} > r_i]$.  The latter is open in
  the upper Vietoris topology on
  $\Val_\bullet {\SV X} = \Val_\bullet P$, hence in the Scott topology
  of the stochastic ordering.  Since $\Val_\bullet P$ is a continuous
  dcpo with a basis of simple valuations, there is a simple valuation
  $\xi^* \eqdef \sum_{j=1}^m a_j \delta_{Q_j}$ in $\Val_\bullet P$
  that is way below $\mu$ and in $\bigcap_{i=1}^n [\Box {U_i} > r_i]$.

  We build a superlinear prevision $F$ on $X$ by letting $F (h)$ be
  equal to $\sum_{j=1}^m a_j \min_{x \in Q_j} h (x)$ for every
  $h \in \Lform X$.  (See Lemma~\ref{lemma:choq}, item~\ref{choq:F}.)
  Equivalently, $F (h) = \int_{x \in X} h (x) \,d\xi$, where
  $\xi \eqdef \sum_{j=1}^m a_j \ugame {Q_j}$.  The notations $\xi$,
  $\xi^*$ are justified by the fact that for every $U \in \Open X$,
  $\xi (U) = \xi^* (\Box U)$.
  
  Then $s_\DN^\bullet (F)$ is a compact saturated subset of
  $\Pred_\Nature^\bullet X$, as we have seen in
  Section~\ref{sec:prev-powerc}.  Equating $\Pred_\Nature^\bullet X$
  with $\Val_\bullet X$, $s_\DN^\bullet (F)$ is the subset of those
  $\nu' \in \Val_\bullet X$ such that for every $h \in \Lform X$,
  $F (h) \leq \int_{x \in X} h (x) \,d\nu'$.  We claim that $\nu$ is
  in the interior of $s_\DN^\bullet (F)$, and that $s_\DN^\bullet (F)$
  is included in $\mathcal U$; this will end our proof.

  We start by showing that $s_\DN^\bullet (F) \subseteq \mathcal U$.
  Let $\nu'$ be any element of $s_\DN^\bullet (F)$.  In other words,
  for every $h \in \Lform X$,
  $\int_{x \in X} h (x) \,d\nu' \geq F (h) = \sum_{j=1}^m a_j \min_{x
    \in Q_j} h (x)$.  For each $i \in \{1, \cdots, n\}$, we apply the
  latter to $h \eqdef \chi_{U_i}$.  We realize that
  $\int_{x \in X} \chi_{U_i} (x) \,d\nu' = \nu' (U_i)$
  (Lemma~\ref{lemma:choq}, item~\ref{choq:chi}), and that
  $\min_{Q_j} \chi_{U_i}$ is equal to $1$ if $Q_j \subseteq U_i$, to
  $0$ otherwise, so
  $F (\chi_{U_i}) = \sum_{\substack{1\leq j\leq m\\Q_j \subseteq U_i}}
  a_j$.  Therefore
  $\nu' (U_i) \geq \sum_{\substack{1\leq j\leq m\\Q_j \subseteq U_i}}
  a_j$.  We recall that $\xi^* = \sum_{j=1}^m a_j \delta_{Q_j}$ is in
  $\bigcap_{i=1}^n [\Box {U_i} > r_i]$, so for every
  $i \in \{1, \cdots, n\}$, $\xi^* (\Box {U_i}) > r_i$, namely
  $\sum_{\substack{1\leq j\leq m\\Q_j \subseteq U_i}} a_j > r_i$.
  Hence $\nu' \in \bigcap_{i=1}^n [U_i > r_i] \subseteq \mathcal U$.

  Next, we verify that $\nu$ is in the interior of
  $s_\DN^\bullet (F)$.  We use the fact that $\xi^*$ is way below
  $\mu$, equivalently that $\mu$ is in the open set $\uuarrow \xi^*$.
  Since $\mu = \Val {\eta^\Smyth_X} (\nu)$, $\nu$ is in
  $(\Val {\eta^\Smyth_X})^{-1} (\uuarrow \xi^*)$, which is open since
  $\Val {\eta^\Smyth_X}$ is continuous.  It remains to show that
  $(\Val {\eta^\Smyth_X})^{-1} (\uuarrow \xi^*)$ is included in
  $s_\DN^\bullet (F)$.

  For every $\nu' \in (\Val {\eta^\Smyth_X})^{-1} (\uuarrow \xi^*)$,
  by definition $\xi^*$ is way below, in particular below
  $\mu' \eqdef \Val {\eta^\Smyth_X} (\nu')$.  The latter is such that
  $\mu' (\Box U) = \eta^\Smyth_X [\nu'] (\Box U) = \nu'
  ((\eta^\Smyth_X)^{-1} (\Box U)) = \nu' (U)$ for every
  $U \in \Open X$.  Hence we may write $\mu'$ as ${\nu'}^*$ and apply
  Lemma~\ref{lemma:choq}, item~\ref{choq:int}, so that
  $\int_{x \in X} h (x)\,d\nu' = \int_{Q \in \SV X} \min_{x \in Q} h
  (x) \,d{\nu'}^*$.  Since $\xi^* \leq \mu' = {\nu'}^*$, the latter is
  larger than or equal to
  $\int_{Q \in \SV X} \min_{x \in Q} h (x) \,d\xi^* = \int_{x \in X} h
  (x) \,d\xi = F (h)$.  (We use Lemma~\ref{lemma:choq},
  item~\ref{choq:int} on the pair $\xi$, $\xi^*$ for that.)  We have
  shown that $\int_{x \in X} h (x)\,d\nu' \geq F (h)$ for every
  $h \in \Lform X$, so $\nu' \in s_\DN^\bullet (F)$, as promised.
  \qed
\end{proof}

\subsection{Proper maps and quasi-adjoints}
\label{sec:proper-maps-quasi}

In order to see that $\Val$ preserves proper maps, we rely on the
following notion, a very close cousin of the quasi-retractions of
\cite[Section~4]{JGL:QRB}, which were used to characterize proper
surjective maps there.  We recall that
$\eta^\Smyth_X \colon X \to \SVz X$ is the unit of the $\SVz$ monad,
and that it maps every $x \in X$ to $\upc x$.  We also recall that the
specialization ordering on spaces of the form $\SVz X$ is
\emph{reverse} inclusion $\supseteq$.

\begin{definition}
  \label{defn:qadj}
  A \emph{quasi-adjoint} to a continuous map $r \colon X \to Y$ is a
  continuous map $\varsigma \colon Y \to \SVz X$ such that:
  \begin{enumerate}[label=(\alph*)]
  \item $\eta^\Smyth_Y \leq \SVz r \circ \varsigma$, namely
    $\upc y \supseteq \SVz r (\varsigma (y))$ for every
    $y \in Y$, and
  \item $\varsigma \circ r \leq \eta^\Smyth_X$, namely
    $x \in \varsigma (r (x))$ for every $x \in X$.
  \end{enumerate}
\end{definition}

\begin{lemma}
  \label{lemma:proper}
  For a continuous map $r \colon X \to Y$, the following are
  equivalent:
  \begin{enumerate}
  \item $r$ is a proper map;
  \item $\dc r [F]$ is closed for every closed subset $F$ of $X$ and
    $r^{-1} (\upc y)$ is compact for every $y \in Y$;
  \item $r$ has a quasi-adjoint.
  \end{enumerate}
  The quasi-adjoint $\varsigma$, if it exists, is uniquely determined
  by $\varsigma (y) = r^{-1} (\upc y)$ for every $y \in Y$.
\end{lemma}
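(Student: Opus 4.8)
The plan is to prove $1\Leftrightarrow 2$ and $2\Leftrightarrow 3$ separately, and to read off the uniqueness statement from the proof that any quasi-adjoint must be given by the stated formula.

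First, the equivalence $1\Leftrightarrow 2$. Since a proper map is by definition a closed perfect map, since the preimage of a saturated set under a continuous map is saturated, and since $\upc y$ is compact saturated, the only content of ``perfect'' not already present in ``closed'' is that $r^{-1}(Q)$ be compact for every compact saturated $Q$; hence $1\limp 2$ is immediate. For $2\limp 1$ I would argue that, $r$ being closed, compactness of all the point-preimages $r^{-1}(\upc y)$ already forces $r^{-1}(Q)$ to be compact for every compact saturated $Q$: given an open cover $\mathcal U$ of $r^{-1}(Q)$, cover each compact $r^{-1}(\upc y)$ ($y\in Q$) by a finite subfamily of $\mathcal U$ with union $U_y$, set $V_y\eqdef Y\diff\dc r[X\diff U_y]$ (open since $r$ is closed, containing $y$ since $r^{-1}(\upc y)\subseteq U_y$, and satisfying $r^{-1}(V_y)\subseteq U_y$ because any $x\notin U_y$ has $r(x)\in\dc r[X\diff U_y]$), extract a finite subcover $V_{y_1},\dots,V_{y_n}$ of the compact set $Q$, and conclude that $r^{-1}(Q)\subseteq\bigcup_k U_{y_k}$ is covered by finitely many members of $\mathcal U$. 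Together with closedness and the automatic saturation of $r^{-1}(Q)$, this yields $1$.

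Next, $2\limp 3$: define $\varsigma(y)\eqdef r^{-1}(\upc y)$, which lies in $\SVz X$ by hypothesis and continuity of $r$; it is continuous because $\varsigma^{-1}(\Box_0 U)=\{y\mid r^{-1}(\upc y)\subseteq U\}$ coincides with the open set $Y\diff\dc r[X\diff U]$. Condition~(b) is the tautology $x\in r^{-1}(\upc r(x))$, and condition~(a) holds because $\SVz r(\varsigma(y))=\upc r[r^{-1}(\upc y)]\subseteq\upc y$, which in the reverse-inclusion ordering of $\SVz Y$ reads $\eta^{\Smyth}_Y(y)\leq\SVz r(\varsigma(y))$. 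For $3\limp 2$ and the uniqueness, let $\varsigma$ be any quasi-adjoint: from (a), $x\in\varsigma(y)$ implies $r(x)\in\upc r[\varsigma(y)]=\SVz r(\varsigma(y))\subseteq\upc y$, so $\varsigma(y)\subseteq r^{-1}(\upc y)$; conversely, for $x\in r^{-1}(\upc y)$ we have $y\leq r(x)$, hence $x\in\varsigma(r(x))\subseteq\varsigma(y)$ by condition~(b) and monotonicity of the continuous map $\varsigma$. Thus $\varsigma(y)=r^{-1}(\upc y)$, which pins down $\varsigma$ uniquely and shows that $r^{-1}(\upc y)$ is compact saturated; moreover $r$ is closed, since for $F$ closed in $X$ one has $Y\diff\dc r[F]=\{y\mid\varsigma(y)\cap F=\emptyset\}=\varsigma^{-1}(\Box_0(X\diff F))$, which is open.

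The step I expect to be the main obstacle is the upgrade inside $2\limp 1$, from compactness of the fibres $r^{-1}(\upc y)$ to compactness of $r^{-1}(Q)$ for an arbitrary compact saturated $Q$: this is the classical cover argument, and the one place where closedness of $r$ genuinely enters. Everything else is bookkeeping, the only real trap being to keep the specialization order on $\SVz X$ and $\SVz Y$ straight — it is \emph{reverse} inclusion, so the two inequalities of Definition~\ref{defn:qadj} have to be translated carefully into the set inclusions used above.
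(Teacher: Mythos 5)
Your proposal is correct. The quasi-adjoint part of your argument is essentially the paper's: the identification of the complement of $\varsigma^{-1}(\Box_0 U)$ with $\dc r[X \diff U]$ is exactly the observation the paper isolates as $(*)$, and your proof that any quasi-adjoint must satisfy $\varsigma(y) = r^{-1}(\upc y)$ (via condition~(a) for one inclusion, and condition~(b) plus monotonicity of $\varsigma$ under the reverse-inclusion ordering for the other) is the same as the paper's $3 \limp 2$ step, from which compactness of the fibres, closedness of $\dc r[F]$, and uniqueness all follow as you say. Where you genuinely diverge is in the treatment of $1 \Leftrightarrow 2$: the paper simply cites Lemma~VI-6.21 of \cite{GHKLMS:contlatt} for this equivalence and then closes the cycle via $1 \limp 3$, whereas you prove $2 \limp 1$ from scratch with the classical tube-style cover argument (shrinking an open cover of $r^{-1}(Q)$ to finitely many sets over each fibre $r^{-1}(\upc y)$, pushing down to the open sets $V_y = Y \diff \dc r[X \diff U_y]$, and using compactness of $Q$), and you then build the quasi-adjoint directly from the hypotheses of item~2 rather than from properness. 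Your route buys self-containedness—the one nontrivial implication hidden in the citation is made explicit, and your cover argument is sound, including the degenerate case of empty fibres—at the cost of a somewhat longer proof; the paper's route is shorter by outsourcing precisely that step. No gaps.
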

\begin{proof}
%
  The equivalence between items~1 and~2 can be found in
  \citep[Lemma~VI-6.21]{GHKLMS:contlatt}.
  
  We make the following observation: $(*)$ for every map
  $\varsigma \colon Y \to \SVz X$ such that
  $\varsigma (y) = r^{-1} (\upc y)$ for every $y \in Y$, for every
  open subset $U$ of $X$, the complement of $\varsigma^{-1} (\Box U)$
  in $Y$ is equal to $\dc r [F]$, where $F$ is the complement of $U$.
  Indeed, for every $y \in Y$, $y \not\in \varsigma^{-1} (\Box U)$ if
  and only if $\varsigma (y) = r^{-1} (\upc y)$ is not included in
  $U$, if and only if there is an $x \in F$ such that $y \leq r (x)$.

  $3 \limp 2$.  Let $\varsigma$ be a quasi-adjoint of $r$.  We claim
  that $\varsigma (y) = r^{-1} (\upc y)$, which will also show the
  final uniqueness result.  For every $x \in \varsigma (y)$, $r (x)$
  is in $\upc r [\varsigma (y)] \subseteq \upc y$ (by (a)), so
  $y \leq r (x)$, namely $x \in r^{-1} (\upc y)$.  Conversely, for
  every $x \in r^{-1} (\upc y)$, we have $y \leq r (x)$.  Since
  $\varsigma$ is continuous hence monotonic,
  $\varsigma (y) \supseteq \varsigma (r (x)$.  By (b),
  $x \in \varsigma (r (x))$, so $x \in \varsigma (y)$.

  It follows that, since $\varsigma (y) \in \SVz X$ by
  assumption, $r^{-1} (\upc y)$ is compact.  For every closed subset
  $F$ of $X$, we consider its complement $U$.  Since $\varsigma$ is
  continuous, $\varsigma^{-1} (\Box U)$ is open.  But its complement
  is precisely $\dc r [F]$, by $(*)$, so $\dc r [F]$ is closed.

  $1 \limp 3$.  Let $\varsigma (y) \eqdef r^{-1} (\upc y)$ for every
  $y \in Y$.  This is compact saturated since $r$ is proper, hence
  perfect.  Hence $\varsigma$ is a map from $Y$ to $\SVz X$.

  We check that $\varsigma$ is continuous.  For every open subset $U$
  of $X$, $\varsigma^{-1} (\Box U)$ is the complement of $\dc r [F]$,
  where $F \eqdef X \diff U$, by $(*)$.  Since $r$ is proper,
  $\dc r [F]$ is closed, so $\varsigma^{-1} (\Box U)$ is open.

  Let us check (a).  For every $y \in Y$,
  $(\SVz r \circ \varsigma) (y) = \upc r [\varsigma (y)] = \upc r
  [r^{-1} (\upc y)]$.  Every element $y'$ of that set is such that
  $y' \geq r (x)$ for some $x \in X$ such that $r (x) \geq y$, so
  $y' \in \upc y$.

  Let us check (b).  For every $x \in X$, we need to check that
  $x \in \varsigma (r (x)) = r^{-1} (\upc r (x))$, or equivalently
  that $r (x) \geq r (x)$, which is obvious.  \qed
\end{proof}

\subsection{On the preservation of proper maps by $\Val$}
\label{sec:pres-prop-maps}

\begin{lemma}
  \label{lemma:Phi}
  For every topological space $X$, there is a continuous map
  $\Phi \colon \Val_\bullet {\SV X} \to \Pred_\DN^\bullet X$ defined
  by
  $\Phi (\mu) (h) \eqdef \int_{Q \in \SV X} \min_{x \in Q} h (x)
  \,d\mu$ for every $h \in \Lform X$.
\end{lemma}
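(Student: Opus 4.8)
The plan is to check, one by one, that for each continuous valuation $\mu$ on $\SV X$ the functional $\Phi(\mu)$ is a well-defined superlinear prevision on $X$ of the appropriate normalization class, and then that $\Phi$ itself is continuous. Everything rests on Lemma~\ref{lemma:choq}, and chiefly on its item~\ref{choq:int}: for each $h \in \Lform X$, the map $h^* \colon Q \mapsto \min_{x \in Q} h(x)$ lies in $\Lform{\SV X}$, so $\Phi(\mu)(h) = \int_{Q \in \SV X} h^*(Q)\,d\mu$ is a well-defined element of $\creal$; the same item identifies it with $\int_{x \in X} h(x)\,d\nu$, where $\nu \colon \Open X \to \creal$, $\nu(U) \eqdef \mu(\Box U)$, is Scott-continuous.

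First I would verify that $\Phi(\mu)$ is a prevision. Positive homogeneity follows from $(a h)^* = a\, h^*$ for $a \in \Rp$ together with the linearity of Choquet integration in its integrand against the continuous valuation $\mu$ (Lemma~\ref{lemma:choq}, item~\ref{choq:lin:h}). Scott-continuity of $\Phi(\mu)$ in $h$, hence its monotonicity, is immediate from the representation $\Phi(\mu)(h) = \int_{x \in X} h(x)\,d\nu$ and the fact that Choquet integration against the Scott-continuous $\nu$ is Scott-continuous in the integrand (Lemma~\ref{lemma:choq}, item~\ref{choq:cont:h}).

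Next, superlinearity: for $h, h' \in \Lform X$ and any nonempty compact saturated subset $Q$ of $X$ one has $\min_{x \in Q}(h(x) + h'(x)) \ge \min_{x \in Q} h(x) + \min_{x \in Q} h'(x)$, that is, $(h + h')^* \ge h^* + {h'}^*$ pointwise on $\SV X$; integrating against $\mu$ and using that Choquet integration against $\mu$ is monotone in the integrand (immediate from its definition) and additive in the integrand ($\mu$ being a genuine continuous valuation, Lemma~\ref{lemma:choq}, item~\ref{choq:lin:h}) gives $\Phi(\mu)(h + h') \ge \int h^*\,d\mu + \int {h'}^*\,d\mu = \Phi(\mu)(h) + \Phi(\mu)(h')$, so $\Phi(\mu) \in \Pred_\DN X$. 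For the normalization class, writing $\one$ for the constant map $1$ on $X$, we have $(\one + h)^* = \chi_{\SV X} + h^*$, whence $\Phi(\mu)(\one + h) = \mu(\SV X) + \Phi(\mu)(h)$ by Lemma~\ref{lemma:choq}, item~\ref{choq:chi}. Hence if $\mu \in \Val_{\leq 1}{\SV X}$ then $\mu(\SV X) \le 1$ and $\Phi(\mu)$ is subnormalized, while if $\mu \in \Val_1{\SV X}$ then $\mu(\SV X) = 1$ and $\Phi(\mu)$ is normalized; so $\Phi$ restricts to a map $\Val_\bullet{\SV X} \to \Pred_\DN^\bullet X$ for $\bullet$ nothing, ``$\leq 1$'' or ``$1$''.

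Finally I would establish continuity of $\Phi$. The topology of $\Pred_\DN^\bullet X$ is generated by the sets $[h > r]$, $h \in \Lform X$, $r \in \Rp$, and $\Phi^{-1}([h > r]) = \{\mu \in \Val_\bullet{\SV X} \mid \int_{Q \in \SV X} h^*(Q)\,d\mu > r\}$; since $h^* \in \Lform{\SV X}$, the functional $\mu \mapsto \int_{Q \in \SV X} h^*\,d\mu$ is continuous from $\Val_\bullet{\SV X}$ to $\creal$ with its Scott topology, because the weak topology is by construction the coarsest one making all such integration functionals continuous (Section~\ref{sec:cont-valu}); hence $\Phi^{-1}([h > r])$ is the preimage of the Scott-open set $]r, \infty]$, and is open. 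I expect no real obstacle here: the one mildly delicate point is the passage through the hyperspace $\SV X$ — that $h^*$ is lower semicontinuous on $\SV X$ and that $\int_{Q \in \SV X} h^*\,d\mu$ coincides with $\int_{x \in X} h\,d\nu$ — and that is exactly what Lemma~\ref{lemma:choq}, item~\ref{choq:int}, supplies; everything else reduces to elementary properties of $\min$ over nonempty compact sets and to the linearity, monotonicity and Scott-continuity of Choquet integration already collected there.
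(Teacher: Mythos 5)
Your proposal is correct and follows essentially the same route as the paper's proof: both rest on Lemma~\ref{lemma:choq}, item~\ref{choq:int} (that $h^*\in\Lform{\SV X}$ and $\int h^*\,d\mu=\int h\,d\nu$ with $\nu(U)=\mu(\Box U)$), derive Scott-continuity, positive homogeneity and superlinearity from items~\ref{choq:cont:h} and~\ref{choq:lin:h}, handle the (sub)normalized cases via $\min_{x\in Q}(1+h(x))=1+\min_{x\in Q}h(x)$, and obtain continuity of $\Phi$ from $\Phi^{-1}([h>r])=[h^*>r]$ with $h^*$ lower semicontinuous. Your normalization step (getting the exact identity $\Phi(\mu)(\one+h)=\mu(\SV X)+\Phi(\mu)(h)$ before invoking the bound on $\mu(\SV X)$) is a slightly cleaner phrasing of the same computation, not a different argument.
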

\begin{proof}
  Given $\mu \in \Val_\bullet {\SV X}$, we may define
  $\nu^* \eqdef \mu$ and $\nu (U) \eqdef \nu^* (\Box U)$ for every
  $U \in \Open X$.  Then $\nu$ is Scott-continuous and
  $\Phi (\mu) (h) = \int_{x \in X} h (x) \,d\nu$ by
  Lemma~\ref{lemma:choq}, item~\ref{choq:int}, so $\Phi (\mu)$ is
  Scott-continuous in $h$ by Lemma~\ref{lemma:choq},
  item~\ref{choq:cont:h}.

  We claim that $\Phi (\mu)$ is positively homogeneous.  We write
  $h^* \colon \SV X \to \creal$ for the map
  $Q \mapsto \min_{x \in Q} h (x)$.  This is in $\Lform {\SV X}$, by
  Lemma~\ref{lemma:choq}, item~\ref{choq:int}.

  For every $a \in \Rp$, $(ah)^* = a h^*$.  Therefore
  $\Phi (\mu) (ah) = \int_{Q \in \SV X} (ah)^* (Q) \,d\mu = \int_{Q
    \in \SV X} a h^* (Q) \,d\mu = a \Phi (\mu) (h)$, by linearity of
  integration (Lemma~\ref{lemma:choq}, item~\ref{choq:lin:h}).

  We claim that $\Phi (\mu)$ is superlinear.  Let
  $h, h' \in \Lform X$.  Then
  $\Phi (\mu) (h + h') = \int_{Q \in \SV X} \min_{x \in Q} (h (x) +
  h' (x)) \,d\mu$.  For every $Q \in \SV X$,
  $\min_{x \in Q} (h (x) + h' (x)) \geq \min_{x \in Q} h (x) + \min_{x
    \in Q'} h' (x)$.  Since $\mu$ is a continuous valuation,
  integration with respect to $\mu$ is linear
  (Lemma~\ref{lemma:choq}, item~\ref{choq:lin:h}), and monotonic (as a consequence of
  Lemma~\ref{lemma:choq}, item~\ref{choq:cont:h}) in the integrated function, so
  $\Phi (\mu) (h+h') \geq \int_{Q \in \SV X} \min_{x \in Q} h (x)
  \,d\mu + \int_{Q \in \SV X} \min_{x \in Q} h' (x) \,d\mu = \Phi
  (\mu) (h) + \Phi (\mu) (h')$.

  Hence $\Phi (\mu)$ is a superlinear prevision.  We note that
  $\min_{x \in Q} (1+h(x)) = 1 + \min_{x \in Q} h (x)$, for every
  non-empty compact saturated subset of $X$ and for every
  $h \in \Lform X$.  If $\mu (X) \leq 1$, then for every
  $h \in \Lform X$,
  $\Phi (\mu) (\one+h) = \int_{Q \in \SV X} \min_{x \in Q} (1 + h
  (x)) \,d\mu = \int_{Q \in \SV X} 1 \,d\mu + \int_{Q \in \SV X}
  \min_{x \in Q} h (x) \,d\mu \leq 1 + \Phi (\mu) (h)$.
  Similarly, if $\mu (X)=1$, then
  $\Phi (\mu) (\one+h) = 1 + \Phi (\mu) (h)$.  Therefore $\Psi$ is
  a map from $\Val_\bullet {\SV X}$ to $\Pred_\DN^\bullet X$.

  It remains to show that $\Phi$ is continuous.  For every
  $h \in \Lform X$, for every $r \in \Rp$,
  $\Phi^{-1} ([h > r]) = [h^* > r]$, where
  $h^* \colon \SV X \to \creal$ is defined by
  $h^* (Q) \eqdef \min_{x \in Q} h (x)$, as above.  Note that $h^*$ is
  in $\Lform {\SV X}$, by Lemma~\ref{lemma:choq}, item~\ref{choq:int}.
  \qed
\end{proof}
\begin{corollary}
  \label{corl:Phi}
  For every topological space $X$, there is a continuous map from
  $\Val_\bullet {\SV X}$ to $\SV {\Val_\bullet X}$, which
  maps every $\mu \in \Val_\bullet {\SV X}$ to the collection of
  continuous valuations $\nu \in \Val_\bullet X$ such that
  $\nu (U) \geq \mu (\Box U)$ for every $U \in \Open X$.
\end{corollary}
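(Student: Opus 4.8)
The plan is to realize the asserted map as a composite of maps already at our disposal. Let $\kappa \colon \Pred_\Nature^\bullet X \to \Val_\bullet X$ be the natural homeomorphism sending a linear prevision $G$ to the continuous valuation $U \mapsto G (\chi_U)$, whose inverse sends $\nu$ to $h \mapsto \int h\,d\nu$ (Section~\ref{sec:prev-powerc}). Let $\Phi \colon \Val_\bullet {\SV X} \to \Pred_\DN^\bullet X$ be the continuous map of Lemma~\ref{lemma:Phi}, and $s_\DN^\bullet \colon \Pred_\DN^\bullet X \to \SV {(\Pred_\Nature^\bullet X)}$ the continuous section recalled in Section~\ref{sec:prev-powerc}, $s_\DN^\bullet (F) = \{G \in \Pred_\Nature^\bullet X \mid G \geq F\}$. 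Since $\SV$ is a functor, $\SV \kappa$ is a homeomorphism from $\SV {(\Pred_\Nature^\bullet X)}$ to $\SV {\Val_\bullet X}$, so $\SV \kappa \circ s_\DN^\bullet \circ \Phi$ is a continuous map from $\Val_\bullet {\SV X}$ to $\SV {\Val_\bullet X}$. It then remains only to identify what this composite does to a given $\mu$.

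By definition, $s_\DN^\bullet (\Phi (\mu)) = \{G \in \Pred_\Nature^\bullet X \mid G \geq \Phi (\mu)\}$; as $\kappa$ is an order isomorphism (and the set is upwards closed anyway, so applying $\upc$ in $\SV {\Val_\bullet X}$ changes nothing), $\SV \kappa$ carries this to the set of $\nu \in \Val_\bullet X$ with $\int h\,d\nu \geq \Phi (\mu) (h)$ for every $h \in \Lform X$, where for the continuous valuation $\nu$ the Choquet integral agrees with the usual linear integral $\kappa^{-1} (\nu) (h)$. I must check this set equals $\{\nu \in \Val_\bullet X \mid \nu (U) \geq \mu (\Box U)\text{ for all }U \in \Open X\}$. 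For the forward inclusion, apply the inequality to $h \eqdef \chi_U$: since $\min_{x \in Q} \chi_U (x)$ equals $1$ when $Q \subseteq U$ and $0$ otherwise, $\Phi (\mu) (\chi_U) = \int_{Q \in \SV X} \chi_{\Box U} (Q)\,d\mu = \mu (\Box U)$ by Lemma~\ref{lemma:choq}, item~\ref{choq:chi}, while $\int \chi_U\,d\nu = \nu (U)$, so $\nu (U) \geq \mu (\Box U)$. For the converse, put $\nu_\mu (U) \eqdef \mu (\Box U)$; this is Scott-continuous, and $\Phi (\mu) (h) = \int_{x \in X} h (x)\,d\nu_\mu$ for every $h$ by Lemma~\ref{lemma:choq}, item~\ref{choq:int}. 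If $\nu (U) \geq \mu (\Box U) = \nu_\mu (U)$ for all $U$, then $\nu \geq \nu_\mu$ pointwise, so by monotonicity of the Choquet integral in the capacity (Lemma~\ref{lemma:choq}, item~1) $\int h\,d\nu \geq \int h\,d\nu_\mu = \Phi (\mu) (h)$ for every $h$. Hence the two descriptions coincide, which is exactly the claim.

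The only step that needs a moment's attention is the identification of $s_\DN^\bullet (\Phi (\mu))$ under $\SV \kappa$ with the two-sided inequality description, together with the small computation that $\Phi (\mu) (\chi_U) = \mu (\Box U)$; neither involves any genuinely new estimate, everything else being a direct unwinding of Lemma~\ref{lemma:Phi} and Lemma~\ref{lemma:choq}.
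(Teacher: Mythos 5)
Your proposal is correct and follows essentially the same route as the paper: the paper also defines the map as $s_{\DN\,X}^\bullet \circ \Phi$ (under the identification of $\Val_\bullet X$ with $\Pred_\Nature^\bullet X$, which you merely make explicit via $\SV \kappa$), and identifies its image by testing against $h \eqdef \chi_U$ in one direction and by monotonicity of the Choquet integral in the underlying set function, via Lemma~\ref{lemma:choq}, item~\ref{choq:int}, in the other. The extra remarks you make (upward closure under $\SV\kappa$, $\Phi(\mu)(\chi_U)=\mu(\Box U)$) are exactly the unwinding the paper performs.
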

\begin{proof}
  We equate $\Val_\bullet X$ with $\Pred_\Nature^\bullet X$.  Then the
  map $s_{\DN\,X}^\bullet \circ \Phi$ is continuous, and maps every
  $\mu \in \Val_\bullet {\SV X}$ to the collection
  $\{\nu \in \Val_\bullet X \mid \forall h \in \Lform X, \int_{Q \in
    \SV X} \min_{x \in Q} h (x) \,d\mu \leq \int_{x \in X} h (x)
  \,d\nu\}$.  Let $\xi \colon \Open X \to \creal$ be defined by
  $\xi (\Box U) \eqdef \mu (U)$ for every $U \in \Open X$, so that we
  may write $\mu$ as $\xi^*$, following the convention of
  Lemma~\ref{lemma:choq}, item~\ref{choq:int}.  By this item, for
  every $\nu \in \Val_\bullet X$,
  $\nu \in (s_{\DN\,X}^\bullet \circ \Phi) (\mu)$ if and only if
  $\int_{x \in X} h (x) \,d\xi \leq \int_{x \in X} h (x)\,d\nu$.  By
  taking $h \eqdef \chi_U$ for an arbitrary open subset $U$ of $X$,
  the latter implies $\xi (U) \leq \nu (U)$.  Conversely, if
  $\xi (U) \leq \nu (U)$ for every $U \in \Open X$,
  $\int_{x \in X} h (x) \,d\xi = \int_0^\infty \xi (h^{-1} (]t,
  \infty])) \,dt \leq \int_0^\infty \nu (h^{-1} (]t, \infty])) \,dt =
  \int_{x \in X} h (x)\,d\nu$.  Hence
  $(s_{\DN\,X}^\bullet \circ \Phi) (\mu)$ is exactly
  $\{\nu \in \Val_\bullet X \mid \forall U \in \Open X, \nu (U) \geq
  \mu (\Box U)\}$.  \qed
\end{proof}

In order to apply the theory of quasi-adjoints, we need to replace
$\SV$ by $\SVz$ in the result above.  We will do this by using the
following trick.

For any topological space $X$, let $X^\top$ be the space obtained from
$X$ by adding a fresh element $\top$, and whose non-empty open subsets
are the sets $U \cup \{\top\}$, $U \in \Open X$.  The specialization
preordering $\leq^\top$ of $X^\top$ is such that $x \leq^\top y$ if
and only if $y=\top$ or $x, y \in X$ and $x \leq y$ in $X$.

As we had said in Section~\ref{sec:smyth-hyperspace}, we will reserve
the notation $\Box U$ for $\{Q \in \SV X \mid Q \subseteq U\}$, and
use the notation $\Box_0 U$ for $\{Q \in \SVz X \mid Q \subseteq U\}$.
Hence $\Box_0 U = \Box U \cup \{\emptyset\}$, for every
$U \in \Open X$.

\begin{lemma}
  \label{lemma:Xtop}
  For every topological space $X$, the map
  $t \colon Q \mapsto Q \cup \{\top\}$ is a homeomorphism of $\SVz X$
  onto $\SV X$.
\end{lemma}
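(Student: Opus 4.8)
The plan is to verify that $t$ is a bijective, full, continuous map from $\SVz X$ onto $\SV {X^\top}$ (note that $t$ sends a compact saturated subset $Q$ of $X$ to $Q \cup \{\top\}$, a non-empty compact saturated subset of $X^\top$, so its codomain is $\SV {X^\top}$), and then to invoke the characterisation recalled in Section~\ref{sec:preliminaries}: a bijective, full, continuous map is a homeomorphism.

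First I would check that $t$ is well defined, i.e.\ that $Q \cup \{\top\} \in \SV {X^\top}$ whenever $Q \in \SVz X$. Non-emptiness is trivial; saturation follows because $\leq^\top$ restricts to the specialization ordering $\leq$ of $X$ on points of $X$ and has $\top$ as its greatest element, so any point above a point of $Q \cup \{\top\}$ is either $\top$ or a point of $X$ above a point of $Q$, hence already in $Q \cup \{\top\}$; compactness follows since, from any open cover of $Q \cup \{\top\}$ in $X^\top$, the traces on $X$ of its members form an open cover of $Q$, and a finite subcover of $Q$ together with one member of the original cover containing $\top$ covers $Q \cup \{\top\}$. Injectivity of $t$ is immediate (intersect with $X$). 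For surjectivity onto $\SV {X^\top}$: given $K \in \SV {X^\top}$, $K$ is non-empty and saturated, hence contains $\top$; setting $Q \eqdef K \cap X$, one checks as above that $Q$ is saturated in $X$, and that $Q$ is compact in $X$ because any open cover $(U_i)_i$ of $Q$ in $X$ gives an open cover $(U_i \cup \{\top\})_i$ of $K$ in $X^\top$ (when this family is empty, $Q = \emptyset$ and there is nothing to prove), from which a finite subcover restricts to a finite subcover of $Q$; clearly $t(Q) = K$.

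Continuity and fullness both come from one computation on basic open sets. Every non-empty open subset of $X^\top$ has the form $U \cup \{\top\}$ with $U \in \Open X$, and $\Box \emptyset = \emptyset$, so for each $U \in \Open X$, $t^{-1}(\Box (U \cup \{\top\})) = \{Q \in \SVz X \mid Q \cup \{\top\} \subseteq U \cup \{\top\}\} = \{Q \in \SVz X \mid Q \subseteq U\} = \Box_0 U$. Since the sets $\Box (U \cup \{\top\})$ form a base of the topology of $\SV {X^\top}$, this shows $t$ is continuous; since the sets $\Box_0 U$ form a base of the topology of $\SVz X$ and each is such a preimage, $t$ is full. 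Being bijective, full and continuous, $t$ is a homeomorphism.

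The verification is entirely routine, so I do not expect a real obstacle; the only points needing a little care are that compactness transfers in both directions across the addition of $\top$, and the bookkeeping around the empty compact saturated subset of $X$, which $t$ sends to the singleton $\{\top\}$ of $\SV {X^\top}$.
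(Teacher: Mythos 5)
Your proof is correct and follows essentially the same route as the paper's: the same key computation $t^{-1}(\Box\,(U \cup \{\top\})) = \Box_0\, U$ giving fullness and continuity, and the same surjectivity argument via $Q \eqdef K \cap X$ (you verify compactness of $K \cap X$ by a direct cover argument where the paper notes that $X$ is closed in $X^\top$, a negligible difference). Nothing is missing.
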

\begin{proof}
  For every compact saturated subset $Q$ of $X$, $Q \cup \{\top\}$ is
  certainly saturated and non-empty.  Any open cover of
  $Q \cup \{\top\}$ can be trimmed to one that does not contain the
  empty set, hence one of the form ${(U_i \cup \{\top\})}_{i \in I}$
  with each $U_i$ open in $X$; then ${(U_i)}_{i \in I}$ is an open
  cover of $Q$, from which we can extract a finite subcover.  This
  shows that $Q \cup \{\top\}$ is compact in $X^\top$.

  For every $U \in \Open X$,
  $t^{-1} (\Box_0 {(U \cup \{\top\})}) = \Box U$, so $t$ is full and
  continuous.  Since $\SVz X$ is $T_0$, $t$ is a topological
  embedding.

  It remains to show that $t$ is surjective.  Given any non-empty
  compact saturated subset $Q'$ of $X^\top$, $Q'$ must contain some
  point, which is below $\top$, so $Q'$ must also contain $\top$.  But
  $\{\top\}$ is open in $X^\top$, so its complement $X$ is closed in
  $X^\top$, and therefore $Q \eqdef Q' \cap X$ is compact.  Since it
  is included in the subspace $X$, $Q$ is compact in $X$, too.  It is
  clearly saturated in $X$, and $Q' = Q \cup \{\top\}$, so $t$ is
  surjective.  \qed
\end{proof}

\begin{lemma}
  \label{lemma:calF}
  Let $\bullet$ be nothing, ``$\leq 1$'' or ``$1$''.  For every
  topological space $X$, let $\mathcal F_X$ be the subset of
  $\Val_\bullet {(X^\top)}$ consisting of those elements $\nu$ such that
  $\nu (\{\top\}) = 0$.  $\mathcal F_X$ is a closed subspace of
  $\Val_\bullet {(X^\top)}$, and there is a continuous map
  $c \colon \SV {\Val_\bullet {(X^\top)}} \to \SVz {\mathcal F_X}$
  defined by $c (\mathcal Q) \eqdef \mathcal Q \cap \mathcal F_X$ for
  every $\mathcal Q \in \SV {\Val_\bullet {(X^\top)}}$.
\end{lemma}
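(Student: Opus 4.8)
The plan is to establish in turn that $\mathcal F_X$ is closed, that $c$ is well-defined, and that $c$ is continuous, the last by computing inverse images of basic open sets.

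First I would note that $\{\top\}$ is an open subset of $X^\top$ (it is the set $U \cup \{\top\}$ for $U = \emptyset$), so that $[\{\top\} > 0] = \{\nu \in \Val_\bullet {(X^\top)} \mid \nu (\{\top\}) > 0\}$ is one of the subbasic open sets defining the weak topology on $\Val_\bullet {(X^\top)}$. Since $\mathcal F_X$ is by definition its complement, $\mathcal F_X$ is closed, and moreover $\Val_\bullet {(X^\top)}$ is the disjoint union of $\mathcal F_X$ and $[\{\top\} > 0]$, a fact I will reuse below.

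Next, I would check that $c$ is well-defined, i.e.\ that $\mathcal Q \cap \mathcal F_X \in \SVz {\mathcal F_X}$ for every $\mathcal Q \in \SV {\Val_\bullet {(X^\top)}}$. Since $\mathcal F_X$ is closed in $\Val_\bullet {(X^\top)}$, the set $\mathcal Q \cap \mathcal F_X$ is closed in the subspace $\mathcal Q$, hence compact, $\mathcal Q$ being compact; and compactness of a subset does not depend on the ambient space, so $\mathcal Q \cap \mathcal F_X$ is compact in $\mathcal F_X$ as well. It is also saturated in $\mathcal F_X$: the specialization preordering of the subspace $\mathcal F_X$ is the restriction of that of $\Val_\bullet {(X^\top)}$, so if $\nu \in \mathcal Q \cap \mathcal F_X$ and $\nu \leq \nu'$ with $\nu' \in \mathcal F_X$, then $\nu' \in \mathcal Q$ (because $\mathcal Q$ is saturated), hence $\nu' \in \mathcal Q \cap \mathcal F_X$. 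As $\mathcal Q \cap \mathcal F_X$ may be empty (precisely when $\mathcal Q \subseteq [\{\top\} > 0]$), the codomain $\SVz {\mathcal F_X}$ --- rather than $\SV {\mathcal F_X}$ --- is what is needed.

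For continuity, I would use that the sets $\Box_0 W$, with $W$ ranging over the open subsets of $\mathcal F_X$, form a base of the upper Vietoris topology on $\SVz {\mathcal F_X}$, and show that $c^{-1} (\Box_0 W)$ is open for each such $W$. Writing $W = V \cap \mathcal F_X$ with $V$ open in $\Val_\bullet {(X^\top)}$, I claim that $c^{-1} (\Box_0 W) = \Box {(V \cup [\{\top\} > 0])}$, a basic open subset of $\SV {\Val_\bullet {(X^\top)}}$. Indeed, for $\mathcal Q \in \SV {\Val_\bullet {(X^\top)}}$ we have $\mathcal Q \cap \mathcal F_X \subseteq W$ if and only if $\mathcal Q \cap \mathcal F_X \subseteq V$ (as $\mathcal Q \cap \mathcal F_X \subseteq \mathcal F_X$ always holds), and since $\mathcal Q \cap \mathcal F_X$ is disjoint from $[\{\top\} > 0]$ while $\Val_\bullet {(X^\top)} = \mathcal F_X \cup [\{\top\} > 0]$, this holds if and only if $\mathcal Q \subseteq V \cup [\{\top\} > 0]$, proving the claim and hence continuity. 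I do not expect a genuine obstacle in this lemma; the only point requiring a little care is the bookkeeping between the subspace topology on $\mathcal F_X$ and the ambient topology on $\Val_\bullet {(X^\top)}$, together with the partition $\Val_\bullet {(X^\top)} = \mathcal F_X \cup [\{\top\} > 0]$ that makes the preimage computation clean.
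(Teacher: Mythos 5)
Your proposal is correct and follows essentially the same route as the paper's proof: $\mathcal F_X$ is closed as the complement of the subbasic open set $[\{\top\} > 0]$, well-definedness comes from intersecting a compact saturated set with a closed set and checking saturation in the subspace, and continuity is established by the very same computation $c^{-1}(\Box_0 {(V \cap \mathcal F_X)}) = \Box {(V \cup [\{\top\} > 0])}$. Your extra remarks (the partition of $\Val_\bullet {(X^\top)}$ and the possible emptiness of $\mathcal Q \cap \mathcal F_X$) are accurate and only make explicit what the paper leaves implicit.
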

\begin{proof}
  First, the definition of $\mathcal F_X$ makes sense, and notably the
  condition $\nu (\{\top\}) = 0$, because $\{\top\}$ is open in
  $X^\top$.  Second, $\mathcal F_X$ is the complement of
  $[\{\top\} > 0]$, hence is closed in $\Val_\bullet {(X^\top)}$.

  For every $\mathcal Q \in \SV {\Val_\bullet {(X^\top)}}$,
  $c (\mathcal Q)$ is compact in $\Val_\bullet {(X^\top)}$ and
  included in $\mathcal F_X$, hence compact in $\mathcal F_X$ seen as
  a subspace.  The specialization ordering of $\mathcal F_X$ is the
  restriction of the stochastic ordering, so $c (\mathcal Q)$ is
  saturated in $\mathcal F_X$: it suffices to show that for all
  $\nu \in \mathcal Q \cap \mathcal F_X$ and $\nu' \in \mathcal F_X$
  such that $\nu \leq \nu'$, $\nu'$ is in $\mathcal Q$, which follows
  from the fact that $\mathcal Q$ is saturated in
  $\Val_\bullet {(X^\top)}$.  Hence $c$ defines a map from
  $\SV {\Val_\bullet {(X^\top)}}$ to $\SVz {\mathcal F_X}$.  For every
  open subset $\mathcal U$ of $\Val_\bullet {(X^\top)}$,
  $c^{-1} (\Box_0 {(\mathcal U \cap \mathcal F_X)}) = \{\mathcal Q \in
  \SV {\Val_\bullet {(X^\top)}} \mid \mathcal Q \cap \mathcal F_X
  \subseteq \mathcal U\} = \Box {([\{\top\} > 0] \cup \mathcal U)}$;
  indeed, $\mathcal Q \cap \mathcal F_X \subseteq \mathcal U$ if and
  only if $\mathcal Q$ is included in the union of the complement
  $[\{\top\} > 0]$ of $\mathcal F_X$ with $\mathcal U$.  Hence $c$ is
  continuous.  \qed
\end{proof}

\begin{lemma}
  \label{lemma:nu-}
  Let $\bullet$ be nothing, ``$\leq 1$'' or ``$1$''.  For every
  topological space $X$, let $\mathcal F_X$ be as in
  Lemma~\ref{lemma:calF}.  For every $\nu \in \mathcal F_X$, there is
  a unique $\nu^- \in \Val_\bullet X$ such that $i [\nu^-] = \nu$,
  where $i$ is the inclusion map from $X$ into $X^\top$.  The map
  $\_^- \colon \nu \mapsto \nu^-$ is continuous from $\mathcal F_X$ to
  $\Val_\bullet X$.
\end{lemma}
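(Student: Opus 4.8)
The plan is to guess the right formula for $\nu^-$ and then verify the axioms. I would first record that the open subsets of $X^\top$ are exactly $\emptyset$ together with the sets $U \cup \{\top\}$ for $U \in \Open X$, and that $U \mapsto U \cup \{\top\}$ is an order-isomorphism of $\Open X$ onto $\Open {(X^\top)} \diff \{\emptyset\}$ preserving binary unions, binary intersections, and directed unions. Since $i^{-1} (U \cup \{\top\}) = U$ and $i^{-1} (\emptyset) = \emptyset$, any $\mu \in \Val_\bullet X$ with $i [\mu] = \nu$ must satisfy $\mu (U) = \nu (U \cup \{\top\})$ for every $U \in \Open X$, which gives uniqueness at once (the value on $\emptyset$ being forced to $0$ by strictness).

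I would then \emph{define} $\nu^- (U) \eqdef \nu (U \cup \{\top\})$ for $U \in \Open X$ and check that $\nu^-$ is a continuous valuation. Strictness is exactly where the hypothesis $\nu \in \mathcal F_X$ is used: $\nu^- (\emptyset) = \nu (\{\top\}) = 0$. Modularity follows from modularity of $\nu$, since $(U \cup \{\top\}) \cup (V \cup \{\top\}) = (U \cup V) \cup \{\top\}$ and $(U \cup \{\top\}) \cap (V \cup \{\top\}) = (U \cap V) \cup \{\top\}$; Scott-continuity (hence monotonicity) of $\nu^-$ follows from that of $\nu$ together with the fact that $U \mapsto U \cup \{\top\}$ preserves directed unions. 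Finally $\nu^- (X) = \nu (X^\top)$, so $\nu^-$ inherits from $\nu$ whichever normalization condition $\bullet$ is in force (boundedness, ``$\leq 1$'', or ``$=1$''); hence $\nu^- \in \Val_\bullet X$, and clearly $i [\nu^-] = \nu$.

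For continuity of $\_^-$, I would use that the weak topology on $\Val_\bullet X$ has the subbasic opens $[U > r]$, $U \in \Open X$, $r \in \Rp$, and observe that ${(\_^-)}^{-1} ([U > r]) = \{\nu \in \mathcal F_X \mid \nu (U \cup \{\top\}) > r\} = [U \cup \{\top\} > r] \cap \mathcal F_X$, which is open in the subspace $\mathcal F_X$ of $\Val_\bullet {(X^\top)}$.

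I do not expect a genuine obstacle here; the only points demanding care are making sure the condition $\nu (\{\top\}) = 0$ is invoked precisely for strictness of $\nu^-$, and treating the three variants of $\bullet$ uniformly when concluding $\nu^- \in \Val_\bullet X$.
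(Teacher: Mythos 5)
Your proof is correct and follows essentially the same route as the paper: uniqueness is forced by $\nu^-(U) = \nu(U \cup \{\top\})$, existence is by taking that formula as the definition (with $\nu(\{\top\})=0$ giving strictness, and modularity, Scott-continuity and the normalization condition transferring directly), and continuity comes from ${(\_^-)}^{-1}([U>r]) = [U \cup \{\top\} > r] \cap \mathcal F_X$. The only cosmetic slip is the parenthetical mention of ``boundedness'', which is not among the cases of $\bullet$ in this lemma, but it does not affect the argument.
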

\begin{proof}
  Let $\nu \in \mathcal F_X$.  If $\nu^-$ exists, then for every
  $U \in \Open X$, we must have
  $\nu (U \cup \{\top\}) = \nu^- (i^{-1} (U \cup \{\top\}) = \nu^-
  (U)$, showing uniqueness.  As far as existence is concerned, we
  define $\nu^- (U)$ as $\nu (U \cup \{\top\})$ for every
  $U \in \Open X$.  This is a strict map precisely because
  $\nu \in \mathcal F_X$, and it is clear that $\nu^-$ is modular and
  Scott-continuous.  Additionally, $\nu^- (X) \leq 1$ if and only if
  $\nu (X^\top) \leq 1$, and similarly with $=$ instead of $\leq$.
  
  This defines a map $\_^- \colon \nu \mapsto \nu^-$ from
  $\mathcal F_X$ to $\Val_\bullet X$, and it remains to see that it is
  continuous: the inverse image of a subbasic open set $[U > r]$, with
  $U \in \Open X$ and $r \in \Rp$, is $[U \cup \{\top\} > r]$.  \qed
\end{proof}

We recall that $\Box_0 U$ denotes $\Box U \cup \{\emptyset\}$, and is
a canonical subbasic open subset of $\SVz X$, where $U \in \Open X$.
\begin{proposition}
  \label{prop:Phi}
  For every topological space $X$, there is a continuous map from
  $\Val_\bullet {\SVz X}$ to $\SVz {\Val_\bullet X}$, which maps every
  $\mu \in \Val_\bullet {\SVz X}$ to the collection of continuous
  valuations $\nu \in \Val_\bullet X$ such that
  $\nu (U) \geq \mu (\Box_0 U)$
  for every $U \in \Open X$.
\end{proposition}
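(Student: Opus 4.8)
The plan is to reduce to Corollary~\ref{corl:Phi} by passing to the space ${X^\top}$, exploiting the homeomorphism $\SVz X \cong \SV {X^\top}$ of Lemma~\ref{lemma:Xtop} and the fact that $\Val_\bullet X$ sits inside $\Val_\bullet {X^\top}$ as the closed subspace $\mathcal F_X$ (Lemma~\ref{lemma:calF}). Concretely, I would exhibit the desired map as the composite
\[
  \Val_\bullet {\SVz X}
  \xrightarrow{\;\Val_\bullet t\;} \Val_\bullet {\SV {X^\top}}
  \xrightarrow{\;(\star)\;} \SV {\Val_\bullet {X^\top}}
  \xrightarrow{\;c\;} \SVz {\mathcal F_X}
  \xrightarrow{\;\SVz {(\_^-)}\;} \SVz {\Val_\bullet X},
\]
where $t$ is the homeomorphism of Lemma~\ref{lemma:Xtop}, $(\star)$ is the continuous map of Corollary~\ref{corl:Phi} applied to the space ${X^\top}$, $c$ is the continuous map of Lemma~\ref{lemma:calF}, and $\_^-$ is the continuous map of Lemma~\ref{lemma:nu-}. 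Each arrow is continuous: $\Val_\bullet t$ because $\Val_\bullet$ is a functor and $t$ a homeomorphism, $(\star)$ and $c$ by the quoted results, and the last one because $\SVz$ is an endofunctor on $\Topcat$; hence the composite is continuous.

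Then I would identify the composite concretely. Writing $\mu' \eqdef \Val_\bullet t (\mu) = t [\mu]$, the computation $t^{-1} (\Box {(U \cup \{\top\})}) = \Box_0 U$ (as in the proof of Lemma~\ref{lemma:Xtop}) gives $\mu' (\Box {(U \cup \{\top\})}) = \mu (\Box_0 U)$ for every $U \in \Open X$, and $\mu' (\Box \emptyset) = 0$. By Corollary~\ref{corl:Phi} applied to ${X^\top}$, the image of $\mu'$ under $(\star)$ is $\mathcal Q \eqdef \{\nu' \in \Val_\bullet {X^\top} \mid \nu' (V) \geq \mu' (\Box V) \text{ for all } V \in \Open {X^\top}\}$; since every open of ${X^\top}$ is $\emptyset$ (for which the constraint is vacuous) or $U \cup \{\top\}$ with $U \in \Open X$, this is $\{\nu' \in \Val_\bullet {X^\top} \mid \nu' (U \cup \{\top\}) \geq \mu (\Box_0 U) \text{ for all } U \in \Open X\}$. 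Applying $c$ intersects this with $\mathcal F_X$, and applying $\SVz {(\_^-)}$ takes the upward closure in $\Val_\bullet X$ of the image of $\mathcal Q \cap \mathcal F_X$ under $\_^-$. To see that the result equals $\mathcal N \eqdef \{\nu \in \Val_\bullet X \mid \nu (U) \geq \mu (\Box_0 U) \text{ for all } U \in \Open X\}$, I would argue two inclusions: given $\nu \in \mathcal N$, the pushforward $\nu' \eqdef i [\nu]$ along the inclusion $i \colon X \hookrightarrow {X^\top}$ lies in $\mathcal Q \cap \mathcal F_X$ (it satisfies $\nu' (\{\top\}) = \nu (\emptyset) = 0$ and $\nu' (U \cup \{\top\}) = \nu (U) \geq \mu (\Box_0 U)$), and $(\nu')^- = \nu$ by the uniqueness in Lemma~\ref{lemma:nu-}, so $\nu \in (\_^-)[\mathcal Q \cap \mathcal F_X]$; conversely, if $\nu \geq (\nu')^-$ for some $\nu' \in \mathcal Q \cap \mathcal F_X$, then $\nu (U) \geq (\nu')^- (U) = \nu' (U \cup \{\top\}) \geq \mu' (\Box {(U \cup \{\top\})}) = \mu (\Box_0 U)$ for every $U \in \Open X$, so $\nu \in \mathcal N$.

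The step I expect to require the most care — though it is bookkeeping rather than a genuine obstacle — is keeping the three box-notations straight ($\Box$ in $\SV {X^\top}$, $\Box_0$ in $\SVz X$, and the opens $U \cup \{\top\}$ of ${X^\top}$) while tracking the fresh point $\top$ through the four arrows, and checking that the degenerate index $U = \emptyset$ causes no trouble: $\Box_0 \emptyset = \{\emptyset\}$ imposes the same constraint $\mu (\{\emptyset\}) = 0$ on both $\mathcal N$ and $\mathcal Q \cap \mathcal F_X$, so both collections are empty precisely when that fails, and the correspondence $\nu \leftrightarrow i [\nu]$ is a bijection otherwise.
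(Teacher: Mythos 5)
Your proposal is correct and follows essentially the same route as the paper: the same four-arrow composite through $\Val_\bullet t$, Corollary~\ref{corl:Phi} applied to $X^\top$, $c$, and $\SVz{(\_^-)}$, with the same identification of the image via the two inclusions (pushforward along the inclusion $i \colon X \to X^\top$ in one direction, monotonicity through $(\nu')^-$ in the other). The bookkeeping with $\Box$, $\Box_0$ and the opens $U \cup \{\top\}$ matches the paper's computation.
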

\begin{proof}
  Let us call $f$ the continuous map from
  $\Val_\bullet {\SV {(X^\top)}} \to \SV {\Val_\bullet {(X^\top)}}$
  that we obtain from Corollary~\ref{corl:Phi} applied to the space
  $X^\top$.  We form the composition:
  \[
    \xymatrix{\Val_\bullet {\SVz X} \ar[r]^{\Val_\bullet t} &
      \Val_\bullet {\SV {(X^\top)}} \ar[r]^f & \SV {\Val_\bullet
        {X^\top}} \ar[r]^c & \SVz {\mathcal F_X} \ar[r]^{\SVz {\_^-}} &
      \SVz {\Val_\bullet X}}.
  \]
  where $t$ is from Lemma~\ref{lemma:Xtop}, $c$ is from
  Lemma~\ref{lemma:calF}, and $\_^-$ is from Lemma~\ref{lemma:nu-}.
  This composition is continuous.

  Given any $\mu \in \Val_\bullet {\SVz X}$, let $Q$ be its image by
  that composition.  It remains to show that
  $Q = \{\nu \in \Val_\bullet X \mid \forall U \in \Open X, \nu (U)
  \geq \mu (\Box_0 U)\}$.  The image of $\mu$ by $\Val_\bullet t$ is
  the continuous valuation
  $\mathcal U \mapsto \mu (t^{-1} (\mathcal U)) = \mu (\{Q \in \SVz X
  \mid Q \cup \{\top\} \in \mathcal U\})$.  In particular, for every
  open subset $U$ of $X$,
  $\Val_\bullet t (\mu) (\Box {(U \cup \{\top\})}) = \mu (\{Q \in \SVz
  X \mid Q \cup \{\top\} \subseteq U \cup \{\top\}\}) = \mu (\Box_0
  U)$, while $\Val_\bullet t (\mu) (\emptyset) = 0$.  The image of
  $\Val_\bullet t (\mu)$ by $f$ is the collection of all
  $\nu \in \Val_\bullet {(X^\top)}$ such that for every
  $U \in \Open X$,
  $\nu (U \cup \{\top\}) \geq \Val_\bullet t (\mu) (\Box {(U \cup
    \{\top\})})$ (and
  $\nu (\emptyset) \geq \Val_\bullet t (\mu) (\emptyset)$, which is
  automatically true); in other words, the collection of all
  $\nu \in \Val_\bullet {(X^\top)}$ such that for every
  $U \in \Open X$, $\nu (U \cup \{\top\}) \geq \mu (\Box_0 U)$.  This
  is mapped by $c$ to the collection $\mathcal N$ of all
  $\nu \in \Val_\bullet {(X^\top)}$ satisfying the same condition and
  such that $\nu (\{\top\}) = 0$.  For any $\nu \in \mathcal N$, we
  have $\nu^- (U) = \nu (U \cup \{\top\}) \geq \mu (\Box_0 U)$ for
  every $U \in \Open X$.

  The set $Q$ is the upward closure of the collection of continuous
  valuations $\nu^-$ obtained this way.  In particular, for every
  $\nu' \in Q$, for every $U \in \Open X$, $\nu' (U)$ is larger than
  or equal to $\nu^- (U)$ for some $\nu \in \mathcal N$, and therefore
  $\nu' (U) \geq \mu (\Box_0 U)$.  Conversely, for every
  $\nu' \in \Val_\bullet X$ such that $\nu' (U) \geq \mu (\Box_0 U)$
  for every $U \in \Open X$, let $\nu \in \Val_\bullet {(X^\top)}$ be
  defined by $\nu (U \cup \{\top\}) \eqdef \nu' (U)$ for every
  $U \in \Open X$, and $\nu (\emptyset) \eqdef 0$.  It is easy to
  check that $\nu$ is indeed in $\Val_\bullet {(X^\top)}$, and that
  $\nu (\{\top\}) = 0$, so that $\nu \in \mathcal F_X$.  Additionally,
  $\nu^- = \nu'$.  Therefore $\nu'$ is in $Q$.  Hence $Q$ coincides
  with the collection
  $\{\nu' \in \Val_\bullet X \mid \forall U \in \Open X, \nu' (U) \geq
  \mu (\Box_0 U)\}$, as promised.  \qed
\end{proof}
The following somehow generalizes Theorem~6.5 of \cite{JGL:QRB}, which
states that $\Val_1$ preserves proper surjective maps between stably
compact spaces.  We do not deal with surjectivity.
\begin{theorem}
  \label{thm:V:proper}
  Let $\bullet$ be nothing, ``$\leq 1$'' or ``$1$''.
  For every proper map $r \colon X \to Y$, $\Val r \colon
  \Val_\bullet X \to \Val_\bullet Y$ is proper.
\end{theorem}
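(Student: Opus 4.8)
The plan is to reduce the problem to the theory of quasi-adjoints from Lemma~\ref{lemma:proper}. The map $\Val_\bullet r$ is continuous, being the action of the functor $\Val_\bullet$ on a morphism, so it suffices to exhibit a quasi-adjoint $\Sigma \colon \Val_\bullet Y \to \SVz {\Val_\bullet X}$ of $\Val_\bullet r$ and then invoke the equivalence of items~1 and~3 of Lemma~\ref{lemma:proper}. Since $r$ is proper, that lemma also furnishes a quasi-adjoint $\varsigma \colon Y \to \SVz X$ of $r$, characterised by $\varsigma (y) = r^{-1} (\upc y)$ and satisfying $x \in \varsigma (r (x))$ for every $x \in X$ (condition (b), i.e.\ $\varsigma \circ r \leq \eta^\Smyth_X$) and $\upc y \supseteq \SVz r (\varsigma (y))$ for every $y \in Y$ (condition (a)). My candidate for $\Sigma$ is the composite $\Psi \circ \Val_\bullet \varsigma$, where $\Psi \colon \Val_\bullet {\SVz X} \to \SVz {\Val_\bullet X}$ is the continuous map provided by Proposition~\ref{prop:Phi}, which sends $\mu$ to $\{\nu \in \Val_\bullet X \mid \nu (U) \geq \mu (\Box_0 U) \text{ for all } U \in \Open X\}$. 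This composite is continuous by construction, so the only work left is to check the two defining inequalities of a quasi-adjoint.

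First I would unfold $\Sigma$ explicitly: for $\nu \in \Val_\bullet Y$ and $U \in \Open X$ we have $\Val_\bullet \varsigma (\nu) (\Box_0 U) = \nu (\varsigma^{-1} (\Box_0 U))$, hence $\Sigma (\nu) = \{\mu \in \Val_\bullet X \mid \mu (U) \geq \nu (\varsigma^{-1} (\Box_0 U)) \text{ for all } U \in \Open X\}$. The engine of the verification is a pair of elementary set inclusions drawn from the two conditions on $\varsigma$: namely $r^{-1} (\varsigma^{-1} (\Box_0 U)) \subseteq U$ for every $U \in \Open X$, and $V \subseteq \varsigma^{-1} (\Box_0 (r^{-1} (V)))$ for every $V \in \Open Y$. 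The first is immediate, since $r (x) \in \varsigma^{-1} (\Box_0 U)$ means $\varsigma (r (x)) \subseteq U$ and $x \in \varsigma (r (x))$; the second holds because $r [\varsigma (y)] \subseteq \upc r [\varsigma (y)] \subseteq \upc y \subseteq V$ whenever $y \in V$ (the last inclusion since $V$ is upwards closed), so $\varsigma (y) \subseteq r^{-1} (V)$.

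With these inclusions in hand, the two quasi-adjoint conditions for $\Sigma$ follow by monotonicity of continuous valuations. For condition~(b), put $\nu \eqdef \Val_\bullet r (\mu) = r [\mu]$; then $\nu (\varsigma^{-1} (\Box_0 U)) = \mu (r^{-1} (\varsigma^{-1} (\Box_0 U))) \leq \mu (U)$ for every $U$, so $\mu \in \Sigma (\nu)$, i.e.\ $\Sigma \circ \Val_\bullet r \leq \eta^\Smyth_{\Val_\bullet X}$. For condition~(a), take $\mu \in \Sigma (\nu)$ and $V \in \Open Y$; applying the defining inequality of $\Sigma (\nu)$ at $U \eqdef r^{-1} (V)$ gives $\Val_\bullet r (\mu) (V) = \mu (r^{-1} (V)) \geq \nu (\varsigma^{-1} (\Box_0 (r^{-1} (V)))) \geq \nu (V)$, so $\Val_\bullet r (\mu) \geq \nu$ in the stochastic ordering; since this holds for every $\mu \in \Sigma (\nu)$, we get $\upc \nu \supseteq \SVz {(\Val_\bullet r)} (\Sigma (\nu))$. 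Hence $\Sigma$ is a quasi-adjoint of $\Val_\bullet r$, and $\Val_\bullet r$ is proper.

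I do not anticipate a genuine obstacle: the substantive content is already encapsulated in Proposition~\ref{prop:Phi} (which rests on the Choquet-integral toolkit of Lemma~\ref{lemma:choq} and the $X^{\top}$ trick) and in the quasi-adjoint characterisation of proper maps. The one point deserving a moment's care is uniformity in $\bullet$: the argument goes through identically for $\bullet$ being nothing, ``$\leq 1$'' or ``$1$'', because Proposition~\ref{prop:Phi} is stated for all three, $\Val_\bullet$ is a functor in all three cases, and $r [\mu]$ inherits (sub)normalisation from $\mu$.
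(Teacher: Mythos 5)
Your proposal is correct and follows essentially the same route as the paper: the paper also defines the quasi-adjoint of $\Val r$ as the composite of $\Val_\bullet \varsigma$ with the map of Proposition~\ref{prop:Phi}, and verifies conditions (a) and (b) via exactly the two inclusions $r^{-1}(\varsigma^{-1}(\Box_0 U)) \subseteq U$ and $V \subseteq \varsigma^{-1}(\Box_0 (r^{-1}(V)))$. The only cosmetic difference is that you phrase condition (a) by showing $\Val_\bullet r(\mu) \geq \nu$ for each $\mu \in \Sigma(\nu)$ and then passing to the upward closure, whereas the paper argues directly with an arbitrary element of $\upc \Val r[\Sigma(\nu)]$; these are interchangeable.
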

\begin{proof}
  By Lemma~\ref{lemma:proper}, $r$ has a quasi-adjoint
  $\varsigma \colon Y \to \SVz X$.  Let $\varsigma'$ be the
  composition
  $\xymatrix{\Val_\bullet Y \ar[r]^(0.4){\Val \varsigma} & \Val_\bullet
    {\SVz X} \ar[r]^g & \SVz {\Val_\bullet X}}$, where $g$ is the
  continuous map of Proposition~\ref{prop:Phi}.  We check that
  $\varsigma'$ is a quasi-adjoint to $\Val r$.


  First, we claim that
  $\eta^\Smyth_{\Val_\bullet Y} \leq \SVz {\Val r} \circ \varsigma'$,
  namely that for every $\nu \in \Val_\bullet Y$,
  $\upc \nu \supseteq \SVz {\Val r} (g (\Val \varsigma (\nu)))$.  For
  every
  $\mu' \in \SVz {\Val r} (g (\Val \varsigma (\nu))) = \upc \Val r [g
  (\varsigma [\nu])]$, there is a $\nu' \in g (\varsigma [\nu])$ such
  that $\mu' \geq r [\nu']$.  By definition of $g$, for every
  $U \in \Open X$,
  $\nu' (U) \geq \varsigma [\nu] (\Box_0 U) = \nu (\varsigma^{-1}
  (\Box_0 U))$.  Therefore, for every $V \in \Open Y$,
  $\mu' (V) \geq r [\nu'] (V) = \nu' (r^{-1} (V)) \geq \nu
  (\varsigma^{-1} (\Box_0 {r^{-1} (V)}))$.  We now observe that
  $V \subseteq \varsigma^{-1} (\Box_0 {r^{-1} (V)} )$: for every
  $y \in V$, $\upc y$ is included in $V$, and since
  $\upc y \supseteq \SVz r (\varsigma (y))$, we have
  $r [\varsigma (y)] \subseteq V$; hence
  $\varsigma (y) \in \Box_0 {r^{-1} (V)}$.  Since $\nu$ is monotonic, we
  conclude that $\mu' (V) \geq \nu (V)$.  Since $V$ is arbitrary in
  $\Open Y$, $\mu' \geq \nu$.  This shows that $\mu' \in \upc \nu$.
  Since $\mu'$ is arbitrary in
  $\SVz {\Val r} (g (\Val \varsigma (\nu)))$, we have shown that
  $\upc \nu \supseteq \SVz {\Val r} (g (\Val \varsigma (\nu)))$.

  Second, we claim that $\nu \in \varsigma' (\Val r (\nu))$ for every
  $\nu \in \Val_\bullet X$.  We have
  $\varsigma' (\Val r (\nu)) = g (\Val (\varsigma \circ r)) (\nu))$,
  and the claim reduces to showing that
  $\nu (U) \geq \Val (\varsigma \circ r) (\nu) (\Box_0 U)$ for every
  $U \in \Open X$.  We compute:
  $\Val (\varsigma \circ r) (\nu) (\Box_0 U) = (\varsigma \circ r)
  [\nu] (\Box_0 (U)) = \nu ((\varsigma \circ r)^{-1} (\Box_0 U))$.
  But $(\varsigma \circ r)^{-1} (\Box_0 U) \subseteq U$: for every
  $x \in (\varsigma \circ r)^{-1} (\Box_0 U)$,
  $(\varsigma \circ r) (x) \subseteq U$, and we conclude since
  $x \in \varsigma (r (x))$.  Since $\nu$ is monotonic,
  $\Val (\varsigma \circ r) (\nu) (\Box_0 U) \leq \nu (U)$, which is
  what we wanted to prove.

  We now know that $\varsigma'$ is a quasi-adjoint to $\Val r$, so
  $\Val r$ is proper by Lemma~\ref{lemma:proper}.  \qed
\end{proof}

\subsection{Projective systems consisting of proper maps}
\label{sec:proj-syst-cons}

\begin{proposition}
  \label{prop:hi:proper}
  Let ${(p_{ij} \colon X_j \to X_i)}_{i \sqsubseteq j \in I}$ be a
  projective system in $\Topcat$, with canonical projective limit
  $X, {(p_i)}_{i \in I}$.  Let us also assume that each $X_j$ is sober
  and that each $p_{ij}$ is proper.  Then:
  \begin{enumerate}
  \item every $p_i$ is proper; we write $\varsigma_i$ for its quasi-adjoint;
  \item for every $i \in I$, for every $U \in \Open X$, the largest
    open subset $U_i$ of $X_i$ such that $p_i^{-1} (U_i) \subseteq U$
    is $\varsigma_i^{-1} (\Box_0 U)$;
  \item for every $i \in I$, for every $h \in \Lform X$, the largest
    function $h_i \in \Lform {X_i}$ such that $h_i \circ p_i \leq h$
    is $h^\dagger \circ \varsigma_i$, where
    $h^\dagger \in \Lform {\SVz X}$ maps every $Q \in \SVz X$ to
    $\min_{x \in Q} h (x)$ if $Q \neq \emptyset$ and $\emptyset$ to
    $\infty$.
  \end{enumerate}
\end{proposition}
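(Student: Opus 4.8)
The plan is to treat all three items together through an explicit candidate for the quasi-adjoint, namely $\varsigma_i \colon X_i \to \SVz X$ with $\varsigma_i(y) \eqdef p_i^{-1}(\upc y)$. Once $\varsigma_i$ is shown to be a well-defined continuous map and a quasi-adjoint of $p_i$, item~1 is Lemma~\ref{lemma:proper} and the formula $\varsigma_i(y) = p_i^{-1}(\upc y)$ is built in. First I would check $\varsigma_i(y) \in \SVz X$: it is saturated because $p_i$ is monotonic and $\upc y$ is saturated, and it is compact as follows. Passing to the cofinal directed subset $\{j \mid i \sqsubseteq j\}$ of $I$ (which does not change the limit up to homeomorphism), the sets $Q^y_j \eqdef p_{ij}^{-1}(\upc y)$ are compact saturated in $X_j$ — each $p_{ij}$ is proper hence perfect, and $\upc y$ is compact saturated (any open set containing $y$ is upwards closed, hence contains $\upc y$) — so they are compact sober in the subspace topology by Remark~\ref{rem:sat:sober}. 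The maps $p_{jk}$ restrict to bonding maps $Q^y_k \to Q^y_j$, the canonical projective limit of ${(Q^y_j)}_{j \sqsupseteq i}$ is exactly $p_i^{-1}(\upc y)$, and Steenrod's theorem makes it compact (and sober).

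The bulk of the argument is the continuity of $\varsigma_i$. Fixing $U \in \Open X$ and writing $U_j$ for the largest open subset of $X_j$ with $p_j^{-1}(U_j) \subseteq U$, I would prove
\[
  \varsigma_i^{-1}(\Box_0 U) = \bigcup_{j \sqsupseteq i} \varsigma_{ij}^{-1}(\Box_0 U_j),
\]
where $\varsigma_{ij}$ is the quasi-adjoint of the proper map $p_{ij}$; the right-hand side is open because each $\varsigma_{ij}$ is continuous. The inclusion $\supseteq$ is immediate: $p_{ij}^{-1}(\upc y) \subseteq U_j$ forces $p_i^{-1}(\upc y) = p_j^{-1}(p_{ij}^{-1}(\upc y)) \subseteq p_j^{-1}(U_j) \subseteq U$. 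For $\subseteq$, let $y$ satisfy $p_i^{-1}(\upc y) \subseteq U = \dcup_{j \sqsupseteq i} p_j^{-1}(U_j)$; since $p_i^{-1}(\upc y)$ is compact, $p_i^{-1}(\upc y) \subseteq p_j^{-1}(U_j)$ for some $j \sqsupseteq i$. Then $p_{ij}^{-1}(\upc y) \cap U_j$ is an open neighbourhood, inside the compact sober space $p_{ij}^{-1}(\upc y)$, of the saturation of $p_j[p_i^{-1}(\upc y)]$, so Lemma~\ref{lemma:steenrod:open} (applied to the realization of $p_i^{-1}(\upc y)$ as the canonical limit of the compact sober system ${(p_{ik}^{-1}(\upc y))}_{k \sqsupseteq j}$) yields $l \sqsupseteq j$ with $p_{jl}[p_{il}^{-1}(\upc y)] \subseteq U_j$, i.e.\ $p_{il}^{-1}(\upc y) \subseteq p_{jl}^{-1}(U_j)$; and $p_{jl}^{-1}(U_j) \subseteq U_l$ by maximality of $U_l$, since $p_l^{-1}(p_{jl}^{-1}(U_j)) = p_j^{-1}(U_j) \subseteq U$. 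Hence $y \in \varsigma_{il}^{-1}(\Box_0 U_l)$. I expect this to be the main obstacle: the delicate point is tracking upward closures taken in $X_j$ against those taken inside the compact subspace $p_{ij}^{-1}(\upc y)$. With continuity established, conditions (a) and (b) of Definition~\ref{defn:qadj} are routine ($\upc p_i[p_i^{-1}(\upc y)] \subseteq \upc y$, and $x \in \varsigma_i(p_i(x))$ since $p_i(x) \geq p_i(x)$), so $\varsigma_i$ is a quasi-adjoint of $p_i$ and item~1 follows from Lemma~\ref{lemma:proper}.

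For item~2, observation $(*)$ in the proof of Lemma~\ref{lemma:proper} gives that the complement of $\varsigma_i^{-1}(\Box_0 U)$ in $X_i$ is $\dc p_i[X \diff U]$, which is closed because $p_i$ is proper, so $\varsigma_i^{-1}(\Box_0 U)$ is open; it contains every open $V$ with $p_i^{-1}(V) \subseteq U$ (if $y \in V$ then $\upc y \subseteq V$, so $p_i^{-1}(\upc y) \subseteq U$) and it itself satisfies $p_i^{-1}(\varsigma_i^{-1}(\Box_0 U)) \subseteq U$ (for such $x$, $x \in p_i^{-1}(\upc p_i(x)) \subseteq U$), hence it is the largest such open set, which is $U_i$. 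For item~3, I would first note $h^\dagger \in \Lform{\SVz X}$ by computing ${(h^\dagger)}^{-1}(]t,\infty]) = \Box_0 h^{-1}(]t,\infty])$ for $t \in \Rp$ (the minimum over a non-empty compact set is attained, and $h^\dagger(\emptyset) = \infty > t$), as in Lemma~\ref{lemma:choq}, item~\ref{choq:int}; then $h^\dagger \circ \varsigma_i \in \Lform{X_i}$ as a composite of continuous maps, and $(h^\dagger \circ \varsigma_i) \circ p_i \leq h$ because $x \in \varsigma_i(p_i(x))$ gives $(h^\dagger \circ \varsigma_i)(p_i(x)) = \min_{x' \in p_i^{-1}(\upc p_i(x))} h(x') \leq h(x)$. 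Finally, if $k \in \Lform{X_i}$ satisfies $k \circ p_i \leq h$, then for each $y$ with $\varsigma_i(y) \neq \emptyset$ and each $x \in \varsigma_i(y) = p_i^{-1}(\upc y)$ one has $h(x) \geq k(p_i(x)) \geq k(y)$ by monotonicity of $k$, so $k(y) \leq \min_{x \in \varsigma_i(y)} h(x) = h^\dagger(\varsigma_i(y))$; the case $\varsigma_i(y) = \emptyset$ is trivial. Thus $h^\dagger \circ \varsigma_i$ is the largest $h_i \in \Lform{X_i}$ with $h_i \circ p_i \leq h$, whose existence is guaranteed by Lemma~\ref{lemma:hi}.
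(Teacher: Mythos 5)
Your proposal is correct, and it shares the paper's overall skeleton (define $\varsigma_i(y) \eqdef p_i^{-1}(\upc y)$, show it lands in $\SVz X$ and is continuous, check conditions (a) and (b), then read off items~2 and~3), but the two hardest steps are executed differently. For compactness of the fibre $p_i^{-1}(\upc y)$, the paper constructs compact saturated sets $Q_j$ in \emph{every} $X_j$ as filtered intersections $\fcap_{k \in \upc i \cap \upc j} \upc p_{jk}[p_{ik}^{-1}(\upc y)]$, relying on well-filteredness of the sober spaces $X_j$, and then applies Steenrod; you instead restrict to the cofinal subsystem of the fibres $p_{ik}^{-1}(\upc y)$, $k \sqsupseteq i$, which are compact saturated outright by properness of $p_{ik}$, and apply Steenrod there---shorter, at the (mild, standard) cost of the cofinality identification of the two limits. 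For continuity, the paper again uses well-filteredness of $X_j$ to rewrite $\varsigma_i^{-1}(\Box_0(p_j^{-1}(U)))$ as a union of sets $(\SV{p_{jk}} \circ \varsigma_{ik})^{-1}(\Box_0 U)$, whereas you prove $\varsigma_i^{-1}(\Box_0 U) = \bigcup_{j \sqsupseteq i}\varsigma_{ij}^{-1}(\Box_0 U_j)$ for arbitrary $U \in \Open X$ by combining compactness of the fibre with Lemma~\ref{lemma:steenrod:open} applied to the fibre system; this is a genuinely different mechanism (the Steenrod approximation lemma in place of well-filteredness), and your bookkeeping of where saturations and neighborhoods live (inside $p_{ij}^{-1}(\upc y)$ versus $X_j$) is handled correctly since $U_j$ is upwards-closed in $X_j$. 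Item~2 matches the paper's argument. For item~3 the paper derives $h_i \leq h^\dagger \circ \varsigma_i$ by a small calculus with $\_^\dagger$, $\SV{p_i}$ and the quasi-adjoint inequalities, while you verify maximality head-on (any competitor $k$ with $k \circ p_i \leq h$ satisfies $k(y) \leq h^\dagger(\varsigma_i(y))$ by monotonicity of $k$); your version is more elementary and equally valid, and it even re-proves the existence clause of Lemma~\ref{lemma:hi} in this special case rather than merely invoking it.
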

\begin{proof}
  We will need to know the following.  A \emph{well-filtered space}
  $Z$ is a topological space such that for every filtered family
  ${(Q_j)}_{j \in J}$ of compact saturated subsets, for every open
  subset $U$ of $Z$, if $\fcap_{j \in J} Q_j \subseteq U$ then
  $Q_j \subseteq U$ for some $j \in J$.  It follows that for every
  filtered family as above, $\fcap_{j \in J} Q_j$ is compact saturated
  \cite[Proposition~8.3.6]{JGL-topology}.  Every sober space is
  well-filtered \cite[Proposition 8.3.5]{JGL-topology}.
  
  1.  We fix $i \in I$.  Using Lemma~\ref{lemma:proper}, we will build
  a quasi-adjoint $\varsigma_i$ to $p_i$.  We know that, for every
  $y \in X_i$, $\varsigma_i (y)$ must be equal to $p_i^{-1} (\upc y)$,
  but we will define it differently, so as to make sure that it is
  compact saturated, and we will then check that it is equal to
  $p_i^{-1} (\upc y)$.

  Let $y \in X_i$.  For every $k \in I$ such that $i \sqsubseteq k$,
  $p_{ik}^{-1} (\upc y)$ is compact (and saturated) since $p_{ik}$ is
  proper.  For every $j \sqsubseteq k$, we let
  $Q_{jk} \eqdef \upc p_{jk} [p_{ik}^{-1} (\upc y)]$.  We claim that
  for all $j, k, k' \in I$ such that
  $i, j \sqsubseteq k \sqsubseteq k'$, $Q_{jk'} \subseteq Q_{jk}$.
  For every $x \in Q_{jk'}$, there is a point $x' \in X_{k'}$ such
  that $p_{jk'} (x') \leq x$ and $y \leq p_{ik'} (x')$.  In other
  words, $p_{jk} (p_{kk'} (x')) \leq x$ and
  $y \leq p_{ik} (p_{kk'} (x'))$, showing that
  $x \in \upc p_{jk} [p_{ik}^{-1} (\upc y)] = Q_{jk}$.  Hence the
  family ${(Q_{jk})}_{k \in \upc i \cap \upc j}$ is filtered.  (We
  write $\upc i$ for the collection of indices $k \in I$ such that
  $i \sqsubseteq k$, and similarly for $\upc j$.  Both $\upc i$ and
  $\upc j$, as well as their intersection, are cofinal in $I$, and in
  particular directed.)  Since $X_j$ is sober hence well-filtered,
  $Q_j \eqdef \fcap_{k \in \upc i \cap \upc j} Q_{jk}$ is therefore a
  compact saturated subset of $X_j$.

  We verify that for all $j \sqsubseteq j' \in I$, $p_{jj'}$ maps
  $Q_{j'}$ to $Q_j$.  It suffices to verify that it maps $Q_{j'k}$ to
  $Q_{jk}$ for every $k \in \upc i \cap \upc j'$.  For every
  $x \in Q_{j'k}$, by definition there is a point $x' \in X_{k}$ such
  that $p_{j'k} (x') \leq x$ and $y \leq p_{ik} (x')$.  Then
  $p_{jj'} (x) \geq p_{jj'} (p_{j'k} (x')) = p_{jk} (x')$, and
  $y \leq p_{ik} (x')$, so
  $p_{jj'} (x) \in \upc p_{jk} [p_{ik}^{-1} (\upc y)] = Q_{jk}$.

  Hence
  ${(p_{jk|Q_k} \colon Q_k \to Q_j))}_{j \sqsubseteq k \in \upc i}$ is
  a projective system of compact spaces, obtained from compact
  saturated subsets $Q_i$ of each $X_i$.  Each $X_i$ is sober, hence
  also every $Q_i$ is sober by Remark~\ref{rem:sat:sober}.  By
  Steenrod's theorem, its canonical projective limit $Q$ is compact.
  One can verify that $Q$ is in fact a compact saturated subset of $X$
  \cite[Lemma~4.3]{JGL:proj:class}.

  We claim that $Q = p_i^{-1} (\upc y)$.  By construction, $Q$ is the
  collection of tuples $\vec x \eqdef {(x_i)}_{i \in I}$ where each
  $x_j \in Q_j$ and for all $j \sqsubseteq k \in I$,
  $x_j = p_{jk} (x_j)$.  For each such tuple, $p_i (\vec x) = x_i$ is
  in $Q_i$, and
  $Q_i \subseteq Q_{ii} = \upc p_{ii} [p_{ii}^{-1} (\upc y)] = \upc
  y$.  Therefore $Q \subseteq p_i^{-1} (\upc y)$.  Conversely, let
  $\vec x \eqdef {(x_i)}_{i \in I}$ be any element of $X$ such that
  $p_i (\vec x) = x_i \in \upc y$.  We claim that $\vec x$ in $Q$,
  namely that for every $j \in I$, $x_j \in Q_j$.  In turn, we need to
  show that for every $k \in \upc i \cap \upc j$,
  $x_j \in \upc p_{jk} [p_{ik}^{-1} (\upc y)]$.  We simply observe
  that $x_j = p_{jk} (x_k)$ (hence in particular
  $x_j \geq p_{jk} (x_j)$) and $x_k \in p_{ik}^{-1} (\upc y)$, since
  $p_{ik} (x_k) = x_i \in \upc y$, by assumption.

  Using Lemma~\ref{lemma:proper}, let $\varsigma_{jk}$ be the
  quasi-adjoint of $p_{jk}$, for all $j \sqsubseteq k \in I$.  We know
  that $\varsigma_{jk} (x) = p_{jk}^{-1} (\upc x)$ for every
  $x \in X_j$.  Hence $Q_{jk}$, as defined above, is equal to
  $\SV {p_{jk}} (\varsigma_{ik} (y))$, and
  $Q_j = \fcap_{k \in \upc i \cap \upc j} \SV {p_{jk}} (\varsigma_{ik}
  (y))$.

  For every $y \in X_i$, let us define $\varsigma_i (y)$ as
  $p_i^{-1} (\upc y)$, namely as the intersection
  $X \cap \prod_{j \in I} \fcap_{k \in \upc i \cap \upc j} \SV
  {p_{jk}} (\varsigma_{ik} (y))$.  We claim that $\varsigma_i$ is
  continuous.  It suffices to show that the inverse image of a basic
  open set $\Box_0 {(p_j^{-1} (U))}$ ($j \in I$, $U \in \Open {X_j}$)
  of $X$ by $\varsigma_i$ is open in $X_i$.  The elements in that
  inverse image are the points $y \in X_i$ such that
  $\fcap_{k \in \upc i \cap \upc j} \SV {p_{jk}} (\varsigma_{ik} (y))
  \subseteq U$.  Since $X_j$ is sober hence well-filtered, the latter
  is equivalent to the existence of $k \in \upc i \cap \upc j$ such
  that $\SV {p_{jk}} (\varsigma_{ik} (y)) \subseteq U$.  But
  $\SV {p_{jk}} (\varsigma_{ik} (y)) \subseteq U$ is equivalent to
  $\SV {p_{jk}} (\varsigma_{ik} (y)) \in \Box_0 U$, which is
  equivalent to
  $y \in {(\SV {p_{jk}} \circ \varsigma_{ik})}^{-1} (\Box_0 U)$.
  Therefore
  $\varsigma_i^{-1} (\Box_0 {(p_j^{-1} (U))}) = \bigcup_{k \in \upc i
    \cap \upc j} {(\SV {p_{jk}} \circ \varsigma_{ik})}^{-1} (\Box_0
  U)$, which is an open set.

  Showing that $\varsigma_i$ is quasi-adjoint to $p_i$ is now a
  formality.
  For every $y \in X_i$, $\SVz {p_i} (\varsigma_i (y)) = \upc p_i
  [p_i^{-1} (\upc y)] \subseteq \upc y$, and for every $x \in X$,
  $\varsigma_i (p_i (x)) = p_i^{-1} [\upc p_i (x)]$ contains $x$.  By
  Lemma~\ref{lemma:proper}, it follows that $p_i$ is proper.

  2. Since $\varsigma_i$ is continuous, $\varsigma_i^{-1} (\Box_0 U)$
  is certainly open.  For every $y \in \varsigma_i^{-1} (\Box_0 U)$,
  $\varsigma_i (y) = p_i^{-1} (\upc y)$ is included in $U$.  Since
  open sets are upwards-closed,
  $\varsigma_i^{-1} (\Box_0 U) = \bigcup_{y \in \varsigma_i^{-1}
    (\Box_0 U)} \upc y$, so
  $p_i^{-1} (\varsigma_i^{-1} (\Box_0 U)) = \bigcup_{y \in
    \varsigma_i^{-1} (\Box_0 U)} p_i^{-1} (\upc y) \subseteq U$.

  Therefore $\varsigma_i^{-1} (\Box_0 U) \subseteq U_i$.  In the
  reverse direction, for every $y \in U_i$,
  $\varsigma_i (y) = p_i^{-1} (\upc y) \subseteq p_i^{-1} (U_i)
  \subseteq U$, so $\varsigma_i (y) \in \Box_0 U$.
  
  3.  We have already defined a very similar function we called $h^*$
  in Lemma~\ref{lemma:choq}, item~\ref{choq:int}.  But that $h^*$ had
  $\SV X$ as domain, while the domain of $h^\dagger$ is $\SVz h$.  We
  had shown that for every $t \in \Rp$,
  ${h^*}^{-1} (]t, \infty]) = \Box {h^{-1} (]t, \infty])}$.  It
  immediately follows that
  ${h^\dagger}^{-1} (]t, \infty]) = \Box_0 {h^{-1} (]t, \infty])}$, a
  basic open subset of $\SVz X$.  Hence $h^\dagger$ is lower
  semicontinuous, namely, in $\Lform {\SVz X}$.

  Let us write $g$ for $h^\dagger \circ \varsigma_i$, and $h_i$ for
  the largest function in $\Lform {X_i}$ such that
  $h_i \circ p_i \leq h$, as described in Lemma~\ref{lemma:hi}.  We
  have
  $g \circ p_i = h^\dagger \circ \varsigma_i \circ p_i \leq h^\dagger
  \circ \eta^\Smyth_X$ (by property~(b) of quasi-adjoints and the fact
  that continuous maps are monotonic), and
  $h^\dagger \circ \eta^\Smyth_X = h$, since for every $x \in X$,
  $h^\dagger (\eta^\Smyth_X (x)) = \min_{y \in \upc x} h (y) = h (x)$.
  Since $h_i$ is the largest element of $\Lform {X_i}$ such that
  $h_i \circ p_i \leq h$, $g \leq h_i$.  Conversely, the operation
  $\_^\dagger$ is monotonic, so
  $g = h^\dagger \circ \varsigma_i \geq (h_i \circ p_i)^\dagger \circ
  \varsigma_i$.  For every $Q \in \SVz X$, either $Q$ is empty and
  $(h_i \circ p_i)^\dagger (Q) = 0 = h_i^\dagger (\SV {p_i} (Q))$, or
  $Q$ is non-empty and
  $(h_i \circ p_i)^\dagger (Q) = \min_{x \in Q} h_i (p_i (x)) =
  \min_{x \in Q, y \geq p_i (x)} h_i (y) = h_i^\dagger (\upc p_i [Q])
  = h_i^\dagger (\SV {p_i} (Q))$.  Therefore
  $(h_i \circ p_i)^\dagger = h_i^\dagger \circ \SV {p_i}$, and hence
  $g \geq h_i^\dagger \circ \SV {p_i} \circ \varsigma_i \geq
  h_i^\dagger \circ \eta^\Smyth_X$ (by property~(a) of quasi-adjoints)
  $= h_i$.  Hence $g=h_i$.  \qed
\end{proof}

\subsection{Projective limits of consonant and $\odot$-consonant
  spaces}
\label{sec:proj-limits-cons}

In a topological space $X$, for every compact saturated subset $Q$,
the collection $\blacksquare Q$ of all open neighborhoods of $Q$ is a
Scott-open subset of $\Open X$.  Any union of such sets
$\blacksquare Q$ is Scott-open, and $X$ is called \emph{consonant} if
and only if the converse holds, namely: for every Scott-open subset
$\mathcal U$ of $\Open X$, for every $U \in \mathcal U$, there is a
compact saturated subset $Q$ of $X$ such that $Q \subseteq U$ and
$\blacksquare Q \subseteq \mathcal U$.  As we said in
Section~\ref{sec:cont-valu}, the notion arises from
\cite{DGL:consonant}; see also \cite[Exercise 5.4.12]{JGL-topology}.

\begin{proposition}
  \label{prop:consonant:lim}
  Let ${(p_{ij} \colon X_j \to X_i)}_{i \sqsubseteq j \in I}$ be a
  projective system of topological spaces, with canonical projective
  limit $X, {(p_i)}_{i \in I}$.  If every $p_{ij}$ is proper and if
  every $X_i$ is consonant and sober, then so is $X$.
\end{proposition}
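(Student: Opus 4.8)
The plan is to dispatch sobriety immediately and then concentrate on consonance, which is where the work lies. Sobriety of $X$ is free: any limit of sober spaces taken in $\Topcat$ is sober (\cite[Theorem~8.4.13]{JGL-topology}). For consonance, the key preliminary observation is that, since every $X_i$ is sober and every $p_{ij}$ is proper, Proposition~\ref{prop:hi:proper} applies: every cone map $p_i$ is proper, with a quasi-adjoint $\varsigma_i$ satisfying $\varsigma_i (y) = p_i^{-1} (\upc y)$, and for every open subset $W$ of $X$ the largest open subset $W_i$ of $X_i$ with $p_i^{-1} (W_i) \subseteq W$ equals $\varsigma_i^{-1} (\Box_0 W)$, which in particular is open.

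First I would fix a Scott-open subset $\mathcal U$ of $\Open X$ and an element $U \in \mathcal U$. Writing $U = \dcup_{i \in I} p_i^{-1} (U_i)$ as a directed union, where $U_i$ is the largest open subset of $X_i$ with $p_i^{-1} (U_i) \subseteq U$ (as recalled in Section~\ref{sec:preliminaries}), Scott-openness of $\mathcal U$ gives an index $i \in I$ with $p_i^{-1} (U_i) \in \mathcal U$. Next I would transport $\mathcal U$ to $X_i$: the map $\Open {X_i} \to \Open X$, $V \mapsto p_i^{-1} (V)$, is monotone and commutes with arbitrary unions, hence Scott-continuous, so $\mathcal U_i \eqdef \{V \in \Open {X_i} \mid p_i^{-1} (V) \in \mathcal U\}$ is Scott-open in $\Open {X_i}$, and $U_i \in \mathcal U_i$. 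Applying consonance of $X_i$ yields a compact saturated subset $Q_i$ of $X_i$ with $Q_i \subseteq U_i$ and $\blacksquare Q_i \subseteq \mathcal U_i$.

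Then I would set $Q \eqdef p_i^{-1} (Q_i)$, which is compact saturated in $X$ because $p_i$ is proper hence perfect, and satisfies $Q \subseteq p_i^{-1} (U_i) \subseteq U$. It remains to verify $\blacksquare Q \subseteq \mathcal U$, i.e.\ that every open neighbourhood $W$ of $Q$ in $X$ lies in $\mathcal U$. By Proposition~\ref{prop:hi:proper}(2) the set $W_i \eqdef \varsigma_i^{-1} (\Box_0 W)$ is open in $X_i$ and $p_i^{-1} (W_i) \subseteq W$; moreover $Q_i \subseteq W_i$, because for $y \in Q_i$ we have $\varsigma_i (y) = p_i^{-1} (\upc y) \subseteq p_i^{-1} (Q_i) = Q \subseteq W$ (using that $Q_i$ is saturated, so $\upc y \subseteq Q_i$), whence $\varsigma_i (y) \in \Box_0 W$. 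Thus $W_i \in \blacksquare Q_i \subseteq \mathcal U_i$, so $p_i^{-1} (W_i) \in \mathcal U$, and since $p_i^{-1} (W_i) \subseteq W$ and $\mathcal U$ is upward closed, $W \in \mathcal U$. (One can avoid the quasi-adjoint: with $C \eqdef X \diff W$, the set $\dc p_i [C]$ is closed since $p_i$ is proper, is disjoint from $Q_i$ since $Q_i$ is saturated, and its complement $V$ satisfies $p_i^{-1} (V) \subseteq W$, so again $V \in \blacksquare Q_i \subseteq \mathcal U_i$ forces $W \in \mathcal U$.) This establishes consonance of $X$.

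I expect the main obstacle to be exactly this last step—converting an open neighbourhood of $Q = p_i^{-1} (Q_i)$ in $X$ into an open neighbourhood of $Q_i$ in $X_i$ whose preimage is still contained in $W$. This is precisely where properness of the bonding maps (and hence of the $p_i$, via Proposition~\ref{prop:hi:proper}) is indispensable, either through continuity of $\varsigma_i$ together with the identity $W_i = \varsigma_i^{-1} (\Box_0 W)$, or through the closedness of $\dc p_i [C]$. The remaining ingredients—selecting the index $i$ from Scott-openness and checking that $\mathcal U_i$ is Scott-open—are routine.
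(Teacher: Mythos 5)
Your proof is correct and follows essentially the same route as the paper's: transport $\mathcal U$ to the Scott-open family $\mathcal U_i$ on $\Open{X_i}$, apply consonance of $X_i$ to get $Q_i$, take $Q \eqdef p_i^{-1}(Q_i)$ using properness of $p_i$ from Proposition~\ref{prop:hi:proper}, and pull open neighborhoods $W$ of $Q$ back to the neighborhood $\varsigma_i^{-1}(\Box_0 W)$ of $Q_i$ via item~2 of that proposition. Your parenthetical variant that avoids the quasi-adjoint, using the closedness of $\dc p_i[X \diff W]$, is a correct and slightly more elementary phrasing of the same step, but it does not change the structure of the argument.
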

\begin{proof}
  Let $\mathcal U$ be a Scott-open subset of $\Open X$.  For every
  $i \in I$, let $\mathcal U_i$ be the collection of open subsets $U$
  of $X_i$ such that $p_i^{-1} (U) \in \mathcal U$.  Since $p_i^{-1}$
  commutes with unions, $\mathcal U_i$ is Scott-open in $\Open {X_i}$.
  Now let $U \in \Open X$.  For each $i \in I$, let $U_i$ be the
  largest open subset of $X_i$ such that $p_i^{-1} (U_i) \subseteq U$.
  Then ${(p_i^{-1} (U_i))}_{i \in I, \sqsubseteq}$ is a monotone net
  of open subsets of $X$, whose union is $U$.  Since $\mathcal U$ is
  Scott-open, $p_i^{-1} (U_i) \in \mathcal U$ for some $i \in I$.  In
  other words, $U_i$ is in $\mathcal U_i$.
  
  We use the fact that $X_i$ is consonant: there is a compact
  saturated subset $Q_i$ of $X_i$ such that $Q_i \subseteq U_i$ and
  every open neighborhood of $Q_i$ is in $\mathcal U_i$.  Using
  Proposition~\ref{prop:hi:proper}, item~1, $Q \eqdef p_i^{-1} (Q_i)$ is
  compact saturated in $X$.

  We note that $Q \subseteq U$.  Indeed, for every $x \in Q$, $p_i (x)
  \in Q_i \subseteq U_i$, so $x \in p_i^{-1} (U_i) \subseteq U$.

  We claim that every open neighborhood $V$ of $Q$ in $X$ lies in
  $\mathcal U$.  Since $Q_i$ is upwards-closed it is equal to the
  union of the sets $\upc y$ when $y$ ranges over $Q_i$.  Then
  $p_i^{-1} (\upc y) = \varsigma_i (y)$, where $\varsigma_i$ is the
  quasi-adjoint of $p_i$, so
  $Q = \bigcup_{y \in Q_i} \varsigma_i (y)$.  The fact that
  $Q \subseteq V$ then means that for every $y \in Q_i$,
  $\varsigma_i (y) \in \Box_0 V$, hence that
  $Q_i \subseteq \varsigma_i^{-1} (\Box_0 V)$.  By definition of
  $Q_i$, $\varsigma_i^{-1} (\Box_0 V)$ is then in $\mathcal U_i$.  By
  Proposition~\ref{prop:hi:proper}, item~2, $\varsigma_i^{-1} (\Box_0 V)$
  is the largest open subset $V_i$ of $X_i$ such that
  $p_i^{-1} (V_i) \subseteq V$.  We have seen that
  $V_i \in \mathcal U_i$, so by definition of $\mathcal U_i$,
  $p_i^{-1} (V_i)$ is in $\mathcal U$.  Since $\mathcal U$ is
  upwards-closed, $V$ is in $\mathcal U$.  \qed
\end{proof}

For every topological space $X$, for every $n \in \nat$, let the
\emph{copower} $n \odot X$ be the topological sum (categorical
coproduct) of $n$ copies of $X$.  In other words, $n \odot X$ is the
collection of pairs $(k, x)$ with $1 \leq k \leq n$ and $x \in X$,
with topology generated by the sets $\{k\} \times U$, $U \in \Open X$.
A space $X$ is \emph{$\odot$-consonant} if and only if $n \odot X$ is
consonant for every $n \in \nat$
\cite[Definition~13.1]{dBGLJL:LCScomplete}.  For example, every
LCS-complete space is $\odot$-consonant
\cite[Lemma~13.2]{dBGLJL:LCScomplete}.  There is an $n \odot \_$
endofunctor on $\Topcat$: for every continuous map $f \colon X \to Y$,
$n \odot f$ maps every $(k, x)$ to $(k, f (x))$.

A category $\cati$ is \emph{connected} if and only if it has at least
one object, and every two objects are connected by a zig-zag of
morphisms.  A connected diagram in a category $\catc$ is a functor
from a small connected category $\cati$ to $\catc$.  It is clear that
every projective system is a connected diagram.  The following says
that the copower functor preserves connected limits in $\Topcat$.
\begin{lemma}
  \label{lemma:odot:cont}
  Let $X, {(p_i)}_{i \in |\cati|}$ be the canonical limit of a
  connected diagram $F \colon \cati \to \Topcat$.  For every
  $n \in \nat$, $n \cdot X, {(n \odot p_i)}_{i \in I}$ is a projective
  limit of $(n \odot \_) \circ F$.
\end{lemma}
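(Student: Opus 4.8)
The plan is to make the canonical limit $Z$ of $(n \odot \_) \circ F$ explicit and then to check that the comparison map $\varphi \colon n \odot X \to Z$ is a homeomorphism, using the fact recalled in Section~\ref{sec:preliminaries} that a bijective full continuous map is a homeomorphism. By construction, $Z$ is the subspace of $\prod_{i \in |\cati|} (n \odot F(i))$ of those tuples ${((k_i, x_i))}_{i \in |\cati|}$ such that $(n \odot F(u))(k_j, x_j) = (k_i, x_i)$ for every morphism $u \colon j \to i$ in $\cati$; since $(n \odot F(u))(k_j, x_j) = (k_j, F(u)(x_j))$, this amounts to $k_i = k_j$ together with $F(u)(x_j) = x_i$. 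Because $q_i \circ \varphi = n \odot p_i$ and $(n \odot p_i)(k, \vec x) = (k, x_i)$, the comparison map is necessarily given by $\varphi (k, \vec x) = {((k, x_i))}_{i \in |\cati|}$; it is continuous, since each $q_i \circ \varphi = n \odot p_i$ is.

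First I would record that $\varphi$ is injective: $\varphi (k, \vec x) = \varphi (k', \vec x')$ forces $k = k'$ and $x_i = x'_i$ for every $i$, hence $\vec x = \vec x'$ in $X$.

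The one step that genuinely uses the hypothesis is surjectivity, and this is where connectedness of $\cati$ is essential. Given a tuple ${((k_i, x_i))}_i \in Z$, the constraint $k_i = k_j$ holds whenever $i$ and $j$ are joined by a morphism in either direction, hence, by propagating along a finite zig-zag of morphisms, whenever $i$ and $j$ are joined by such a zig-zag; since $\cati$ is connected (in particular non-empty), all the $k_i$ equal a common value $k$. The remaining data ${(x_i)}_i$ then satisfies $F(u)(x_j) = x_i$ for every morphism $u \colon j \to i$, so it is a point $\vec x$ of the canonical limit $X$, and $\varphi(k, \vec x)$ is the given tuple. (If $\cati$ were non-empty but disconnected, a tuple in $Z$ could carry distinct indices on distinct connected components and $\varphi$ would fail to be surjective — this is exactly where the hypothesis is spent.)

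Finally I would show $\varphi$ is full; by the remark in Section~\ref{sec:preliminaries} it suffices to do this on a subbase of $n \odot X$. Such a subbase is given by the sets $\{k\} \times p_i^{-1}(V_i)$ with $1 \le k \le n$, $i \in |\cati|$ and $V_i \in \Open{F(i)}$, using that the sets $p_i^{-1}(V_i)$ generate the topology of $X$. From the formula for $\varphi$ one reads off directly that $\varphi^{-1}\bigl(q_i^{-1}(\{k\} \times V_i)\bigr) = \{k\} \times p_i^{-1}(V_i)$, and $q_i^{-1}(\{k\} \times V_i)$ is open in $Z$ since $\{k\} \times V_i$ is a subbasic open subset of $n \odot F(i)$ and $q_i$ is continuous. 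Hence $\varphi$ is full, and being also a continuous bijection it is a homeomorphism, which finishes the proof. No step presents a real difficulty here; the only point requiring care is the surjectivity argument, which is precisely what the connectedness assumption is there to make work.
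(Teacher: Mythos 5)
Your proof is correct and follows essentially the same route as the paper: describe the canonical limit of $(n \odot \_) \circ F$ explicitly, use connectedness of $\cati$ to force all first components of a tuple to coincide, and conclude via a bijective, full, continuous map checked on the subbasic open sets $\{k\} \times p_i^{-1}(V_i)$. The only cosmetic difference is that you work with the comparison map $n \odot X \to Z$, whereas the paper writes down its inverse $Z \to n \odot X$ directly; your extra detail on the zig-zag propagation and on fullness is exactly what the paper leaves implicit.
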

\begin{proof}
  Let $X', {(q_i)}_{i \in |\cati|}$ be the canonical projective limit
  of $(n \odot \_) \circ F$.  The elements of $X'$ are the tuples
  ${((k, x_i))}_{i \in I}$ such that
  $(k, x_i) = (n \odot p_{ij}) (k, x_j)$ for all
  $i \sqsubseteq j \in I$.  Note that the first component $k$ must be
  the same at all positions $i \in I$, because $\cati$ is connected.
  A base of open subsets of $X'$ is given by the sets
  $p_i^{-1} (\{k\} \times U_i)$, where $i \in I$, $1\leq k\leq n$, and
  $U_i \in \Open {X_i}$.  The map
  $f \colon {((k, x_i))}_{i \in I} \mapsto (k, {(x_i)}_{i \in I})$ is
  bijective.  It is continuous and full because
  $f^{-1} (\{k\} \times p_i^{-1} (U_i)) = p_i^{-1} (\{k\} \times
  U_i)$.  Hence $f$ is a homeomorphism.  Additionally, since $p_i$ and
  $q_i$ are both projections onto coordinate $i$,
  $(n \odot p_i) \circ f = q_i$.  \qed
\end{proof}

\begin{corollary}
  \label{corl:consonant:lim}
  Let ${(p_{ij} \colon X_j \to X_i)}_{i \sqsubseteq j \in I}$ be a
  projective system of topological spaces, with canonical projective
  limit $X, {(p_i)}_{i \in I}$.  If every $p_{ij}$ is proper and if
  every $X_i$ is $\odot$-consonant and sober, then so is $X$.
\end{corollary}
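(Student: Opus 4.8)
The plan is to reduce everything to Proposition~\ref{prop:consonant:lim} by means of the copower functor. Fix $n \in \nat$. Since a projective system is in particular a connected diagram, Lemma~\ref{lemma:odot:cont} shows that $n \odot X$, together with the maps $n \odot p_i$, is a projective limit of the projective system ${(n \odot p_{ij} \colon n \odot X_j \to n \odot X_i)}_{i \sqsubseteq j \in I}$. Consonance and sobriety are invariant under homeomorphism, so it suffices to show that $n \odot X$ is consonant and sober, and I would deduce this from Proposition~\ref{prop:consonant:lim} applied to the projective system of copowers. That proposition requires that each $n \odot X_i$ be consonant and sober and that each $n \odot p_{ij}$ be proper, so the work reduces to checking these three facts.

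The first is immediate from the definition of $\odot$-consonance: $n \odot X_i$ is consonant precisely because $X_i$ is $\odot$-consonant. For the second, I would use the folklore fact that a finite topological sum of sober spaces is sober, whose proof is short. A sum of $T_0$ spaces is $T_0$, and a closed subset of $n \odot X_i$ has the form $\bigsqcup_{k=1}^n F_k$ with each $F_k$ closed in $X_i$; if two of the $F_k$ were non-empty the set would split as a union of two strictly smaller closed subsets, so an irreducible closed subset of $n \odot X_i$ is concentrated in a single copy and is irreducible there, hence of the form $\dc x$ in $X_i$ by sobriety, i.e.\ the downward closure of $(k, x)$ in $n \odot X_i$. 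For the third, I would check that the copower of a proper map is proper by direct computation: closed subsets, resp.\ compact saturated subsets, of a finite sum decompose as $\bigsqcup_k F_k$, resp.\ $\bigsqcup_k Q_k$, over the $n$ copies, since the specialization preorder of a sum is the disjoint union of the factors' preorders and a compact subset of a finite sum meets only finitely many summands. Then $\dc (n \odot p_{ij})[\bigsqcup_k F_k] = \bigsqcup_k \dc p_{ij}[F_k]$ is closed because $p_{ij}$ is a closed map, and $(n \odot p_{ij})^{-1}(\bigsqcup_k Q_k) = \bigsqcup_k p_{ij}^{-1}(Q_k)$ is compact saturated because $p_{ij}$ is perfect; hence $n \odot p_{ij}$ is proper.

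With these three facts in hand, Proposition~\ref{prop:consonant:lim} gives that $n \odot X$ is consonant and sober. Since $n \in \nat$ was arbitrary, $X$ is $\odot$-consonant; and taking $n = 1$ (or invoking that any limit of sober spaces is sober) shows that $X$ is sober. The only real obstacle is the routine bookkeeping for the third point—showing properness is inherited by copowers—and since properness is phrased in terms of closed images and compact-saturated preimages, both of which interact well with finite sums, there is no genuine difficulty.
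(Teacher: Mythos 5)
Your proof is correct and takes essentially the same route as the paper's: reduce to Proposition~\ref{prop:consonant:lim} via Lemma~\ref{lemma:odot:cont}, using $\odot$-consonance of each $X_i$ for consonance of the copowers, sobriety of finite coproducts of sober spaces, and sobriety of limits of sober spaces. Your explicit verification that $n \odot p_{ij}$ is proper (via the decomposition of closed and compact saturated subsets of a finite sum) is a detail the paper leaves implicit, and your argument for it is sound.
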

\begin{proof}
  For every $n \in \nat$, $n \odot X, {(n \odot p_i)}_{i \in I}$ is a
  projective limit of
  $(n \odot p_{ij} \colon n \odot X_j \to n \odot X_i)_{i \sqsubseteq
    j \in I}$ by Lemma~\ref{lemma:odot:cont}.  By assumption each
  space $n \odot X_i$ is consonant.  It is sober because any coproduct
  of sober spaces taken in $\Topcat$ is sober \cite[Lemma
  8.4.2]{JGL-topology}.  By Proposition~\ref{prop:consonant:lim},
  $n \odot X$ must then be consonant.  We finally recall that any
  limit of sober spaces taken in $\Topcat$ is sober.  \qed
\end{proof}

\subsection{Projective limits of locally compact sober spaces are
  consonant}
\label{sec:proj-limits-locally}

An $\omega$-projective limit of locally compact sober spaces need not
be locally compact, even for compact, locally compact sober spaces
\cite[Proposition~3.4]{JGL:proj:class}.  We will show that, while
local compactness is lost, the projective limit remains
$\odot$-consonant.

\begin{proposition}
  \label{prop:consonant:lim:omega:lc}
  Let ${(p_{mn} \colon X_n \to X_m)}_{m \leq n \in \nat}$ be a
  projective system of topological spaces, with canonical projective
  limit $X, {(p_n)}_{n \in \nat}$.  If every $X_n$ is locally compact
  and sober, then $X$ is consonant.
\end{proposition}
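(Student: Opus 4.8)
The plan is to realise the required compact saturated subset of $X$ as the inverse image, through the $p_n$, of a carefully chosen compatible family of compact saturated subsets of the $X_n$, and to use Theorem~\ref{thm:Q:projlim} to turn that family into an actual compact set in $X$. First dispose of the case $\mathcal U = \Open X$, where $\emptyset$ trivially witnesses consonance; so assume $\emptyset \notin \mathcal U$. For each $n$ put $\mathcal U_n \eqdef \{V \in \Open {X_n} \mid p_n^{-1} (V) \in \mathcal U\}$; since $p_n^{-1}$ is Scott-continuous (it preserves all unions), $\mathcal U_n$ is Scott-open in $\Open {X_n}$, and $\emptyset \notin \mathcal U_n$. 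Given $U \in \mathcal U$, write $U = \dcup_n p_n^{-1} (U_n)$ as recalled in the preliminaries; Scott-openness of $\mathcal U$ yields an index $n_0$ with $p_{n_0}^{-1} (U_{n_0}) \in \mathcal U$, and since $\{n \mid n \geq n_0\}$ is cofinal and the subsystem it determines has the same limit, we may reindex so that $n_0 = 0$. Then $p_n^{-1} (U_n) \supseteq p_0^{-1} (U_0) \in \mathcal U$ for every $n$, so $U_n \in \mathcal U_n$ for every $n$.

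The reduction is: it suffices to produce nonempty compact saturated subsets $K_n \subseteq X_n$ ($n \in \nat$) with $\upc p_{n,n+1} [K_{n+1}] = K_n$ for all $n$ (whence $\upc p_{mn} [K_n] = K_m$ whenever $m \leq n$, by composing), with $K_0 \subseteq U_0$, and with $\blacksquare_{X_n} K_n \subseteq \mathcal U_n$ for every $n$, where $\blacksquare_{X_n} Q$ denotes the set of open neighbourhoods of $Q$ in $X_n$. Indeed, such a family is an element of the canonical projective limit of $(\SV {p_{mn}} \colon \SV {X_n} \to \SV {X_m})_{m \leq n}$; as every $X_n$ is sober, Theorem~\ref{thm:Q:projlim} gives a homeomorphism from $\SV X$ onto that limit, and its surjectivity produces a nonempty compact saturated $K \subseteq X$ with $\upc p_n [K] = K_n$ for every $n$. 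Then $K \subseteq p_0^{-1} (\upc p_0 [K]) = p_0^{-1} (K_0) \subseteq p_0^{-1} (U_0) \subseteq U$; and for any $V \in \Open X$ with $K \subseteq V$, writing $V = \dcup_n p_n^{-1} (V_n)$ and using compactness of $K$ we get $K \subseteq p_n^{-1} (V_n)$ for some $n$, so $K_n = \upc p_n [K] \subseteq V_n$, so $V_n \in \blacksquare_{X_n} K_n \subseteq \mathcal U_n$, so $V \supseteq p_n^{-1} (V_n) \in \mathcal U$ and $V \in \mathcal U$. Hence $\blacksquare_X K \subseteq \mathcal U$, and $K$ witnesses consonance of $X$ at $(\mathcal U, U)$.

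To build the family, I would construct by induction on $N$ a compact saturated $K_N^{(N)} \subseteq X_N$ and an open $W_N \in \mathcal U_N$ with $W_N \subseteq K_N^{(N)}$, setting $K_n^{(N)} \eqdef \upc p_{nN} [K_N^{(N)}]$ for $n < N$, and maintaining the invariants: $K_N^{(N)} \neq \emptyset$; $K_0^{(0)} \subseteq U_0$; $K_N^{(N)} \subseteq p_{N-1,N}^{-1} (W_{N-1})$ when $N \geq 1$ (so that $\upc p_{N-1,N} [K_N^{(N)}] \subseteq W_{N-1} \subseteq K_{N-1}^{(N-1)}$, hence $K_n^{(N)} \subseteq K_n^{(N-1)}$ for all $n < N$); and $\blacksquare_{X_n} K_n^{(N)} \subseteq \mathcal U_n$ for all $n \leq N$. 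The base case uses local compactness of $X_0$: as $\mathcal U_0$ is Scott-open, there are an open $W_0 \in \mathcal U_0$ and a compact saturated $K_0^{(0)}$ with $W_0 \subseteq K_0^{(0)} \subseteq U_0$ (\cite{JGL-topology}); then $\blacksquare_{X_0} K_0^{(0)} \subseteq \mathcal U_0$ because $\mathcal U_0$ is upwards closed and contains $W_0$, and $K_0^{(0)} \neq \emptyset$ since $\emptyset \notin \mathcal U_0$. Finally set $K_n \eqdef \bigcap_{N \geq n} K_n^{(N)}$: this is a filtered intersection of nonempty compact saturated subsets of the sober---hence well-filtered---space $X_n$, so $K_n$ is compact saturated by \cite[Proposition~8.3.6]{JGL-topology} and nonempty (well-filteredness applied with the empty open set); $K_0 \subseteq K_0^{(0)} \subseteq U_0$; $\blacksquare_{X_n} K_n \subseteq \mathcal U_n$ by well-filteredness and the invariant; and since each $X_n$ is locally compact sober, the upper Vietoris and Scott topologies on $\SV {X_n}$ coincide (\cite[Lemma~8.3.26]{JGL-topology}), so the continuous map $\SV {p_{n,n+1}}$ is Scott-continuous and commutes with filtered intersections of compact saturated sets, giving $\upc p_{n,n+1} [K_{n+1}] = \bigcap_{N \geq n+1} \upc p_{n,n+1} [K_{n+1}^{(N)}] = \bigcap_{N \geq n+1} K_n^{(N)} = K_n$ (the last equality because $K_n^{(n+1)} \subseteq K_n^{(n)}$).

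The main obstacle is the inductive step, passing from $N$ to $N+1$: given $K_N^{(N)} \supseteq W_N$, one must choose a compact saturated $K_{N+1}^{(N+1)} \subseteq p_{N,N+1}^{-1} (W_N)$ and an open $W_{N+1} \in \mathcal U_{N+1}$ inside it in such a way that every image $\upc p_{n,N+1} [K_{N+1}^{(N+1)}]$ with $n \leq N+1$ stays $\mathcal U_n$-robust, i.e.\ $\blacksquare_{X_n} (\upc p_{n,N+1} [K_{N+1}^{(N+1)}]) \subseteq \mathcal U_n$. The difficulty is that the bonding maps $p_{mn}$ need not be surjective, so forming a $\upc$-image of a compact set may collapse it and destroy robustness; one must therefore choose $K_{N+1}^{(N+1)}$ (equivalently $W_{N+1}$) large enough in every one of the \emph{finitely many} directions $0, \dots, N+1$ simultaneously---which is precisely where the countability of the index set is used---by interpolating, via the local compactness of $X_{N+1}$, between $p_{N,N+1}^{-1} (W_N)$ and an open member of $\mathcal U_{N+1}$ obtained by pulling back a compact approximation that respects the robustness invariant in the coordinates $0, \dots, N$. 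Establishing that such a simultaneous choice can always be made is the technical heart of the proof.
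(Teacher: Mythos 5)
Your reduction and your final neighborhood argument are fine \emph{given} the compatible family $(K_n)_n$, and your overall skeleton (pass to the Scott-open families $\mathcal U_n$, find a coordinate $n_0$ where $U_{n_0} \in \mathcal U_{n_0}$, interpolate compact saturated sets using local compactness, take a limit) is the same as the paper's. But the proof is incomplete exactly where you say it is: the inductive step is never carried out, and in your formulation it is genuinely problematic. By insisting on exact compatibility $\upc p_{n(n+1)} [K_{n+1}] = K_n$ (so as to invoke Theorem~\ref{thm:Q:projlim}) \emph{and} on the invariant that every open neighborhood of $K_n^{(N)}$ lies in $\mathcal U_n$ for \emph{all} $n \leq N$, you force yourself to control all downstream images of $K_{N+1}^{(N+1)}$ simultaneously, and with non-surjective bonding maps there is no evident way to do this; this is not a technicality you can defer, it is the whole difficulty, and I see no reason it can be overcome as stated.

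The paper's proof shows that none of this is needed. One only constructs, for $n \geq n_0$, compact saturated sets $Q_n \subseteq X_n$ satisfying the two \emph{local} conditions $\interior{Q_n} \in \mathcal U_n$ and $Q_{n+1} \subseteq p_{n(n+1)}^{-1} (\interior{Q_n})$; each step is a single interpolation by local compactness of $X_{n+1}$ and Scott-openness of $\mathcal U_{n+1}$, with no constraint involving coordinates below $n$ and no exact image condition. Then $p_{mn} [Q_n] \subseteq Q_m$ suffices for the restrictions $p_{mn|Q_n} \colon Q_n \to Q_m$ to form a projective system of compact sober spaces, and Steenrod's theorem (not Theorem~\ref{thm:Q:projlim}) makes its limit $Q$ a compact saturated subset of $X$ contained in $U$. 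The idea you are missing is Lemma~\ref{lemma:steenrod:open}: given an open $V \supseteq Q$, compactness gives $Q \subseteq p_m^{-1} (V_m)$, i.e.\ $\upc p_m [Q] \subseteq V_m$, and the lemma produces some $n \geq m$ (which one may take $\geq n_0$) with $\upc p_{mn} [Q_n] \subseteq V_m$, i.e.\ $Q_n \subseteq p_{mn}^{-1} (V_m)$; then $p_{mn}^{-1} (V_m) \supseteq \interior{Q_n} \in \mathcal U_n$, so $p_{mn}^{-1} (V_m) \in \mathcal U_n$ by upward closure, hence $p_m^{-1} (V_m) \in \mathcal U$ and $V \in \mathcal U$. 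In other words, robustness is only ever needed at one sufficiently large coordinate, chosen \emph{after} the neighborhood $V$ is given, so the simultaneous-choice obstacle you identified never arises. If you want to salvage your write-up, replace your all-coordinates invariant and the appeal to Theorem~\ref{thm:Q:projlim} by this weaker invariant plus Lemma~\ref{lemma:steenrod:open}.
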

\begin{proof}
  Let $\mathcal U$ be a Scott-open subset of $\Open X$.  For every
  $n \in \nat$, let $\mathcal U_n$ be the collection of open subsets
  $U$ of $X_n$ such that $p_n^{-1} (U) \in \mathcal U$.  Since
  $p_n^{-1}$ commutes with unions, $\mathcal U_n$ is Scott-open in
  $\Open {X_n}$.  Now let $U \in \Open X$.  For each $n \in \nat$, let
  $U_n$ be the largest open subset of $X_n$ such that
  $p_n^{-1} (U_n) \subseteq U$.  Then
  ${(p_n^{-1} (U_n))}_{n \in \nat, \leq}$ is a monotone net of open
  subsets of $X$, whose union is $U$.  Since $\mathcal U$ is
  Scott-open, $p_n^{-1} (U_n) \in \mathcal U$ for $n$ large enough.
  In other words, $U_n$ is in $\mathcal U_n$ pour $n$ large enough,
  say $n \geq n_0$.

  Since $X_{n_0}$ is locally compact, $U_{n_0}$ is a directed supremum
  of sets $\interior Q$ with $Q$ compact saturated included in
  $U_{n_0}$.  Since $\mathcal U_{n_0}$ is Scott-open, one such set,
  call it $\interior {Q_{n_0}}$, is in $\mathcal U_{n_0}$; $Q_{n_0}$
  is compact saturated and included in $U_{n_0}$.  Then
  $p_{n_0(n_0+1)}^{-1} (\interior {Q_{n_0}})$ is in
  $\mathcal U_{n_0+1}$.  Indeed, this means that
  $p_{n_0+1}^{-1} (p_{n_0(n_0+1)}^{-1} (\interior {Q_{n_0}})) \in
  \mathcal U$, equivalently, that
  $p_{n_0}^{-1} (\interior {Q_{n_0}}) \in \mathcal U$, namely that
  $\interior {Q_{n_0}} \in \mathcal U_{n_0}$.  Since
  $\mathcal U_{n_0+1}$ is Scott-open, there is a compact saturated
  subset $Q_{n_0+1}$ of $X_{n_0+1}$ included in $\interior {Q_{n_0}}$
  whose interior is in $\mathcal U_{n_0+1}$.  We proceed in the same
  way for $n=n_0+2, n_0+3, \cdots$, and we obtain compact saturated
  subsets $Q_n$ of $X_n$ for every $n \geq n_0$ such that $\interior
  {Q_n} \in \mathcal U_n$ and $Q_{n+1} \subseteq \interior {Q_n}$.
  We complete this by letting $Q_m \eqdef \upc p_{mn_0} [Q_{n_0}]$ for
  every $m < n_0$.

  We see each $Q_n$ as a subspace of $X_n$.  Since $X_n$ is sober, by
  Remark~\ref{rem:sat:sober}, $Q_n$ is sober.  By construction,
  ${(p_{mn|Q_n} \colon Q_n \to Q_m)}_{m \leq n \in \nat}$ is a
  projective system, where $p_{mn|Q_n}$ is the restriction of $p_{mn}$
  to $Q_n$, and it is a consequence of Steenrod's theorem that its
  canonical projective limit $Q$ (or rather,
  $Q, {(p_{n|Q})}_{n \in \nat}$) is compact saturated in $X$ (and that
  every compact saturated subset of $X$ is obtained this way, see
  Lemma~4.3 of \cite{JGL:proj:class}).

  We claim that $Q \subseteq U$.  For every
  $x \eqdef {(x_n)}_{n \in \nat}$ in $Q$, we have $x_n \in Q_n$ for
  every $n \in \nat$.  In particular,
  $p_{n_0} (x) = x_{n_0} \in Q_{n_0} \subseteq U_{n_0}$, and since
  $p_{n_0}^{-1} (U_{n_0}) \subseteq U$, $x \in U$.
  
  We verify that every open neighborhood $V$ of $Q$ in $X$ is in
  $\mathcal U$.  Writing $V_m$ for the largest open subset of $X$ such
  that $p_m^{-1} (V_m) \subseteq V$, we have
  $V = \dcup_{m \in \nat} p_m^{-1} (V_m)$.  Since $Q$ is compact, $Q$
  is included in $p_m^{-1} (V_m)$ for some $m \in \nat$.
  Equivalently, $p_m [Q] \subseteq V_m$, hence
  $\upc p_m [Q] \subseteq V_m$, since $V_m$ is upwards-closed.  By
  Lemma~\ref{lemma:steenrod:open}, there is an $n \geq m$ such that
  $\upc p_{mn} [Q_n] \subseteq V_m$, so $p_{mn} [Q_n] \subseteq V_m$.
  For every $n' \geq n$, we have
  $p_{mn'} [Q_{n'}] = p_{mn} [p_{nn'} [Q_{n'}]] \subseteq p_{mn} [Q_n]
  \subseteq V_m$, so $p_{mn} [Q_n] \subseteq V_m$ holds for $n$ large
  enough.  We pick one such that $p_{mn} [Q_n] \subseteq V_m$, namely
  such that $Q_n \subseteq p_{mn}^{-1} (V_m)$, and $n \geq n_0$.
  Since $n \geq n_0$, we know that $\interior {Q_n} \in \mathcal U_n$,
  so $p_{mn}^{-1} (V_m) \in \mathcal U_n$, since $\mathcal U_n$ is
  upwards-closed; therefore
  $p_n^{-1} (p_{mn}^{-1} (V_m)) = p_m^{-1} (V_m)$ is in $\mathcal U$.
  Since $p_m^{-1} (V_m) \subseteq V$ and $\mathcal U$ is
  upwards-closed, $V$ is in $\mathcal U$.
\end{proof}

\begin{theorem}
  \label{prop:oconsonant:lim:omega:lc}
  Let ${(p_{mn} \colon X_n \to X_m)}_{m \leq n \in \nat}$ be a
  projective system of topological spaces, with canonical projective
  limit $X, {(p_n)}_{n \in \nat}$.  If every $X_n$ is locally compact
  and sober, then $X$ is $\odot$-consonant and sober.
\end{theorem}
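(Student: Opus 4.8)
The plan is to reduce to Proposition~\ref{prop:consonant:lim:omega:lc}, which already establishes consonance of $\omega$-projective limits of locally compact sober spaces, combined with the fact that copowers commute with connected limits (Lemma~\ref{lemma:odot:cont}); the argument parallels that of Corollary~\ref{corl:consonant:lim}, with Proposition~\ref{prop:consonant:lim:omega:lc} in place of Proposition~\ref{prop:consonant:lim}.

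First I would fix $n \in \nat$ and check that each $n \odot X_m$ is again locally compact and sober. Local compactness holds because a finite topological sum of locally compact spaces is locally compact: a point $(k, x)$ of $n \odot X_m$ lies in the clopen summand $\{k\} \times X_m$, and a base of compact saturated neighborhoods of $x$ in $X_m$ yields one of $(k, x)$ in $n \odot X_m$, since upward closures and compact sets in the copower stay inside a single summand. Sobriety holds because any coproduct of sober spaces taken in $\Topcat$ is sober \cite[Lemma~8.4.2]{JGL-topology}.

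Next, by Lemma~\ref{lemma:odot:cont} applied to the connected diagram underlying the projective system ${(p_{mn})}_{m \leq n \in \nat}$, the space $n \odot X$, together with the maps $n \odot p_m$, is a projective limit of ${(n \odot p_{mn} \colon n \odot X_n \to n \odot X_m)}_{m \leq n \in \nat}$, hence homeomorphic to the canonical projective limit of that system. That system is indexed by $(\nat, \leq)$, which has a countable cofinal subset, and all its spaces are locally compact sober by the previous paragraph, so Proposition~\ref{prop:consonant:lim:omega:lc} applies and its canonical projective limit is consonant; consonance being a topological invariant, $n \odot X$ is consonant. Since $n$ is arbitrary, $X$ is $\odot$-consonant by definition. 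Finally, $X$ is sober because any limit of sober spaces taken in $\Topcat$ is sober \cite[Theorem~8.4.13]{JGL-topology}.

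I do not expect a genuine obstacle here: the only care needed is the elementary, non-Hausdorff verification that finite coproducts preserve local compactness (with compact \emph{saturated} neighborhoods) and sobriety, and the routine bookkeeping of transporting consonance along the homeomorphism supplied by Lemma~\ref{lemma:odot:cont}.
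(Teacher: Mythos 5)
Your proposal is correct and follows the paper's own argument essentially verbatim: apply Lemma~\ref{lemma:odot:cont} to identify $n \odot X$ with the projective limit of the copowers, use that finite coproducts of locally compact sober spaces are locally compact sober, invoke Proposition~\ref{prop:consonant:lim:omega:lc} for consonance, and conclude sobriety of $X$ from preservation of sobriety under limits in $\Topcat$. No differences worth noting.
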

\begin{proof}
  Any limit of sober spaces, taken in $\Topcat$, is sober
  \cite[Theorem 8.4.13]{JGL-topology}.  In order to see that $X$ is
  $\odot$-consonant, we realize that for every $k \in \nat$,
  $k \odot X, {(k \odot p_n)}_{n \in \nat}$ is a projective limit of
  ${(k \odot p_{mn} \colon k \odot X_n \to k \odot X_m)}_{m \leq n \in
    \nat}$ by Lemma~\ref{lemma:odot:cont}.  Every finite copower of
  locally compact sober spaces is locally compact sober.  In fact, in
  $\Topcat$, every finite coproduct of locally compact spaces is
  locally compact (an easy exercise), and every coproduct of sober
  spaces is sober \cite[Lemma 8.4.2]{JGL-topology}.  We can now apply
  Theorem~\ref{prop:consonant:lim:omega:lc} and we obtain that $k
  \odot X$ is consonant.
\end{proof}

\section{Hoare powercones and sublinear previsions}
\label{sec:sublinear-previsions}

We might think of proceeding in a similar way with the $\Pred_{\AN}$
sublinear prevision functor as with the $\Pred_{\DN}$ superlinear
prevision functor, but we will not.  There is an analogue of the
$(r_\DN, s_\DN^\bullet)$ retraction, but it is only a natural
retraction on some subcategory $\catk$ of $\Topcat$ consisting of
$\AN_\bullet$-friendly spaces.  (We will define this notion below.)
Additionally, contrarily to $\SV$, the $\HV$ functor does not preserve
all projective limits of sober spaces.

Sublinear previsions form a model of mixed angelic non-deterministic
and probabilistic choice.  Another, earlier model, due to
\cite{Mislove:nondet:prob,Tix:PhD,TKP:nondet:prob,DBLP:journals/acta/McIverM01},
is the composition $\HV^{cvx} \Val_\bullet$, where
$\HV^{cvx} (\Val_\bullet X)$ is the subspace of $\HV (\Val_\bullet X)$
consisting of convex non-empty closed sets.  We start with the functor
$\HV^{cvx} \Val_\bullet$.  First, we verify that this is, indeed, a
functor.

We import the following from \cite{Keimel:topcones2}.  A \emph{cone}
is a set with a scalar multiplication operation, by scalars from
$\Rp$, and with an addition operation, satisfying the expected laws.
A \emph{semitopological cone} is a cone with a topology that makes
both scalar multiplication and addition separately continuous, where
$\Rp$ is given the Scott topology.  For example, $\Lform X$, $\Val X$,
$\Pred_\DN X$, $\Pred_\AN X$ are semitopological cones, and
$\Val_\bullet X$, $\Pred_\DN^\bullet X$, $\Pred_\AN^\bullet X$ are
convex subspaces of the latter three.    We
will need the following fact.  In a semitopological cone, the closure of
a convex subsets is convex \cite[Lemma~4.10~(a)]{Keimel:topcones2},
and we obtain the following as an easy consequence.
\begin{fact}
  \label{fact:cl:conv}
  Given any convex subspace $Z$ of a semitopological cone, the closure
  of any convex subset of $Z$ in $Z$ is convex.
\end{fact}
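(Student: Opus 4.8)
The plan is to derive Fact~\ref{fact:cl:conv} directly from \cite[Lemma~4.10~(a)]{Keimel:topcones2} together with two elementary observations: that convexity of a subset does not depend on whether we view it inside $Z$ or inside the ambient cone, and that intersections of convex sets are convex.

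First I would fix a semitopological cone $C$, a convex subspace $Z$ of $C$, and a convex subset $A$ of $Z$. I note that $A$ is also a convex subset of $C$: the notion of convexity only refers to the scalar multiplication and addition of $C$, which are the same operations as those inherited by $Z$, so for all $a_1, a_2 \in A$ and all $r \in [0,1]$ we have $r a_1 + (1-r) a_2 \in A$ because $A$ is convex in $Z$. (That this element also lies in $Z$, by convexity of $Z$, is true but not even needed at this point.)

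Next I would apply \cite[Lemma~4.10~(a)]{Keimel:topcones2} to the convex subset $A$ of the semitopological cone $C$, obtaining that the closure $cl_C (A)$ of $A$ in $C$ is convex. Then I would recall the standard formula for closure in a subspace, namely $cl_Z (A) = cl_C (A) \cap Z$. Since $cl_C (A)$ and $Z$ are both convex subsets of $C$, and since an intersection of convex subsets is convex (if $x$ and $y$ lie in both and $r \in [0,1]$, then $r x + (1-r) y$ lies in both), the set $cl_Z (A)$ is convex, which is exactly the claim.

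There is no real obstacle here; the only points that deserve a moment's care are the identity $cl_Z (A) = cl_C (A) \cap Z$ and the remark that convexity is preserved when passing between $Z$ and $C$ since the cone operations coincide. This is why the statement is recorded as a \emph{Fact}, obtained ``as an easy consequence'' of \cite[Lemma~4.10~(a)]{Keimel:topcones2}, rather than as a lemma requiring a substantial argument.
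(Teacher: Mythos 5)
Your argument is correct and is exactly the ``easy consequence'' the paper has in mind: it invokes \cite[Lemma~4.10~(a)]{Keimel:topcones2} for the closure in the ambient cone, then uses $cl_Z(A) = cl_C(A) \cap Z$ and the convexity of $Z$ to conclude. Nothing to add.
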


\begin{lemma}
  \label{lemma:V:lin}
  Let $\bullet$ be nothing, ``$\leq 1$'' or ``$1$''.   The
  $\Val_\bullet$ functor preserves convex combinations, namely: for
  every continuous map $f \colon X \to Y$, for every $n \geq 1$, for
  all non-negative real numbers $a_1$, \ldots, $a_n$ summing up to
  $1$, for all $\nu_1, \cdots, \nu_n \in \Val_\bullet X$,
  $\Val_\bullet f (\sum_{i=1}^n a_i \cdot \nu_i) = \sum_{i=1}^n a_i
  \cdot \Val_\bullet f (\nu_i)$.
\end{lemma}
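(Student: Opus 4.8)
The plan is to reduce the identity to a pointwise computation on open sets, using the facts that the cone operations on $\Val_\bullet X$ are defined pointwise and that two continuous valuations agreeing on every open set are equal. First I would recall that for $\nu,\nu'\in\Val X$ and $a\in\Rp$, the scalar multiple $a\cdot\nu$ and the sum $\nu+\nu'$ are the continuous valuations $U\mapsto a\cdot\nu(U)$ and $U\mapsto\nu(U)+\nu'(U)$; strictness, modularity and Scott-continuity are visibly preserved under these operations. When $\nu_1,\dots,\nu_n\in\Val_\bullet X$ and $\sum_i a_i=1$, evaluating at $X$ gives $(\sum_i a_i\nu_i)(X)=\sum_i a_i\nu_i(X)$, which is $\le 1$ (resp.\ $=1$) if each $\nu_i(X)$ is, and always finite if each $\nu_i$ is bounded; so $\sum_i a_i\cdot\nu_i$ indeed lies in $\Val_\bullet X$, and likewise $\sum_i a_i\cdot\Val_\bullet f(\nu_i)\in\Val_\bullet Y$, so that both sides of the claimed equality are well-typed.

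Next I would simply evaluate both sides at an arbitrary $V\in\Open Y$. By the definition of the action of $\Val_\bullet$ on morphisms, $\Val_\bullet f(\mu)(V)=\mu(f^{-1}(V))$ for any $\mu\in\Val_\bullet X$, so
\begin{align*}
  \Val_\bullet f\Bigl(\sum_{i=1}^n a_i\cdot\nu_i\Bigr)(V)
    &= \Bigl(\sum_{i=1}^n a_i\cdot\nu_i\Bigr)(f^{-1}(V))
     = \sum_{i=1}^n a_i\cdot\nu_i(f^{-1}(V)) \\
    &= \sum_{i=1}^n a_i\cdot\Val_\bullet f(\nu_i)(V),
\end{align*}
and the last expression is by definition the value at $V$ of $\sum_{i=1}^n a_i\cdot\Val_\bullet f(\nu_i)$. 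Since this holds for every open subset $V$ of $Y$, the two continuous valuations coincide.

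There is essentially no obstacle here: the only point requiring a little care is the one handled in the first paragraph, namely checking that the convex combination $\sum_i a_i\cdot\nu_i$, taken in the semitopological cone $\Val X$ (or its convex subspace $\Val_\bullet X$) in the sense of Section~\ref{sec:sublinear-previsions}, actually stays inside $\Val_\bullet X$, so that the statement typechecks. Once that is granted, the argument is the one-line pointwise computation above, which in fact works for arbitrary non-negative coefficients; the normalization $\sum_i a_i=1$ is used only to remain inside $\Val_{\le 1}$ or $\Val_1$ in the subprobability and probability cases.
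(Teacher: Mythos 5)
Your proof is correct and follows essentially the same route as the paper, whose entire argument is the one-line observation that both sides send every $V \in \Open Y$ to $\sum_{i=1}^n a_i\,\nu_i(f^{-1}(V))$; your extra check that the convex combination stays in $\Val_\bullet X$ is a harmless elaboration of the same computation.
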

\begin{proof}
  Both sides map every open subset $V$ of $Y$ to $\sum_{i=1}^n a_i
  \nu_i (f^{-1} (V))$.  \qed
\end{proof}

\begin{lemma}
  \label{lemma:Hcvx:functor}
  Let $\bullet$ be nothing, ``$\leq 1$'' or ``$1$''.  The composition
  $\HV^{cvx} \Val_\bullet$ is a functor from $\Topcat$ to $\Topcat$,
  whose action on morphisms is the restriction of $\HV \Val_\bullet$.
\end{lemma}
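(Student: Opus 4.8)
The plan is to prove the only point that carries any content, namely that the endofunctor $\HV \Val_\bullet$ on $\Topcat$ sends the subspaces $\HV^{cvx} \Val_\bullet X$ into one another; functoriality will then be inherited for free. So I would first fix a continuous map $f \colon X \to Y$ and a convex non-empty closed subset $C$ of $\Val_\bullet X$, and check that $\HV (\Val_\bullet f) (C) = cl (\Val_\bullet f [C])$ lies in $\HV^{cvx} \Val_\bullet Y$. Closedness is immediate, as it is a closure, and non-emptiness is clear since $C$, hence $\Val_\bullet f [C]$, hence its closure, are non-empty.

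The heart of the matter is convexity, and I would handle it in two moves. First, $\Val_\bullet f [C]$ itself is convex: given $\nu'_1 = \Val_\bullet f (\nu_1)$ and $\nu'_2 = \Val_\bullet f (\nu_2)$ with $\nu_1, \nu_2 \in C$ and $r \in [0,1]$, Lemma~\ref{lemma:V:lin} gives $r \nu'_1 + (1-r) \nu'_2 = \Val_\bullet f (r \nu_1 + (1-r) \nu_2)$, and $r \nu_1 + (1-r) \nu_2 \in C$ by convexity of $C$, so the combination lies in $\Val_\bullet f [C]$. Second, $\Val_\bullet Y$ is a convex subspace of the semitopological cone $\Val Y$ (for $\bullet$ empty it is this cone itself; for $\bullet = {}$``$\leq 1$'' or ``$1$'' it is a convex subspace of it), so by Fact~\ref{fact:cl:conv} the closure in $\Val_\bullet Y$ of the convex set $\Val_\bullet f [C]$ is convex. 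Hence $\HV (\Val_\bullet f) (C) \in \HV^{cvx} \Val_\bullet Y$.

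Finally I would record that $\HV^{cvx} \Val_\bullet f$, defined as the restriction of the continuous map $\HV (\Val_\bullet f)$ to the subspace $\HV^{cvx} \Val_\bullet X$ with codomain co-restricted to the subspace $\HV^{cvx} \Val_\bullet Y$, is continuous (restriction and co-restriction of a continuous map along subspace inclusions), and that $\HV^{cvx} \Val_\bullet$ preserves identities and composition because $\HV \Val_\bullet$ does and restriction commutes with composition. I do not expect a genuine obstacle here: the only subtle point is that convexity must survive taking topological closures, which is precisely the role of Fact~\ref{fact:cl:conv}, together with the fact that $\Val_\bullet f$ respects convex combinations, which is Lemma~\ref{lemma:V:lin}.
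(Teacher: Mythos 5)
Your proposal is correct and follows essentially the same route as the paper: convexity of $\Val_\bullet f [C]$ via Lemma~\ref{lemma:V:lin}, convexity of its closure via Fact~\ref{fact:cl:conv}, and functoriality inherited by restriction from $\HV \Val_\bullet$. Nothing is missing.
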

\begin{proof}
  Using Lemma~\ref{lemma:V:lin}, for every
  $C \in \HV^{cvx} \Val_\bullet X$, $\Val_\bullet f [C]$ is convex,
  and therefore so is its closure $\HV \Val_\bullet f (C)$, by
  Fact~\ref{fact:cl:conv}.  Hence $\HV \Val_\bullet f$ maps elements
  $\HV^{cvx} \Val_\bullet X$ to elements of
  $\HV^{cvx} \Val_\bullet Y$, and we define $\HV^{cvx} \Val_\bullet f$
  as the corresponding restriction of $\HV \Val_\bullet f$.  This is a
  continuous map, and the fact that $\HV^{cvx} \Val_\bullet$ defines a
  functor follows from the fact that $\HV \Val_\bullet$ is a functor.  \qed
\end{proof}

\begin{proposition}
  \label{prop:Hcvx:projlim}
  Let ${(p_{ij} \colon X_j \to X_i)}_{i \sqsubseteq j \in I}$ be a
  projective system of topological spaces, with canonical projective
  limit $X, {(p_i)}_{i \in I}$.  Let $\bullet$ be nothing,
  ``$\leq 1$'' or ``$1$''.

  Then
  ${(\HV^{cvx} \Val_\bullet {p_{ij}} \colon \HV^{cvx} \Val_\bullet
    {X_j} \to \HV^{cvx} \Val_\bullet {X_i})}_{i \sqsubseteq j \in I}$
  is a projective system of topological spaces, and
  $\HV^{cvx} \Val_\bullet X, {(\HV^{cvx} \Val_\bullet {p_i})}_{i \in
    I}$ is a projective limit of it provided that
  $\HV \Val_\bullet X, {(\HV \Val_\bullet {p_i})}_{i \in I}$ is the
  projective limit of the projective system
  ${(\HV \Val_\bullet {p_{ij}} \colon \HV \Val_\bullet {X_j} \to \HV
    \Val_\bullet {X_i})}_{i \sqsubseteq j \in I}$, up to
  homeomorphism.
\end{proposition}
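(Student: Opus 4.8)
The plan is to reduce this to the assumed statement for $\HV\Val_\bullet$ by realizing everything relevant to $\HV^{cvx}\Val_\bullet$ as a subspace of the corresponding data for $\HV\Val_\bullet$. First I would record, using Lemma~\ref{lemma:Hcvx:functor} and Lemma~\ref{lemma:V:lin}, that $(\HV^{cvx}\Val_\bullet {p_{ij}})_{i\sqsubseteq j\in I}$ is a genuine projective system whose spaces $\HV^{cvx}\Val_\bullet X_i$ sit inside $\HV\Val_\bullet X_i$ and whose bonding maps are restrictions of the $\HV\Val_\bullet {p_{ij}}$. Consequently, writing $Z$ for the canonical projective limit of $(\HV\Val_\bullet {p_{ij}})_{i\sqsubseteq j\in I}$ and $Z^{cvx}$ for that of $(\HV^{cvx}\Val_\bullet {p_{ij}})_{i\sqsubseteq j\in I}$, the space $Z^{cvx}$ is simply the subspace of $Z$ consisting of those tuples $(C_i)_{i\in I}$ all of whose components are convex, and the comparison map $\varphi^{cvx}\colon\HV^{cvx}\Val_\bullet X\to Z^{cvx}$ is the restriction of the comparison map $\varphi\colon\HV\Val_\bullet X\to Z$. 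Since $\varphi$ is a homeomorphism by hypothesis, $\varphi^{cvx}$ is automatically a topological embedding, so the only thing left to prove is that it is surjective, i.e. that $\varphi^{-1}(Z^{cvx})\subseteq\HV^{cvx}\Val_\bullet X$: a non-empty closed subset $C$ of $\Val_\bullet X$ is convex as soon as $cl(\Val_\bullet {p_i}[C])$ is convex for every $i\in I$.

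One inclusion is free: if $C$ is convex then $\HV\Val_\bullet {p_i}(C)=cl(\Val_\bullet {p_i}[C])$ lands in $\HV^{cvx}\Val_\bullet X_i$ by Lemma~\ref{lemma:Hcvx:functor}, so $\varphi(C)\in Z^{cvx}$. For the converse, which I expect to be the crux, suppose each $C_i\eqdef cl(\Val_\bullet {p_i}[C])$ is convex, pick $\nu,\nu'\in C$ and $r\in[0,1]$, and set $\mu\eqdef r\nu+(1-r)\nu'$, again a member of $\Val_\bullet X$. The goal is to show $\mu\in C$. By Lemma~\ref{lemma:V:lin}, $\Val_\bullet {p_i}(\mu)=r\,\Val_\bullet {p_i}(\nu)+(1-r)\,\Val_\bullet {p_i}(\nu')$ for every $i$, which is a convex combination of two elements of $\Val_\bullet {p_i}[C]\subseteq C_i$ and hence lies in $C_i$ by convexity. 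The remaining, slightly delicate, step is to turn this pointwise information into membership in $C$: the natural move is to look at $cl\{\mu\}$, an element of $\HV\Val_\bullet X$, and observe that for each $i$ continuity of $\Val_\bullet {p_i}$ gives $\HV\Val_\bullet {p_i}(cl\{\mu\})=cl(\Val_\bullet {p_i}[cl\{\mu\}])\subseteq cl\{\Val_\bullet {p_i}(\mu)\}\subseteq C_i$, the last inclusion because $C_i$ is closed and contains $\Val_\bullet {p_i}(\mu)$.

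Hence $\varphi(cl\{\mu\})\leq\varphi(C)$ in the specialization ordering of $Z$, which componentwise is just inclusion of closed subsets. Since $\varphi$ is a homeomorphism it is an order-isomorphism for the specialization orderings on both sides, so $cl\{\mu\}\subseteq C$ in $\HV\Val_\bullet X$, and in particular $\mu\in C$; thus $C$ is convex. This establishes $\varphi^{-1}(Z^{cvx})=\HV^{cvx}\Val_\bullet X$, so $\varphi^{cvx}$ is a surjective topological embedding, i.e. a homeomorphism, which is exactly the assertion. The main obstacle is precisely the passage from ``$\Val_\bullet {p_i}(\mu)\in C_i$ for all $i$'' to ``$\mu\in C$'': rather than chasing open neighbourhoods of $\mu$ directly, one exploits that $\varphi$ transports the specialization order of $\HV\Val_\bullet X$ isomorphically onto that of $Z$.
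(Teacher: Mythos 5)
Your proof is correct, and its crux is genuinely different from the paper's. Both proofs share the same frame: the canonical limit $Z^{cvx}$ (the paper calls it $Z'$) is the subspace of $Z$ of tuples with convex components, the comparison map is the restriction of the homeomorphism $\varphi$, the embedding part is immediate, and everything reduces to showing that the unique closed set $C$ with $\varphi(C)=(C_i)_{i\in I}$ is convex whenever every $C_i$ is. At that point the paper argues globally: it forms $C'\eqdef cl(\conv C)$, which is closed and convex by Fact~\ref{fact:cl:conv}, shows $\HV\Val_\bullet p_i(C')=C_i$ for every $i$ via $cl(\Val_\bullet p_i[cl(\conv C)])\subseteq cl(\conv(\Val_\bullet p_i[C]))\subseteq C_i$ (using Lemma~\ref{lemma:V:lin}), and concludes $C=C'$ by injectivity of $\varphi$. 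You instead argue pointwise: given $\nu,\nu'\in C$ and $r\in[0,1]$, you use Lemma~\ref{lemma:V:lin} and convexity of the $C_i$ to get $\Val_\bullet p_i(\mu)\in C_i$ for $\mu\eqdef r\nu+(1-r)\nu'$, and then transport the componentwise inclusion $\varphi(cl\{\mu\})\leq\varphi(C)$ back through $\varphi$, which, being a homeomorphism, is an order-isomorphism for the specialization orderings; this yields $cl\{\mu\}\subseteq C$, hence $\mu\in C$. Your route avoids invoking Fact~\ref{fact:cl:conv} in the surjectivity step (it is still needed, via Lemma~\ref{lemma:Hcvx:functor}, to know that $\varphi$ maps convex closed sets to tuples in $Z^{cvx}$), and replaces the convex-hull-plus-uniqueness trick by an elementary order-theoretic transport; the paper's version is slightly more compact, as a single application of injectivity settles convexity all at once. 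Minor points that you handled correctly but are worth keeping explicit: $\mu$ lies in $\Val_\bullet X$ because $\Val_\bullet X$ is convex (also for $\bullet$ equal to ``$\leq 1$'' or ``$1$''), and the specialization order of $Z$ is componentwise inclusion because specialization orders of products and subspaces are the componentwise, respectively restricted, ones.
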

\begin{proof}
  The fact that
  ${(\HV^{cvx} \Val_\bullet {p_{ij}} \colon \HV^{cvx} \Val_\bullet
    {X_j} \to \HV^{cvx} \Val_\bullet {X_i})}_{i \sqsubseteq j \in I}$
  is a projective system of topological spaces, and that
  $\HV^{cvx} \Val_\bullet X, {(\HV^{cvx} \Val_\bullet {p_i})}_{i \in
    I}$ is a cone on that system, follows from the fact that
  $\HV^{cvx} \Val_\bullet$ is a functor
  (Lemma~\ref{lemma:Hcvx:functor}).

  Let $Z, {(q_i)}_{i \in I}$ be the canonical limit of
  ${(\HV \Val_\bullet {p_{ij}} \colon \HV \Val_\bullet {X_j} \to \HV
    \Val_\bullet {X_i})}_{i \sqsubseteq j \in I}$.  We remember that
  $Z$ is a space of $I$-indexed tuples, and that $q_i$ is projection
  onto coordinate $i$.  The canonical limit of
  ${(\HV^{cvx} \Val_\bullet {p_{ij}} \colon \HV^{cvx} \Val_\bullet
    {X_j} \to \HV^{cvx} \Val_\bullet {X_i})}_{i \sqsubseteq j \in I}$
  is $Z', {(q'_i)}_{i \in I}$ where
  $Z' \eqdef \{{(C_i)}_{i \in I} \in Z \mid C_i \text{ is convex for
    every }i \in I\}$, and $q'_i$ is the restriction of $q_i$ to $Z'$.
  By assumption, there is homeomorphism
  $f \colon \HV \Val_\bullet X \to Z$, defined by
  $f (C) \eqdef {(\HV \Val_\bullet p_i (C))}_{i \in I}$ for every
  $C \in \HV \Val_\bullet X$.  By Lemma~\ref{lemma:Hcvx:functor}, this
  restricts to a continuous map
  $f' \colon \HV^{cvx} \Val_\bullet X \to Z'$.  Since $f$ is full, so
  is $f'$: every open subset of $\HV^{cvx} \Val_\bullet X$ can be
  written as $\mathcal U \cap \HV^{cvx} \Val_\bullet X$ for some open
  subset $\mathcal U$ of $\HV \Val_\bullet X$; since $f$ is full,
  $\mathcal U = f^{-1} (\mathcal V)$ for some open subset $\mathcal V$
  of $Z$, and therefore
  $\mathcal U \cap \HV^{cvx} \Val_\bullet X = {f'}^{-1} (\mathcal V
  \cap Z')$.  Since $\HV^{cvx} \Val_\bullet X$ is $T_0$ (its
  specialization preordering is inherited from its superspace
  $\HV \Val_\bullet X$, and is therefore the inclusion ordering), $f'$
  is a topological embedding.

  It remains to show that $f'$ is surjective.  Let ${(C_i)}_{i \in I}$
  be any element of $Z'$; in particular, remember that $C_i$ is closed
  and convex.  Since $f$ is bijective, there is a unique non-empty
  closed subset $C$ of $\Val_\bullet X$ such that
  $C_i = \HV \Val_\bullet p_i (C)$ for every $i \in I$.  We claim that
  $C$ is convex.  In order to see this, we form the closure $C'$ of
  the convex hull $\conv C$ of $C$; the \emph{convex hull} $\conv C$
  is the smallest convex set containing $C$, and consists of the sums
  $\sum_{i=1}^n a_i \cdot x_i$ where $n \geq 1$, the numbers $a_i$ are
  non-negative and sum up to $1$, and each $x_i$ is in $C$.  $C'$ is
  closed and convex by Fact~\ref{fact:cl:conv}.  We will show that
  $C_i = \HV \Val_\bullet p_i (C')$ for every $i \in I$.  Then, by
  uniqueness of $C$, it will follows that $C = C'$, so that $C$ will
  indeed be convex.  Let us fix $i \in I$.  Since $C \subseteq C'$,
  $C_i = \HV \Val_\bullet p_i (C) \subseteq \HV \Val_\bullet p_i
  (C')$.  In the reverse direction,
  $\HV \Val_\bullet p_i (C') = cl (\Val_\bullet p_i [cl (\conv C)])
  \subseteq cl (\Val_\bullet p_i [\conv C])$ (since, for any
  continuous map $f$, and for every set $A$,
  $f [cl (A)] \subseteq cl (f [A])$)
  $\subseteq cl (\conv (\Val_\bullet p_i [C]))$ (by our explicit
  characterization of convex hulls and Lemma~\ref{lemma:V:lin})
  $\subseteq cl (\conv C_i) = C_i$, where the last equality is because
  $C_i$ is closed and convex.

  Now $f'$ is a surjective topological embedding, hence a
  homeomorphism.  Additionally,
  $q'_i \circ f' = \HV^{cvx} \Val_\bullet {p_i}$ for every $i \in I$,
  since $q_i \circ f = \HV \Val_\bullet {p_i}$.  \qed
\end{proof}

\begin{theorem}
  \label{thm:Hcvx:projlim}
  Let ${(p_{ij} \colon X_j \to X_i)}_{i \sqsubseteq j \in I}$ be a
  projective system of topological spaces, with canonical projective
  limit $X, {(p_i)}_{i \in I}$.  Let $\bullet$ be nothing,
  ``$\leq 1$'' or ``$1$''.
  If:
  \begin{enumerate}
  \item the projective system is an ep-system,
  \item or $I$ has a countable cofinal subset and each $X_i$ is
    locally compact sober (and compact if $\bullet$ is ``$1$''),
  \item or every $X_i$ is consonant sober and every $p_{ij}$ is a
    proper map,
  \end{enumerate}
  then
  ${(\HV^{cvx} \Val_\bullet {p_{ij}} \colon \HV^{cvx} \Val_\bullet
    {X_j} \to \HV^{cvx} \Val_\bullet {X_i})}_{i \sqsubseteq j \in I}$
  is a projective system of topological spaces, and
  $\HV^{cvx} \Val_\bullet X, {(\HV^{cvx} \Val_\bullet {p_i})}_{i \in
    I}$ is its projective limit, up to homeomorphism.
\end{theorem}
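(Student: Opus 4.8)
The plan is to reduce the statement, via Proposition~\ref{prop:Hcvx:projlim}, to the corresponding assertion for $\HV \Val_\bullet$ in place of $\HV^{cvx} \Val_\bullet$: it suffices to show that $\HV \Val_\bullet X, {(\HV \Val_\bullet {p_i})}_{i \in I}$ is a projective limit (up to homeomorphism) of ${(\HV \Val_\bullet {p_{ij}})}_{i \sqsubseteq j \in I}$. I would prove this in two stages, first applying the limit preservation theorems for $\Val_\bullet$ from Section~\ref{sec:cont-valu} and Section~\ref{sec:interm-pres-local}, then those for $\HV$ from Section~\ref{sec:hoare-hyperspaces}. Throughout, $\Val_\bullet$ denotes one of $\Val$, $\Val_{\leq 1}$, $\Val_1$, all of which are covered by those results.

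For the first stage, I would observe that under each of the three hypotheses the functor $\Val_\bullet$ preserves the given limit: in case~1 the projective system is an ep-system, so Theorem~\ref{thm:V:projlim}, item~1, applies; in case~2, item~2 applies; and in case~3, item~4 applies. This produces a homeomorphism $\varphi \colon \Val_\bullet X \to Z_1$ onto the canonical projective limit $Z_1, {(r_i)}_{i \in I}$ of ${(\Val_\bullet {p_{ij}})}_{i \sqsubseteq j \in I}$, with $r_i \circ \varphi = \Val_\bullet {p_i}$ for every $i \in I$.

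For the second stage, I would apply Theorem~\ref{thm:H:projlim} to the projective system ${(\Val_\bullet {p_{ij}} \colon \Val_\bullet {X_j} \to \Val_\bullet {X_i})}_{i \sqsubseteq j \in I}$. Each space $\Val_\bullet {X_i}$ is sober by Remark~\ref{rem:sat:sober}. In case~2, each $X_i$ is locally compact, hence core-compact, so Theorem~\ref{thm:V:loccomp} makes each $\Val_\bullet {X_i}$ locally compact sober --- the extra compactness assumption on $X_i$ when $\bullet$ is ``$1$'' is precisely what that theorem requires --- and since $I$ has a countable cofinal subset, Theorem~\ref{thm:H:projlim}, item~3, applies. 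In cases~1 and~3, every $p_{ij}$ is proper (by hypothesis in case~3; and in case~1 because the bonding maps of an ep-system are projections, hence proper by Remark~\ref{rem:ep:proper}), so by Theorem~\ref{thm:V:proper} each $\Val_\bullet {p_{ij}}$ is proper and Theorem~\ref{thm:H:projlim}, item~2, applies. In all cases $\HV$ preserves the limit $Z_1$, i.e.\ $\HV {Z_1}, {(\HV {r_i})}_{i \in I}$ is a projective limit of ${(\HV \Val_\bullet {p_{ij}})}_{i \sqsubseteq j \in I}$. Applying the functor $\HV$ to $\varphi$ then yields a homeomorphism $\HV \varphi \colon \HV \Val_\bullet X \to \HV {Z_1}$ with $\HV {r_i} \circ \HV \varphi = \HV \Val_\bullet {p_i}$, so $\HV \Val_\bullet X, {(\HV \Val_\bullet {p_i})}_{i \in I}$ is a projective limit of ${(\HV \Val_\bullet {p_{ij}})}_{i \sqsubseteq j \in I}$, and Proposition~\ref{prop:Hcvx:projlim} finishes the argument.

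The bookkeeping is routine; the part with genuine content is the verification, in the second stage, that the lifted system satisfies the hypotheses of Theorem~\ref{thm:H:projlim}, which is exactly where the two preservation results proved in Section~\ref{sec:interm-pres-local} --- that $\Val_\bullet$ sends core-compact spaces to locally compact sober ones, and that $\Val_\bullet$ preserves proper maps --- do the work. I therefore expect the main (already resolved) obstacle to sit in those auxiliary theorems rather than in the present reduction.
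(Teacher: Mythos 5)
Your proposal is correct and follows essentially the same route as the paper: reduce via Proposition~\ref{prop:Hcvx:projlim}, apply Theorem~\ref{thm:V:projlim} to get preservation by $\Val_\bullet$, then apply Theorem~\ref{thm:H:projlim} to the lifted system using Theorem~\ref{thm:V:loccomp} (case~2) and Theorem~\ref{thm:V:proper} (proper bonding maps). The only cosmetic difference is in case~1, where the paper invokes item~1 of Theorem~\ref{thm:H:projlim} via the fact that monotonic functors send ep-systems to ep-systems, while you route it through Remark~\ref{rem:ep:proper} and Theorem~\ref{thm:V:proper} to use item~2 instead; both are valid one-line appeals to results already established.
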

Case~3 in particular applies when every $X_i$ is LCS-complete;
LCS-complete spaces are even $\odot$-consonant
\cite[Lemma~13.2]{dBGLJL:LCScomplete}, and they are sober
\cite[Proposition~7.1]{dBGLJL:LCScomplete}.

\begin{proof}
  In all cases,
  ${(\Val_\bullet {p_{ij}} \colon \Val_\bullet {X_j} \to \Val_\bullet
    {X_i})}_{i \sqsubseteq j \in I}$ is a projective system of
  topological spaces, and
  $\Val_\bullet X, {(\Val_\bullet {p_i})}_{i \in I}$ is its projective
  limit, up to homeomorphism, by Theorem~\ref{thm:V:projlim}.

  We claim that
  ${(\HV \Val_\bullet {p_{ij}} \colon \HV \Val_\bullet {X_j} \to \HV
    \Val_\bullet {X_i})}_{i \sqsubseteq j \in I}$ is a projective
  system of topological spaces, and
  $\HV \Val_\bullet X, {(\HV \Val_\bullet {p_i})}_{i \in I}$ is its
  projective limit, up to homeomorphism.  This will allow us to
  conclude by Proposition~\ref{prop:Hcvx:projlim}.  In order to show
  the claim, we rely on Theorem~\ref{thm:H:projlim}; let us
  check its assumptions.

  In case~1,
  ${(\Val_\bullet {p_{ij}} \colon \Val_\bullet {X_j} \to \Val_\bullet
    {X_i})}_{i \sqsubseteq j \in I}$ is an ep-system.  Indeed, the
  image of an ep-system by any monotonic functor is an ep-system.
  Therefore case~1 of Theorem~\ref{thm:H:projlim} applies.  In case~2,
  every space $\Val_\bullet {X_i}$ is locally compact and sober by
  Theorem~\ref{thm:V:loccomp} (this is why we require $X_i$ to be
  compact when $\bullet$ is ``$1$''), so case~3 of
  Theorem~\ref{thm:H:projlim} applies.  In case~3, every space
  $\Val_\bullet {X_i}$ is sober (see Remark~\ref{rem:sat:sober}), and
  every map $\Val_\bullet {p_{ij}}$ is proper, by
  Theorem~\ref{thm:V:proper}, so case~2 of Theorem~\ref{thm:H:projlim}
  applies.  \qed
\end{proof}

In a semitopological cone, scalar multiplication is always jointly
continuous, but addition may fail to be.  Let us introduce more
material from \cite{Keimel:topcones2}.  A \emph{topological cone} is
one where addition is jointly continuous.  A semitopological cone $C$
is \emph{locally convex} if and only if for every $x \in C$, every
open neighborhood of $x$ contains a convex open neighborhood of $x$.
It is \emph{locally convex-compact} if and only if for every
$x \in C$, every open neighborhood of $x$ contains a convex compact
saturated neighborhood of $x$.

A space $X$ is \emph{$\AN_\bullet$-friendly}
\cite[Definition~1]{JGL:mscs16:errata}
if and only if:
\begin{itemize}[label=---]
\item $\bullet$ is nothing or ``$\leq 1$'', and $\Lform X$ is locally
  convex;
\item or $\bullet$ is ``$1$'', and either:
  \begin{enumerate}
  \item $\Lform X$ is locally convex and $X$ is compact;
  \item or $\Lform X$ is a locally convex, locally convex-compact,
    sober topological cone;
  \item or $X$ is LCS-complete.
  \end{enumerate}
\end{itemize}
We recall that $\Lform X$ is equipped with its Scott topology.

Every core-compact space is $\AN_\bullet$-friendly, for any value of
$\bullet$ \cite[Remark~2]{JGL:mscs16:errata}.
Hence, in particular, every locally compact space is
$\AN_\bullet$-friendly.  Every LCS-complete space is
$\AN_\bullet$-friendly for any value of $\bullet$, and
$\AN_1$-friendliness implies $\AN$-friendliness
\cite[Remark~3]{JGL:mscs16:errata}.
Also, every LCS-complete space is
$\odot$-consonant \cite[Lemma~13.2]{dBGLJL:LCScomplete}, and
$\Lform X$ is locally convex for every $\odot$-consonant space $X$.
We summarize all this as follows.
\begin{fact}
  \label{fact:assum}
  Every core-compact space and in particular every locally compact
  space, every LCS-complete space and in particular every
  $\odot$-consonant space is $\AN$-friendly (and
  $\AN_{\leq 1}$-friendly).  Every core-compact space, every locally
  compact space, every LCS-complete space, every compact
  $\odot$-consonant space is $\AN_1$-friendly.
\end{fact}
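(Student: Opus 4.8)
The plan is to treat Fact~\ref{fact:assum} as a bookkeeping consequence of the cited results together with the definition of $\AN_\bullet$-friendliness, verifying each listed implication by unwinding that definition clause by clause. First I would record two elementary observations that glue the pieces together: (i)~for $\bullet$ nothing or ``$\leq 1$'' the defining condition of $\AN_\bullet$-friendliness is literally the same---namely that $\Lform X$ be locally convex in its Scott topology---so a space is $\AN$-friendly if and only if it is $\AN_{\leq 1}$-friendly; and (ii)~in each of the three subcases that define $\AN_1$-friendliness the cone $\Lform X$ is in particular locally convex (subcase~1 asserts it directly; subcase~2 asserts that $\Lform X$ is a locally convex topological cone; subcase~3 assumes $X$ LCS-complete, and LCS-complete spaces are $\odot$-consonant by \cite[Lemma~13.2]{dBGLJL:LCScomplete}, whence $\Lform X$ is locally convex). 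Consequently $\AN_1$-friendliness implies both $\AN$-friendliness and $\AN_{\leq 1}$-friendliness, which is the content of \cite[Remark~3]{JGL:mscs16:errata}.

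Next I would dispatch the core-compact and locally compact columns. By \cite[Remark~2]{JGL:mscs16:errata} every core-compact space is $\AN_\bullet$-friendly for every value of $\bullet$, and every locally compact space is core-compact by \cite[Theorem~5.2.9]{JGL-topology}; combined with observation~(i) this settles those two cases for all of $\AN$, $\AN_{\leq 1}$ and $\AN_1$.

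The LCS-complete and $\odot$-consonant cases I would handle through local convexity of $\Lform X$. For an LCS-complete space $X$: it is $\AN_1$-friendly directly by subcase~3 of the definition, and since it is $\odot$-consonant (\cite[Lemma~13.2]{dBGLJL:LCScomplete}) the local-convexity step below applies, so $\Lform X$ is locally convex and hence $X$ is $\AN$-friendly and, by~(i), $\AN_{\leq 1}$-friendly. For a general $\odot$-consonant space $X$ I would invoke the fact---asserted in the discussion preceding the Fact, and which I would pin to its source in \cite{dBGLJL:LCScomplete}---that $\Lform X$ is locally convex whenever $X$ is $\odot$-consonant; with $\Lform X$ locally convex, $X$ is $\AN$-friendly and $\AN_{\leq 1}$-friendly. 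Finally, if $X$ is moreover compact, then ``$\Lform X$ locally convex and $X$ compact'' is precisely subcase~1 of the definition of $\AN_1$-friendliness, so $X$ is $\AN_1$-friendly; this disposes of the last clause.

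The main obstacle is not a genuine difficulty but rather the care needed in matching each clause of Fact~\ref{fact:assum} against the correct subcase of the (somewhat intricate) three-way definition of $\AN_\bullet$-friendliness, and in particular in pinning down a clean citation for the statement ``$\Lform X$ is locally convex for every $\odot$-consonant space $X$'', on which both the LCS-complete and the $\odot$-consonant rows rest.
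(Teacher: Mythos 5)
Your proposal is correct and follows essentially the same route as the paper, which presents this Fact as a summary of the immediately preceding citations: core-compactness via \cite[Remark~2]{JGL:mscs16:errata}, LCS-completeness and the implication from $\AN_1$- to $\AN$-friendliness via \cite[Remark~3]{JGL:mscs16:errata}, $\odot$-consonance of LCS-complete spaces via \cite[Lemma~13.2]{dBGLJL:LCScomplete}, and local convexity of $\Lform X$ for $\odot$-consonant $X$, followed by the same clause-by-clause matching against the definition of $\AN_\bullet$-friendliness. The one citation you wanted to pin down, that $\Lform X$ is locally convex for every $\odot$-consonant space, is \cite[Lemma~13.6]{dBGLJL:LCScomplete}, as the paper itself uses later in Corollary~\ref{corl:LX:consistent}.
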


We turn to sublinear previsions.  From
\cite[Proposition~3.11]{JGL-mscs16} (and its errata
\cite{JGL:mscs16:errata}), there is a map
$r_{\AN\,X} \colon \HV (\Pred_\Nature^\bullet X) \to \Pred_\AN^\bullet
X$ and a map $s_{\AN\,X}^\bullet$ in the other direction, defined by
$r_{\AN\,X} (C) (h) \eqdef \sup_{G \in C} G (h)$ for every
$h \in \Lform X$ and
$s_{\AN\,X}^\bullet (F) \eqdef \{G \in \Pred_\Nature^\bullet X \mid G
\leq F\}$, and they form a retraction under the assumption that $X$ is
$\AN_\bullet$-friendly.

For any $\AN_\bullet$-friendly space $X$, $r_{\AN\,X}$ restricts to a
homeomorphism, with inverse $s_{\AN\,X}^\bullet$, between the subspace
$\HV^{cvx} (\Pred_\Nature^\bullet X) \to \Pred_\AN^\bullet X$ of
non-empty closed convex subsets of $\Pred_\Nature^\bullet X$ and
$\Pred_\AN^\bullet X$, see Theorem~4.11 of \cite{JGL-mscs16} and its
errata \cite{JGL:mscs16:errata}.

We write $r_\AN$ for the transformation consisting of all the maps
$r_{\AN\,X}$, when $X$ varies, and similarly with $s_\AN^\bullet$.

\begin{lemma}
  \label{lemma:rAP:nat}
  Let $\bullet$ be nothing, ``$\leq 1$'', or ``$1$''.  The
  transformations $r_\AN$ and $s_\AN^\bullet$ restrict to natural
  transformations between $\HV \Val_\bullet$ and $\Pred^\bullet_\AN$
  (resp., natural isomorphisms between $\HV^{cvx} \Val_\bullet$ and
  $\Pred^\bullet_\AN$) on the full subcategory of $\Topcat$ consisting
  of $\AN_\bullet$-friendly spaces.
\end{lemma}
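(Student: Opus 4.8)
The plan is to verify the two naturality squares directly, following the pattern of Lemma~\ref{lemma:rDP:nat} for $r_\DN$ and $s_\DN^\bullet$ but with suprema in place of minima and $\HV^{cvx}$ in place of $\SV^{cvx}$. Throughout I would silently identify $\Val_\bullet$ with $\Pred_\Nature^\bullet$ through the natural homeomorphism recalled in Section~\ref{sec:prev-powerc}, so that $\HV\Val_\bullet f$ and $\HV^{cvx}\Val_\bullet f$ are restrictions of $\HV\Pred f$. That the restricted maps are well defined on the full subcategory of $\AN_\bullet$-friendly spaces---$r_{\AN\,X}$ taking values in $\Pred_\AN^\bullet X$, and $s_{\AN\,X}^\bullet$ taking (non-empty, convex, closed) values in $\HV^{cvx}(\Pred_\Nature^\bullet X)$---is already contained in \cite[Proposition~3.11, Theorem~4.11]{JGL-mscs16} and its errata \cite{JGL:mscs16:errata}; the content of the lemma is therefore the naturality of the squares together with the isomorphism statement.

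For $r_\AN$: fix a continuous map $f\colon X\to Y$ of $\AN_\bullet$-friendly spaces, a closed subset $C\in\HV(\Pred_\Nature^\bullet X)$, and $h\in\Lform Y$. Then $r_{\AN\,Y}(\HV\Pred f(C))(h)$ is the supremum of $G'(h)$ over all $G'$ in the closure of $\{\Pred f(G)\mid G\in C\}$ in $\Pred_\Nature^\bullet Y$. The one non-formal observation is that the evaluation map $G'\mapsto G'(h)$ is lower semicontinuous---the sets $[h>r]$ are open---so its supremum over a subset and over the closure of that subset agree; this is the exact analogue of the elementary fact (used for $r_\DN$) that a minimum over an upward closure equals the minimum over the original set. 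Hence the value above equals $\sup_{G\in C}\Pred f(G)(h)=\sup_{G\in C}G(h\circ f)=r_{\AN\,X}(C)(h\circ f)=\Pred f(r_{\AN\,X}(C))(h)$, which is the required naturality equation; and $\Pred f$ carries sublinear previsions to sublinear previsions, so this is an equation in $\Pred_\AN^\bullet Y$.

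For $s_\AN^\bullet$: I would prove $\HV^{cvx}\Val_\bullet f\bigl(s_{\AN\,X}^\bullet(F)\bigr)=s_{\AN\,Y}^\bullet(\Pred f(F))$ for every $F\in\Pred_\AN^\bullet X$ by the device used for $s_\DN^\bullet$. Both sides are non-empty closed convex subsets of $\Pred_\Nature^\bullet Y$: $s_{\AN\,X}^\bullet(F)=\{G\mid G\le F\}$ is convex since $F$ is sublinear, its image under $\Val_\bullet f$ is convex by Lemma~\ref{lemma:V:lin}, and taking closure preserves convexity by Fact~\ref{fact:cl:conv}; $s_{\AN\,Y}^\bullet(\Pred f(F))=\{G'\mid G'\le\Pred f(F)\}$ is visibly convex and closed; non-emptiness on both sides is provided by $\AN_\bullet$-friendliness via the cited results. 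Since $Y$ is $\AN_\bullet$-friendly, $r_{\AN\,Y}$ restricts to a homeomorphism $\HV^{cvx}(\Pred_\Nature^\bullet Y)\to\Pred_\AN^\bullet Y$ with inverse $s_{\AN\,Y}^\bullet$, so it suffices to check that the two sides have equal image under $r_{\AN\,Y}$. The right-hand side maps to $\Pred f(F)$ because $r_{\AN\,Y}\circ s_{\AN\,Y}^\bullet=\mathrm{id}$; the left-hand side maps to $\Pred f\bigl(r_{\AN\,X}(s_{\AN\,X}^\bullet(F))\bigr)=\Pred f(F)$ by the naturality of $r_\AN$ just established together with $r_{\AN\,X}\circ s_{\AN\,X}^\bullet=\mathrm{id}$. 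The natural-isomorphism statement for the convex variants is then immediate: on $\AN_\bullet$-friendly spaces each $r_{\AN\,X}$ restricts to a homeomorphism $\HV^{cvx}\Val_\bullet X\to\Pred_\AN^\bullet X$ inverse to $s_{\AN\,X}^\bullet$, and both families have just been shown natural.

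I expect the main obstacle to be bookkeeping rather than any real difficulty: tracking precisely where $\AN_\bullet$-friendliness is needed (only to know $s_{\AN\,X}^\bullet$ and $s_{\AN\,Y}^\bullet$ are non-empty-valued and that $r_{\AN\,Y}$ is a homeomorphism onto $\Pred_\AN^\bullet Y$), and checking that $\HV^{cvx}\Val_\bullet f$ is indeed the right arrow, i.e.\ that images and closures of convex sets of valuations remain convex---both handled by Lemma~\ref{lemma:V:lin} and Fact~\ref{fact:cl:conv}, and packaged in Lemma~\ref{lemma:Hcvx:functor}. The only genuinely analytic point, the lower-semicontinuity remark that suprema are insensitive to closure, is a two-line argument.
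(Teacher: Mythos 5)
Your proposal is correct and follows essentially the same route as the paper's proof: the same lower-semicontinuity observation that suprema are unchanged under closure for the naturality of $r_\AN$, and the same injectivity trick for $s_\AN^\bullet$ (check convexity of the image via Lemma~\ref{lemma:V:lin}/Fact~\ref{fact:cl:conv}, then compare images under the homeomorphism $r_{\AN\,Y}$ using the just-proved naturality of $r_\AN$). The bookkeeping about where $\AN_\bullet$-friendliness enters matches the paper as well.
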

\begin{proof}
  We will need to use the following observation: $(*)$ for any lower
  semicontinuous map $\psi \colon Z \to \creal$, where $Z$ is any
  topological space, for every $A \subseteq Z$,
  $\sup_{z \in A} \psi (z) = \sup_{z \in cl (A)} \psi (z)$.  Indeed,
  for every $t \in \real$, $t < \sup_{z \in A} \psi (z)$ if and only
  if $\psi^{-1} (]t, \infty])$ intersects $A$,
  $t < \sup_{z \in A} \psi (z)$ if and only if
  $\psi^{-1} (]t, \infty])$ intersects $cl (A)$, and those are
  equivalent conditions since $\psi^{-1} (]t, \infty])$ is open.

  
  Let $f \colon X \to Y$ be any continuous map, where both $\Lform X$
  and $\Lform Y$ are locally convex.  Let us start with $r_\AN$.  We
  need to show that for every $C \in \HV (\Pred_\Nature^\bullet X)$,
  for every $h \in \Lform Y$,
  $r_{\AN\,Y} (\HV (\Pred f) (C)) (h) = \Pred f (r_{\AN\,X} (C)) (h)$.
  The left-hand side is equal to
  $\sup_{G' \in \HV (\Pred f) (Q)} G' (h) = \sup_{G' \in cl (\{\Pred f
    (G) \mid G \in C\})} G' (h) = \sup_{G' \in \{\Pred f (G) \mid G
    \in C\}} G' (h)$ (by $(*)$, since $G' \mapsto G' (h)$ is lower
  semicontinuous, by definition of the weak topology)
  $= \sup_{G \in C} \Pred f (G) (h) = \sup_{G \in C} G (h \circ f) =
  r_{\AN\,X} (C) (h \circ f) = \Pred f (r_{\AN\,X} (C)) (h)$.

  As far as $s_\AN^\bullet$ is concerned, we must show that for every
  $F \in \Pred_\AN^\bullet X$,
  $s_{\AN\,Y}^\bullet (\Pred f (F)) = \HV (\Pred f) \allowbreak
  (s_{\AN\,X}^\bullet (F))$.  The left-hand side is convex, and we
  claim that the right-hand side is, too.  Knowing this, we will be
  able to conclude: since $r_{\AN\,Y}$ restricted to
  $\HV^{cvx} (\Pred_\Nature^\bullet X)$ is a homeomorphism, it is
  enough to show that
  $r_{\AN\,Y} (s_{\AN\,Y}^\bullet (\Pred f (F))) = r_{\AN\,Y} (\HV
  (\Pred f) \allowbreak (s_{\AN\,X}^\bullet (F)))$, and this will
  follow from the naturality of $r_{\AN}$.

  Hence it remains to show that
  $\HV (\Pred f) \allowbreak (s_{\AN\,X}^\bullet (F))$ is convex.
  This is equal to $cl (A)$, where
  $A \eqdef \{\Pred f (G) \mid G \in s_{\AN\,X}^\bullet (F)\}$.  Since
  $s_{\AN\,X}^\bullet (F)$ is convex and $\Pred f$ commutes with
  scalar multiplication and with addition, $A$ is convex.  By
  Fact~\ref{fact:cl:conv}, $cl (A)$ is convex, too.  \qed
\end{proof}


We can now transport Theorem~\ref{thm:Hcvx:projlim} to the world of
sublinear previsions, as follows.
\begin{theorem}
  \label{thm:AN:projlim}
  Let ${(p_{ij} \colon X_j \to X_i)}_{i \sqsubseteq j \in I}$ be a
  projective system of topological spaces, with canonical projective
  limit $X, {(p_i)}_{i \in I}$.  Let $\bullet$ be nothing,
  ``$\leq 1$'' or ``$1$''.
  If:
  \begin{enumerate}
  \item the projective system is an ep-system,
  \item or $I$ has a countable cofinal subset and each $X_i$ is
    locally compact sober (and compact, if $\bullet$ is ``$1$''),
  \item or every $X_i$ is $\odot$-consonant sober (and compact if
    $\bullet$ is ``$1$'') and every $p_{ij}$ is a proper map,
  \end{enumerate}
  then
  ${(\Pred_\AN^\bullet {p_{ij}} \colon \Pred_\AN^\bullet {X_j} \to
    \Pred_\AN^\bullet {X_i})}_{i \sqsubseteq j \in I}$ is a projective
  system of topological spaces, and
  $\Pred_\AN^\bullet X, {(\Pred_\AN^\bullet {p_i})}_{i \in I}$ is its
  projective limit, up to homeomorphism.
\end{theorem}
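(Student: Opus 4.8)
The plan is to split along the same three cases as in Theorem~\ref{thm:Hcvx:projlim}: the ep-system case is handled by the subcontinuation machinery of Section~\ref{sec:subc-funct}, while cases~2 and~3 are obtained by transporting the already-proved preservation statement for the powercone functor $\HV^{cvx}\Val_\bullet$ along the natural retraction $(r_\AN, s_\AN^\bullet)$ of Lemma~\ref{lemma:rAP:nat}. In all cases $\Pred_\AN^\bullet$ is a subcontinuation functor (Definition~\ref{defn:subcont}), hence an endofunctor on $\Topcat$, so ${(\Pred_\AN^\bullet {p_{ij}})}_{i \sqsubseteq j \in I}$ is automatically a projective system and $\Pred_\AN^\bullet X, {(\Pred_\AN^\bullet {p_i})}_{i \in I}$ a cone over it; by Lemma~\ref{lemma:T:projlim:subcont} the comparison map $\varphi$ is already a topological embedding, so what remains, in each case, is surjectivity (equivalently, that $\varphi$ is a homeomorphism).

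For case~1 I would apply Proposition~\ref{prop:ep:subcont}, whose only non-trivial hypothesis is that $\Pred_\AN^\bullet X$ be a subdcpo of $KX$, i.e.\ that a pointwise directed supremum $F \eqdef \dsup_k F_k$ of sublinear, positively homogeneous, (sub)normalized previsions on $X$ is again one. Positive homogeneity is immediate since multiplication by a fixed scalar is Scott-continuous on $\creal$; for sublinearity one has $F (h+h') = \dsup_k F_k (h+h') \leq \dsup_k (F_k (h) + F_k (h')) = \dsup_k F_k (h) + \dsup_k F_k (h') = F (h) + F (h')$, using that addition on $\creal$ commutes with directed suprema over a common directed index; (sub)normalization follows identically, replacing $h+h'$ by $\one+h$. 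Scott-continuity and lower semicontinuity of $F$ are free, since $KX$ is closed under arbitrary suprema. Proposition~\ref{prop:ep:subcont} then gives the result.

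For cases~2 and~3, Theorem~\ref{thm:Hcvx:projlim} (case~2, resp.\ case~3, since $\odot$-consonant implies consonant) gives that $\HV^{cvx}\Val_\bullet X, {(\HV^{cvx}\Val_\bullet {p_i})}_{i \in I}$ is a projective limit of ${(\HV^{cvx}\Val_\bullet {p_{ij}})}_{i \sqsubseteq j \in I}$. I would then invoke Lemma~\ref{lemma:retract:limits} with $S \eqdef \Pred_\AN^\bullet$, $T \eqdef \HV^{cvx}\Val_\bullet$, and retraction $(r_\AN, s_\AN^\bullet)$. By Lemma~\ref{lemma:rAP:nat} this is a natural isomorphism, in particular a natural retraction, between $T$ and $S$ on the full subcategory $\catk$ of $\Topcat$ of $\AN_\bullet$-friendly spaces; to get an $F_*$-relative natural retraction in the sense of Definition~\ref{defn:Frel:retr} (with $F$ the given projective system viewed as a diagram $\cati \to \Topcat$) it is enough that every $X_i$ and the canonical limit $X$ be $\AN_\bullet$-friendly, for then $\catk$, being full, contains the whole image of $F_*$. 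The $X_i$ are $\AN_\bullet$-friendly by Fact~\ref{fact:assum} (locally compact in case~2; $\odot$-consonant, and also compact when $\bullet$ is ``$1$'', in case~3). For the limit $X$: in case~2, $X$ is $\odot$-consonant and sober by Theorem~\ref{prop:oconsonant:lim:omega:lc}; in case~3, $X$ is $\odot$-consonant and sober by Corollary~\ref{corl:consonant:lim}; and when $\bullet$ is ``$1$'' each $X_i$ is compact, so $X$ is compact by Steenrod's theorem. In every subcase Fact~\ref{fact:assum} then yields that $X$ is $\AN_\bullet$-friendly. Lemma~\ref{lemma:retract:limits} now gives that $\Pred_\AN^\bullet X, {(\Pred_\AN^\bullet {p_i})}_{i \in I}$ is a limit of $\Pred_\AN^\bullet \circ F$, which is exactly the desired conclusion.

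I expect the main obstacle to be not any single computation but the verification that the limit space $X$ lands in $\catk$: since the retraction $(r_\AN, s_\AN^\bullet)$ is available only over $\AN_\bullet$-friendly spaces and $\AN_\bullet$-friendliness is not visibly limit-closed, this is precisely the point that forces the $F$-relative formulation of Lemma~\ref{lemma:retract:limits} and the intermission results on the $\odot$-consonance of projective limits (Theorem~\ref{prop:oconsonant:lim:omega:lc}, Corollary~\ref{corl:consonant:lim}), plus Steenrod's theorem to recover compactness of $X$ in the normalized case. The ep-system case cannot be subsumed here, as an ep-system of arbitrary topological spaces need not consist of friendly spaces, which is why it is routed instead through the subdcpo criterion of Proposition~\ref{prop:ep:subcont}.
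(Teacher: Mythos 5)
Your proposal is correct and follows essentially the same route as the paper: case~1 via Proposition~\ref{prop:ep:subcont} after checking that $\Pred_\AN^\bullet X$ is a subdcpo of $KX$, and cases~2 and~3 by transporting Theorem~\ref{thm:Hcvx:projlim} along the relative natural homeomorphism of Lemma~\ref{lemma:rAP:nat} via Lemma~\ref{lemma:retract:limits}, with $\AN_\bullet$-friendliness of the $X_i$ and of $X$ supplied by Fact~\ref{fact:assum}, Theorem~\ref{prop:oconsonant:lim:omega:lc}, Corollary~\ref{corl:consonant:lim}, and Steenrod's theorem. Your closing remarks on why the $F$-relative formulation and the intermission results are needed, and why the ep-case must be routed through the subdcpo criterion, accurately reflect the structure of the paper's argument.
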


\begin{proof}
  In case~1, we use Proposition~\ref{prop:ep:subcont}, noticing that
  $\Pred_\AN^\bullet X$ is a subdcpo of $KX$, namely that pointwise
  directed suprema of (subnormalized, normalized) sublinear previsions
  are again (subnormalized, normalized) sublinear previsions.

  In cases~2 and~3, we apply the corresponding cases of
  Theorem~\ref{thm:Hcvx:projlim}.  To this end, we need to verify that
  $r_\AN$ and $s^\bullet_\AN$ are a natural homeomorphism on a
  subcategory of $\Topcat$ that contains the spaces $X_i$ and the
  limit $X$; this is a special case of
  Lemma~\ref{lemma:retract:limits} where the retraction is in fact a
  homeomorphism.  In light of Lemma~\ref{lemma:rAP:nat}, it suffices
  to show that every $X_i$ is $\AN_\bullet$-friendly, as well as $X$.

  In case~2, every locally compact sober space is
  $\AN_\bullet$-friendly, and that is the case of each $X_i$.  $X$ may
  fail to be locally compact, but it is $\odot$-consonant by
  Proposition~\ref{prop:oconsonant:lim:omega:lc}.  When $\bullet$ is
  ``$1$'', it is also compact by Steenrod's theorem.  In any case, $X$
  is $\AN_\bullet$-friendly by Fact~\ref{fact:assum}.

  In case~3, every $p_{ij}$ is proper, and every $X_i$ is
  $\odot$-consonant sober, so $X$ is, too, by
  Corollary~\ref{corl:consonant:lim}.  When $\bullet$ is ``$1$'',
  every $X_i$ is compact sober, so $X$ is, too, by Steenrod's theorem.
  By Fact~\ref{fact:assum}, all the spaces and $X_i$ and $X$ are
  therefore $\AN_\bullet$-friendly.
\end{proof}


\section{Forks}
\label{sec:forks}

We arrive at our final functors, which mix probabilistic and erratic
non-determinism.  A \emph{fork} on a space $X$ is any pair
$(F^-, F^+)$ of a superlinear prevision $F^-$ on $X$ and of a
sublinear prevision $F^+$ on $X$ satisfying \emph{Walley's condition}:
\[
  F^- (h+h') \leq F^- (h) + F^+ (h') \leq F^+ (h+h')
\]
for all $h, h' \in \Lform X$ \cite{Gou-csl07,KP:predtrans:pow}.  A
fork is \emph{subnormalized}, resp.\ \emph{normalized} if and only if
both $F^-$ and $F^+$ are.

We write $\Pred_{\ADN} X$ for the set of all forks on $X$, and
$\Pred^{\leq 1}_{\ADN} X$, $\Pred^1_{\ADN} X$ for their subsets of
subnormalized, resp.\ normalized, forks.  The \emph{weak topology} on
each is the subspace topology induced by the inclusion into the larger
space $\Pred_{\DN} X \times \Pred_{\AN} X$.  A subbase of the weak
topology is composed of two kinds of open subsets: $[h > r]^-$,
defined as $\{(F^-, F^+) \mid F^- (h) > r\}$, and $[h > r]^+$, defined
as $\{(F^-, F^+) \mid F^+ (h) > r\}$, where $h \in \Lform X$,
$r \in \real^+$.  The specialization ordering of spaces of forks is
the product ordering $\leq \times \leq$, where $\leq$ denotes the
pointwise ordering on previsions.  In particular, all those spaces of
forks are $T_0$.

It is easy to see that, whether $\bullet$ is nothing, ``$\leq 1$'', or
``$1$'', $\Pred_\ADN^\bullet$ defines an endofunctor on $\Topcat$,
whose action on morphisms is given by
$\Pred_\ADN^\bullet f \eqdef (\Pred f, \Pred f)$.

\begin{lemma}
  \label{lemma:Fork:projlim}
  Let $\bullet$ be nothing, ``$\leq 1$'', or ``$1$'', and $T$ be the
  $\Pred_\ADN^\bullet$ functor.  The comparison map
  $\varphi \colon TX \to Z$ of any projective $T$-situation is a
  topological embedding.
\end{lemma}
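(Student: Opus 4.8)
The plan is to reduce the statement to the embedding results already obtained for the superlinear and sublinear prevision functors. Recall from Section~\ref{sec:prev-powerc} that both $\Pred_\DN^\bullet$ and $\Pred_\AN^\bullet$ are subcontinuation functors in the sense of Definition~\ref{defn:subcont}, so by Lemma~\ref{lemma:T:projlim:subcont} the comparison map of any projective situation for either of them is a topological embedding. First I would fix a projective $\Pred_\ADN^\bullet$-situation as in Definition~\ref{defn:situation}: a projective system ${(p_{ij})}_{i \sqsubseteq j \in I}$, its canonical projective limit $X, {(p_i)}_{i \in I}$, the canonical projective limit $Z, {(q_i)}_{i \in I}$ of ${(T {p_{ij}})}_{i \sqsubseteq j \in I}$ with $T \eqdef \Pred_\ADN^\bullet$, and the comparison map $\varphi \colon TX \to Z$. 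Alongside, I would introduce the canonical projective limits $Z^{-}, {(q_i^{-})}_{i \in I}$ of ${(\Pred_\DN^\bullet {p_{ij}})}_{i \sqsubseteq j \in I}$ and $Z^{+}, {(q_i^{+})}_{i \in I}$ of ${(\Pred_\AN^\bullet {p_{ij}})}_{i \sqsubseteq j \in I}$, with comparison maps $\varphi^{-} \colon \Pred_\DN^\bullet X \to Z^{-}$ and $\varphi^{+} \colon \Pred_\AN^\bullet X \to Z^{+}$, both topological embeddings by Lemma~\ref{lemma:T:projlim:subcont}; hence $\varphi^{-} \times \varphi^{+} \colon \Pred_\DN^\bullet X \times \Pred_\AN^\bullet X \to Z^{-} \times Z^{+}$ is a topological embedding as well.

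The next step is to identify $Z$ with a subspace of $Z^{-} \times Z^{+}$. By definition the weak topology on $TX = \Pred_\ADN^\bullet X$ is the subspace topology inherited from $\Pred_\DN^\bullet X \times \Pred_\AN^\bullet X$, and $T {p_{ij}}$ is the restriction of $\Pred_\DN^\bullet {p_{ij}} \times \Pred_\AN^\bullet {p_{ij}}$. Now $\prod_{i \in I} T {X_i}$ is a subspace of $\prod_{i \in I}(\Pred_\DN^\bullet {X_i} \times \Pred_\AN^\bullet {X_i})$, which is canonically homeomorphic to $(\prod_{i \in I} \Pred_\DN^\bullet {X_i}) \times (\prod_{i \in I} \Pred_\AN^\bullet {X_i})$ by rearranging factors; under this homeomorphism the canonical projective limit $Z$, cut out of $\prod_{i \in I} T {X_i}$ by the compatibility equations, corresponds to the subspace of $Z^{-} \times Z^{+}$ consisting of those pairs $(\vec{F}^{-}, \vec{F}^{+})$ of compatible families for which $(F_i^{-}, F_i^{+})$ is a fork on $X_i$ for every $i \in I$, with $q_i$ corresponding to the restriction of $q_i^{-} \times q_i^{+}$. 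A short check of the universal property then shows that, under this identification, $\varphi$ is precisely the corestriction to $Z$ of the restriction of $\varphi^{-} \times \varphi^{+}$ to the subspace $TX$. Since restricting a topological embedding to a subspace of its domain again yields a topological embedding onto its image, and corestricting to any subspace of the codomain that contains that image preserves this (by transitivity of subspace topologies), and the image of $\varphi$ lies in $Z$, we conclude that $\varphi \colon TX \to Z$ is a topological embedding.

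I do not expect a genuine obstacle: the statement is a formal consequence of Lemma~\ref{lemma:T:projlim:subcont} together with the product structure of spaces of forks. The only delicate bookkeeping is the verification in the second step that the canonical projective limit commutes with binary products of diagrams and with passage to pointwise subspaces; this is routine but should be written out carefully. As an alternative that avoids this bookkeeping, one can argue directly in the style of the proof of Lemma~\ref{lemma:T:projlim:subcont}: the space $TX$ is $T_0$, its specialization preordering being the antisymmetric product ordering $\leq \times \leq$, and $\varphi$ is full because, for every $h \in \Lform X$ and $r \in \Rp$, each subbasic open set $[h > r]^{-}$ of $TX$ equals the directed union $\dcup_{i \in I} \varphi^{-1}(q_i^{-1}([h_i > r]^{-}))$, where $h_i \in \Lform {X_i}$ is the largest function with $h_i \circ p_i \leq h$ provided by Lemma~\ref{lemma:hi}, using the Scott-continuity of superlinear previsions and $q_i \circ \varphi = T {p_i}$; the argument for $[h > r]^{+}$ is identical, and a full continuous map out of a $T_0$ space is a topological embedding.
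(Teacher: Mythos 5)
Your proposal is correct, but your primary route differs from the paper's. The paper proves the lemma directly, in the style of Lemma~\ref{lemma:T:projlim:subcont}: it writes each subbasic open set $[h>r]^\pm$ of $\Pred_\ADN^\bullet X$ as $\dcup_{i\in I}[h_i\circ p_i>r]^\pm$ using Lemma~\ref{lemma:hi} and the Scott-continuity of $F^\pm$, identifies $[h_i\circ p_i>r]^\pm$ with $\varphi^{-1}(q_i^{-1}([h_i>r]^\pm))$ via $q_i\circ\varphi=T{p_i}$, and concludes fullness plus $T_0$-ness of the product ordering $\leq\times\leq$ --- which is exactly the ``alternative'' argument you sketch in your last paragraph. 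Your main route instead deduces the result formally from the subcontinuation embedding lemma applied to $\Pred_\DN^\bullet$ and $\Pred_\AN^\bullet$: since the weak topology on forks is by definition the subspace topology from $\Pred_\DN^\bullet X\times\Pred_\AN^\bullet X$, and canonical projective limits commute (up to the evident rearrangement homeomorphism) with binary products of diagrams and with passage to pointwise subspaces, the comparison map $\varphi$ is the restriction--corestriction of the embedding $\varphi^-\times\varphi^+$, hence an embedding. This reduction is sound --- the bookkeeping you flag (product of subspaces is a subspace of the product, transitivity of subspace topologies, identification of $Z$ inside $Z^-\times Z^+$, and the universal-property check that $\varphi$ really is the restriction of $\varphi^-\times\varphi^+$) all goes through --- and it buys modularity: no fresh fullness computation is needed, and the same scheme would apply to any functor cut out as a pointwise subspace of a product of subcontinuation functors. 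What the paper's direct argument buys is brevity and self-containedness: because the subbasic opens of the fork space are already indexed by the sign $\pm$, the fullness computation is a two-line repetition of the one in Lemma~\ref{lemma:T:projlim:subcont}, with no need to verify that limits commute with products and subspaces.
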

\begin{proof}
  Let $Z^\sharp, {(q_i^\sharp)}_{i \in I}$ be the canonical projective
  limit of
  $(\Pred_\DN^\bullet {p_{ij}} \colon \Pred_\DN^\bullet {X_j} \to
  \Pred_\DN^\bullet {X_i})_{i \sqsubseteq j \in I}$ and
  $\varphi^\sharp \colon \Pred_\DN^\bullet X \to Z^\sharp$ be the
  comparison map.  Similarly with $Z^\flat, {(q_i^\flat)}_{i \in I}$
  and $\Pred_\AN^\bullet$.  We also take the notations ($Z$,
  $\varphi$, $q_i$) from Definition~\ref{defn:situation}, with
  $T \eqdef \Pred_\ADN^\bullet$.

  By definition (see Definition~\ref{defn:situation}),
  $\varphi^\sharp$ maps every $F^- \in \Pred_\DN^\bullet X$ to
  ${(\Pred p_i (F^-))}_{i \in I}$, $\varphi^\flat$ maps every
  $F^+ \in \Pred_\AN^\bullet X$ to ${(\Pred p_i (F^+))}_{i \in I}$,
  and $\varphi$ maps every fork $(F^-, F^+) \in \Pred_\ADN^\bullet X$
  to ${(\Pred p_i (F^-), \Pred p_i (F^+))}_{i \in I}$.  Also, the maps
  $q_i^\sharp$, $q_i^\flat$, $q_i$ are just projection onto coordinate
  $i$, just like $p_i$.

  As a consequence, for every $i \in I$, for every
  $h_i \in \Lform {X_i}$, for every $r \in \Rp$, for every
  $\pm \in \{-, +\}$,
  ${\varphi}^{-1} (q_i^{-1} ([h_i > r]^\pm)) = [h_i \circ p_i]^\pm$.
  Indeed, $(F^-, F^+)$ is in the left-hand side if and only if
  $\Pred {p_i} (F^\pm) (h_i) > r$, if and only if
  $F^\pm (h_i \circ p_i) > r$, if and only if
  $(F^-, F^+) \in [h_i \circ p_i > r]^\pm$.

  A subbase of the topology on $\Pred_\ADN^\bullet X$ is given by the
  sets $[h > r]^\pm$ where $h \in \Lform X$, $r \in \Rp$, and
  $\pm \in \{+, -\}$.  For every $i \in I$, let $h_i$ be the largest
  map in $\Lform {X_i}$ such that $h_i \circ p_i \leq h$, as given in
  Lemma~\ref{lemma:hi}.  Now $[h > r]^\pm$ is the collection of
  (subnormalized, normalized) forks $(F^-, F^+)$ such that
  $F^\pm (h) > r$, or equivalently such that
  $F^\pm (h_i \circ p_i) > r$ for some $i \in I$, using item~6 of that
  lemma and the Scott-continuity of $F^\pm$.  In other words,
  $[h > r]^\pm = \dcup_{i \in I} [h_i \circ p_i > r]^\pm$, and we have
  seen that this is equal to
  $\dcup_{i \in I} {\varphi}^{-1} (q_i^{-1} ([h_i > r]^\pm))$, hence
  to ${\varphi}^{-1} (\dcup_{i \in I} q_i^{-1} ([h_i > r]^\pm))$.
  Therefore $\varphi$ is full.

  Since $\Pred_\ADN^\bullet X$ is $T_0$, $\varphi$ is a topological
  embedding.  \qed
\end{proof}

\begin{theorem}
  \label{thm:Fork:projlim}
  Let $\bullet$ be nothing, ``$\leq 1$'', or ``$1$''.  Let
  ${(p_{ij} \colon X_j \to X_i)}_{i \sqsubseteq j \in I}$ be a
  projective system of topological spaces, with canonical projective
  limit $X, {(p_i)}_{i \in I}$.  If 
  $\Pred_\DN^\bullet X$ is a projective limit of
  $(\Pred_\DN^\bullet {p_{ij}} \colon \Pred_\DN^\bullet {X_j} \to
    \Pred_\DN^\bullet {X_i})_{i \sqsubseteq j \in I}$ and if
  $\Pred_\AN^\bullet X$ is a projective limit of
  ${(\Pred_\AN^\bullet {p_{ij}} \colon \Pred_\AN^\bullet {X_j} \to
    \Pred_\AN^\bullet {X_i})}_{i \sqsubseteq j \in I}$, then
  $\Pred_\ADN^\bullet X$ is a projective limit of
  $(\Pred_\ADN^\bullet {p_{ij}} \colon \Pred_\ADN^\bullet {X_j} \to
  \Pred_\ADN^\bullet {X_i})_{i \sqsubseteq j \in I}$.
\end{theorem}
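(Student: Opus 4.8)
\section*{Proof proposal for Theorem~\ref{thm:Fork:projlim}}

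The plan is to invoke Lemma~\ref{lemma:Fork:projlim}, which already says that the comparison map $\varphi \colon \Pred_\ADN^\bullet X \to Z$ of the given projective $\Pred_\ADN^\bullet$-situation is a topological embedding; so the only thing left to prove is that $\varphi$ is surjective, and then $\varphi$ will be a surjective topological embedding, hence a homeomorphism, which is exactly the assertion that $\Pred_\ADN^\bullet X, {(\Pred_\ADN^\bullet {p_i})}_{i \in I}$ is a projective limit of $(\Pred_\ADN^\bullet {p_{ij}})_{i \sqsubseteq j \in I}$.

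First I would unravel the elements of $Z$. Writing $Z$ for the canonical projective limit of ${(\Pred_\ADN^\bullet {p_{ij}})}_{i \sqsubseteq j \in I}$, an element of $Z$ is a tuple ${((F_i^-, F_i^+))}_{i \in I}$ where each $(F_i^-, F_i^+)$ is a (subnormalized, normalized) fork on $X_i$ and, for all $i \sqsubseteq j \in I$, $\Pred {p_{ij}} (F_j^-) = F_i^-$ and $\Pred {p_{ij}} (F_j^+) = F_i^+$. Hence ${(F_i^-)}_{i \in I}$ lies in the canonical projective limit $Z^\sharp$ of $(\Pred_\DN^\bullet {p_{ij}})_{i \sqsubseteq j \in I}$ and ${(F_i^+)}_{i \in I}$ lies in the canonical projective limit $Z^\flat$ of $(\Pred_\AN^\bullet {p_{ij}})_{i \sqsubseteq j \in I}$. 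By the two hypotheses of the theorem, the comparison maps $\varphi^\sharp \colon \Pred_\DN^\bullet X \to Z^\sharp$ and $\varphi^\flat \colon \Pred_\AN^\bullet X \to Z^\flat$ are homeomorphisms, hence surjective. So there are a superlinear prevision $F^- \in \Pred_\DN^\bullet X$ with $\Pred {p_i} (F^-) = F_i^-$ for every $i \in I$, and a sublinear prevision $F^+ \in \Pred_\AN^\bullet X$ with $\Pred {p_i} (F^+) = F_i^+$ for every $i \in I$. Note that $F^-$ and $F^+$ are automatically subnormalized, resp.\ normalized, when $\bullet$ is ``$\leq 1$'', resp.\ ``$1$'', since they are elements of $\Pred_\DN^\bullet X$, resp.\ $\Pred_\AN^\bullet X$.

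The crux is to check that $(F^-, F^+)$ satisfies Walley's condition, for then $(F^-, F^+)$ is a (subnormalized, normalized) fork on $X$, and since $\Pred_\ADN^\bullet {p_i} (F^-, F^+) = (\Pred {p_i} (F^-), \Pred {p_i} (F^+)) = (F_i^-, F_i^+)$ for every $i \in I$, we will have $\varphi (F^-, F^+) = {((F_i^-, F_i^+))}_{i \in I}$, as desired. Fix $h, h' \in \Lform X$. For each $i \in I$, let $h_i$ (resp.\ $h'_i$) be the largest element of $\Lform {X_i}$ such that $h_i \circ p_i \leq h$ (resp.\ $h'_i \circ p_i \leq h'$), as provided by Lemma~\ref{lemma:hi}. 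By items~3 and~5 of that lemma, ${(h_i \circ p_i)}_{i \in I, \sqsubseteq}$ and ${(h'_i \circ p_i)}_{i \in I, \sqsubseteq}$ are monotone nets with suprema $h$ and $h'$ in $\Lform X$, hence ${((h_i + h'_i) \circ p_i)}_{i \in I, \sqsubseteq}$ is a monotone net with supremum $h + h'$. Using the Scott-continuity of $F^-$ and $F^+$, the definition $\Pred {p_i} (G) (g) = G (g \circ p_i)$, and $\Pred {p_i} (F^\pm) = F_i^\pm$, we get
\begin{align*}
  F^- (h+h') &= \dsup_{i \in I} F_i^- (h_i + h'_i), &
  F^+ (h+h') &= \dsup_{i \in I} F_i^+ (h_i + h'_i),
\end{align*}
and, since a directed supremum of the pointwise sum of two monotone nets over the same directed index set equals the sum of the two suprema,
\[
  F^- (h) + F^+ (h') = \dsup_{i \in I} (F_i^- (h_i) + F_i^+ (h'_i)).
\]
For each $i \in I$, Walley's condition for the fork $(F_i^-, F_i^+)$ on $X_i$ reads $F_i^- (h_i + h'_i) \leq F_i^- (h_i) + F_i^+ (h'_i) \leq F_i^+ (h_i + h'_i)$; taking directed suprema over $i \in I$ preserves both inequalities, whence $F^- (h+h') \leq F^- (h) + F^+ (h') \leq F^+ (h+h')$. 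As $h, h'$ were arbitrary, $(F^-, F^+)$ is a fork, and the proof is complete.

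I expect the only delicate points to be the manipulations of directed suprema---the identity $h + h' = \dsup_{i \in I} (h_i + h'_i) \circ p_i$ and the interchange of $\dsup$ with $+$---but these are routine and parallel the argument in the ``only if'' direction of the proof of Theorem~\ref{thm:DN:projlim}. Combined with Theorem~\ref{thm:DN:projlim} and Theorem~\ref{thm:AN:projlim}, this yields concrete sufficient conditions (ep-systems; $\omega$-systems of locally compact sober spaces; proper bonding maps between $\odot$-consonant sober spaces) under which $\Pred_\ADN^\bullet$ preserves the projective limit.
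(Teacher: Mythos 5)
Your proposal is correct and follows essentially the same route as the paper's own proof: reduce to surjectivity via Lemma~\ref{lemma:Fork:projlim}, pull back the two coordinate families through the assumed homeomorphisms for $\Pred_\DN^\bullet$ and $\Pred_\AN^\bullet$, and verify Walley's condition by writing $h$, $h'$ as directed suprema of the $h_i \circ p_i$, $h'_i \circ p_i$ from Lemma~\ref{lemma:hi} and interchanging directed suprema with addition. Nothing substantive differs from the argument in the paper.
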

\begin{proof}
  Let $Z^\sharp, {(q_i^\sharp)}_{i \in I}$ be the canonical projective
  limit of
  $(\Pred_\DN^\bullet {p_{ij}} \colon \Pred_\DN^\bullet {X_j} \to
  \Pred_\DN^\bullet {X_i})_{i \sqsubseteq j \in I}$ and
  $\varphi^\sharp \colon \Pred_\DN^\bullet X \to Z^\sharp$ be the
  comparison map.  Similarly with $Z^\flat, {(q_i^\flat)}_{i \in I}$
  and $\Pred_\AN^\bullet$, with
  $Z^\natural, {(q_i^\natural)}_{i \in I}$ and $\Pred_\ADN^\bullet$.
  By assumption, $\varphi^\sharp$ and $\varphi^\flat$ are
  homeomorphisms.  Relying on Lemma~\ref{lemma:Fork:projlim}, it
  remains to show that $\varphi$ is surjective.

  Let ${(F_i^-, F_i^+)}_{i \in I}$ be any element of $Z$.  This means
  that every $(F_i^-, F_i^+)$ is in $\Pred_\ADN^\bullet {X_i}$, and
  that for all $i \sqsubseteq j \in I$,
  $(F_i^-, F_i^+) = \Pred_\ADN^\bullet {p_{ij}} (F_j^-, F_j^+) =
  (\Pred {p_{ij}} (F_j^-), \Pred {p_{ij}} (F_j^+))$.  In particular,
  ${(F_i^-)}_{i \in I}$ is in $Z^\sharp$, hence is equal to
  $\varphi^\sharp (F^-)$ for some $F^- \in \Pred_\DN^\bullet X$, and
  ${(F_i^+)}_{i \in I}$ is in $Z^\flat$, hence is equal to
  $\varphi^\flat (F^+)$ for some $F^+ \in \Pred_\AN^\bullet X$.
  Explicitly, this means that $\Pred {p_i} (F^+) = F_i^+$ and
  $\Pred {p_i} (F^-) = F_i^-$ for every $i \in I$, namely that for
  every $h_i \in \Lform {X_i}$,
  $F^\pm (h_i \circ p_i ) = F_i^\pm (h_i)$ (with $\pm$ equal to $-$ or
  to $+$).

  We claim that $(F^-, F^+)$ is in $\Pred_\ADN^\bullet X$.  It
  suffices to check Walley's condition.  For all $h, h' \in \Lform X$,
  we write $h_i$ for the largest map in $\Lform {X_i}$ such that $h_i
  \circ p_i \leq h$, for every $i \in I$, and similarly with $h'_i$.  Then:
  \begin{align*}
    F^- (h + h')
    & = F^- (\dsup_{i \in I} (h_i \circ p_i) + \dsup_{i \in I} (h'_i
      \circ p_i))
    & \text{Lemma~\ref{lemma:hi}, item~5} \\
    & = F^- (\dsup_{i \in I} (h_i + h'_i) \circ p_i)
    & \text{$+$ is Scott-continuous} \\
    & = \dsup_{i \in I} F^- ((h_i + h'_i) \circ p_i)
    & \text{$F^-$ is Scott-continuous} \\
    & = \dsup_{i \in I} F_i^- (h_i + h'_i) \\
    & \leq \dsup_{i \in I} (F_i^- (h_i) + F_i^+ (h'_i))
    & \mskip-60mu\text{Walley's condition on $(F_i^-, F_i^+)$} \\
    & = \dsup_{i \in I} F_i^- (h_i) + \dsup_{i \in I} F_i^+ (h'_i) \\
    & = \dsup_{i \in I} F^- (h_i \circ p_i) + \dsup_{i \in I} F^+ (h'_i
      \circ p_i) \\
    & = F^- (h) + F^+ (h'),
  \end{align*}
  by using the Scott-continuity of $F^-$ and $F^+$, and
  Lemma~\ref{lemma:hi}, item~5.  The inequality $F^- (h) + F^+ (h')
  \leq F^+ (h+h')$ is proved similarly.

  We have now found an element $(F^-, F^+)$ of $\Pred_\ADN X$ such
  that $\Pred {p_i} (F^+) = F_i^+$ and $\Pred {p_i} (F^-) = F_i^-$ for
  every $i \in I$, hence such that $\Pred_\ADN^\bullet {p_i} (F^-,
  F^+) = (F_i^-, F_i^+)$ for every $i \in I$.  Hence $\varphi (F^-,
  F^+) = {(F_i^-, F_i^+)}_{i \in I}$.  \qed
\end{proof}

We apply Theorem~\ref{thm:Fork:projlim} and list conditions under
which $\Pred_\DN^\bullet$ preserves projective limits (equivalently,
$\Val_\bullet$, by Theorem~\ref{thm:DN:projlim}, hence the conditions
of Theorem~\ref{thm:V:projlim}), and under which $\Pred_\AN^\bullet$
also preserves projective limits; in other words, we appeal to
Theorem~\ref{thm:DN:projlim} and to Theorem~\ref{thm:AN:projlim}, and
we obtain the following.
\begin{corollary}
  \label{corl:Fork:projlim}
  Let ${(p_{ij} \colon X_j \to X_i)}_{i \sqsubseteq j \in I}$ be a
  projective system of topological spaces, with canonical projective
  limit $X, {(p_i)}_{i \in I}$.  Let $\bullet$ be nothing,
  ``$\leq 1$'' or ``$1$''.  If:
  \begin{enumerate}
  \item the projective system is an ep-system,
  \item or $I$ has a countable cofinal subset and each $X_i$ is
    locally compact sober (and compact, if $\bullet$ is ``$1$''),
  \item or every $X_i$ is $\odot$-consonant sober (and compact, if
    $\bullet$ is ``$1$'') and every $p_{ij}$ is a proper map,
  \end{enumerate}
  then
  ${(\Pred_\ADN^\bullet {p_{ij}} \colon \Pred_\ADN^\bullet {X_j} \to
    \Pred_\ADN^\bullet {X_i})}_{i \sqsubseteq j \in I}$ is a
  projective system of topological spaces, and
  $\Pred_\ADN^\bullet X, {(\Pred_\ADN^\bullet {p_i})}_{i \in I}$ is
  its projective limit, up to homeomorphism.
\end{corollary}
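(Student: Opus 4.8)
The plan is to deduce Corollary~\ref{corl:Fork:projlim} directly from Theorem~\ref{thm:Fork:projlim}, whose hypotheses require that both $\Pred_\DN^\bullet$ and $\Pred_\AN^\bullet$ preserve the given projective limit. So the whole argument reduces to checking, under each of the three listed sets of conditions, that these two hypotheses hold; the fact that $(\Pred_\ADN^\bullet {p_{ij}})_{i \sqsubseteq j \in I}$ is a projective system then comes for free from $\Pred_\ADN^\bullet$ being a functor, and the comparison map $\varphi \colon \Pred_\ADN^\bullet X \to Z$ is a topological embedding by Lemma~\ref{lemma:Fork:projlim}, hence a homeomorphism once Theorem~\ref{thm:Fork:projlim} supplies surjectivity.

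For the $\Pred_\DN^\bullet$ side I would invoke Theorem~\ref{thm:DN:projlim}, which reduces preservation by $\Pred_\DN^\bullet$ to preservation by $\Val_\bullet$, and then match each of our three cases against the hypotheses of Theorem~\ref{thm:V:projlim}: case~1 (ep-system) is case~1 there; case~2 (countable cofinal index set, each $X_i$ locally compact sober) is case~2 there, the extra compactness proviso when $\bullet$ is ``$1$'' being simply unneeded for $\Val_\bullet$; and case~3 uses that $\odot$-consonance implies consonance (take the copower index $n=1$), so that case~4 of Theorem~\ref{thm:V:projlim} applies, every bonding map being proper. Hence $\Val_\bullet$, and therefore $\Pred_\DN^\bullet$, preserves the limit in all three cases.

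For the $\Pred_\AN^\bullet$ side there is even less to do: the three cases of our statement are literally the three cases of Theorem~\ref{thm:AN:projlim}, so I would just quote it. Ep-systems fall under its case~1; countable cofinal limits of locally compact sober spaces (compact if $\bullet$ is ``$1$'') under its case~2; and $\odot$-consonant sober spaces (compact if $\bullet$ is ``$1$'') with proper bonding maps under its case~3. With both hypotheses of Theorem~\ref{thm:Fork:projlim} in hand, the conclusion follows.

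The only point needing a little care — and it is the closest thing to an obstacle here — is making the side conditions line up cleanly. The ``$\odot$-consonant'' hypothesis in case~3 must be strong enough for the $\Val_\bullet$ reduction, which it is via consonance, and it is exactly what the $\Pred_\AN^\bullet$ reduction demands, since Theorem~\ref{thm:AN:projlim} is phrased with $\odot$-consonance precisely because proper-map limits of $\odot$-consonant sober spaces remain $\odot$-consonant (Corollary~\ref{corl:consonant:lim}); likewise, the compactness proviso attached to the normalized case ``$1$'' has to be threaded through Steenrod's theorem exactly where Theorem~\ref{thm:AN:projlim} needs the limit space to be compact. Once one has verified that these conditions propagate consistently, the corollary is a pure assembly of the preceding theorems.
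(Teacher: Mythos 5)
Your proposal is correct and follows essentially the same route as the paper, which states the corollary as an immediate assembly of Theorem~\ref{thm:Fork:projlim} with Theorem~\ref{thm:DN:projlim} (hence Theorem~\ref{thm:V:projlim}) for the $\Pred_\DN^\bullet$ hypothesis and Theorem~\ref{thm:AN:projlim} for the $\Pred_\AN^\bullet$ hypothesis. Your explicit case-matching, including the observation that $\odot$-consonance gives consonance (copower $n=1$) so that case~4 of Theorem~\ref{thm:V:projlim} applies, and that the compactness proviso for $\bullet$ equal to ``$1$'' is only needed on the $\Pred_\AN^\bullet$ side, just spells out what the paper leaves implicit.
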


\section{The $\Plotkinn^{cvx} \Pred_\Nature^\bullet$ functor}
\label{sec:plotk-pred_n-funct}

We will transport this result to the matching model of convex lenses
over spaces of continuous (subprobability, probability) valuations of
\cite{TKP:nondet:prob}, through a suitable homeomorphism.  However,
this will work in a restricted setting.

There is such a homeomorphism between
$\Plotkinn^{cvx} \Pred_\Nature^\bullet X$, the subspace of
$\Plotkinn \Pred_\Nature^\bullet X$ of convex lenses, and
$\Pred_\ADN^\bullet X$, for every space $X$ such that $\Lform X$ is
locally convex \emph{and} has a almost open addition map
\cite[Theorem~4.17]{JGL-mscs16}.  The latter property means that for
all open subsets $\mathcal U$ and $\mathcal V$ of $\Lform X$,
$\upc (\mathcal U + \mathcal V) = \{f \in \Lform X \mid \exists g \in
\mathcal U, h \in \mathcal V, f \geq g+h\}$ is open.  When $\bullet$
is ``$1$'', we also need to require $X$ to be compact.  The
homeomorphism is the restriction of a retraction
$(r_{\ADN\,X}, s_{\ADN\,X}^\bullet)$, where
$r_{\ADN\,X} \colon \Plotkinn \Pred_\Nature^\bullet X \to \Pred_\ADN
X$ maps every lens $L$ to the fork
$(h \mapsto \inf_{G \in L} G (h), h \mapsto \sup_{G \in L} G (h))$,
and
$s_{\ADN\,X} \colon \Pred_\ADN X \to \Plotkinn \Pred_\Nature^\bullet
X$ maps $(F^-, F^+)$ to
$\{G \in \Pred_\Nature^\bullet X \mid F^- \leq G \leq F^+\}$.  The
fact that it is a retraction is also predicated on the fact that
$\Lform X$ is locally convex, has an almost open addition map, and
that $X$ is compact if $\bullet$ is ``$1$''
\cite[Proposition~3.32]{JGL-mscs16}.  We only consider the
homeomorphisms, not the retractions, here.
\begin{lemma}
  \label{lemma:rADP:nat}
  The transformations $r_\ADN$ and $s_\ADN^\bullet$ between
  $\Plotkinn^{cvx} \Pred_\Nature^\bullet$ and $\Pred_\ADN^\bullet$ are
  $\catk^\bullet$-relative natural, where $\catk^\bullet$ is the full subcategory of
  $\Topcat$ consisting of spaces $X$ such that $\Lform X$ is locally
  convex, with an almost open addition map, and such that $X$ is
  compact in case $\bullet$ is ``$1$''.
\end{lemma}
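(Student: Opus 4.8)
The plan is to verify, for every continuous map $f \colon X \to Y$ between two objects of $\catk^\bullet$, that the naturality square for $r_\ADN$ commutes, and then to deduce the statement for $s_\ADN^\bullet$ by inversion; the computations follow the blueprint of Lemmas~\ref{lemma:rDP:nat} and~\ref{lemma:rAP:nat}. First I would record, as an analogue of Lemma~\ref{lemma:Hcvx:functor}, that $\Plotkinn^{cvx} \Pred_\Nature^\bullet$ is indeed an endofunctor on $\Topcat$: $\Pred f$ sends linear previsions to linear previsions, and $\Pred f (\sum_k a_k G_k) (h) = \sum_k a_k G_k (h \circ f) = \sum_k a_k \Pred f (G_k) (h)$, so $\Pred f$ preserves convex combinations; hence for a convex lens $L$ on $\Pred_\Nature^\bullet X$ the sets $\upc \Pred f [L]$ (the upward closure of a convex set is convex for the pointwise order) and $cl (\Pred f [L])$ (the closure of a convex subset of a convex subspace of a semitopological cone is convex, by Fact~\ref{fact:cl:conv}) are convex, so $\Plotkinn {(\Pred f)} (L) = \upc \Pred f [L] \cap cl (\Pred f [L])$, using Lemma~\ref{lemma:iota:nat}, is again a convex lens.

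For $r_\ADN$ I would compute both sides on a convex lens $L$ on $\Pred_\Nature^\bullet X$ and any $h \in \Lform Y$. Setting $M \eqdef \Plotkinn^{cvx} {(\Pred f)} (L) = \upc \Pred f [L] \cap cl (\Pred f [L])$, the fork $r_{\ADN\,Y} (M)$ has components $h \mapsto \inf_{G' \in M} G' (h)$ and $h \mapsto \sup_{G' \in M} G' (h)$, whereas $\Pred_\ADN^\bullet f (r_{\ADN\,X} (L))$ has components $h \mapsto \inf_{G \in L} G (h \circ f)$ and $h \mapsto \sup_{G \in L} G (h \circ f)$. For the supremum component, since $\Pred f [L] \subseteq M \subseteq cl (\Pred f [L])$ and $G' \mapsto G' (h)$ is lower semicontinuous on $\Pred_\Nature^\bullet Y$, the observation $(*)$ in the proof of Lemma~\ref{lemma:rAP:nat} shows $\sup_{G' \in M} G' (h) = \sup_{G' \in \Pred f [L]} G' (h) = \sup_{G \in L} G (h \circ f)$. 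For the infimum component, $\Pred f [L] \subseteq M$ yields $\inf_{G' \in M} G' (h) \le \inf_{G \in L} G (h \circ f)$, and $M \subseteq \upc \Pred f [L]$ means every $G' \in M$ dominates some $\Pred f (G)$ with $G \in L$, so $G' (h) \ge \inf_{G \in L} G (h \circ f)$, giving the reverse inequality. Hence $r_\ADN$ is a natural transformation from $\Plotkinn^{cvx} \Pred_\Nature^\bullet$ to $\Pred_\ADN^\bullet$ on $\catk^\bullet$; since each component $r_{\ADN\,X}$ (for $X \in \catk^\bullet$) is a homeomorphism onto $\Pred_\ADN^\bullet X$, it is a natural isomorphism, and its componentwise inverse $s_\ADN^\bullet$ is therefore also a natural transformation (indeed isomorphism) on $\catk^\bullet$.

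The main obstacle is bookkeeping rather than depth: one must be careful about whether upward closures and closures are formed inside $\Pred_\Nature^\bullet$ or in the ambient semitopological cone, confirm that $\Pred f$ lands back in $\Pred_\Nature^\bullet$ and preserves convexity (so that $\Plotkinn^{cvx} \Pred_\Nature^\bullet$ is a functor and $M$ is a convex lens), and treat the infimum component of $r_\ADN$ with slightly more care than the supremum component, since infima do not commute with closures the way suprema of lower semicontinuous maps do.
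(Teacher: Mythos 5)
Your proposal is correct and follows essentially the same route as the paper: reduce to naturality of $r_\ADN$ (since its components are homeomorphisms with inverses $s_{\ADN\,X}^\bullet$ on $\catk^\bullet$), then compare the two fork components on the image lens, using the sup-over-closure property of lower semicontinuous evaluation maps for the $\sup$ part and order-domination for the $\inf$ part. The only cosmetic difference is that the paper cites Lemma~4.6 of \cite{JGL-mscs16} for both the $\inf$ and $\sup$ reductions and \cite[Proposition 4.33]{TKP:nondet:prob} for the action of $\Plotkinn^{cvx}$ on morphisms, whereas you re-derive these small facts directly; this does not change the argument.
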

\begin{proof}
  Since these transformations consist of mutually inverse
  homeomorphisms, it suffices to show that $r_\ADN$ is natural.
  Lemma~4.6 of \cite{JGL-mscs16} states that: $(*)$ for every lens
  $L \in \Plotkinn \Pred_\Nature^\bullet X$ (in particular for any
  convex lens), for every $h \in \Lform X$,
  $\sup_{G \in \upc L} G (h) = \sup_{G \in L} G (h)$ and
  $\inf_{G \in cl (L)} G (h) = \inf_{G \in L} G (h)$.  The action of
  the $\Plotkinn^{cvx}$ functor on a morphism $g$ maps any lens $L$ to
  $\upc g [L] \cap cl (g [L])$ \cite[Proposition
  4.33]{TKP:nondet:prob}.  Hence, for every continuous map
  $f \colon X \to Y$, for every
  $L \in \Plotkinn^{cvx} \Pred_\Nature^\bullet X$,
  $r_\ADN (\Plotkinn^{cvx} \Pred f (L)) = (h \mapsto \inf_{G \in \upc
    (\upc \Pred f [L] \cap cl (\Pred f [L]))} G (h), \allowbreak h
  \mapsto \sup_{G \in cl (\upc \Pred f [L] \cap cl (\Pred f [L]))} G
  (h))$.  For every $h \in \Lform X$,
  \begin{align*}
    & \inf_{G \in \upc (\upc \Pred f [L] \cap cl (\Pred f [L]))} G (h) \\
    & = \inf_{G \in \upc \Pred f [L] \cap cl (\Pred f [L])} G (h)
    & \text{by $(*)$} \\
    & \geq \inf_{G \in \upc \Pred f [L]} G (h)
    & \text{since } \upc \Pred f [L] \cap cl (\Pred f [L]) \subseteq
      \upc \Pred f [L] \\
    & = \inf_{G \in \Pred f [L]} G (h)
    & \text{by $(*)$} \\
    & \geq \inf_{G \in \upc \Pred f [L] \cap cl (\Pred f [L])} G (h)
    & \text{since } \Pred f [L] \subseteq \upc \Pred f [L] \cap cl (\Pred f [L]),
  \end{align*}
  so all those values are equal.  In particular,
  $\inf_{G \in \upc (\upc \Pred f [L] \cap cl (\Pred f [L]))} G (h) =
  \inf_{G \in \Pred f [L]} G (h)$.  Similarly,
  $\sup_{G \in cl (\upc \Pred f [L] \cap cl (\Pred f [L]))} G (h) =
  \sup_{G \in \Pred f [L]} G (h)$.  Then
  $\inf_{G \in \Pred f [L]} G (h) = \inf_{G' \in L} G' (h \circ f)$,
  and
  $\sup_{G \in \Pred f [L]} G (h) = \sup_{G' \in L} G' (h \circ f)$,
  so
  $r_\ADN (\Plotkinn^{cvx} \Pred f (L)) = (h \mapsto \inf_{G' \in L}
  G' (h \circ f), h \mapsto \sup_{G' \in L} G' (h \circ f))$.  We
  compare this to
  $\Pred_\ADN^\bullet f (r_\ADN (L)) = \Pred_\ADN^\bullet f (h'
  \mapsto \inf_{G' \in L} G' (h'), h' \mapsto \sup_{G' \in L} G'
  (h'))$, and we find that those are equal.  \qed
\end{proof}

Let us write $\Lformco X$ for the space $\Lform X$,
but with the compact-open topology instead of the Scott topology.  The
compact-open topology is generated by open subsets
$[Q > r] \eqdef \{h \in \Lform X \mid \forall x \in Q, h (x) > r\}$,
where $Q$ ranges over the compact saturated of $X$ and $r \in \Rp$.
We can even restrict to basic open subsets $[Q > r]$ where $Q$ is
compact saturated, since $[Q > r] = [\upc Q > r]$.

\begin{lemma}
  \label{lemma:LcoX:consistent}
  For every weakly Hausdorff, coherent space $X$, addition is almost
  open on $\Lformco X$, viz., for all open subsets $\mathcal U$ and
  $\mathcal V$ of $\Lformco X$, $\upc (\mathcal U + \mathcal V)$ is
  open in $\Lformco X$.
\end{lemma}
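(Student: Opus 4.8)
The plan is to reduce to a concrete combinatorial statement about the subbasic opens of $\Lformco X$ and then build the required summands by hand. First I would note that every open subset of $\Lformco X$ is a union of \emph{basic} opens, i.e.\ finite intersections of subbasic sets $[Q>r]$; since $\upc (-)$ commutes with unions and $\mathcal U + \mathcal V = \bigcup_{\alpha,\beta} (\mathcal U_\alpha + \mathcal V_\beta)$ when $\mathcal U = \bigcup_\alpha \mathcal U_\alpha$, $\mathcal V = \bigcup_\beta \mathcal V_\beta$, it suffices to treat basic $\mathcal U = \bigcap_{k=1}^m [Q_k > r_k]$ and $\mathcal V = \bigcap_{l=1}^n [R_l > s_l]$, where (using $[Q>r]=[\upc Q > r]$) I may take every $Q_k$, $R_l$ compact saturated. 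For such $\mathcal U,\mathcal V$ it is enough to produce, for each $f_0 = g_0 + h_0$ with $g_0 \in \mathcal U$ and $h_0 \in \mathcal V$, an open $W$ with $f_0 \in W \subseteq \upc (\mathcal U + \mathcal V)$: then, since open subsets of $\Lformco X$ are upward closed in the pointwise order and every $p \in \upc (\mathcal U + \mathcal V)$ lies above some $f_0 \in \mathcal U + \mathcal V$, the $W$ attached to that $f_0$ is an open neighbourhood of $p$ inside $\upc (\mathcal U + \mathcal V)$, so the latter is open.

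To build $W$: since $g_0 > r_k$ on the compact set $Q_k$, $Q_k$ is covered by the directed family of opens $g_0^{-1} (]t,\infty])$, $t > r_k$, so by compactness there is a real $r_k' > r_k$ with $g_0 > r_k'$ on $Q_k$; symmetrically choose $s_l' > s_l$ with $h_0 > s_l'$ on $R_l$. I would then set
\[
  W \eqdef \bigcap_{k=1}^m [Q_k > r_k'] \;\cap\; \bigcap_{l=1}^n [R_l > s_l'] \;\cap\; \bigcap_{k=1}^m \bigcap_{l=1}^n [Q_k \cap R_l > r_k' + s_l'].
\]
Here coherence of $X$ makes each $Q_k \cap R_l$ compact saturated, so $W$ is a finite intersection of subbasic opens of $\Lformco X$, hence open; and $f_0 = g_0 + h_0 \geq g_0$, $\geq h_0$, with $g_0 > r_k'$ on $Q_k$ and $h_0 > s_l'$ on $R_l$, so $f_0 \in W$.

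It remains to show $W \subseteq \upc (\mathcal U + \mathcal V)$. Fix $f \in W$. For each pair $(k,l)$, $f^{-1} (]r_k' + s_l', \infty])$ is an open neighbourhood of $Q_k \cap R_l$, so by weak Hausdorffness (in the compact-saturated formulation) there are opens $U_{kl} \supseteq Q_k$, $V_{kl} \supseteq R_l$ with $U_{kl} \cap V_{kl} \subseteq f^{-1} (]r_k' + s_l', \infty])$. Put $U_k \eqdef f^{-1} (]r_k',\infty]) \cap \bigcap_{l} U_{kl}$ and $V_l \eqdef f^{-1} (]s_l',\infty]) \cap \bigcap_{k} V_{kl}$; these are open, contain $Q_k$, resp.\ $R_l$, and satisfy $f > r_k'$ on $U_k$, $f > s_l'$ on $V_l$, $f > r_k' + s_l'$ on $U_k \cap V_l$. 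Now let $g \eqdef \bigvee_{k=1}^m r_k' \chi_{U_k}$ and $h \eqdef \bigvee_{l=1}^n s_l' \chi_{V_l}$, which are lower semicontinuous. On $Q_k \subseteq U_k$ one has $g \geq r_k' > r_k$, so $g \in \mathcal U$; likewise $h \in \mathcal V$. For $g + h \leq f$ I would check points $x$ by cases: if $x$ lies in some $U_{k^*}$ with $r_{k^*}'$ maximal among the $U_k\ni x$ and in some $V_{l^*}$ with $s_{l^*}'$ maximal among the $V_l\ni x$, then $x \in U_{k^*}\cap V_{l^*}$ gives $f(x) > r_{k^*}' + s_{l^*}' = g(x)+h(x)$; if $x$ lies in some $U_k$ but no $V_l$, then $h(x)=0$ and $f(x) > r_{k^*}' = g(x)$; symmetrically if $x$ lies in some $V_l$ but no $U_k$; and $g(x)=h(x)=0\le f(x)$ otherwise. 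Hence $f \geq g + h \in \mathcal U + \mathcal V$, completing the argument.

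I expect the main obstacle to be precisely the strict inequality "$g > r_k$ on $Q_k$": since $X$ is only assumed weakly Hausdorff and coherent it need not be locally compact, so one cannot shrink $Q_k$ to a compact set with non-empty interior. The construction above circumvents this by absorbing a fixed amount of slack into $r_k'$, $s_l'$ beforehand and letting $g$ (resp.\ $h$) be the step function with constant value $r_k'$ on $U_k$ (resp.\ $s_l'$ on $V_l$); the bound $g + h \le f$ then only requires $f$ to strictly dominate $r_k' + s_l'$ where $g+h$ attains that value, which is exactly what membership of $f$ in $[Q_k\cap R_l > r_k' + s_l']$, together with the neighbourhoods $U_{kl}\cap V_{kl}$ supplied by weak Hausdorffness, guarantees. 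Coherence is used only to ensure $Q_k\cap R_l$ is compact saturated, so that $[Q_k\cap R_l > r_k' + s_l']$ is a genuine subbasic open of $\Lformco X$.
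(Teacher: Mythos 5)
Your proof is correct, and it reaches the conclusion by a leaner route than the paper's. The key ingredients are the same at heart: you invoke weak Hausdorffness exactly in its compact-saturated form to split an open neighbourhood of $Q_k \cap R_l$ into $U_{kl} \cap V_{kl}$, you use coherence only to make $Q_k \cap R_l$ compact saturated, and you finish by dominating $f$ from below with finite step functions $\bigvee_k r'_k \chi_{U_k}$ and $\bigvee_l s'_l \chi_{V_l}$ --- this final step is essentially identical to the paper's construction of $g'$ and $h'$. Where you genuinely diverge is in the reduction: the paper first develops a base of $\Lformco X$ out of co-step functions $\sup_i \langle Q_i \searrow r_i\rangle$ and the pointwise way-below relation $\prec$ (sets $\Uuarrow f_0$), proves that co-step functions are closed under addition (this is where it spends coherence), and only then runs the weak-Hausdorff argument inside $\Uuarrow f_0 \subseteq \upc(\Uuarrow g_0 + \Uuarrow h_0)$. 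You bypass all of that machinery by (i) observing that opens of $\Lformco X$ are upward closed pointwise, so it suffices to find an open neighbourhood inside $\upc(\mathcal U + \mathcal V)$ of each $f_0 \in \mathcal U + \mathcal V$ itself, (ii) reducing to basic opens $\bigcap_k [Q_k > r_k]$, $\bigcap_l [R_l > s_l]$, and (iii) absorbing the needed slack additively into thresholds $r'_k > r_k$, $s'_l > s_l$ (the paper does the analogous thing multiplicatively with $1+\epsilon$), with your explicit neighbourhood $W$ playing the role of $\Uuarrow(g_0+h_0)$. Your version is more elementary and self-contained for this one lemma; the paper's version costs more set-up but isolates a reusable structural fact about the compact-open topology (the $\Uuarrow$-base of co-step functions) rather than an ad hoc $W$. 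Two harmless points worth stating explicitly if you write this up: the degenerate subbasic sets $[Q_k \cap R_l > r'_k + s'_l]$ with $Q_k \cap R_l = \emptyset$ are just the whole space, and the finiteness of $r'_k$ comes from the directedness-plus-compactness argument you sketch.
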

\begin{proof}
  For every compact saturated subset $Q$ of $X$, let
  $\langle Q \searrow r\rangle$ be the function that maps every
  element of $Q$ to $r$, and all others to $0$.  A \emph{co-step
    function} is a pointwise supremum of a finite family of such
  functions \cite[Section~2]{EEK:waybelow}.  We also define a relation
  $\prec$ on functions from $X$ to $\creal$ by $f \prec g$ if and only
  if for every $x \in X$, $f (x) \ll g (x)$, where $\ll$ is the
  way-below relation on $\creal$---namely, $r \ll s$ if and only if
  $r=0$ or $r<s$.  Finally, we write $\Uuarrow f$ for $\{g \in \Lform
  X \mid f \prec g\}$.

  We claim that the sets $\Uuarrow f$ form a base of the compact-open
  topology on $\Lformco X$, where $f$ ranges over the co-step
  functions.  Let
  $f \eqdef \sup_{i=1}^n \langle Q_i \searrow r_i \rangle$, where each
  $Q_i$ is compact saturated and each $r_i \in \Rp$.  Without loss of
  generality, we assume that $r_i > 0$.  Then, for every
  $g \in \Lform X$, $f \prec g$ if and only if
  $g \in \bigcap_{i=1}^n [Q_i > r_i]$.  Indeed, if $f \prec g$, then
  for every $i \in \{1, \cdots, n\}$, for every $x \in Q_i$,
  $f (x) \geq r_i$ and $f (x) \ll g (x)$, so $g (x) > r_i$, using the
  fact that $r_i > 0$.  Conversely, if
  $g \in \bigcap_{i=1}^n [Q_i > r_i]$, then for every $x \in X$, let
  $I \eqdef \{i \in \{1, \cdots, n\} \mid x \in Q_i\}$.  If $I$ is
  empty, then $f (x) = 0 \ll g (x)$.  Otherwise, let $i \in I$ be such
  that $r_i$ is largest.  Then $f (x) = r_i$, and since
  $g \in [Q_i > r_i]$, we have $f (x) < g (x)$; in any case,
  $f \prec g$.

  Any co-step function $f$ takes only finitely many values, and
  $f^{-1} ([r, \infty])$ is compact saturated for every
  $r \in \Rp \diff \{0\}$.  Conversely, if $f$ is any function from
  $X$ to $\creal$ that takes only finitely many values and is such
  that $f^{-1} ([r, \infty])$ is compact saturated for every
  $r \in \Rp \diff \{0\}$, then we claim that $f$ is a co-step
  function.  Indeed, it suffices to list and sort the non-zero values
  taken by $f$ as $r_1 > \cdots > r_n > 0$, to define
  $Q_i \eqdef f^{-1} ([r_i, \infty])$ for every
  $i \in \{1, \cdots, n\}$, and to verify that
  $f = \sup_{i=1}^n \langle Q_i \searrow r_i \rangle$.  In order to
  see this, we lett
  $g \eqdef \sup_{i=1}^n \langle Q_i \searrow r_i \rangle$, and we
  show that $f^{-1} ([r, \infty]) = g^{-1} ([r, \infty])$ for every
  $r \in \Rp \diff \{0\}$.  Both $f$ and $g$ take their values in the
  same set $\{0, r_1, \cdots, r_n\}$, so it is enough to verify that
  $f^{-1} ([r_i, \infty]) = g^{-1} ([r_i, \infty])$ for every
  $i \in \{1, \cdots, n\}$.  But
  $g^{-1} ([r_i, \infty]) = Q_1 \cup \cdots \cup Q_i$, since
  $r_i > r_{i+1} > \cdots > r_n$; it is easy to see that, since
  $r_1 > \cdots > r_n$, we also have
  $Q_1 \subseteq \cdots \subseteq Q_n$, so
  $g^{-1} ([r_i, \infty]) = Q_i = f^{-1} ([r_i, \infty])$.

  It follows that the sum of any two co-step functions is a co-step
  function.  Indeed, if $f$ and $g$ are co-step functions, with values
  taken in the finite sets $A$ and $B$ respectively, then $f+g$ takes
  its values in $A+B$, and for every $r \in \Rp \diff \{0\}$,
  $(f+g)^{-1} ([r, \infty]) = \bigcup_{a \in A, b \in B, a+b \geq r}
  (f^{-1} ([a, \infty]) \cap g^{-1} ([b, \infty]))$.  The latter is
  compact saturated because the union is finite, and because
  $f^{-1} ([a, \infty]) \cap g^{-1} ([b, \infty])$ is compact
  saturated.  Indeed, $a+b \geq r > 0$ implies that $a$ and $b$ cannot
  both be $0$.  If $a=0$, then that is equal to
  $f^{-1} ([a, \infty]) \cap g^{-1} ([b, \infty]) = g^{-1} ([b,
  \infty])$ is compact saturated (since $b > 0$); similarly similarly
  if $b=0$; and if $a, b > 0$, then
  $f^{-1} ([a, \infty]) \cap g^{-1} ([b, \infty])$ is compact
  saturated because $X$ is coherent.

  Now let $f \in \upc (\mathcal U + \mathcal V)$.  There are two lower
  semicontinuous map $g \in \mathcal U$ and $h \in \mathcal V$ such
  that $f \geq g+h$.  Since $g \in \mathcal U$, there is a co-step
  function $g_0$ such that $g \in \Uuarrow g_0 \subseteq \mathcal U$.
  Similarly, there is a co-step function $h_0$ such that
  $h \in \Uuarrow h_0 \subseteq \mathcal V$.  Let
  $f_0 \eqdef g_0 + h_0$: $f_0$ is a co-step function, and
  $f_0 = g_0 + h_0 \prec g+h \leq f$, so $f \in \Uuarrow f_0$.  It
  remains to show that $\Uuarrow f_0$ is included in
  $\upc (\Uuarrow g_0 + \Uuarrow h_0)$, which will imply that it is
  included in $\upc (\mathcal U + \mathcal V)$.

  Let $f'$ be any element of $\Uuarrow f_0$.  Let $A$ (resp.\ $B$) be
  the set of values taken by $g_0$ (resp.\ $h_0$).  We recall that
  $f_0^{-1} ([r, \infty]) = \bigcup_{a \in A, b \in B, a+b \geq r}
  (g_0^{-1} ([a, \infty]) \cap h_0^{-1} ([b, \infty]))$ for every
  $r \in \Rp \diff \{0\}$.  For every pair of values $a \in A$,
  $b \in B$ such that $a+b > 0$, for every
  $x \in g_0^{-1} ([a, \infty]) \cap h_0^{-1} ([b, \infty])$, we have
  $f_0 (x) = g_0 (x) + h_0 (x) \geq a+b$, so $f' (x) > a+b$.  Since
  $g_0^{-1} ([a, \infty]) \cap h_0^{-1} ([b, \infty])$ is compact,
  $\min_{x \in g_0^{-1} ([a, \infty]) \cap h_0^{-1} ([b, \infty])} f'
  (x)$ exists and is strictly larger than $a+b$.  Hence it is also
  strictly larger than $(1+\epsilon_{a,b}) (a+b)$ for some number
  $\epsilon_{a,b} > 0$.  Letting
  $\epsilon \eqdef \min_{a \in A, b \in B, a+b > 0} \epsilon_{a,b}$,
  we have obtained that there is a number $\epsilon > 0$ such that for
  all $a \in A$ and $b \in B$ such that $a+b > 0$,
  $g_0^{-1} ([a, \infty]) \cap h_0^{-1} ([b, \infty]) \subseteq
  {f'}^{-1} (](1+\epsilon) (a+b), \infty])$.
  
  We claim that there are an open neighborhood $U_{a,b}$ of
  $g_0^{-1} ([a, \infty])$ and an open neighborhood $V_{a,b}$ of
  $h_0^{-1} ([a, \infty])$ such that
  $U_{a,b} \cap V_{a,b} \subseteq {f'}^{-1} (](1+\epsilon) (a+b),
  \infty])$.  When $a, b > 0$, this is because $X$ is weakly
  Hausdorff.  If $a=0$, we simply take $U_{a,b} \eqdef X$ and
  $V_{a,b} \eqdef {f'}^{-1} (](1+\epsilon) (a+b), \infty])$, and
  symmetrically if $b=0$.

  For every $a \in A$, let
  $U_a \eqdef \bigcap_{b \in B, a+b>0} U_{a,b}$, and for every
  $b \in B$, let $V_b \eqdef \bigcap_{a \in A, a+b>0} V_{a,b}$.  Those
  are open sets, since the intersections are finite.  For every
  $a \in A$, $g_0^{-1} ([a, \infty])$ is included in $U_a$, and
  similarly for every $b \in B$, $h_0^{-1} ([a, \infty])$ is included
  in $V_b$.  Additionally, for all $a \in A$ and $b \in B$ such that
  $a+b > 0$,
  $U_a \cap V_b \subseteq U_{a,b} \cap V_{a,b} \subseteq {f'}^{-1}
  (](1+\epsilon) (a+b), \infty])$.

  Let $g' \eqdef \sup_{a \in A} (1+\epsilon) a \chi_{U_a}$ and
  $h' \eqdef \sup_{b \in B} (1+\epsilon) b \chi_{U_b}$.  Those are
  suprema of characteristic maps of lower semicontinuous maps, hence
  are lower semicontinuous maps.  Since
  $g_0^{-1} ([a, \infty]) \subseteq U_a$ for every $a \in A$,
  $g_0 \prec g'$: for every $x \in X$, either $g_0 (x)=0$ or not, and
  in the latter case, let $a \eqdef g_0 (x) \in A \diff \{0\}$; then
  $x \in g_0^{-1} ([a, \infty])$, so $x \in U_a$, and hence
  $g' (x) \geq (1+\epsilon) a > a = g_0 (x)$.  Similarly,
  $h_0 \prec h'$.  Finally, $g'+h' \leq f'$: for every $x \in X$,
  either $g' (x)=h' (x)=0$ and this is clear, or
  $g' (x)=(1+\epsilon)a$ for some $a \in A$ such that $x \in U_a$ and
  $h' (x) = (1+\epsilon)b$ for some $b \in B$ such that $x \in V_b$,
  and $a+b > 0$.  Since $U_a \cap V_b \subseteq {f'}^{-1}
  (](1+\epsilon) (a+b), \infty])$, $f' (x) > (1+\epsilon) (a+b) = g'
  (x) + h' (x)$.

  Hence $g' \in \Uuarrow g_0$, $h' \in \Uuarrow h_0$, and
  $f' \geq g'+h'$.  Therefore
  $f' \in \upc (\Uuarrow g_0 + \Uuarrow h_0)$.  Since $f'$ is
  arbitrary in $\Uuarrow f_0$, $\Uuarrow f_0$ is included in
  $\upc (\Uuarrow g_0 + \Uuarrow h_0)$, hence in
  $\upc (\mathcal U + \mathcal V)$, as promised.  \qed
\end{proof}

\begin{corollary}
  \label{corl:LX:consistent}
  For every $\odot$-consonant, weakly Hausdorff, coherent space $X$,
  $\Lform X$ is locally convex and addition is almost open on
  $\Lform X$.
\end{corollary}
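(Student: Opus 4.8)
The plan is to reduce the statement to Lemma~\ref{lemma:LcoX:consistent}, which already establishes that addition is almost open on $\Lformco X$, i.e.\ on $\Lform X$ equipped with the \emph{compact-open} topology, whenever $X$ is weakly Hausdorff and coherent. Both of those hypotheses are among our assumptions, so it suffices to prove that, for an $\odot$-consonant space $X$, the Scott topology and the compact-open topology on $\Lform X$ coincide; being ``almost open'' is a property of the topological space $\Lform X$, so once the two topologies agree it transfers verbatim. Local convexity follows as well, since the sets $\bigcap_i [Q_i > r_i]$ form a base of the compact-open topology and each $[Q > r] = \{h \in \Lform X \mid \forall x \in Q,\ h(x) > r\}$ is convex (a convex combination of maps exceeding $r$ on $Q$ still exceeds $r$ on $Q$); alternatively one may simply invoke Fact~\ref{fact:assum}.

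The remaining work is the equality of the two topologies. One inclusion needs no hypothesis: each $[Q > r]$ is Scott-open, because it is upward closed and, for a directed family $D$ with $\dsup D \in [Q > r]$, one has $Q \subseteq (\dsup D)^{-1}(]r,\infty]) = \dcup_{g \in D} g^{-1}(]r,\infty])$, whence $Q \subseteq g^{-1}(]r,\infty])$ for some $g \in D$ by compactness of $Q$; finite intersections and unions stay Scott-open, so the compact-open topology is coarser than the Scott topology. For the converse I would use $\odot$-consonance. Fix a Scott-open $\mathcal U \subseteq \Lform X$ and $h \in \mathcal U$. The finitely-valued lower semicontinuous maps $f \prec h$ (notation from the proof of Lemma~\ref{lemma:LcoX:consistent}) form a directed family with pointwise supremum $h$ (for rational $r$, $r\chi_{h^{-1}(]r,\infty])} \prec h$, and $\prec$-below finitely-valued maps are closed under binary $\sup$), so some such $f_0 = \sup_{i=1}^n r_i \chi_{W_i}$, with $W_1 \supseteq \cdots \supseteq W_n$ open and $r_1 < \cdots < r_n$ the nonzero values of $f_0$, lies in $\mathcal U$. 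The map $\Psi \colon \Open{(n \odot X)} \cong \prod_{i=1}^n \Open X \to \Lform X$ sending $(V_1,\dots,V_n)$ to $\sup_i r_i \chi_{V_i}$ is Scott-continuous, so $\Psi^{-1}(\mathcal U)$ is Scott-open in $\Open{(n \odot X)}$ and contains the open set $\bigsqcup_i \{i\} \times W_i$. Consonance of $n \odot X$ yields a compact saturated $K = \bigsqcup_i \{i\} \times Q_i$ with each $Q_i \subseteq W_i$ compact saturated in $X$, such that every open superset of $K$ lies in $\Psi^{-1}(\mathcal U)$; that is, $\sup_i r_i \chi_{V_i} \in \mathcal U$ for all opens $V_i \supseteq Q_i$. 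Put $g \eqdef \sup_{i=1}^n \langle Q_i \searrow r_i \rangle$, a co-step function. Then $g \leq f_0 \prec h$ gives $g \prec h$, i.e.\ $h \in \Uuarrow g$, and $\Uuarrow g \subseteq \mathcal U$: if $g \prec f$ then for every $x \in Q_i$ one has $f(x) > r_i$, so $Q_i \subseteq f^{-1}(]r_i,\infty])$; applying the displayed property with $V_i \eqdef f^{-1}(]r_i,\infty])$ gives $\sup_i r_i \chi_{V_i} \in \mathcal U$, and $\sup_i r_i \chi_{V_i} \leq f$ (at each point the left side is the maximum of finitely many $r_i$ strictly below $f(x)$, hence $< f(x)$), so $f \in \mathcal U$ by upward closure. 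Since $\Uuarrow g = \bigcap_i [Q_i > r_i]$ is open in the compact-open topology, $\mathcal U$ is a union of compact-open basic opens; so the Scott topology is coarser than, hence equal to, the compact-open topology.

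With the two topologies identified, Lemma~\ref{lemma:LcoX:consistent} gives that addition is almost open on $\Lform X$, and local convexity is immediate as noted (or cite Fact~\ref{fact:assum}), which finishes the proof. The only genuinely delicate point is the second inclusion, and inside it the observation that $\odot$-consonance — and not plain consonance — is exactly what permits the simultaneous shrinking of the nested opens $W_1 \supseteq \cdots \supseteq W_n$ to compact saturated sets, via the copower $n \odot X$; everything else is bookkeeping with the way-below relation on $\creal$.
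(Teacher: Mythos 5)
Your proof is correct, and its skeleton is the same as the paper's: reduce everything to Lemma~\ref{lemma:LcoX:consistent} by showing that the Scott and compact-open topologies on $\Lform X$ coincide when $X$ is $\odot$-consonant, then observe that local convexity comes from the convexity of the basic compact-open sets. The difference is one of self-containedness: the paper simply cites \cite[Proposition~13.4]{dBGLJL:LCScomplete} for the coincidence of the two topologies and \cite[Lemma~13.6]{dBGLJL:LCScomplete} for local convexity, whereas you reprove both in place. Your inline argument for the coincidence — approximate $h$ by a finitely-valued $f_0 \prec h$ lying in the Scott-open $\mathcal U$, transport $\mathcal U$ along the Scott-continuous map $\Psi \colon \Open{(n \odot X)} \cong \prod_{i=1}^n \Open X \to \Lform X$, and apply consonance of $n \odot X$ to shrink the nested opens $W_1 \supseteq \cdots \supseteq W_n$ simultaneously to compact saturated sets $Q_i$, so that $h \in \Uuarrow g = \bigcap_{i=1}^n [Q_i > r_i] \subseteq \mathcal U$ — is sound (including the identification $\Uuarrow g = \bigcap_i [Q_i > r_i]$ for $r_i > 0$, which is exactly the computation done inside the proof of Lemma~\ref{lemma:LcoX:consistent}), and it correctly isolates why $\odot$-consonance rather than plain consonance is needed; the degenerate case $f_0 \equiv 0$ (so $n = 0$ and $\mathcal U = \Lform X$) is harmless but deserves a word. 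Two tiny nitpicks: Fact~\ref{fact:assum} asserts $\AN_\bullet$-friendliness, not local convexity itself — the relevant statement is the remark just before it (backed by the citation above) — so your direct convexity argument via the sets $[Q > r]$ is the better thing to keep; and what your reduction buys is precisely a proof of the cited Proposition~13.4, so in the context of this paper it duplicates known material rather than shortening the route.
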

\begin{proof}
  The compact-open topology is always coarser than the Scott topology,
  and coincides with it when $X$ is $\odot$-consonant
  \cite[Proposition~13.4]{dBGLJL:LCScomplete}.  $\Lform X$ is locally
  convex by \cite[Lemma~13.6]{dBGLJL:LCScomplete}, and almost openness
  is by Lemma~\ref{lemma:LcoX:consistent}.  \qed
\end{proof}

With all that, we transport the result of
Corollary~\ref{corl:Fork:projlim} from forks to convex lenses over
spaces of continuous valuations as follows.  The conditions are
stricter than what we are accustomed to, as we need the maps $p_{ij}$
to be proper.
\begin{theorem}
  \label{thm:PV:projlim}
  Let ${(p_{ij} \colon X_j \to X_i)}_{i \sqsubseteq j \in I}$ be a
  projective system of topological spaces, with canonical projective
  limit $X, {(p_i)}_{i \in I}$.  Let $\bullet$ be nothing,
  ``$\leq 1$'' or ``$1$''.  If every $X_i$ is $\odot$-consonant and
  locally strongly sober (and compact, namely strongly sober, if
  $\bullet$ is ``$1$'') and if every $p_{ij}$ is a proper map, then
  ${(\Plotkinn^{cvx} \Val {p_{ij}} \colon \Plotkinn^{cvx} \Val_\bullet
    {X_j} \to \Plotkinn^{cvx} \Val_\bullet {X_i})}_{i \sqsubseteq j
    \in I}$ is a projective system of topological spaces, and
  $\Plotkinn^{cvx} \Val_\bullet X, {(\Plotkinn^{cvx} \Val {p_i})}_{i
    \in I}$ is its projective limit, up to homeomorphism.
\end{theorem}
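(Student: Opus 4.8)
The plan is to reduce to the fork functor $\Pred_\ADN^\bullet$, for which the desired preservation result has already been established in Corollary~\ref{corl:Fork:projlim}, and then to transport that result along the natural homeomorphism between $\Plotkinn^{cvx} \Pred_\Nature^\bullet$ and $\Pred_\ADN^\bullet$ supplied by Lemma~\ref{lemma:rADP:nat}, using Lemma~\ref{lemma:retract:limits} in the special case where the retraction is in fact an isomorphism. Throughout we equate $\Val_\bullet$ with $\Pred_\Nature^\bullet$ through the natural homeomorphism recalled in Section~\ref{sec:prev-powerc}, so it suffices to prove that $\Plotkinn^{cvx} \Pred_\Nature^\bullet X, (\Plotkinn^{cvx} \Pred_\Nature^\bullet p_i)_{i \in I}$ is a projective limit of $(\Plotkinn^{cvx} \Pred_\Nature^\bullet p_{ij})_{i \sqsubseteq j \in I}$.

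First I would check the hypotheses of Corollary~\ref{corl:Fork:projlim}: each $X_i$ is locally strongly sober, hence sober, and $\odot$-consonant, and every $p_{ij}$ is proper; when $\bullet$ is ``$1$'', each $X_i$ is moreover compact, hence strongly sober. This is case~3 of Corollary~\ref{corl:Fork:projlim}, so $\Pred_\ADN^\bullet X, (\Pred_\ADN^\bullet p_i)_{i \in I}$ is the projective limit of $(\Pred_\ADN^\bullet p_{ij})_{i \sqsubseteq j \in I}$, up to homeomorphism.

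Next I would identify the topological properties of the limit $X$ needed to place it, together with all the $X_i$, in the subcategory $\catk^\bullet$ of Lemma~\ref{lemma:rADP:nat}. By Corollary~\ref{corl:consonant:lim}, $X$ is $\odot$-consonant (all $p_{ij}$ are proper and all $X_i$ are $\odot$-consonant sober); $X$ is locally strongly sober, being a projective limit of locally strongly sober spaces \cite[Theorem~5.1]{JGL:proj:class}, hence in particular coherent and weakly Hausdorff; and, when $\bullet$ is ``$1$'', $X$ is compact by Steenrod's theorem. By Corollary~\ref{corl:LX:consistent}, $\Lform X$ is then locally convex and addition is almost open on it, so $X \in \catk^\bullet$; the same corollary applied to each $X_i$ (which is $\odot$-consonant, weakly Hausdorff and coherent) shows $X_i \in \catk^\bullet$. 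One should also recall, as in the discussion preceding Lemma~\ref{lemma:rADP:nat} and by analogy with Lemma~\ref{lemma:Hcvx:functor}, that $\Plotkinn^{cvx} \Pred_\Nature^\bullet$ is genuinely an endofunctor of $\Topcat$, its action on morphisms being the restriction of that of $\Plotkinn \Pred_\Nature^\bullet$ and convexity of lenses being preserved because $\Pred_\Nature^\bullet$ preserves convex combinations and closures preserve convexity in semitopological cones.

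Finally, since $\catk^\bullet$ is a full subcategory of $\Topcat$ containing the image of $F_*$ — namely all the $X_i$, all the cone maps $p_i$, and the apex $X$ — Lemma~\ref{lemma:rADP:nat} provides an $F_*$-relative natural isomorphism between $S \eqdef \Plotkinn^{cvx} \Pred_\Nature^\bullet$ and $T \eqdef \Pred_\ADN^\bullet$, with $s \eqdef r_\ADN \colon S \to T$ and $r \eqdef s_\ADN^\bullet \colon T \to S$ satisfying $r \circ s = \mathrm{id}_S$. Applying Lemma~\ref{lemma:retract:limits} with this data and with $TX, (Tp_i)_{i \in I}$ the limit of $TF$ obtained in the second step, we conclude that $SX, (Sp_i)_{i \in I}$ is a limit of $SF$; transporting back along the natural homeomorphism $\Val_\bullet \cong \Pred_\Nature^\bullet$ yields the statement. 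The main obstacle is the verification in the third paragraph: one must chain together several non-trivial preservation results — for $\odot$-consonance, local strong sobriety (hence coherence and weak Hausdorffness), and compactness — to show that the limit $X$ lies in $\catk^\bullet$, and in particular that $\Lform X$ (with its Scott topology) has an almost open addition map, which is precisely what is required for the fork/convex-lens homeomorphism to be available at $X$ and not merely at the $X_i$'s.
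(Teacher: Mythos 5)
Your proposal is correct and follows essentially the same route as the paper: reduce to $\Pred_\ADN^\bullet$ via case~3 of Corollary~\ref{corl:Fork:projlim}, then transport along the $\catk^\bullet$-relative natural isomorphism of Lemma~\ref{lemma:rADP:nat} using Lemma~\ref{lemma:retract:limits}, after verifying via Corollary~\ref{corl:consonant:lim}, the projectivity of (locally) strongly sober spaces, and Corollary~\ref{corl:LX:consistent} that the limit $X$, and not just the $X_i$, lies in $\catk^\bullet$. The only cosmetic difference is that you invoke Steenrod's theorem for compactness of $X$ when $\bullet$ is ``$1$'', where the paper appeals to the projectivity of the class of strongly sober spaces; both are valid.
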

\begin{proof}
  We recall that the locally strongly sober spaces are exactly the
  weakly Hausdorff, coherent, and sober spaces
  \cite[Theorem~3.5]{JGL:wHaus}.

  By Corollary~\ref{corl:Fork:projlim} (case~3), $\Pred_\ADN^\bullet$ preserves
  limits of such projective systems.  Lemma~\ref{lemma:rADP:nat} gives
  use a $\catk^\bullet$-natural retraction (even isomorphism) of
  $\Pred_\ADN^\bullet$ onto $\Plotkinn^{cvx} \Pred_\Nature$, or
  equivalently onto $\Plotkinn^{cvx} \Val$, where $\catk^\bullet$ is the full
  subcategory of $\Topcat$ consisting of spaces $X$ such that
  $\Lform X$ is locally convex, with an almost open addition map, and
  such that $X$ is compact in case $\bullet$ is ``$1$''.  We can then
  apply Lemma~\ref{lemma:retract:limits} and conclude, provided we can
  show that not only the spaces $X_i$ are in $\catk^\bullet$, but also $X$.
  The spaces $X_i$ are $\odot$-consonant, weakly Hausdorff and
  coherent, so $\Lform {X_i}$ is locally convex and addition is almost
  open on it by Corollary~\ref{corl:LX:consistent}.  The classes of
  locally strongly sober spaces and of strongly sober spaces are
  projective \cite[Theorem 5.1]{JGL:proj:class}, so $X$ is locally
  strongly sober (and compact if $\bullet$ is ``$1$''), namely, weakly
  Hausdorff, coherent, and sober (and compact if $\bullet$ is
  ``$1$'').  By Corollary~\ref{corl:consonant:lim}, $X$ is also
  $\odot$-consonant.  Therefore, by
  Corollary~\ref{corl:LX:consistent}, $\Lform X$ is also locally
  convex with an almost open addition map.  \qed
\end{proof}

\section{Conclusion}
\label{sec:conclusion}

Looking back on what we did, it is apparent that we have dealt with
each functor at hand by its own specific techniques.  It would be
nicer if there were a general projective limit preservation theorem
that would entail the results we have obtained.  This is rather
unlikely: the conditions we have obtained for our results differ for
each functor we have considered, and those conditions were shown to be
necessary---at least in the first part of this paper
(Sections~\ref{sec:general-setting}---\ref{sec:lenses}).

The following are a few remaining open questions:
\begin{enumerate}
\item Theorem~\ref{thm:H:projlim}, item~3 requires each space $X_i$ to
  be locally compact sober, and we have shown that a similar result
  would fail for spaces that are not completely Baire.  Would the
  conclusion of the theorem still hold if each $X_i$ were assumed to
  be quasi-Polish? domain-complete? LCS-complete?
\item Theorem~\ref{thm:QL:projlim} states that projective limits of
  sober spaces preserved by $\HV$ are preserved by $\AvalV$ and
  $\QLV$.  Does the converse hold, namely is it true that projective
  limits of sober spaces preserved by $\QLV$ are preserved by $\HV$?
  We know that this is true in a special case
  (Remark~\ref{rem:QL:projlim:nec}), but we conjecture that this is
  false in general.
\item Corollary~\ref{corl:consonant:lim} states that a projective
  limit of $\odot$-consonant sober spaces and proper bonding maps is
  $\odot$-consonant.  Is it necessary that the bonding maps are proper
  for this to hold?  If not, then the conclusion of
  Theorem~\ref{thm:PV:projlim} would also hold for limits of
  projective systems with arbitrary continuous bonding maps (i.e., not
  proper maps), provided that the index set has a countable cofinal
  subset, and that each $X_i$ is $\odot$-consonant and locally
  strongly sober (and compact, namely strongly sober, if $\bullet$ is
  ``$1$''); the proof would be the same, using case~2 of
  Corollary~\ref{corl:Fork:projlim} instead of case~3.
\end{enumerate}








\section*{Competing interests}

The author declares none.

\bibliographystyle{elsarticle-harv}
\ifarxiv

\else
\bibliography{projlim}
\fi







\end{document}
